\theoremstyle{plain}
\newtheorem{theorem}{Theorem}
\newtheorem{conjecture}{Conjecture}
\newtheorem{maintheorem}{Theorem}
\newtheorem{lemma}{Lemma}[maintheorem]
\newtheorem{sublemma}{Lemma}[lemma]
\newtheorem{question}{Question}
\newtheorem{perturbation}{Perturbation}
\theoremstyle{definition}
\def\be{\begin{eqnarray}}
\def\ee{\end{eqnarray}}
\def\beal{\begin{aligned}}
\def\enal{\end{aligned}}
\def\L{\mathbb L}
\def\T{\mathbb T}
\def\R{\mathbb R}
\def\N{\mathbb N}
\def\del{\partial}
\def\eps{\varepsilon}
\newtheorem*{Pf}{Proof}
\newcommand\beqa[1]{ \begin{eqnarray} \label{#1}}
\newcommand{\eeqa}{ \end{eqnarray} }
\newcommand{\beqano}{ \begin{eqnarray*} }
\newcommand{\eeqano}{ \end{eqnarray*} }
\def\beal{\begin{aligned}}
\def\enal{\end{aligned}}
\renewcommand \L {\Lambda}
\renewcommand \phi {\varphi}
\renewcommand \c[1]{\mathcal{#1}}
\newcommand \area {{\text{area}}}
\newcommand \Id {{\text{Id}}}
\def\~{\tilde}
\def\Bbb{\mathbb}
\def\A{\Bbb A}
\def\R{\Bbb R}
\def\Z{\Bbb Z}
\def\T{\Bbb T}
\def\L{\mathcal L}
\def\eps{\varepsilon}
\title{Absolutely Periodic Billiard Orbits of Arbitrarily High Order}
\author{Keagan G. Callis}
\begin{document}
\allowdisplaybreaks
\maketitle

\selectlanguage{english}
\begin{abstract}
    We show that for all $n\in \N$, the set of domains containing absolutely periodic orbits of order $n$ are dense in the set of bounded strictly convex domains with $C^\infty$ boundary. The proof that such an orbit exists is an extension to billiard maps of the results of \cite{GST}, where it is proved that such maps are dense in Newhouse domains in regions of real-analytic area-preserving two-dimensional maps.  Our result is a step toward disproving a conjecture that no absolutely periodic billiard orbits of infinite order exist in Euclidean billiards, and is also an indication that Ivrii's Conjecture about the measure set of periodic orbits may not be true.
\end{abstract}
\tableofcontents
\section{Introduction}

We begin by considering a bounded domain $\Omega \subset \R^2$ with boundary $\del\Omega$, and examine the eigenvalue problem for the laplacian, i.e. solutions $\phi$ to
$$
 \Delta\phi = \lambda \phi, \qquad \phi = 0\text{ on }\del\Omega
$$
for some eigenvalue $\lambda$, and consider the set of all the eigenvalues $\{\lambda_1 < \lambda_2 \leq \lambda_3 \leq ... \leq \lambda_n \leq ...\}$ which admit solutions.  This is called the Laplace spectrum of $\Omega$, and is denoted by $Sp(\Omega)$.  Obviously, the boundary $\Omega$ completely determines the spectrum $Sp(\Omega)$.  This has some interesting consequences however.  For example, consider the counting function of the Laplace spectrum, defined by $N(\lambda)=|\{\lambda_i\leq \lambda\}|$.  Weyl was able to prove in \cite{weyl} that the first term in the asymptotic expansion of $N$ depends only on the area of $\Omega$.  For the second term in the expansion, there is the following conjecture (named after Weyl):

\begin{conjecture}[\textbf{Weyl's Conjecture}]
    Let $\Omega$ have a $C^\infty$ smooth boundary.  Then,
    \begin{align}\begin{split}\nonumber
        N(\lambda) = \frac{\area(\Omega)}{4\pi}\cdot\lambda + \frac{\L(\del\Omega)}{8\pi}\cdot\lambda^{1/2} + o(\lambda^{1/2}),
    \end{split}\end{align}
    where $\L(\del\Omega)$ is the length of $\del \Omega$.
\end{conjecture}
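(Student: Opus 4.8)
The plan is to derive the two-term asymptotics from the singularities of the wave trace, along the H\"ormander--Ivrii--Melrose line of argument. First I would pass from the counting function $N(\lambda)$ to the trace of the wave group, $e(t)=\sum_j\cos\big(t\sqrt{\lambda_j}\big)$, or rather to the smoothed quantities $\sum_j\hat\rho\big(t-\sqrt{\lambda_j}\big)$ for a suitable family of test functions $\rho$: a Fourier Tauberian theorem turns control of the singularity of $e(t)$ at $t=0$, together with a bound on $e(t)$ for $t$ in a fixed punctured neighborhood of $0$, into an expansion of $N(\lambda)$ with remainder $o(\lambda^{1/2})$ --- whereas mere boundedness of the $t=0$ singularity would only give $O(\lambda^{1/2})$.

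Second, I would construct a parametrix for the Dirichlet wave equation $(\partial_t^2-\Delta)u=0$ on $\Omega$, valid for small $|t|$. In the interior this is the standard geometric-optics (Lax--H\"ormander) construction; near $\partial\Omega$ one must also account for reflected rays and, in the glancing region, use the Melrose--Taylor parametrix built from Airy-type phase functions. Substituting into the trace and applying stationary phase, the $t=0$ singularity produces exactly two leading contributions: a bulk term of order $\area(\Omega)\,\lambda$ from the interior diagonal, and a boundary term of order $\L(\partial\Omega)\,\lambda^{1/2}$ from the diagonal restricted to a collar of $\partial\Omega$, with the precise constants --- and the sign, which records whether the boundary condition is Dirichlet or Neumann --- read off from the stationary-phase computation. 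This already yields the stated formula with remainder $O(\lambda^{1/2})$.

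Third --- the decisive step --- to improve $O(\lambda^{1/2})$ to $o(\lambda^{1/2})$ one must show that the wave trace carries no singularity of borderline strength at any $t\neq0$. By propagation of singularities for the broken-geodesic (billiard) flow, the singular support of $e(t)$ away from $t=0$ is contained in the set of lengths of periodic billiard trajectories in $\Omega$ (reflected as well as boundary-gliding ones), and the order of the singularity attached to each such length is governed by the Liouville measure of the corresponding family of periodic orbits in the unit cotangent bundle. Hence the argument closes under Ivrii's hypothesis that the periodic billiard orbits form a set of measure zero: one extracts enough decay of $e(t)$ for $t$ away from $0$ to feed the Tauberian estimate, and the remainder drops from $O(\lambda^{1/2})$ to $o(\lambda^{1/2})$.

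The main obstacle is exactly this last input. There is no unconditional proof that the periodic billiard trajectories of a general smooth bounded strictly convex domain form a measure-zero set; the construction in the present paper of absolutely periodic orbits of arbitrarily high order is precisely evidence against it, so a fully unconditional proof of the Conjecture in the stated generality is not available by this route --- which is the point of this work. A lesser, purely technical difficulty is the glancing analysis at $\partial\Omega$, where both the parametrix and the stationary-phase bookkeeping are delicate; by now, however, that part is essentially standard (Melrose, Taylor, Ivrii).
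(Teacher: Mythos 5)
The statement you were asked about is not proved in the paper at all: it is stated as an open conjecture (Weyl's Conjecture), and the paper's only comment on its status is that Ivrii established the two-term asymptotics \emph{conditionally} on Ivrii's Conjecture, i.e.\ on the hypothesis that the periodic billiard orbits of $\Omega$ form a set of measure zero. Your write-up is exactly that conditional argument: reduction of $N(\lambda)$ to the wave trace via a Fourier Tauberian theorem, a small-time parametrix (interior geometric optics plus the Melrose--Taylor analysis at glancing) giving the two leading terms with $O(\lambda^{1/2})$ remainder, and propagation of singularities for the billiard flow to upgrade to $o(\lambda^{1/2})$ under the measure-zero hypothesis. You correctly identify that this last hypothesis is the irreducible gap, and that the present paper's construction of absolutely periodic orbits of high order is evidence pointing against an easy unconditional closure of it. So there is no error in your proposal, but it should be understood for what it is: a sketch of the known reduction of Weyl's Conjecture to Ivrii's Conjecture, not a proof of the statement, which remains open both in the paper and in the literature.
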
  

What we mean by "$C^\infty$ smooth boundary" is that the curvature of the boundary, $\kappa:\R/\Z\L(\del\Omega)\rightarrow \R$, is a $C^\infty$ function.  What is interesting in this result is that purely geometric properties of $\Omega$ are found inside the expansion of the counting function.  One might imagine that if you continue to expand the counting function, you continue to obtain more quantities determined by $\Omega$ at each step (thus restricting which potential $\Omega$ may produce any given Laplace spectrum).  Indeed, prompted by Weyl's result, this inverse spectral problem was famously phrased by M. Kac in \cite{kac} as "Can one hear the shape of a drum:"

\begin{question}[\textbf{"Can one hear the shape of a drum?"}]
    Given a spectrum  $S=\{\lambda_1 < \lambda_2 \leq \lambda_3 \leq ... \}$, is there a unique $\Omega$ which produces $S$?
\end{question}

The answer depends on which class of domains one examines. For example, if one examines non-convex domains with non-smooth boundaries, there are indeed well-known counterexamples (see \cite{wolpert}).  On the other hand, there is a positive answer in the space of $\Z^2$ symmetric analytic domains \cite{zelditch}. However, if one restricts their view to domains which have only smooth boundaries, the question remains open.

 Interestingly, there is also a natural connection between the Laplace spectrum and another quantity derived from $\Omega$ called the length spectrum.  To define the length spectrum, first we need to define what is called a closed geodesic in $\Omega$.  A geodesic in $\Omega$ is defined as an orbit which travels in straight lines within $\Omega$ and reflects at the boundary $\del \Omega$ with the rule "the angle of incidence is equal to the angle of reflection."  Such a geodesic is called closed if it comes back to itself in a finite number of steps (i.e. is periodic).  Then, the Length spectrum, denoted by $L(\Omega)$, is defined as $$ 
L(\Omega) = \N\{\text{length of all closed geodesics of }\Omega\} \cup \N\{\L(\del\Omega)\}.
$$
For instance, one way this is related to the Laplace spectrum is as follows:
\begin{align}\begin{split}\nonumber
    \text{sing supp}\Bigg(t \rightarrow \sum_{\lambda\in Sp(\omega)}\text{exp}(i\lambda^{1/2}t)\Bigg) \subset \pm L(\Omega)\cup\{0\}.
\end{split}\end{align}
Thus, one can imagine that much information on questions related to the Laplace spectrum can be obtained through also studying the length spectrum (see \cite{hormander}, \cite{popov}).  Indeed, Weyl's conjecture was proven by Ivrii in \cite{Ivrii}, \textit{provided Ivrii's conjecture holds,} which states:

\begin{conjecture}[\textbf{Ivrii's Conjecture}]
    For any $\Omega$ with boundary in $C^{\infty}(\R)$, the set of periodic billiard orbits in $\Omega$ has measure zero .  
\end{conjecture}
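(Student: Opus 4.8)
\medskip

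\noindent\textbf{A strategy, and where it fails.} Ivrii's Conjecture is open, and the construction of this paper is itself evidence that it may be false; still, here is the program I would attempt. Work in the phase space $M=\del\Omega\times(-\tfrac\pi2,\tfrac\pi2)$ of the billiard map $T$, with its invariant (Liouville) measure, and set $\mathrm{Per}_n=\{x\in M:\ T^n x=x\}$. Since the periodic set is $\bigcup_{n\ge 2}\mathrm{Per}_n$, a countable union, it is enough to prove $\meas(\mathrm{Per}_n)=0$ for each fixed $n$, and I would try to do this one $n$ at a time.

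The first step is to present $\mathrm{Per}_n$ as the critical set of a single smooth function. Parametrize $\del\Omega$ by $\R/\Z$ and put $\ell_n(q_1,\dots,q_n)=\sum_{i=1}^{n}|q_i-q_{i+1}|$, indices taken mod $n$; the least-action principle for billiards says that $n$-periodic trajectories correspond exactly to the critical points of $\ell_n$ on the open subset $U_n\subset(\R/\Z)^n$ where consecutive points are distinct, and the $n$ cyclic shifts of a critical point record the $n$ states of the orbit in $M$. If every such critical point were nondegenerate, the critical set would be finite (as it is, for instance, for the Birkhoff orbits of each fixed rotation number on a strictly convex domain), hence $\mathrm{Per}_n$ would be finite and of measure zero; more generally it would suffice to show that the critical set of $\ell_n$ is a locally finite union of submanifolds of $U_n$ of positive codimension — which would follow if, say, $\ell_n$ were subanalytic — since then its image in $M$ is one-dimensional, hence $\meas$-null.

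The hard part — and the reason this is only a program — is that for a general $C^\infty$ domain the critical set of $\ell_n$ can be arbitrarily degenerate. In the worst case $T^n$ equals the identity on an open subset of $M$, i.e.\ $\Omega$ carries an absolutely periodic orbit of infinite order, and then $\meas(\mathrm{Per}_n)>0$ and Ivrii's Conjecture is false outright. Near one periodic orbit, $T$ is a smooth area-preserving map of an annulus, and the mechanism that manufactures these pathologies is the one of \cite{GST}: inside the Newhouse domains of real-analytic area-preserving surface maps there are maps whose first-return map at a periodic point agrees with the identity to any prescribed finite order $n$. Carrying that mechanism over to billiard maps is precisely the main theorem of this paper, and it shows that the combinatorial complexity of $\mathrm{Per}_n$ cannot be bounded uniformly over smooth strictly convex domains; the very step one would need to \emph{prove} Ivrii's Conjecture — ruling out the fat case and keeping the degeneracy at codimension $\ge 1$ for \emph{every} smooth $\Omega$ — is the one the present density result suggests is impossible, and it is the obstacle I do not see how to get past.
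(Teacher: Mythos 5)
This statement is Ivrii's Conjecture, which the paper only \emph{states}: it is an open problem, the paper offers no proof of it, and indeed the paper's main theorem is presented as evidence that it may fail. So there is nothing in the paper to compare your argument against, and you are right not to claim a proof: your write-up correctly records the standard reduction (countable union over the period $n$, the variational characterization of $n$-periodic orbits as critical points of the length functional $\ell_n$, and the hope that the critical set is always of positive codimension), correctly notes that the conjecture is only known in special cases, and correctly identifies that the mechanism of \cite{GST}, transported to billiards by this paper, is exactly what makes a uniform nondegeneracy statement implausible.

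One technical caution if you ever try to push the program: the step ``critical set of $\ell_n$ is a locally finite union of positive-codimension submanifolds of $U_n$, hence its image in the phase space is one-dimensional and therefore null'' is not valid as stated. Positive codimension in $U_n\subset(\R/\Z)^n$ only bounds the dimension by $n-1$, which for $n\ge 3$ is not enough by itself to force the corresponding subset of the two-dimensional phase space to be Lebesgue null; one needs either dimension at most one for the critical set, or a genuinely measure-theoretic argument in the phase space (this is why even the period-$3$ and period-$4$ cases cited in the introduction require nontrivial work rather than a soft transversality count). Since you present the whole thing as a program that you acknowledge breaks down at the degeneracy issue, this does not change the verdict: the statement is a conjecture, the paper does not prove it, and neither do you claim to.
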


Ivrii's conjecture has been proven when one restricts their view to specific families of billiards (for example in \cite{vasilliev_paper}), and can be considered when one restricts their view to specific sets of periodic orbits (e.g. if one restricts their view to only $k$-periodic orbits for fixed values of $k$).  Another question which is one step down from Ivrii's conjecture is whether or not there are open sets of periodic orbits in some family of domains.  For example, recently it was proven in \cite{Corentin} that for projective billiards (which are a generalization of billiards - see \cite{tabachnikov1},\cite{tabachnikov2}) there are no open subsets of 3-periodic orbits in higher dimension\footnote{He also proves that in the plane, the only counterexample is the so called right spherical billiard.}. This has also been proven for the case of convex domains in the plane with smooth boundary up to periods of period 4 (see \cite{Glutsyuk1},\cite{Glutsyuk2}).

In this view, it makes sense to study the prevalence of periodic orbits in smooth billiard systems. An obvious corollary of Ivrii's conjecture would be that there are no open sets of periodic orbits.  Considering that seemingly wild possibility, we observe that if one were to somehow find an open set of $q$-periodic points in a billiard system, then the differential at any point in this set must be the identity, i.e. $df^q(x_0)=\Id$ for each $x_0$ in this open set, where f is the billiard map.

We make the definition (following the definition given in \cite{SafarovVisilliev}\footnote{Note that in \cite{SafarovVisilliev}, the definition of absolutely periodic is given in terms of geodesic flows.  We prove the equivalence of our definition and theirs in Appendix \ref{appendix_abs_t_periodic_implies_identity}.}), that a $q-$periodic orbit is called \textbf{absolutely periodic} if the differential of $f^q$ is the identity and all second order and higher partial derivatives of $f^q$ are $0$ at any point in the orbit.  Stated with this definition, in \cite{SafarovVisilliev} there is also the following conjecture\footnote{Note that in the discussion around their conjecture they note that it should be possible to obtain such orbits as we do in this paper, though they do not make any mention on how often they appear (e.g. their density among smooth convex domains).}:

\begin{conjecture}[\textbf{Safarov-Vassiliev}]
    There are no absolutely periodic orbits for euclidean billiards.
\end{conjecture}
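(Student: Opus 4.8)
The plan is to argue by contradiction. Suppose $\Omega\subset\R^2$ is bounded with $C^\infty$ boundary, $f$ is its billiard map, and $\{x_0,\dots,x_{q-1}\}$ is an absolutely periodic orbit, so that the Taylor jet of $f^q$ at each $x_i$ coincides with that of the identity. The first step is to pass to the variational picture: let $\gamma$ parametrize $\del\Omega$ by arclength, so that $f$ is generated by the chord-length function $h(s,s')=-|\gamma(s)-\gamma(s')|$ and the $q$-periodic orbits are the critical points of the cyclic action $H(s_0,\dots,s_{q-1})=\sum_i h(s_i,s_{i+1})$. Via the standard dictionary between the Taylor jet of a twist map at a periodic orbit and the jet of its generating function at the critical point (the twist condition $\del_1\del_2 h\neq 0$ makes this dictionary faithful), the hypothesis $df^q=\Id$ forces $\mathrm{Hess}_{\bar s}H$ to have corank exactly $2$, and absolute periodicity forces $H$ to be \emph{formally flat} along a $2$-plane transverse to the $q-2$ nondegenerate directions: modulo the trivial quadratic part, every Taylor coefficient of $H$ at $\bar s$ vanishes.

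The heart of the argument would be to show that no Euclidean chord-length action can be formally flat in this way. The point is that $h$ is very far from an arbitrary twist generating function: its partial derivatives of all orders are explicit expressions in the chord length $|\gamma(s)-\gamma(s')|$, the incidence angles $\theta,\theta'$ at the footpoints, and the curvature jets $\kappa,\kappa',\kappa'',\dots$ of $\del\Omega$ at $\gamma(s)$ and $\gamma(s')$ (for instance $\del_1\del_2 h=\pm\sin\theta\sin\theta'/|\gamma(s)-\gamma(s')|$). I would expand the formal-flatness conditions order by order: the vanishing of the restricted third and fourth jets of $H$ should already pin the curvature and its first derivative at each footpoint to those of one common osculating circle, and each successive order should match one further curvature derivative, so that formal flatness would force $\del\Omega$ to be infinitely tangent to a fixed circle at every footpoint. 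If $\del\Omega$ is real-analytic this makes $\del\Omega$ a circular arc, hence $\Omega$ a disk; but for a disk every periodic orbit has $df^q$ a nontrivial shear, $df^q=\left(\begin{smallmatrix}1&\ast\\0&1\end{smallmatrix}\right)$ with $\ast\neq 0$, so no periodic orbit of a disk even satisfies $df^q=\Id$ — a contradiction. For the small periods $q\le 4$ one can replace the last two steps by a shortcut: for analytic boundary, formal flatness of $H$ upgrades to $H\equiv\mathrm{const}$ near $\bar s$, hence to an open set of $q$-periodic orbits, which is ruled out by the rigidity results for smooth convex planar billiards quoted above.

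The step I expect to be the main obstacle is twofold. First, one must control \emph{all} orders of the jet simultaneously and turn the order-by-order curvature matching into the clean statement that $\del\Omega$ osculates a fixed circle to infinite order at the footpoints; this is a delicate bookkeeping problem. Second, and more seriously, even granting that statement, ``infinitely tangent to a fixed circle at finitely many points'' does \emph{not} force a $C^\infty$ boundary to be circular, so the $C^\infty$ case needs a genuinely new idea beyond the analytic shortcut. The difficulty is structural: for general real-analytic area-preserving maps there is no obstruction at all — the GST machinery extended in this paper produces periodic points whose jet agrees with the identity to arbitrarily high, conjecturally infinite, order — so any proof of the conjecture must exploit a rigidity special to Euclidean billiards that fails for abstract twist maps, and it is unclear that a strong enough rigidity exists. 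Indeed the constructions of the present paper are themselves evidence that it may not, so this is a plan whose breaking point is informative, and consistent with the paper's stated aim of moving toward \emph{disproving} the conjecture.
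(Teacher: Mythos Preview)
There is nothing to compare against: the statement is a \emph{conjecture}, and the paper does not prove it. Quite the opposite --- the paper explicitly says its main theorem is ``a step toward disproving'' the Safarov--Vassiliev conjecture, and the only positive result the paper cites is the known analytic case from \cite{SafarovVisilliev}. So your proposal is not an alternative to the paper's proof; it is an attempt at an open problem that the paper itself regards as likely false.

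On the substance of your sketch: the central step --- that formal flatness of the cyclic action $H$ at $\bar s$ forces the curvature jets at all footpoints to agree with those of a single common circle --- is asserted, not argued. You write that the third and fourth jets ``should'' pin down $\kappa$ and $\kappa'$, and that ``each successive order should match one further curvature derivative,'' but you do not carry out even the first of these computations, and there is no mechanism offered for why the matching must be to one \emph{common} circle rather than to different osculating circles at different footpoints. This is precisely the kind of rigidity that the paper's Theorem~\ref{theorem_perturb_many_independently} and Appendix~\ref{appendix_perturb_nth_derivative_independently} show \emph{fails} to any finite order: by perturbing the curvature jets at $n+3$ footpoints one can adjust the $n$-jet of $f^q$ freely (subject only to area preservation), so the map from curvature jets to the jet of $H$ is surjective order by order, and nothing in the finite-order equations forces a common osculating circle. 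Your own final paragraph concedes this for the $C^\infty$ case; the same perturbation freedom undermines the order-by-order matching you rely on even in the analytic case, unless you can identify an infinite-order obstruction that survives --- and you have not.
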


This has been proven in the case of convex domains with analytic boundary (\cite{SafarovVisilliev}), and in a few other cases (\cite{vasilliev}).  Considering all the discussion so far, if one were to find such an orbit, it would be quite surprising.

In this paper, we detail a proof on the existence of billiard systems exhibiting an \textbf{absolutely periodic orbit of order $n$}, which we define to be an orbit such that $df^q(x_0) = Id + F(x_0),$ where $F(x_0)=0$ up to order $n$.  Hence, with these definitions, an absolutely periodic orbit is an absolutely periodic orbit of infinite order. 

Before delving further into the result, we first describe in more detail the domains we consider.  Denote by $\mathcal{D}^r$ the space of unit length strictly convex domains in $\mathbb{R}^2$ with $C^{r+1}$ smooth boundary endowed with the $C^{r+1}$ topology, for $r\in \N$ or $r = \infty$.  We label a parameterization of a boundary $\del \Omega$ by $\gamma(s)$, the curvature at $\gamma(s)$ by $\kappa(s)=\theta'(s)$ (where $\theta(s)$ is the angle the tangent vector of $\del \Omega$ at $\gamma(s)$ makes with the horizontal axis), and the radius of curvature $\rho(s)=\frac{1}{\kappa(\theta^{-1}(s))}$, with arc-length parameterization s.  We identify each domain $\Omega$ with the curvature of its boundary.  Thus the domains in $\mathcal{D}^r$ correspond to curvatures satisfying (see \cite{mm})

\begin{align}
    \begin{split}\label{Dr_curvatures}
         &\bullet\mathcal L(\del \Omega) = 1
         \\&\bullet\kappa(s)\in C^{r}(\mathbb{R}/\mathbb{Z})
         \\&\bullet\kappa(s) >0 
         \\&\bullet\int_0^{1} \rho(t)\cos(2\pi t)dt = \int_0^{1} \rho(t)\sin(2\pi t)dt = 0
    \end{split}
\end{align}

Also, by abuse of notation (whenever it causes no difficulties) we say the billiard map $f$ associated to $\Omega$ is in $\mathcal D^r$ when $\Omega$ is in $\mathcal D^r$.  Additionally, when there is an orbit under the map $f$, we sometimes say $\Omega$ 'has' that orbit or $f$ 'has' that orbit. With these definitions given, we may now state our main result:

\begin{maintheorem}\label{MainTheorem}
For any $n\in \N$, and any $\Omega \in \mathcal D^\infty$, there exists a domain $\tilde{\Omega} \in \mathcal D^\infty$ arbitrarily close to $\Omega$ in the $C^{\infty}$ topology such that 
$\tilde{\Omega}$ has an absolutely periodic orbit of order $n$.
\end{maintheorem}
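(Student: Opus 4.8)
The plan is to reduce the theorem to a statement about perturbations of the billiard map, viewed as an area-preserving twist map of the annulus, and then to import the renormalization mechanism near a homoclinic tangency from \cite{GST}. Fix $n\in\N$. It suffices to produce, by an arbitrarily small deformation of the curvature $\kappa$ that stays inside the constraint set \eqref{Dr_curvatures}, a billiard map $f$ possessing a $q$-periodic point $x_0$ (for some, generally large, period $q$) whose $q$-th return map satisfies $j^n_{x_0}(f^q)=j^n_{x_0}(\Id)$, where $j^n$ denotes the $n$-jet. Indeed, being absolutely periodic of order $n$ asks only that finitely many Taylor coefficients of $f^q-\Id$ at $x_0$ vanish, so this is a finite-codimension condition and, in principle, finitely many independent perturbation directions suffice; this is precisely what makes the finite-order statement accessible, in contrast with the full Safarov--Vassiliev conjecture.

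Next I would arrange a quadratic homoclinic tangency inside the billiard class. Starting from an arbitrary $\Omega\in\mathcal{D}^\infty$, a small curvature perturbation produces a domain $\Omega_0$ whose billiard map has a hyperbolic (saddle) periodic orbit --- for instance a bouncing-ball orbit between two boundary arcs whose radii of curvature are tuned so that $df^q$ is hyperbolic, or a short periodic orbit rendered hyperbolic by a localized change of curvature --- together with a transverse homoclinic orbit, hence a horseshoe; a further small perturbation bends one branch of the stable/unstable intersection into a quadratic tangency. Here one must check that all of this is realizable through curvature deformations that respect the length normalization and the two closing conditions in \eqref{Dr_curvatures}, but those cut out only a finite-codimension linear subspace of the infinite-dimensional space of admissible perturbations, so the point to verify is local surjectivity onto the finitely many jets being prescribed.

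With a quadratic homoclinic tangency in hand, I would invoke the rescaling analysis of \cite{GST}: the first-return maps in a neighborhood of the tangency, after an appropriate affine rescaling, converge as the return time grows to a standard area-preserving (H\'enon-like) normal form, and by varying the splitting parameter of the tangency together with the relevant moduli and auxiliary perturbation parameters one obtains a family of rescaled return maps whose $n$-jets sweep out a full-dimensional neighborhood in the space of $n$-jets of symplectic germs. In particular this family contains maps whose $n$-jet coincides with that of the identity, which, transported back to the original coordinates, furnishes a periodic orbit of $f$ that is absolutely periodic of order $n$; keeping the supporting perturbations small keeps $\tilde{\Omega}$ close to $\Omega$, strictly convex, and $C^\infty$.

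The step I expect to be the main obstacle is showing that the perturbations of the area-preserving map demanded by the argument above can actually be realized by perturbations of the domain. The mechanism of \cite{GST} is formulated for unconstrained perturbations of an abstract analytic area-preserving map, whereas here the admissible perturbations form the infinite-dimensional but highly structured family induced by curvature deformations subject to \eqref{Dr_curvatures}. The crux is therefore a quantitative realizability lemma: the map sending an admissible curvature perturbation to the induced perturbation of the relevant finite jets of $f$ --- along the saddle orbit, along the homoclinic orbit, and of the rescaled return maps --- is a submersion onto enough directions to carry out the two previous steps. Proving this requires an explicit description of the dependence of $df^q$ and its higher-order derivatives on $\kappa$, presumably through the generating-function (mirror-equation) representation of the billiard map near the orbit, followed by the verification of a nondegeneracy condition; the attendant bookkeeping of convexity and smoothness of the perturbed boundary is routine because every deformation used is small and localized.
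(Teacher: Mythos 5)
Your overall strategy (create a quadratic homoclinic tangency inside the billiard class, then import the mechanism of \cite{GST}, with the realizability of perturbations by curvature deformations as the crux) is the right general track, but there is a genuine gap at the heart of the second step. From a \emph{single} quadratic homoclinic tangency, the rescaled first-return maps do not sweep out a full-dimensional neighborhood in the space of $n$-jets: the renormalized family is H\'enon-like, governed essentially by the splitting parameter and a modulus, so its $n$-jets form a very low-dimensional set; in particular it never contains the identity jet (for the conservative normal forms $(x,y)\mapsto(y,-x+Q(y))$ the differential at a fixed point is never $\Id$). What \cite{GST} actually provides, and what the paper uses, is an inductive tower: from the quadratic tangency one produces a secondary transverse homoclinic orbit, a heteroclinic cycle of the third class, and new quadratic tangencies, and one merges a tangency of order $k$ with a quadratic one to obtain order $k+1$, until a tangency of order $2n+4$ is reached; unfolding \emph{that} generically yields only a degenerate elliptic periodic orbit whose $n$-jet is a rotation $e^{i\phi}z+o(|z|^n)$ with, in general, irrational $\phi$. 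Converting this into an identity $n$-jet requires a further step your proposal omits: one must perturb so that $\phi$ becomes rational while not disturbing the vanishing of the resonant terms up to order $n$ (the paper's Lemma~5, which rotates $df^q$ by a small angle and then kills, order by order, the changes induced in the higher jets using the $n+3$-point curvature perturbation), and only then pass to the iterate given by the denominator of $\phi$. As written, your jump from ``quadratic tangency'' to ``identity $n$-jet in the renormalized family, transported back to original coordinates'' is not a valid step.

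On the realizability question you flag as the main obstacle: you ask for a submersion from curvature perturbations onto the relevant jets (this is indeed the paper's Theorem~2 / Perturbation~3, proved by an explicit computation showing the order-$n$ jets of $f^{n+3}$ can be varied independently by varying $\kappa^{(n)}$ at consecutive impact points), but you miss the specific structural obstruction that makes the billiard case harder than a surjectivity count: a curvature perturbation is never local in phase space, since it modifies the map on an entire vertical strip $\{|s-s_*|<\delta\}\times[0,\pi]$. During the tower construction one must control several orbits simultaneously (the saddle, several homoclinic/heteroclinic orbits, each newly created tangency orbit), so one needs each orbit to contain enough points whose vertical lines avoid all the other orbits --- the paper's ``injectivity condition'' --- and one must re-establish this condition after every step (Lemma~3). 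Without it, the submersion you invoke does not let you unfold one tangency while freezing the others. Finally, the creation of the initial tangency is also thinner than you suggest: the existence of a transverse homoclinic (or heteroclinic) orbit after an arbitrarily small perturbation of an \emph{arbitrary} convex domain is not automatic, and the paper obtains it from near-boundary structure (Lazutkin near-integrability, KAM curves trapping Aubry--Mather minimal orbits of rotation number $1/q$, Mather's heteroclinic connections), which simultaneously secures the properties (orbit close to the boundary, invariant manifolds transverse to the vertical) that the later perturbation and injectivity arguments rely on.
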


To prove this we seek to use the methods from \cite{GST} to obtain
a similar result for billiard maps.  In their setting, something stronger is proven 
that involves what are called Newhouse domains.  The notion of Newhouse 
domains comes from another field of dynamical systems - that of 
homoclinic orbits and their bifurcations. Newhouse proved in 
\cite{Newhouse1}, \cite{Newhouse2}, \cite{Newhouse3}, that there exist open 
sets (Newhouse domains) in which maps that exhibit homoclinic tangencies 
are dense.  Moreover, it was shown that these domains exist around any map 
that exhibits a homoclinic tangency.  Later, this result was extended by Duarte to the class of area-preserving maps in \cite{duarte}.

In these regions, one can create many interesting phenomenon by perturbing the map so that these homoclinic orbits move in such a way we call "unfolding."  For instance, in \cite{asgor} (see also \cite{gorodetsky}) they were able to find invariant hyperbolic sets of arbitrarily large Hausdorff dimension by studying these generic unfoldings.  Related to our paper, by studying unfoldings it was shown in (\cite{GST}, Theorem 3) that in 
the Newhouse domains in the space of area
preserving $C^\infty(\mathbb{R}^2)$ maps, maps 
with infinitely many homoclinic tangencies of 
all orders are dense 
in the $C^r$-topology for any $r>0$, including $r=\infty$.  
In this paper we prove this result in the case of 
billiard maps, though here we are only interested 
in obtaining a map close to our original map with at least one periodic orbit which to high order is the identity.

While we follow many of the same arguments found in \cite{GST}, it should be mentioned there are natural difficulties in extending the problem to billiard maps.  There are many ways which we tackle these various problems, which we will go into in detail, but one particularly interesting result used to analyze the billiard map is the following:

\setcounter{theorem}{1}

\begin{theorem}\label{theorem_perturb_many_independently}
    Consider the map $T$ which is the billiard map composed with itself $n+3$ times. Given an orbit $\mathcal O = ((s_0,\phi_0),...,(s_n,\phi_n),...)$ (not necessarily periodic) with $s_i\neq s_j$ for $i\neq j$ and close to the boundary, one may vary each partial derivative of $T$ at $(s_0,\phi_0)$ up to order $n$ independently by perturbing the derivatives of the curvature at $s_0,...,s_n$ up to order $n-1$, while leaving the orbit $\mathcal O$ fixed.
\end{theorem}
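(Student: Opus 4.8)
The plan is to set up a linear-algebra argument: the map $(\kappa, \kappa', \dots, \kappa^{(n-1)})$ evaluated at the points $s_0, \dots, s_n$ gives us a finite-dimensional space of perturbation parameters, and we want to show the induced linear map on the space of partial derivatives of $T$ at $(s_0,\phi_0)$ up to order $n$ is surjective. First I would write $T = f^{n+3}$ and record how the billiard map $f$ in arc-length/angle coordinates $(s,\phi)$ depends on the curvature near the bounce points; the standard formulas (see \cite{mm}) express $df$ and its higher jets at the $i$-th collision in terms of $\rho(s_i), \rho'(s_i), \dots$. Since we perturb $\kappa$ only in small neighborhoods of the $s_i$ and keep the orbit $\mathcal O$ fixed, I would choose perturbations supported so narrowly that the collision points and angles are unchanged to the order we track — this uses $s_i \neq s_j$ and closeness to the boundary so the neighborhoods are disjoint and the grazing geometry is controlled.

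The core computation is to identify, for each $i$, which partial derivatives of $T$ at $(s_0,\phi_0)$ are affected by varying $\kappa^{(k)}(s_i)$ for $0 \le k \le n-1$, and to check the resulting dependence is "triangular" in a suitable sense. The key steps, in order: (i) expand $T$ as a composition and apply the chain rule / Faà di Bruno to see that a derivative of $\kappa$ of order $k$ at $s_i$ enters the jet of $f$ at the $i$-th bounce, hence enters the jet of $T$, at order exactly $k+1$ (the extra order coming from the reflection law), with all lower-order jets of $T$ independent of $\kappa^{(k)}(s_i)$; (ii) argue that as $i$ ranges over $0,\dots,n$ and $k$ over $0,\dots,n-1$ we get enough independent "directions" to hit every partial derivative of $T$ at $(s_0,\phi_0)$ up to order $n$, by matching dimension counts and exhibiting an explicit triangular (hence invertible) structure in the linearization; (iii) conclude via the implicit function theorem that the nonlinear perturbation can be chosen to achieve any prescribed jet of $T$ up to order $n$ while fixing $\mathcal O$.

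The main obstacle I expect is step (ii): making the "independence" precise. One has to be careful that perturbing $\kappa$ at $s_i$ affects not only the $i$-th factor of $f^{n+3}$ but, through the composition, many of the mixed partials of $T$ simultaneously, so the induced linear map is far from diagonal — the claim is only that it is triangular with nonzero diagonal in an appropriate ordering of both the jet-of-$\kappa$ variables and the jet-of-$T$ target coordinates. Identifying that ordering, and verifying the diagonal entries are nonvanishing (this is where strict convexity, $\kappa > 0$, and the $n+3$ — rather than $n+1$ — iterations buy the needed room, presumably with a couple of "free" bounces to absorb the constraint that $\mathcal O$ and the tangency/periodicity data stay fixed), is the technical heart. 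Once the triangular structure is in hand, surjectivity and the passage from the linear statement to the actual perturbation of the curvature are routine.
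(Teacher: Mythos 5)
Your overall strategy---linearize the dependence of the jet of $T$ on the jets of $\kappa$ at the bounce points, prove the linearized map is invertible onto the relevant target, and conclude---is the strategy of the paper, and your step (i) (that $\kappa^{(k)}(s_i)$ first enters the jet of $T$ at order $k+1$, leaving lower-order jets unchanged to first order) is exactly the mechanism used there. But step (ii) has two genuine gaps. First, surjectivity onto \emph{every} partial derivative of $T$ up to order $n$ is not achievable, and no ordering of the variables can rescue it: billiard maps are area preserving, $\det df^{q}(x_0)=\sin\phi_0/\sin\phi_q$, and differentiating this identity shows that at order $n$ only $n+2$ of the $2(n+1)$ order-$n$ partials are free. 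The paper's Appendix B proves precisely this reduction (all order-$n$ partials of the $s$-component of $T$ together with a single order-$n$ partial of the $\phi$-component determine the rest, given lower-order data), and then matches this reduced target with exactly $n+2$ parameters, obtained by perturbing only the top-order derivative of the curvature at each of $n+2$ points so that lower-order jets of $T$ are untouched; the full jet up to order $n$ is then controlled order by order through repeated perturbations (this is how the result is used in Section \ref{sec_rotate_the_differential}), not by one simultaneous perturbation of all curvature derivatives at all points. Your dimension count cannot work as stated: $n(n+1)$ parameters against a full jet of dimension $n(n+3)$, and more fundamentally the jets of billiard maps are constrained, so identifying the correct reduced target is a necessary ingredient your proposal omits.

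Second, the asserted ``triangular (hence invertible) structure'' of the linearization does not exist and cannot simply be posited. Within a fixed order the relevant matrix has entries of the form $M_{k,l}=\frac{\partial s_{n+3}}{\partial\phi_l}\big(\frac{\partial s_l}{\partial s_0}\big)^{n-(k-1)}\big(\frac{\partial s_l}{\partial\phi_0}\big)^{k-1}$, together with one extra row built from $\frac{\partial\phi_{n+3}}{\partial\phi_l}$: a twist-weighted Vandermonde-type matrix which is full, not triangular in any natural ordering. Proving its determinant is nonzero is the technical core of the paper's Appendix \ref{appendix_perturb_nth_derivative_independently}: a long explicit sequence of row and column operations reduces it to lower-triangular form whose diagonal entries are products of twist quantities $\frac{\partial s_j}{\partial\phi_i}$ and determinants $\det(df^k)$, with the non-vanishing of the final diagonal entry requiring a separate induction; the hypotheses $\frac{\partial s_j}{\partial\phi_i}\neq 0$ (this is where strict convexity and the orbit lying near the boundary actually enter) are used exactly there. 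So while you correctly locate where the difficulty lies, the proposal asserts the conclusion of that computation rather than giving an argument for it, and it aims at a target (the unconstrained jet) that the symplectic structure of billiard maps rules out.
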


For a more precise statement of this theorem, see Appendix \ref{appendix_all_perturbations}, Perturbation \ref{perturbation_which _lets_you_vary_all_derivatives_independently}.  The proof of this theorem is quite technical, and is detailed in Appendix \ref{appendix_perturb_nth_derivative_independently}.  It also highlights one of the difficulties in working with billiard maps; namely, that to perturb within the class of billiard maps one must perturb the curvature function of the associated domain and then analyze how this change effects the jets of the billiard map. 

Now, we go into more detail on the various difficulties in restricting the results of \cite{GST} to the class of billiard maps.

\begin{center}
    \textbf{
    Generic Conservative Systems
    }
\end{center}

In general, proving generic properties of dynamical systems often depends on the class of systems you examine.  The wider the class of systems, the more flexibility one has in proving a property holds.  For instance, there is a closing lemma for a range of dynamical systems such as $C^1$ diffeomorphisms and $C^1$ Hamiltonian flows (\cite{pugh_robinson}), yet an analogous result for geodesic flows under Riemannian metrics is still open\footnote{For Hamiltonians on closed surfaces there is a $C^\infty$ closing lemma (see \cite{asaoka_irie}), however, the proof does not apply to geodesic flows and does not use local perturbation techniques.}.  Similarly in \cite{Contreras} the existence of positive topological entropy and nontrivial hyperbolic sets was proven for Riemannian metrics in high dimension, even though for many years prior the same result was known for hamiltonian flows (\cite{Newhouse3}). 

The main difficulty in these restricted settings is that there are no local perturbations in the phase space. For example, let $(M,g)$ be a compact Riemannian manifold.  Then geodesic flow is a flow on a unit cotangent bundle, denoted by $U^*M$.  Consider then $\pi:U^*M\rightarrow M$ where $\pi$ is a natural projection.  If we perturb the metric $g$ on a small open neighborhood $V$ of some point $x$ in $M$, then in $U^*M$ we perturb on $\pi^{-1}V$, which contains many fibers (see Figure \ref{fig_riemannian_difficulty}).

\begin{figure}[h]
\centering
\includegraphics[width=9cm, height=4.5cm]{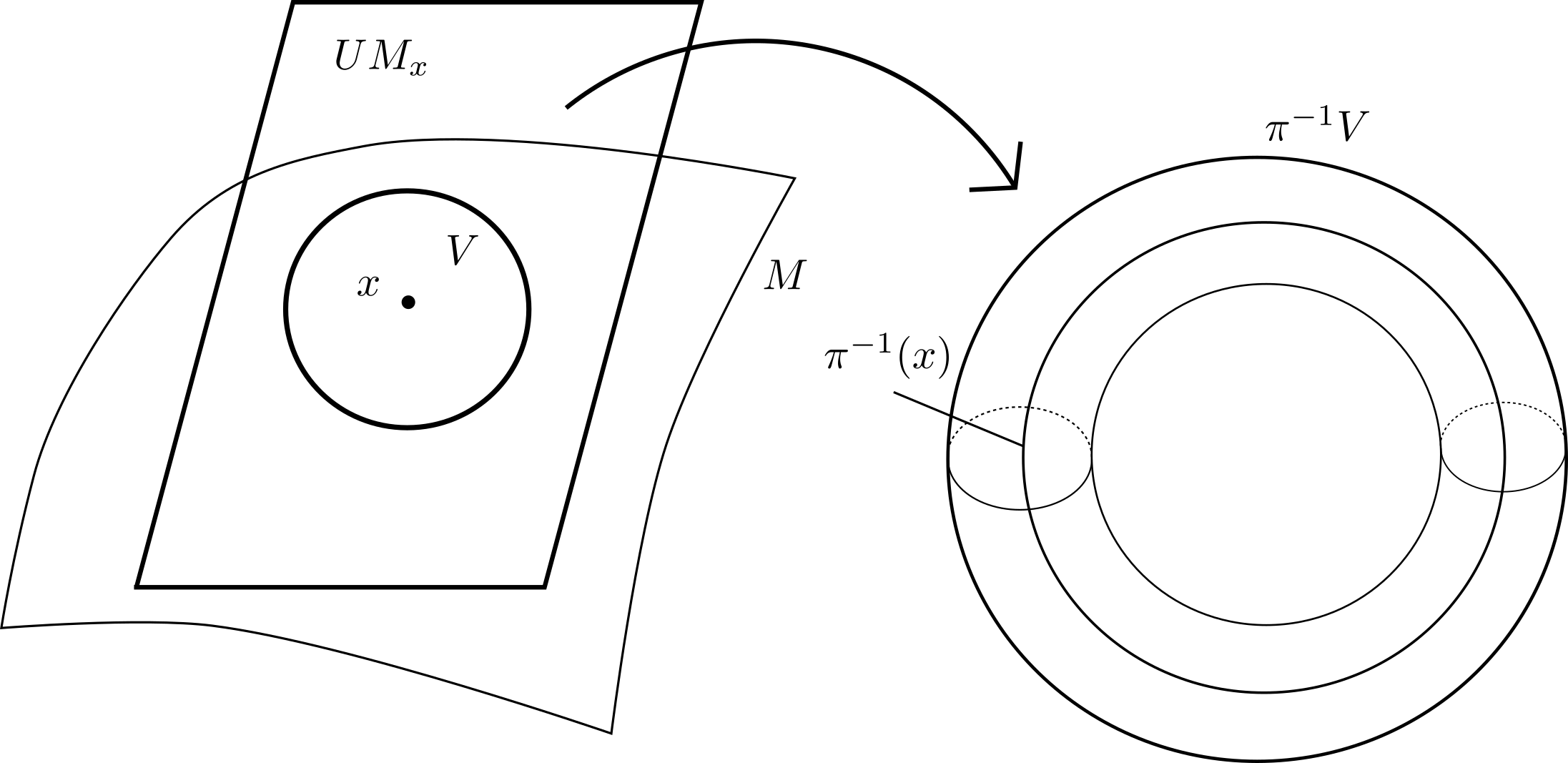}
\caption{A picture showing how local perturbations of g in $M$ lead to non-local perturbations in $U^*M$.}
\label{fig_riemannian_difficulty}
\end{figure}

So, naturally there are significant difficulties in doing the same analysis as in \cite{GST} while restricted to only billiard maps.  The main issues that exist and which we overcome in this paper are
\begin{itemize}
    \item Given a finite set of orbits, ensuring the deformations used change the differential of the billiard map at one given orbit in this set without changing the other orbits in this set.  This difficulty arises from there being no local perturbations in the phase space, analogous to the difficulty found in other dynamical systems mentioned above.
    \item Constructing deformations which do not lead outside of the class of billiard maps and unfold high order HT generically.
    \item Constructing deformations which do not lead outside of the class of billiard maps and accurately vary high order derivatives of the billiard map in controlled ways.
\end{itemize}

More will be said on how we overcome these challenges, but briefly we mention the general strategies.  For the first difficulty, we make sure new orbits satisfy what we call an injectivity condition. Essentially, this condition says that there are, in each orbit, a sufficient number of points which do not hit the same spot on the boundary as the other orbits considered hit.

For the last two points: to ensure we stay in the class of billiard maps as we deform, we study how changes of the curvature function effect the associated billiard map.  Then, in order to accurately change higher derivatives of the billiard map, we carefully go through (in much technical detail) how changing the higher derivatives of the curvature effects the higher derivatives of the billiard map.

Now we will go over some standard definitions used for the concepts considered throughout the paper.  Those who are familiar with these can find the outline of the proof given in Section \ref{overview_of_proof_of_theorem1}.

\section{Billiard Maps and Homoclinic Orbits}

Here we describe briefly billiard orbits in $\mathbb{R}^2$ (see also \cite{mather-forni},\cite{siburg}). Geometrically 
speaking, billiard orbits are curves obtained by considering geodesics on 
the inside of a domain with the rule "angle of incidence equal to angle of 
reflection" to describe how the curve behaves when it hits the boundary.

\begin{figure}[h]
\centering
\includegraphics[width=6cm, height=6cm]{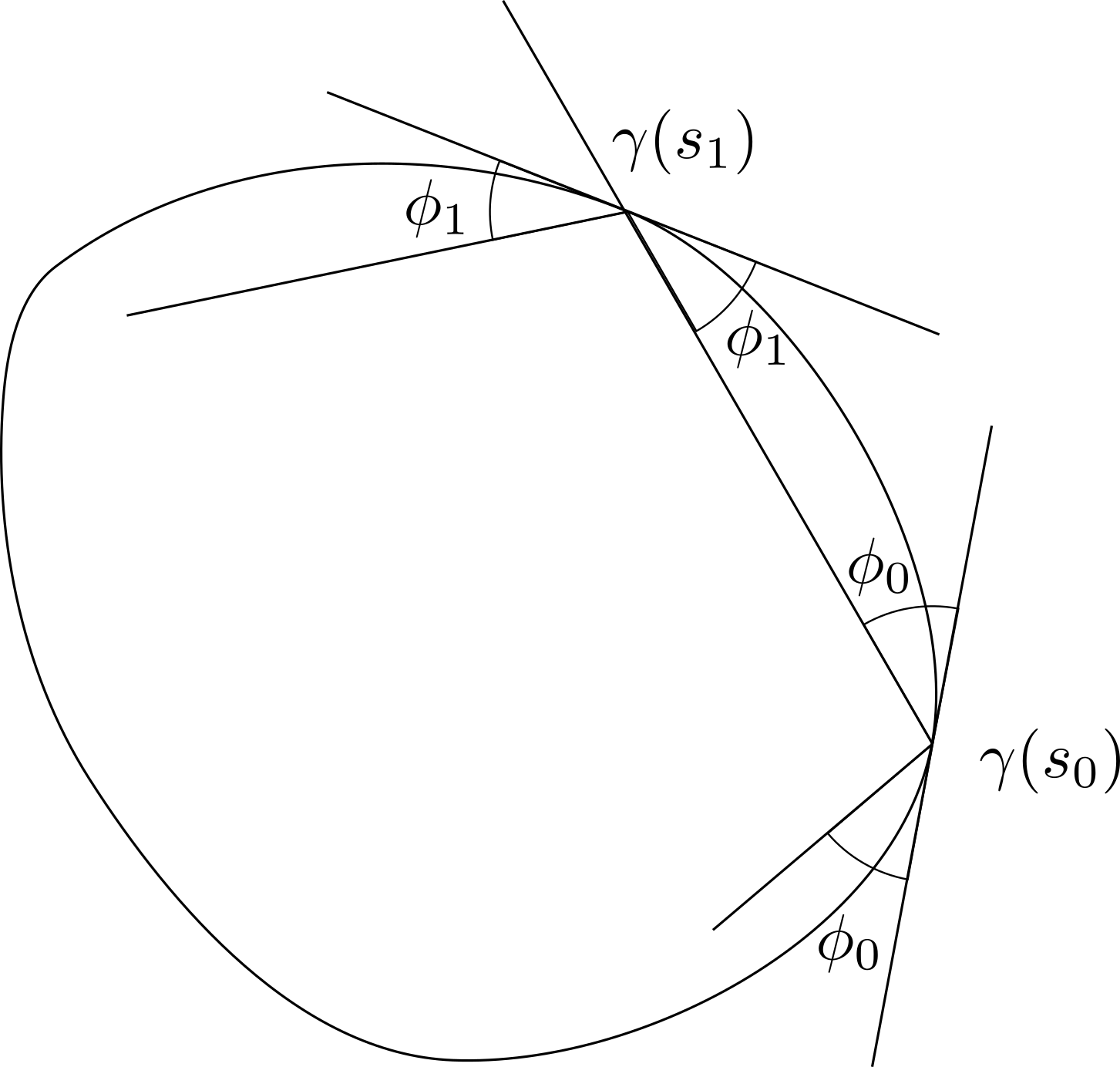}
\caption{A billiard orbit.}
\end{figure}

As before, we consider domains $\Omega$ in $\mathcal D^r$, with boundary parameterized by $\gamma$. For these domains we have the associated billiard map $f$ acts as $f:\mathbb A \rightarrow \mathbb A$, where $\mathbb A = \T \times [0,\pi]$ is a cylinder.  The map $f$ is defined by $f(s_0,\phi_0) = (s_1,\phi_1)$, where 
$\phi$ is the angle the trajectory of the billiard orbit makes with the tangent 
of $\del \Omega$ at $\gamma(s)$.

It is easy to see from elementary geometry that if we define 
$L(s_0,s_1)=\|\gamma(s_1)-\gamma(s_0)\|$, we have

\begin{align}\begin{split}\nonumber
\begin{cases}
    \frac{\del L(s_0,s_1)}{\del s_0} &= -\cos(\phi_0),\\
    \frac{\del L(s_0,s_1)}{\del s_1} &=\ \  \cos(\phi_1).
\end{cases}
\end{split}\end{align}

In this view, we note that billiard maps are a class of maps that act on the cylinder $\mathbb A$.

\begin{figure}[h]
\centering
\includegraphics[width=4cm, height=3.5cm]{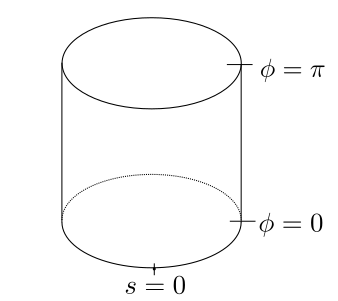}
\caption{We view the billiard map as a special class of maps acting on the cylinder $\mathbb A$.}
\end{figure}

We also note that these maps are actually twist diffeomorphisms, since in the case of strictly convex billiards we have
\begin{align}\begin{split}\nonumber
    \frac{\del s_1}{\del \phi_0} > 0.
\end{split}\end{align}
This implies that to have $df^q(x_0) = Id$ would indeed be, in a sense, rather unusual, since it would effectively be undoing the effects of the twist condition.

Finally we also mention that billiard maps are area preserving maps, with area form $\sin{(\phi)}\,d\phi \wedge ds$.  In our case, this is relevant as this implies that the determinant of the differential of a periodic orbit is always equal to $1$ (where the product is 1 since the map is area preserving).

In our paper, we consider periodic orbits in billiard maps, often of period $q$.  Specifically we begin by considering hyperbolic periodic orbits, so that $df^q(x_0)$ has one eigenvalue $\lambda$ greater than $1$, and the other $\lambda^{-1}$ less than $1$.

Further, we consider a special type of hyperbolic orbit that has what is called a homoclinic tangency (HT).  This means it has a point $p_0$ that is in both the unstable manifold $W^u(x_0)$ and the stable manifold $W^s(x_0)$ of our orbit, and that these manifolds intersect tangentially at $p_m=f^m(p_0)$.  If the tangency is of order $n$, we say the tangency is a homoclinic tangency of order $n$.

\begin{figure}[h]
\centering
\includegraphics[width=8cm, height=8cm]{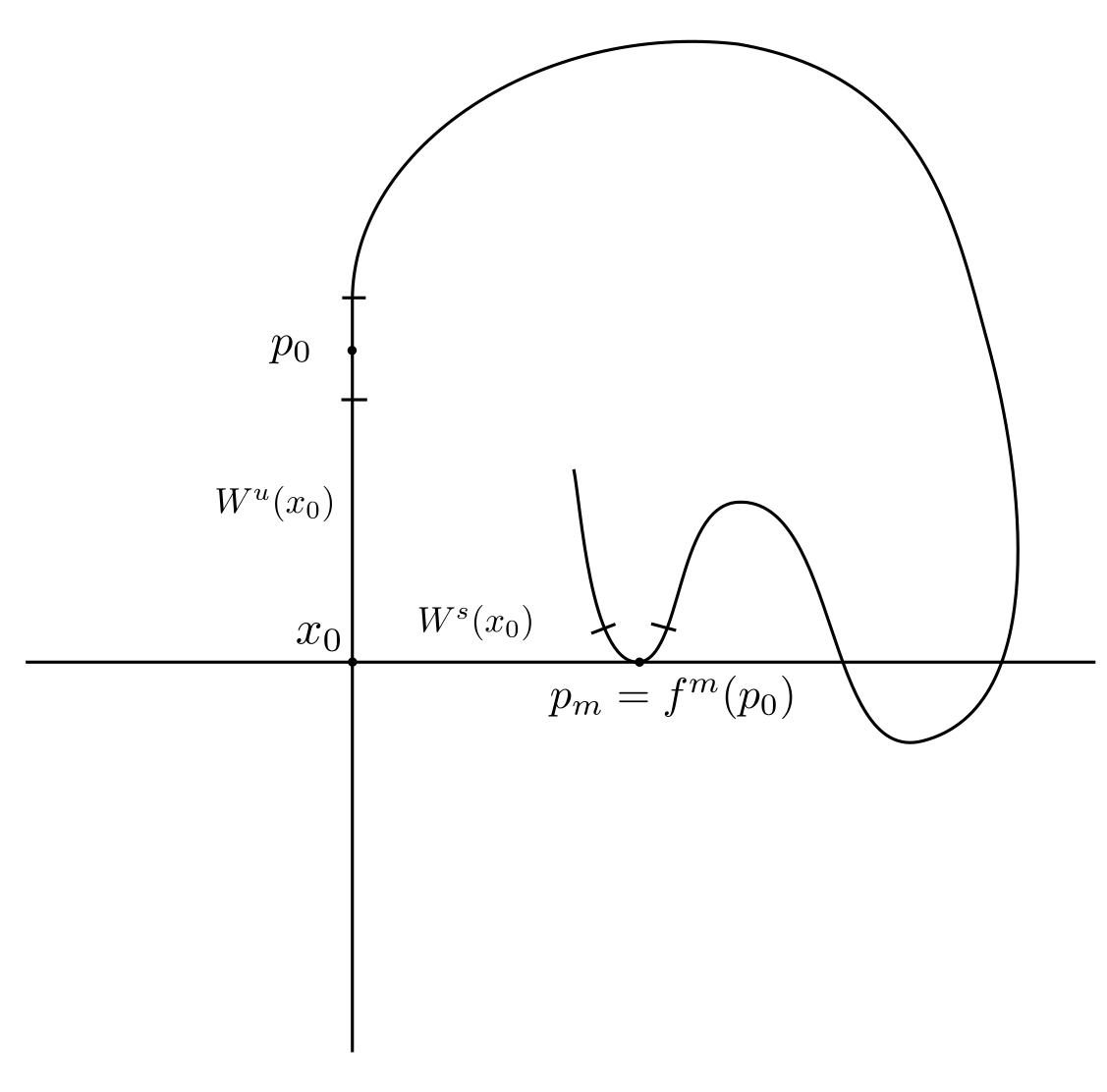}
\caption{A hyperbolic periodic point $x_0$ with a homoclinic tangency at $p_m$.}
\end{figure}

Homoclinic tangencies have been studied extensively, and it has been found that bifurcations of maps exhibiting homoclinic tangencies can produce a variety of surprising behaviours.  In \cite{GST}, the proof of theorem $3$ consists essentially 
of a series of bifurcations of maps with homoclinic (or heteroclinic) tangencies.

A major technical step of our proof is to construct a deformation that \textbf{unfolds (splits) a given HT of order n generically, while remaining in the class of billiard maps}.  
This means that, essentially, we can move each derivative of the curve $f^m(W^u(x_0))$ in a neighborhood of $p_m$ independently as we vary our map 
by a deformation dependent on a parameter $\eps = (\eps_0,...,\eps_n)$.  

Specifically, we let $p(t)\in f^m(W^u_{loc}(x_0))$ with $t\in (\delta,\delta)$ and $p(0)=p_m$, and let $\Phi(t)$ be the shortest distance between $p(t)$ and $W^s_{loc}(x_0)$.  We then embed our map into a smooth family of maps $(f_\eps)_{|\eps| \leq 1}$ dependent on $\eps$ and with $f_0 = f$. Then, for each $\eps$, we similarly define $\Phi_\eps(t)$ to be the shortest distance between $p_\eps(t)\in f^m(W^u_{loc,\eps}(x_0))$ and $W^s_{loc,\eps}(x_0)$. Then, defining for $j \geq 0$
\begin{align}\begin{split}\nonumber
    \Gamma_j(\eps)=\Phi_\eps^{(j)}(0),
\end{split}\end{align}
we say our HT \textbf{of order n unfolds generically} if  (see Figure \ref{fig_unfolding_the_tangency})
\begin{align}\begin{split}\nonumber
    \det\bigg(\frac{\del (\Gamma_0(\eps),\Gamma_1(\eps),...,\Gamma_n(\eps))}{\del\eps}\bigg)\neq 0.
\end{split}\end{align}.

\begin{figure}[h]
\centering
\includegraphics[width=8cm, height=5cm]{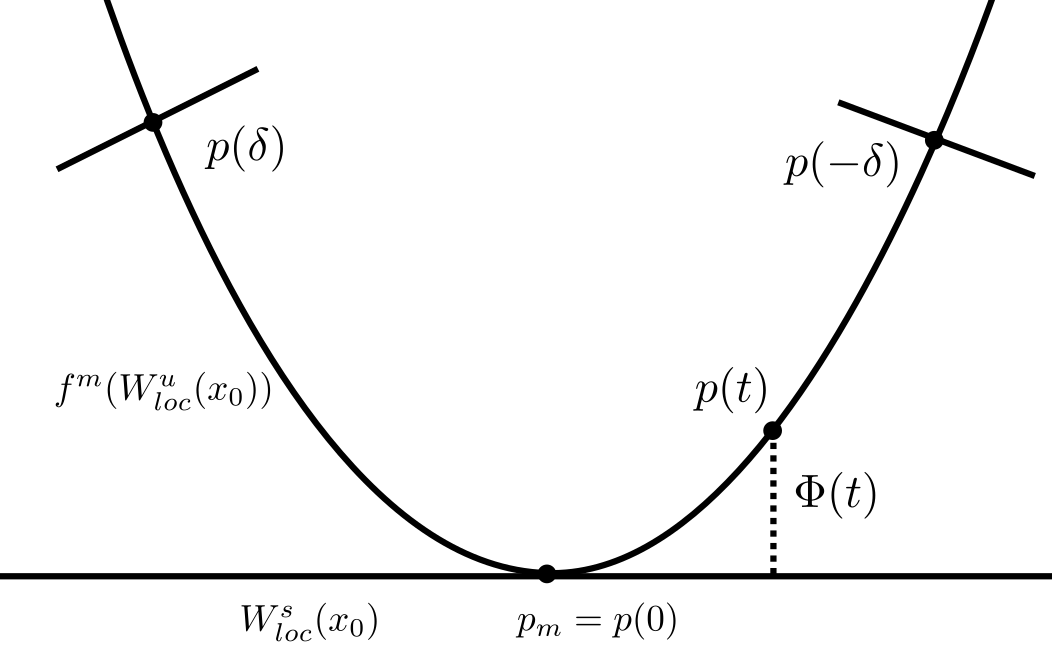}
\caption{Unfolding the tangency.}
\label{fig_unfolding_the_tangency}
\end{figure}

We now briefly describe in more detail some of the already mentioned challenges in the proof and how we overcome them.  This leads to some natural definitions which we use throughout the paper.

\section{Billiard map's relation to curvatures and the injectivity condition}

As mentioned before, the main difficulty in the problem of studying perturbations of billiard maps arises from the differences between general twist diffeomorphisms and billiard maps.  Recall that billiard maps are a distinct subset of are preserving twist differomorphisms.  In particular, we want to consider only billiard maps in $\mathcal D^n$, i.e. maps whose domains have their curvature function satisfy the Conditions \ref{Dr_curvatures}.  So, order to ensure our maps remain in $\mathcal D^n$ as we perform perturbations (and do not accidentally find themselves outside of the subset of maps we want to remain in), all of our perturbations are done by perturbing the curvature function of the domain.  Thus, we examine how perturbations of the curvature of our domain effects the billiard map.  This is a non-trivial technical aspect of our paper.

To tackle this, we detail a general perturbation wherein we perturb the $n^{th}$ derivatives of the curvature at $n+3$ sequential points in a periodic orbit $((s_0,\phi_0),...,(s_{n+3},\phi_{n+3}))$, and calculate how this effects the $n^{th}$ partial derivatives of $s_{n+3}$ and $\phi_{n+3}$, and show that one may vary each of these independently while not effecting lower partial derivatives of these functions - provided the period of these orbits is large enough and they are close enough to the boundary.  This result was already mentioned in Theorem \ref{theorem_perturb_many_independently}.

Additionally, behind all of the perturbations we do is an important condition we must ensure which we call the injectivity condition, which we begin to describe now.  Ensuring this condition is the second main difficulty of extending the results of \cite{GST} to the case of billiard orbits (the first difficulty being precisely determining how changes in the curvature effects changes in the billiard map).  First we give a motivation behind this condition, and then we give a precise definition of this condition.

Throughout our proof we will desire to perturb multiple heteroclinic tangencies (of varying degree) between various orbits independently. Now in the case of arbitrary $C^\infty$ area-preserving maps, we can perturb locally so that we only effect one orbit at a time.  In the case of billiard maps however, we can only perturb the curvature, which acts as perturbing in a strip in the cylinder that the billiard map acts on.  Thus, if we wish to perturb one orbit without effecting a set of other orbits, we must have that the strips where we perturb do not have any points of the other orbits entering them.

More precisely: Consider a group of orbits, $\{\mathcal O_j\}_{1 \leq j \leq N}$, where each orbit is defined by
\begin{align}\begin{split}\nonumber
    \mathcal O_j = {f^k(x_j):k \in \Z},
\end{split}\end{align}
where $x_j \in \A$ for each $1 \leq j \leq N$.  Then, if 
$\tilde{p}=(\tilde{s},\tilde{\phi})$ is a point in one of these orbit, we define the orbit 
to be \textbf{injective at} $\tilde{p}$ if for some $\delta > 0$ and all $x$ in $\{(s,\phi) \in \mathcal O_j:1\leq j\leq N\}$, 
$|s-\tilde{s}|\leq \delta$ implies $x=\tilde{p}$.  This is equivalent to the condition that some open neighborhood of the
the vertical line going through the point $x$ does not contain any other point 
in any orbit in our collection.  If there are at least three such points in an orbit, 
we say the orbit \textbf{satisfies the injectivity condition with respect to} 
$\{\mathcal O_j\}_{1 \leq j \leq N}$ (see Figure \ref{fig_injctivity_conditionnn_diagram}).  

\begin{figure}[h]
\centering
\includegraphics[width=8cm, height=8cm]{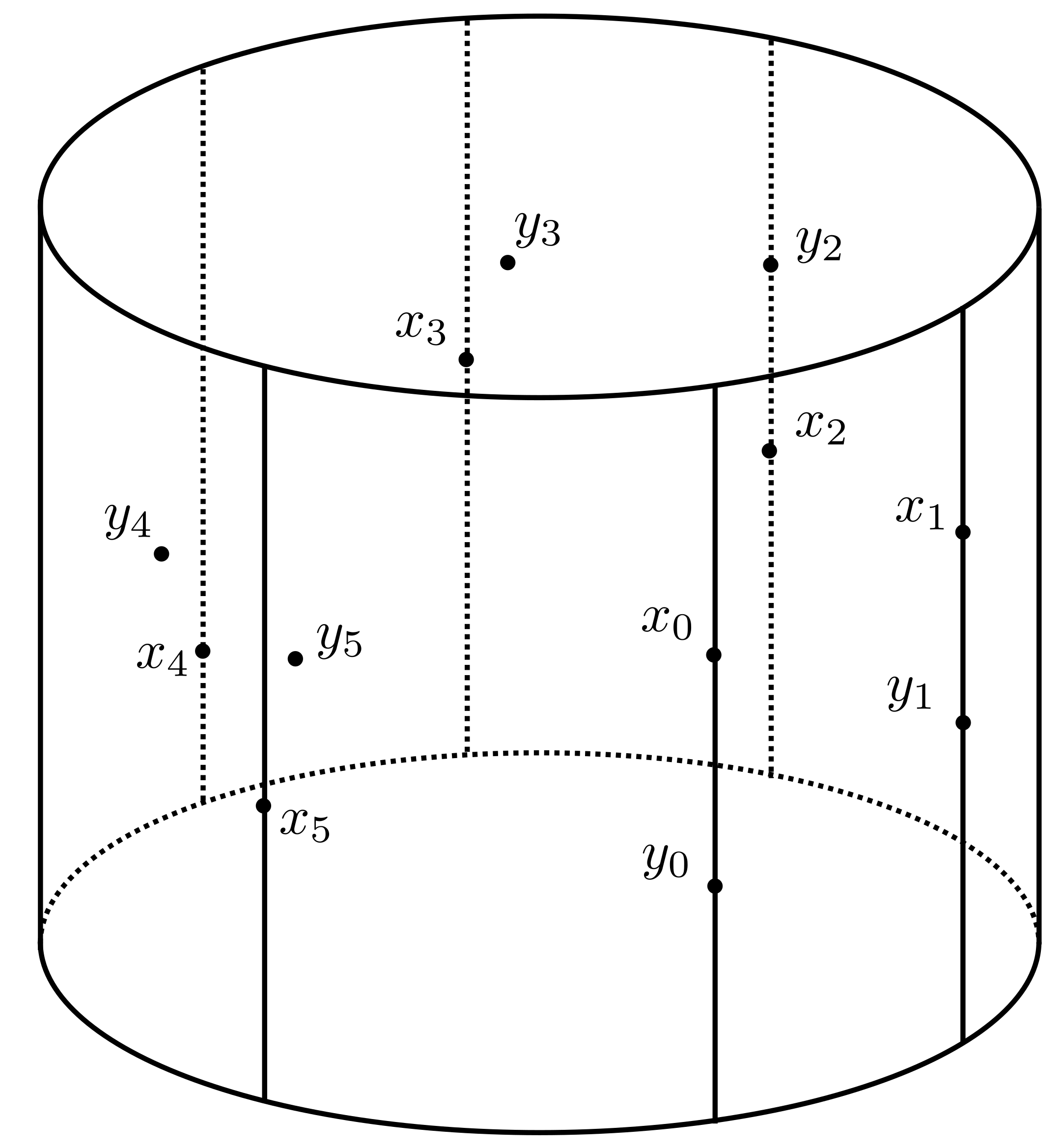}
\caption{Two orbits $(x_0,...,x_5)$ and $(y_0,...,y_5)$ on our cylinder.  
The injectivity condition is satisfied at $x_3,x_4,$ and $x_5$ since there are open neighborhoods around the vertical 
line through these points that does not contain any point in the orbit of $(y_0,...,y_5)$.}
\label{fig_injctivity_conditionnn_diagram}
\end{figure}

Thus, we must ensure that at each step of our proof whenever there is a new 
orbit considered, the injectivity condition remains satisfied with the new orbit and all of the previous orbits.
\\
\\
\section{Overview of Proof of Theorem 1}\label{overview_of_proof_of_theorem1}

Here we provide a road map of our proof.  For the rest of the paper, we take $\Omega$ to be a domain in $\mathcal D^\infty$.  We also let $f$ be the billiard map associated to $\Omega$.  For the proof at the end of this section and for the outline of the proof, we consider a fixed $n\in \N$.  With these definitions, the general steps we take in proving Theorem \ref{MainTheorem} are as follows:

\begin{framed}
\begin{center}
    \textbf{Outline of proof:}
\end{center}
\begin{enumerate}
    \item \textit{(Constructing an orbit with Homoclinic Tangency)} We first perturb to obtain a boundary with a hyperbolic periodic orbit that has an associated quadratic homoclinic tangency.  The ideas here use KAM theory and the fact (due to Lazutkin) that the billiard map is nearly integrable near the boundary.  \textbf{This is done in Appendix \ref{section_Constructing_an_Orbit_with_Homoclinic_Tangency}}.
    \item \textit{(Using a Tower Construction to obtain a Rotation to order n)} Then, following the steps of \cite{GST}, we construct a tower of heteroclinic tangencies.  We unfold this tower to obtain a HT of order $2n+4$.  Then, we unfold this HT of order $2n+4$ generically to obtain an elliptic periodic orbit which, to order $n$, is a rotation.  \textbf{This is done in section \ref{sec_constructing_the_tower}}.
    \begin{enumerate}
        \item \textit{(Unfolding an HT of order $n$ generically)} Throughout step 2 we obtain HT of order $k$ (where $2 \leq k \leq 2n+4$) which we want to unfold generically.  \textbf{We show how we do this in section \ref{sec_perturb_nth_order_ht_generically}}.
        \item \textit{(Verify the Injectivity condition)} Whenever a new orbit is considered in step 2 we perturb our domain so that the injectivity condition remains satisfied between this orbit and all the other orbits already under consideration.  \textbf{We show this perturbation in section \ref{sec_verify_injectivity_condition}}.
    \end{enumerate}
    \item \textit{(Rotate the differential to order n)} We then construct a perturbation at $n+3$ points of this elliptic periodic orbit that changes the differential up to order $n$ as a slight rotation, so that we obtain a rotation matrix with a guaranteed rational rotation angle.  \textbf{This is done in section \ref{sec_rotate_the_differential}}.  This then completes the proof.
\end{enumerate}
\end{framed}

We now go through the specific lemmas used to prove Theorem 1, following the outline given above. As stated, in Appendix \ref{section_Constructing_an_Orbit_with_Homoclinic_Tangency}, we describe the steps to obtain a domain whose associated billiard map has a hyperbolic periodic orbit with a quadratic homoclinic tangency.  The lemma used from this section is as follows:

\begin{lemma}\label{lemma_creating_quadratic_ht}
    Consider the set of $C^\infty$ domains whose associated billiard map has at least one hyperbolic periodic orbit such that:
    \begin{itemize}
        \item the orbit is of some period $q \gg 1$, and has rotation number $1/q$ (such a periodic orbit forms a $q$-gon inscribed inside $\Omega$),
        \item the invariant manifolds of a point in the orbit have a quadratic homoclinic tangency,
        \item the periodic orbit remains close to the boundary and is such that the local stable and local unstable manifolds at each point in the orbit have non-zero angles with the vertical axis.
    \end{itemize} These domains are dense among $C^\infty$ domains.
\end{lemma}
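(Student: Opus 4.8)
The plan is to build the desired $q$-gon orbit with a quadratic homoclinic tangency by exploiting the near-integrability of the billiard map near the boundary, as provided by Lazutkin's construction, and then to use a KAM-type argument together with a finite-dimensional perturbation of the curvature to destroy the invariant curve, create a hyperbolic periodic orbit with the right rotation number, and finally arrange a quadratic homoclinic tangency. I would proceed in four steps. First, recall Lazutkin coordinates: in a neighborhood of $\partial\Omega$ the billiard map $f$ is $C^\infty$-conjugate to a small perturbation of the integrable twist map $(x,y)\mapsto (x+y,y)$, with the caustics near the boundary corresponding to KAM curves. Fix a large denominator $q$; by choosing a rotation number $1/q$ sufficiently close to $0$ (i.e.\ a caustic sufficiently close to $\partial\Omega$), standard Aubry–Mather / Birkhoff theory guarantees a $(1,q)$-periodic orbit forming a convex inscribed $q$-gon, and for generic $\Omega$ this Birkhoff orbit is hyperbolic (the resonance is "opened up").

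Second, I would make the hyperbolicity and the non-degeneracy of the stable/unstable directions explicit and robust. Since $f$ is an area-preserving twist map, a $(1,q)$-periodic minimal orbit is either hyperbolic or parabolic; a $C^\infty$-small perturbation of the curvature (supported away from the footpoints of the orbit, using Theorem~\ref{theorem_perturb_many_independently} or an elementary predecessor of it) makes it hyperbolic with a prescribed small residue, while keeping the orbit itself fixed as a $q$-gon close to the boundary. The twist condition $\partial s_1/\partial\phi_0>0$ together with the fact that the orbit stays in a Lazutkin neighborhood forces the local invariant manifolds to be graphs over the $s$-axis with slopes bounded away from $0$ and $\infty$; in particular the local stable and unstable manifolds make non-zero angles with the vertical, which is the third bullet of the lemma.

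Third, create the quadratic homoclinic tangency. Because the hyperbolic $q$-gon orbit lies in a region where $f$ is $C^\infty$-close to an integrable twist map, its stable and unstable manifolds are $C^\infty$-close to the (coincident) separatrices of the resonant island of the integrable model, so $W^u(x_0)$ and $W^s(x_0)$ intersect, and generically these intersections are transverse. To convert one intersection point into a quadratic (order-$2$) tangency, I would introduce a one-parameter family of curvature perturbations, again leaving the periodic orbit fixed, that moves $W^u$ relative to $W^s$ along the relevant homoclinic strand; by an intermediate-value argument a tangency appears for some parameter value, and by a transversality/jet-genericity argument (taking a two-parameter family and showing the map from parameters to the $2$-jet of the splitting function $\Phi$ is a submersion — this is the billiard analogue of the unfolding discussion in Section~\ref{sec_perturb_nth_order_ht_generically}) the tangency can be taken to be exactly quadratic. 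Finally, a density/closing argument: since all the perturbations above are $C^\infty$-small and each property (existence of a hyperbolic $(1,q)$-periodic orbit of rotation number $1/q$ near the boundary, non-vertical invariant directions, quadratic homoclinic tangency) is open, the set of $C^\infty$ domains enjoying all of them simultaneously is dense in $\mathcal D^\infty$.

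The main obstacle I expect is the third step — producing a genuine quadratic (and not higher-order or merely transverse) homoclinic tangency while staying inside the class of billiard maps. In the general area-preserving setting one simply perturbs locally near the homoclinic point, but here every perturbation must be realized through the curvature function, and its effect on $W^u$ near $p_m$ is mediated by the whole return trajectory; controlling the $0$-, $1$-, and $2$-jets of the splitting function independently therefore requires exactly the kind of multi-point curvature perturbation isolated in Theorem~\ref{theorem_perturb_many_independently}, together with the injectivity condition ensuring these perturbations do not disturb the periodic orbit itself.
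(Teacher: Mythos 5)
Your proposal follows the same overall architecture as the paper's proof in Appendix \ref{section_Constructing_an_Orbit_with_Homoclinic_Tangency} (Lazutkin near-integrability, KAM curves accumulating on the boundary, a minimal $(1,q)$ Birkhoff orbit trapped near the boundary, then a curvature perturbation that moves $W^u$ relative to $W^s$ to produce the tangency), but there is a genuine gap at the start of your third step. You assert that $W^u(x_0)$ and $W^s(x_0)$ intersect because they are ``$C^\infty$-close to the (coincident) separatrices of the resonant island of the integrable model.'' The integrable model here is the Lazutkin twist map $(x,y)\mapsto(x+y,y)$ (up to the $O(y^N)$ error), which has no hyperbolic structure and no separatrices at all: the resonant circle is foliated by parabolic periodic points, and closeness to such a degenerate model yields no information about whether the invariant manifolds of the genuinely hyperbolic orbit reconnect. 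This is precisely the point the paper does not hand-wave: it invokes Mather's variational theorem to produce heteroclinic orbits between consecutive points $p_1,p_2$ of the minimal orbit confined between two KAM curves, obtaining a heteroclinic cycle, and only then upgrades the intersections to transverse ones with Perturbation \ref{perturbation_which_lets_you_move_unstable_manifold}. Some substitute argument (variational, or confinement between invariant curves plus area preservation) is required; ``close to integrable, hence the manifolds intersect'' does not stand on its own.

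A second, related gap is quantitative: your intermediate-value step creates a tangency by pushing $W^u$ until two transverse crossings merge, but for the resulting domain to be $C^\infty$-close to the original one you must know that the required push is small. The paper proves exactly this, via the Birkhoff-normal-form confinement argument showing a strand of $W^u(p_1)$ passes within $O(q^{(-K+1)/2})$ of the stable manifold, which in turn rests on the $C^{N-2}$-closeness to an integrable map established in Appendix \ref{appendix_nearly_integrable_near_boundary}; without such an estimate the IVT argument yields a tangency only after a perturbation of a priori uncontrolled size. Finally, a minor point: your closing density argument appeals to openness of all three properties, but possessing a quadratic homoclinic tangency is a codimension-one, non-open condition; openness is fortunately not needed, since density follows from performing the (small) perturbations sequentially, as both you and the paper in fact do.
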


The main idea in proving this lemma is that near the boundary, the dynamics of the billiard map become almost integrable.

In Section \ref{sec_perturb_nth_order_ht_generically} we show how to perturb high order HT generically.  This is step 2a in the outline.  The lemma proven in this section is

\begin{lemma}\label{lemma_unfold_generically}
    Let $n \in \N$.  If $p$ is a point of homoclinic tangency for the billiard map $f$ and the injectivity condition is satisfied for the homoclinic orbit and its associated periodic orbit, then there exists a deformation of the boundary such that this $n^{th}$ order HT unfolds generically at $p$.
\end{lemma}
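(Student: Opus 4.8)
The plan is to reduce the problem to an application of Theorem~\ref{theorem_perturb_many_independently}. Recall that unfolding the $n$-th order HT generically means arranging that the map $\eps \mapsto (\Gamma_0(\eps),\dots,\Gamma_n(\eps))$, where $\Gamma_j(\eps) = \Phi_\eps^{(j)}(0)$ and $\Phi_\eps(t)$ is the distance from $p_\eps(t) \in f^m(W^u_{loc,\eps}(x_0))$ to $W^s_{loc,\eps}(x_0)$, has surjective differential in $\eps$ at $\eps = 0$. So what I need is an $(n+1)$-parameter family of deformations of the boundary, each keeping us inside $\mathcal D^\infty$, whose effect on the jet of the return map near $p_m$ is rich enough to move $(\Gamma_0,\dots,\Gamma_n)$ in $n+1$ linearly independent directions. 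First I would pass from the periodic point $x_0$ and its homoclinic orbit to a concrete finite string of boundary points: the periodic orbit contributes its footpoints, and the homoclinic orbit contributes the points $p_0, \dots, p_m$ along which $W^u_{loc}(x_0)$ is transported to a neighborhood of $p_m$. By the injectivity hypothesis, within the homoclinic orbit there are at least three footpoints whose $s$-coordinates are isolated from all the other footpoints of the periodic and homoclinic orbits; more generally I would first thicken this to $n+3$ such injective points by noting that, since the orbit is infinite and close to the boundary, one may take a long enough stretch of the homoclinic orbit (before it settles into the $W^s$ side) so that $n+3$ of its footpoints are mutually separated in $s$ and separated from the footpoints of the periodic orbit — if not, one first applies the injectivity perturbation of Section~\ref{sec_verify_injectivity_condition} to make it so.

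Next, I would localize. Choose the $n+3$ injective footpoints $s_0, \dots, s_{n+2}$ along the homoclinic orbit, all distinct and close to the boundary, and apply Theorem~\ref{theorem_perturb_many_independently} to the composition $T = f^{n+3}$ based at the point just before $s_0$: this gives a family of curvature perturbations, supported in disjoint $s$-intervals around $s_0, \dots, s_{n+2}$ (hence touching no other orbit in the collection, by injectivity, so the periodic orbit and the stable branch of the tangency are unaffected), that vary every partial derivative of $T$ up to order $n$ at the base point independently while fixing the orbit $\mathcal O$. In particular the $n+1$ quantities comprising the $n$-jet of the transition map along the homoclinic excursion — equivalently, the $n$-jet at $p_m$ of the curve $f^m(W^u_{loc}(x_0))$ relative to its tangency point with $W^s_{loc}(x_0)$ — can be prescribed freely. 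Since $\Gamma_j(\eps)$ is exactly the $j$-th Taylor coefficient of the graph of that curve over $W^s_{loc}$ measured in the fixed local coordinates, a suitable $(n+1)$-dimensional subfamily of these curvature perturbations realizes any prescribed first-order variation of $(\Gamma_0, \dots, \Gamma_n)$; choosing it so that the variation is the identity gives $\det\big(\partial(\Gamma_0,\dots,\Gamma_n)/\partial\eps\big) \neq 0$ at $\eps = 0$, which is the desired generic unfolding. Throughout, because all perturbations are curvature perturbations supported on short arcs and of arbitrarily small size, the deformed domains stay in $\mathcal D^\infty$ (one corrects the two integral constraints in \eqref{Dr_curvatures} by a further perturbation supported away from all the $s_i$, which by injectivity changes nothing relevant), and the periodic orbit, its hyperbolicity, and the stable manifold branch of the tangency are preserved.

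The main obstacle I expect is the bookkeeping that connects the abstract ``vary all partials of $T$ independently'' statement of Theorem~\ref{theorem_perturb_many_independently} to the specific geometric quantities $\Gamma_j(\eps)$: one must check that the map from the $n$-jet of $T$ along the homoclinic string to the $n$-jet of $f^m(W^u_{loc}(x_0))$ at $p_m$ — composed with the projection onto the distance function $\Phi_\eps$ — is itself a submersion, i.e.\ that prescribing the jet of the transition map really does prescribe the jet of the invariant-manifold image and not merely some proper subspace of it. This is essentially a $\lambda$-lemma / inclination-lemma computation: the local unstable manifold has a fixed $n$-jet at $x_0$ determined by $df^q(x_0)$, and $f^m$ carries it to $p_m$; since $f^m = T \circ (\text{fixed tail})$ up to reindexing and $T$'s $n$-jet is now a free parameter while the periodic data is untouched, the derivative of ``$n$-jet of $f^m(W^u_{loc})$ at $p_m$'' with respect to the parameter is an isomorphism onto the relevant jet space. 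Verifying this — together with the (routine but lengthy) confirmation that none of the auxiliary corrections disturb the injectivity configuration — is where the real work lies; everything else is assembly.
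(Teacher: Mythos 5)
There is a genuine gap, and it is concentrated in the $\Gamma_0$ direction. Every deformation you use is an application of Theorem \ref{theorem_perturb_many_independently}, i.e.\ a curvature perturbation supported in strips around footpoints of the homoclinic orbit that \emph{fixes the orbit $\mathcal O$} and (by injectivity) leaves the periodic orbit, $W^u_{loc}(x_0)$ and $W^s_{loc}(x_0)$ untouched. But then the homoclinic point itself survives for every parameter value: the orbit point $p_0=f^{-m}(p_m)\in W^u_{loc}(x_0)$ is unchanged, $f^m_\eps(p_0)=f^m(p_0)=p_m$ because the orbit is fixed, and $p_m$ still lies on the unchanged $W^s_{loc}(x_0)$. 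Hence $\Phi_\eps$ has a zero at a parameter $t_\eps=O(\eps)$ for \emph{all} $\eps$ in your family. This is incompatible with $\det\big(\partial(\Gamma_0,\dots,\Gamma_n)/\partial\eps\big)\neq 0$: a submersion at $\eps=0$ would produce parameters $\eps(\delta)=O(\delta)$ with $(\Gamma_0,\dots,\Gamma_n)=(\delta,0,\dots,0)$, so that $\Phi_{\eps(\delta)}(t)=\delta+O(t^{n+1})$ is strictly positive on the interval containing $t_{\eps(\delta)}=O(\delta)$, contradicting $\Phi_{\eps(\delta)}(t_{\eps(\delta)})=0$. In other words, orbit-preserving jet perturbations can tilt and bend the curve $f^m(W^u_{loc})$ at $p_m$ (this is how the paper produces the $\eps t^k$ terms for $k\geq 1$, also with orbit-fixing two-point perturbations along the homoclinic orbit), but they can never translate it off the stable manifold; the $k=0$ direction requires a mechanism that moves $W^u$ relative to $W^s$, which the paper supplies separately via Perturbation \ref{perturbation_which_lets_you_move_unstable_manifold} (changing the angle of $W^u_{loc}(O)$ by perturbing near points of the \emph{periodic} orbit and transporting by the lambda lemma to get $\Phi_\eps(t)=\Phi(t)+\eps+O(\eps^2)$). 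Your proposal has no such ingredient, so the claimed generic unfolding fails already at the level of $\Gamma_0$.

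A secondary overstatement: Theorem \ref{theorem_perturb_many_independently} does not let you ``prescribe the $n$-jet of $T$ freely.'' A single application varies only the order-$n$ partials, and only $n+2$ of them independently (the remaining ones are slaved by area preservation), while lower-order partials are untouched; to control an entire jet one must iterate across orders with a triangular cancellation scheme, exactly as the paper does — not for the jet of $T$, but directly for $\Phi$ (Lemma \ref{sublemma_unfolding_first_pert} producing $\eps t^k$ plus higher-order tails, followed by the inductive elimination in Lemma \ref{sublemma_unfolding_final_pert}). If you repair the $\Gamma_0$ issue by adding the angle-moving perturbation at the periodic orbit, your jet-transport strategy for $k\geq 1$ could likely be made to work, but you would still need that bookkeeping step (and the nondegeneracy $s_i'(0)\neq 0$ of the homoclinic curve through the perturbed strips, cf.\ Lemma \ref{sublemma_unfolding_derivatives_nonzero}) rather than a one-shot appeal to Theorem \ref{theorem_perturb_many_independently}.
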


In Section \ref{sec_verify_injectivity_condition} we show step 2b of our outline.  The important lemma proven in this section is the following:

\begin{lemma}\label{lemma_on_injectivity}
    Let $n\in\N$.  Given a hyperbolic periodic orbit $\{(s_j,\phi_j):0\leq j \leq q-1\}$ with a homoclinic orbit containing a point of homoclinic tangency of order $n$ at $p_m$, there exists an arbitrarily small perturbation of the boundary on small neighborhoods of the points $\gamma(s_{j_1}),\gamma(s_{j_2}),\gamma(s_{j_3})$, where $j_1 < j_2 < j_3$, which keeps the hyperbolic periodic orbit fixed and is such that under the perturbed map, the hyperbolic periodic orbit has an associated homoclinic orbit which satisfies the injectivity condition with respect to itself and the periodic orbit, and also has a HT of order $n$.  Moreover, the point of HT of order $n$ under the perturbed map is close to the point which is an $n^{th}$ order HT under the unperturbed map by choosing the size of the perturbation to be small.
\end{lemma}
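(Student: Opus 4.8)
The plan is to work entirely by perturbing the curvature function in small neighborhoods of three boundary points $\gamma(s_{j_1}), \gamma(s_{j_2}), \gamma(s_{j_3})$ lying on the given periodic orbit, using the freedom these perturbations give us to "move" the homoclinic orbit transversally to the vertical direction on the cylinder, while arranging that the perturbations vanish to high enough order at $s_{j_1},s_{j_2},s_{j_3}$ that the periodic orbit itself is unchanged. First I would choose the three indices $j_1<j_2<j_3$ so that the three corresponding points on the periodic orbit are themselves injective with respect to the periodic orbit and the homoclinic orbit — this is possible because a long periodic orbit close to the boundary has $q\gg 1$ distinct $s$-coordinates while the homoclinic orbit meets only finitely many vertical strips, so all but finitely many indices work; moreover one can insist these three points are not among the finitely many "problem" points where the homoclinic orbit shares an $s$-coordinate with the periodic orbit. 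The point of picking three (rather than one) is that the periodic orbit passes through each $\gamma(s_{j_k})$, so a perturbation supported near $\gamma(s_{j_k})$ generically breaks the periodic orbit; to keep the periodic orbit exactly fixed we need the perturbation of $\kappa$ (and its relevant derivatives) to vanish at $s_{j_k}$, which still leaves enough room in a neighborhood to push nearby pieces of the homoclinic orbit sideways.

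Next I would set up the deformation using the machinery already available in the excerpt: by Theorem \ref{theorem_perturb_many_independently} (precise form, Perturbation \ref{perturbation_which _lets_you_vary_all_derivatives_independently}), perturbing the curvature and its derivatives at a cluster of consecutive orbit points lets one independently control the jet of the iterated billiard map along that stretch of orbit without disturbing the orbit points themselves. I would apply this to the stretches of the periodic orbit adjacent to $s_{j_1},s_{j_2},s_{j_3}$: because the homoclinic orbit is asymptotic (forward and backward) to the periodic orbit, pieces of $W^u_{loc}$ and $W^s_{loc}$ lie in the vertical strips over neighborhoods of these boundary points, so changing the local map there drags those pieces of the invariant manifolds — and hence, by the $\lambda$-lemma / inclination lemma, drags the global invariant manifolds near $p_m$. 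The three independent "knobs" (one from each $j_k$) give a map from a $3$-dimensional parameter space into the space of positions of the homoclinic orbit relative to the vertical foliation. Since only a one-parameter (or finite-codimension) condition is needed — namely that finitely many homoclinic points avoid finitely many vertical lines through other orbit points — a dimension count shows a generic choice of the three parameters achieves injectivity; and because the order-$n$ tangency condition at $p_m$ is a closed condition of positive codimension that we are \emph{not} trying to create but rather to \emph{preserve}, I would instead argue as follows: first make the perturbation so small that by continuity the manifolds still intersect with an $n$th order tangency somewhere near the old $p_m$ (tangency order is lower-semicontinuous, but an order-exactly-$n$ tangency is stable under small perturbations when one has the full jet freedom of Theorem \ref{theorem_perturb_many_independently} to "re-correct" the contact order), and only then use the residual freedom to push the other homoclinic points off the bad vertical lines.

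More carefully, the cleanest logical route is a two-step perturbation. Step one: using the jet-control of Theorem \ref{theorem_perturb_many_independently} at the $j_k$-clusters, perturb so that the $n$th order tangency at a point $p_m'$ near $p_m$ is restored (this is the standard "the $n$th order tangency persists up to codimension-$n$, and we have $n+1$ parameters to kill the codimension-$n$ obstruction plus keep it tangent" argument, exactly as in the unfolding Lemma \ref{lemma_unfold_generically}); simultaneously track where the finitely many other homoclinic points $p_i'$ land. Step two: within the remaining parameter directions (we have more parameters than constraints, since three clusters each give a full order-$(n-1)$ jet of curvature to play with, far exceeding the finitely many codimension-one avoidance conditions), choose parameters generically so that each $p_i'$ has $s$-coordinate different from every $s_j$ of the periodic orbit and every $s$-coordinate of the other homoclinic points; genericity here is just the statement that finitely many proper analytic/affine conditions can be simultaneously avoided in a space of larger dimension. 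The closeness of $p_m'$ to $p_m$ is automatic from making the total perturbation small, using continuous dependence of hyperbolic periodic points, their local invariant manifolds, and transverse/tangential intersection points on the curvature in the $C^{r+1}$ topology.

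The main obstacle I expect is the simultaneity in step one: we must \emph{keep} the tangency order exactly $n$ at a nearby point while moving the homoclinic orbit, and this is not a generic condition — it is precisely the positive-codimension condition that Lemma \ref{lemma_unfold_generically} is built to handle, so the real work is checking that the three curvature clusters at $s_{j_1},s_{j_2},s_{j_3}$ supply enough \emph{independent} control both to re-impose the order-$n$ contact \emph{and} to have leftover directions for the injectivity avoidance. Verifying this independence reduces to a rank computation for the differential of the map (parameters) $\mapsto$ (contact jet at $p_m$, positions of the $p_i'$), which in turn rests on the non-degeneracy already extracted in Theorem \ref{theorem_perturb_many_independently} together with the non-vanishing-angle hypothesis on the local manifolds (so that "moving vertically in the strip" genuinely moves the manifold transversally to $W^s$). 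Once that rank claim is in hand, the rest is a routine transversality/dimension-count argument.
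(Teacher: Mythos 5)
There is a genuine gap, and it sits exactly at the point you yourself flag and then defer: the rank/non-degeneracy claim is not an auxiliary verification, it \emph{is} the content of the lemma, and nothing in your proposal establishes it. Your dimension count ("three clusters of curvature jets give more parameters than the finitely many codimension-one avoidance conditions") only works if the induced map from parameters to the \emph{relative horizontal positions} of the homoclinic points (relative to each other and to the periodic orbit) is nonconstant in the relevant directions \emph{after} you have spent parameters re-imposing the order-$n$ contact. All homoclinic points move in a completely correlated way under any single perturbation — their displacements are slaved to the motion of $W^u_{loc}(O)$ and $W^s_{loc}(O)$ and propagated by the linearized dynamics — so it is entirely possible a priori that a pair of homoclinic points stays on a common vertical line for every admissible parameter value; ruling this out is precisely what must be proved, and a generic-avoidance argument cannot see it. Your step one is also shakier than you suggest: "restore an order-exactly-$n$ tangency near $p_m$ after a small perturbation" is a codimension-$n$ matching problem, and the claim that it is "stable" given jet freedom is not an argument. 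The paper avoids ever having to restore the tangency: it moves $W^u_{loc}(O)$ by a parameter $\tau$ (Perturbation \ref{perturbation_which_lets_you_move_unstable_manifold}) and $W^s_{loc}(O)$ by a second parameter $\eps$, then slaves $\eps=\eps(\tau)$ so that a tangency of the same order persists by construction at a displaced point $p_\tau$, with only lower-order errors handled by the Section \ref{sec_perturb_nth_order_ht_generically} machinery.

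The non-degeneracy is then proved in the paper not by transversality in a high-dimensional parameter space but by a quantitative one-parameter argument: the forward images $f^{qk}(p_\tau)$ move in the normal direction to $W^s_{loc}(O)$ at rate $\lambda^{-k}$ in $\tau$, while the backward images move at rate $\lambda^{-n_k}$ with $n_k\to-\infty$; if injectivity failed for \emph{all} small $\tau$, matched pairs of homoclinic points would have to move at equal horizontal speeds, and since (in Birkhoff normal form, using the nonvertical-angle hypothesis) the normal directions at forward and backward iterates converge to two fixed directions, equality of speeds for all large $k$ forces $\lambda^{-k}\cos\theta_s=\lambda^{-n_k}\cos\theta_u$ up to $O(\delta)$, which is impossible. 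It also disposes of the accumulation issue (only finitely many homoclinic points lie outside the perturbation strip $U$, so the "infinitely many bad lines" case cannot occur) — a point your sketch glosses over when you say the homoclinic orbit "meets only finitely many vertical strips." If you want to salvage your multi-parameter route, you would have to carry out the rank computation you postpone, which in practice amounts to redoing the paper's rate estimates; as written, the proposal assumes the crux rather than proving it.
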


The main idea here is that by slightly varying the angles of the unstable and stable manifolds of our periodic point, we may slightly move the points of the in the homoclinic cycle, and it is shown that by considering many points in this cycle it is impossible that it fails to achieve the injectivity condition as we smoothly vary the angles of these manifolds.
\\
In Section \ref{sec_constructing_the_tower}, we go through step 2 of our outline.  The main result we prove is

\begin{lemma}\label{lemma_finding_elliptic_rotation_of_order_n}
    Let $n \in \N$.  Given a $C^\infty$ billiard map $f$ with a point of quadratic HT, there exists a $C^\infty$ billiard map $\tilde f$ which is arbitrarily close to $f$ in the $C^\infty$ topology such that $\tilde f$ contains an elliptic periodic orbit such that at this orbit the differential of $\tilde f$ is a rotation up to of order $n$.
\end{lemma}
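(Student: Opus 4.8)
The plan is to carry out, inside the class of billiard maps, the tower / rescaling construction of \cite{GST}. We start from the hypothesis: $f$ is a $C^\infty$ billiard map with a hyperbolic periodic orbit $\mathcal O$ and a quadratic homoclinic tangency at a point $p$. First I would build the GST tower over $\mathcal O$: a finite chain of hyperbolic periodic orbits, each linked to the next by a heteroclinic tangency, obtained by repeatedly unfolding tangencies already present. The two billiard-specific ingredients make every step legal: Lemma \ref{lemma_unfold_generically} says each tangency encountered can be unfolded generically by a curvature perturbation without leaving $\mathcal D^\infty$, and Lemma \ref{lemma_on_injectivity} lets us arrange the injectivity condition so that the perturbation producing a new orbit does not disturb the orbits already in the tower. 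After $2n+4$ levels, unfolding the tower yields a single hyperbolic periodic orbit carrying a homoclinic tangency of order $2n+4$; since every perturbation used is an arbitrarily small $C^\infty$ curvature perturbation and only finitely many are used, the resulting map is $C^\infty$-close to $f$.

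Next I would unfold this order-$(2n+4)$ tangency generically, via Lemma \ref{lemma_unfold_generically}, obtaining a smooth $(2n+5)$-parameter family $(f_\mu)$ of billiard maps with $f_0$ the map just constructed and with the jets $\Gamma_0(\mu),\dots,\Gamma_{2n+4}(\mu)$ of the unfolded manifold moving independently. I would then run the Gonchenko--Shilnikov--Turaev rescaling: for each sufficiently large period $N$, the first-return map $T_{N,\mu}$ to a fixed small box near the tangency, expressed in suitably rescaled affine coordinates and with $\mu$ rescaled accordingly, converges in the $C^{n+2}$ topology on compact sets, as $N\to\infty$, to a limiting conservative ``generalized H\'enon'' family $G_a(u,v)=(v,\,-u+P_a(v))$, where $P_a$ is a polynomial of degree $2n+5$ with normalized leading coefficient and whose remaining coefficients $a$ depend diffeomorphically on the rescaled $\mu$. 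Each $G_a$ is automatically area preserving, so the restricted class is not an obstruction to realizing this limit family --- which is exactly why Lemma \ref{lemma_unfold_generically} was set up the way it is.

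Then I would analyze the limit family. Choosing the constant and linear coefficients of $P_a$ places a fixed point and fixes the trace of its linearization, so $G_a$ can be made to have an elliptic fixed point with any prescribed rotation number $\omega$, which I take irrational and non-resonant to order $n+2$ so that the Birkhoff normal form to order $n$ is defined. The Birkhoff invariants that obstruct the $n$-jet from being a rotation form a finite list ($\approx n/2$ of them), each a polynomial in the nonlinear coefficients of $P_a$ of degree $\approx n$; since $P_a$ carries the far larger supply of nonlinear coefficients $a_2,\dots,a_{2n+4}$ and the dependence has a triangular structure (a fresh top-degree coefficient affects only the top not-yet-controlled invariant), one finds $a^\ast$ at which all these invariants vanish and the vanishing is \emph{transverse} in $a$. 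Thus $G_{a^\ast}$ has an elliptic fixed point whose differential is a rotation up to order $n$; the margin in ``$2n+4$'' is deliberate, leaving extra coefficients for the transversality and for the subsequent rational-rotation step.

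Finally I would transfer back. Because $T_{N,\mu}\to G_a$ in $C^{n+2}$ on compact sets and the vanishing of the Birkhoff invariants is transverse at $a^\ast$, the implicit function theorem applied to the finite-dimensional maps sending $a$ to the Birkhoff invariants of $T_{N,\cdot}$ at its elliptic fixed point gives, for all large $N$, a parameter $\mu_N$ --- with rescaled value near $a^\ast$, hence with un-rescaled value tending to $0$ --- at which $T_{N,\mu_N}$ has an elliptic periodic orbit of $f_{\mu_N}$ whose Birkhoff invariants up to order $n$ all vanish, i.e. at which the differential of the appropriate iterate of $f_{\mu_N}$ is a rotation up to order $n$. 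Taking $\tilde f = f_{\mu_N}$, which is $C^\infty$-close to $f$, proves the lemma. \emph{The main obstacle} is keeping the entire GST scheme inside $\mathcal D^\infty$: at every level of the tower, and for the final order-$(2n+4)$ tangency, one needs generic unfolding realizable by curvature perturbations (Lemma \ref{lemma_unfold_generically}) \emph{and} localized so as not to destroy earlier orbits (the injectivity condition, Lemma \ref{lemma_on_injectivity}), while the rescaled billiard return maps must still sweep out the full generalized H\'enon family used for the transversality argument; the purely dynamical steps --- rescaling convergence, the Birkhoff-invariant count, the implicit-function transfer --- go essentially as in \cite{GST}.
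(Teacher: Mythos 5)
Your proposal is correct and follows essentially the same route as the paper: build the tower of tangencies over the saddle following \cite{GST}, using Lemma \ref{lemma_unfold_generically} to realize each generic unfolding by curvature perturbations and Lemma \ref{lemma_on_injectivity} to preserve the injectivity condition, arriving at a homoclinic tangency of order $2n+4$, then unfold it generically to produce an elliptic periodic orbit of degeneracy order $n$. The only real difference is that the paper invokes the relevant lemmas of \cite{GST} (Lemmas 2--5 and, for the last step, Lemma 8, which directly yields the degeneracy-order-$n$ elliptic orbit whose return map in Birkhoff normal form is $e^{i\phi}z+o(|z|^n)$) as black boxes, whereas you re-derive that final step via the rescaling limit to a conservative generalized H\'enon family, vanishing of Birkhoff invariants, and an implicit-function transfer---which is precisely the internal content of the cited lemma, so your version is more self-contained but not a genuinely different argument.
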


To prove this we perform a series of unfoldings of various HT (following the steps of \cite{GST}) in order to construct a homoclinic tangency of order $2n+4$, and then unfold this HT to obtain an elliptic periodic orbit which is a rotation up to order $n$.

In Section \ref{sec_rotate_the_differential} we prove
\begin{lemma}\label{lemma_rotating_the_differential_to_order_n}
    For any $0 < \delta\ll 1$ and $n \in \N$, there is a perturbation of size $\delta$ in the $C^\infty$ topology at $n+3$ points of any given orbit $(s_0,\phi_0),...,(s_{q-1},\phi_{q-1})$ with $q > n+3$ which leaves this part of the orbit fixed and which changes the differential of the billiard map so that
    \begin{align}\begin{split}\nonumber
        df^q(s_0,\phi_0) \rightarrow R_{\delta}df^q(s_0,\phi_0) +\Delta_n(s_0,\phi_0),
    \end{split}\end{align}
    where $R_\delta$ is a rotation matrix which rotates by angle $\delta$ and where the error term $\Delta_n(s,\phi)$ satisfies
    \begin{align}\begin{split}\nonumber
        \frac{\del^k \Delta_n(s,\phi)}{\del s^{k-i}\del\phi^i}\Bigg|_{(s,\phi)=(s_0,\phi_0)} = 0
    \end{split}\end{align}
    for $0 \leq i \leq k \leq n$.
\end{lemma}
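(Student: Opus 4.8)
The plan is to realize the desired modification of the $q$-jet of $f^q$ at $(s_0,\phi_0)$ by invoking Theorem \ref{theorem_perturb_many_independently} (Perturbation \ref{perturbation_which _lets_you_vary_all_derivatives_independently}), which says precisely that by perturbing the derivatives of the curvature up to order $n-1$ at the $n+3$ consecutive boundary points $\gamma(s_0),\dots,\gamma(s_{n+2})$ one can move every partial derivative of $T=f^{n+3}$ at $(s_0,\phi_0)$ up to order $n$ independently, while the orbit stays fixed. First I would reduce the statement about $f^q$ to a statement about $T=f^{n+3}$: writing $f^q = f^{q-n-3}\circ T$ and observing that the perturbation leaves the orbit and its tail fixed, the chain rule expresses the $k$-jet of $f^q$ at $(s_0,\phi_0)$ as a polynomial expression in the $k$-jet of $f^{q-n-3}$ at $T(s_0,\phi_0)$ (unchanged by the perturbation) and the $k$-jet of $T$ at $(s_0,\phi_0)$; since $f^{q-n-3}$ near $T(s_0,\phi_0)$ is a fixed diffeomorphism whose $1$-jet there is invertible, the induced map on $k$-jets of $T$ is a polynomial diffeomorphism, so it suffices to prescribe the $k$-jet of $T$. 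Thus the problem becomes: find a curvature perturbation of size $\delta$ realizing a prescribed target $n$-jet for $T$ at $(s_0,\phi_0)$.

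Next I would compute the target. We want $df^q(s_0,\phi_0)\mapsto R_\delta\, df^q(s_0,\phi_0) + \Delta_n$ with $\Delta_n$ vanishing to order $n$ (all partials up to order $n$ zero at the base point) — so at the level of $1$-jets we want to left-multiply the differential $A:=df^q(s_0,\phi_0)$ by the rotation $R_\delta$, and at the level of the higher-order part ($2\le k\le n$) we are free to choose the error, i.e. the target higher jet can be taken to be the unchanged higher jet (or any convenient value), since $\Delta_n$ absorbs the discrepancy. Translating back through the chain-rule dictionary of the previous paragraph gives a concrete target $n$-jet $J^\star$ for $T$; the difference $J^\star - J^0$ between this target and the unperturbed jet $J^0$ is $O(\delta)$ because $R_\delta = \Id + O(\delta)$. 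By Theorem \ref{theorem_perturb_many_independently} the map from curvature-jet perturbations at $s_0,\dots,s_{n+2}$ to the $n$-jet of $T$ is a submersion (indeed its derivative is onto, by the independence statement), so by the implicit/open-mapping argument there is a curvature perturbation of size $O(\delta)$ — hence, after rescaling, of size exactly $\le\delta$ — achieving $J^\star$; keeping the support in small neighborhoods of $\gamma(s_0),\dots,\gamma(s_{n+2})$ is built into that theorem, and one checks the perturbation can be kept inside $\mathcal D^\infty$ (Conditions \eqref{Dr_curvatures}) by the usual corrections, which are themselves $O(\delta)$ and supported away from the orbit. Finally I would verify that the resulting change in $df^q$ is exactly of the claimed form: the $1$-jet is $R_\delta A$ by construction, and the remainder $\Delta_n := df^q_{\text{new}} - R_\delta df^q_{\text{old}}$ then automatically has vanishing partials up to order $n$ at $(s_0,\phi_0)$, because we matched the full $n$-jets there.

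The main obstacle I expect is the bookkeeping in the chain-rule dictionary: showing that prescribing the $n$-jet of $T$ at the base point indeed suffices to prescribe the $n$-jet of $f^q$ in the required "rotation plus order-$n$-flat error" form, and in particular that left-multiplication by $R_\delta$ on $df^q$ corresponds to a realizable (small) target for $T$'s differential after conjugating by the fixed diffeomorphism $f^{q-n-3}$ — this is where one must be careful that $R_\delta$ acts on $df^q$ and not directly on $dT$, so the target for $dT$ is $(df^{q-n-3})^{-1} R_\delta (df^{q-n-3})\, dT$, still a rotation-like $O(\delta)$ perturbation only up to conjugation, but harmlessly $O(\delta)$-close to the identity times $dT$. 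A secondary technical point is confirming that the order-$(n-1)$ curvature perturbation (rather than order $n$) genuinely controls the full order-$n$ jet of $T=f^{n+3}$ with $n+3$ points — but this is exactly the content of Theorem \ref{theorem_perturb_many_independently}, which we are entitled to assume, so the remaining work is the linear-algebra/implicit-function packaging described above.
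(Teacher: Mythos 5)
Your overall route -- reduce everything to prescribing the jet of $T=f^{\,n+3}$ at $(s_0,\phi_0)$ via Theorem \ref{theorem_perturb_many_independently} and pass to $f^q$ by the chain rule -- is the same tool the paper uses, but the key step as you state it has a genuine gap: you claim that the map from curvature-jet perturbations at the $n+3$ points to the $n$-jet of $T$ is a submersion, ``its derivative onto, by the independence statement.'' That is not what Theorem \ref{theorem_perturb_many_independently} gives, and it is in fact false: billiard maps are conservative, so the attainable jets are constrained. At order $k$ there are $2(k+1)$ partial derivatives of the pair $(s_{n+3},\phi_{n+3})$, but only $k+2$ of them can be varied independently; the remaining $k$ are determined by the identity $\det df = \sin\phi_0/\sin\phi_{n+3}$ and its derivatives (this is exactly the second lemma of Appendix \ref{appendix_perturb_nth_derivative_independently}). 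Consequently your one-shot implicit-function argument is missing the essential verification that the target jet -- rotated $1$-jet together with whatever you choose at orders $2,\dots,n$ -- actually lies in the constrained image; the higher-order target is not ``any convenient value,'' because once the $1$-jet is rotated the conservativity identities couple the admissible order-$k$ coefficients to the new lower-order data. The paper devotes the discussion at the end of Section \ref{sec_rotate_the_differential} precisely to this realizability point: at first order three entries are free and the fourth is forced by area preservation, which is compatible since $\det(R_\delta A)=\det A$; at higher orders the targets are taken to be the original (hence a priori realizable) coefficients.

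There is also a structural reason the paper proceeds by induction rather than in one shot: Theorem \ref{theorem_perturb_many_independently} for a fixed order controls only the order-$n$ partials (using order-$n$ curvature derivatives that leave lower-order jets untouched but move higher-order jets by $O(\delta)$). The paper exploits this triangular structure -- rotate the $1$-jet with the $n=0$ perturbation, then for $k=2,\dots,n$ use the order-$(k-1)$ curvature perturbation to restore the order-$k$ coefficients, each correction of size $O(\delta)$ and not disturbing the orders already fixed. Your argument can be repaired along these lines (restrict the linearized map to the $k+2$ free jet coordinates at each order, observe the block-triangular structure, and check realizability of the rotated $1$-jet via the determinant condition), but as written the surjectivity claim and the realizability of the target are the missing steps, and they are exactly where the content of the paper's proof lies.
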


Applying this lemma to the degree $n$ elliptic periodic orbit obtained before, we ensure a rational rotation by performing an arbitrarily small perturbation if the rotation angle is irrational.

We now put all these lemma together in the proof of Theorem 1:
\medskip
\begin{center}
    \textbf{
    Proof of Theorem 1
    }
\end{center}

\begin{proof}
We begin with a boundary $\Omega \in \mathcal D^\infty$ and a fixed $n \in \N$.  By Lemma \eqref{lemma_creating_quadratic_ht}, we find a domain $\Omega_1$ which is $C^\infty$ close to $\Omega$ and contains a hyperbolic $q$-periodic 
point $O_1$ with a quadratic homoclinic tangency, satisfying the conditions described in Lemma \eqref{lemma_creating_quadratic_ht}.
Then by Lemma \eqref{lemma_finding_elliptic_rotation_of_order_n} we find a $C^\infty$ domain $\Omega_2$ which is $C^\infty$ close to  $\Omega_1$ so that the domain $\Omega_2$ contains an elliptic orbit such that the differential of the billiard map at that a point in that orbit is a rotation to order $n$.  Then we use Lemma \eqref{lemma_rotating_the_differential_to_order_n} to obtain a domain $\Omega_3$ in $C^\infty$ which is $C^\infty$ close to $\Omega_2$ and which has an orbit so that the differential of the billiard map at that point is a rotation by a rational angle, to order $n$.  Then, considering the billiard map composed with itself equal to the denominator of that rational angle, the proof is completed.    \end{proof}

In the following sections we will give proofs of the various lemmas and statements mentioned above.  Throughout these proofs we will be using various perturbations of the curvature at specific points on the boundary of the domain to effect the differential of the billiard map.  The proofs of how these perturbations effect the curvature are in Appendix \ref{appendix_all_perturbations}.  

\section{Unfolding an $n^{th}$ order HT generically}\label{sec_perturb_nth_order_ht_generically}
The main lemma we wish to prove in this section is Lemma \eqref{lemma_unfold_generically}.

To begin, we let $O$ be a point in the periodic orbit, and $p$ be a point of homoclinic tangency, and $\vec n$ the normal vector of $W^s(O)$ at $p$.  We also let $p(t)$ for $|t|\leq \delta$ and some small $\delta$ be unit speed parameterization of a section of $W^u(O)$  with $p(0)=p$ (see Figure \ref{fig_unfold_generically1}).

We then start with the following: 

\setcounter{lemma}{2}
\begin{sublemma}\label{sublemma_unfolding_first_pert}
One may construct perturbations for each $0 \leq k \leq n$ such that $p(t)\rightarrow p_\eps(t)$ such that
\begin{align}\begin{split}\nonumber
    p_\eps(t)=p(t)+R\begin{bmatrix}
    O(t^2)\\\eps t^k + \sum_{j=k+1}^{n+1} g_j(\eps)t^j
    \end{bmatrix}
\end{split}\end{align}
for some $C^\infty$ functions $g_j$ which satisfy $g_j(0)=0$, where $R$ is a rotation matrix which maps the vector $(0,1)$ to $\vec n$. Equivalently, these perturbations change $\Phi_\eps$ as
\begin{align}\begin{split}\nonumber
    \Phi_{\eps}(t) = \Phi(t) + \eps t^k + \sum_{j=k+1}^{n+1} g_j(\eps)t^j.
\end{split}\end{align}  
\end{sublemma}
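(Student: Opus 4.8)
The plan is to realize the prescribed modification of $\Phi_\eps$ by perturbing the curvature function of the domain at finitely many points of the homoclinic orbit, exploiting Theorem~\ref{theorem_perturb_many_independently} (i.e. the ability to vary jets of the iterated billiard map independently). The key observation is that $W^u_{loc}(O)$ is unaffected by perturbations supported away from the periodic orbit and the relevant forward iterates of $p$ used to land near $W^s_{loc}(O)$; so it suffices to move the image curve $f^m(W^u_{loc}(O))$ near $p_m$ in the normal direction $\vec n$ by a polynomial in $t$ whose lowest-order term is $\eps t^k$, with the tangential ($(0,1)$-rotated first) coordinate only perturbed to order $O(t^2)$, i.e. keeping the tangency at $t=0$. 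Since $\Phi(t)$ is, by definition, the signed distance from $p(t)$ to $W^s_{loc}(O)$, and $p$ is a tangency point so that $W^s_{loc}$ is tangent to the $t$-axis there, displacing $p(t)$ by $R(\,O(t^2),\ \eps t^k+\sum_{j>k} g_j(\eps)t^j\,)^{\mathsf T}$ changes $\Phi_\eps(t)$ by exactly $\eps t^k + \sum_{j=k+1}^{n+1} g_j(\eps)t^j$ up to higher order, which after absorbing the $O(t^2)$ tangential shift into the remaining $g_j$'s (a routine Taylor reparameterization, using that $\Phi'(0)=0$) gives the stated form.

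**Steps, in order.** First I would fix local coordinates near $p$ adapted to $\vec n$ (the $R$-rotated frame), so that $W^s_{loc}(O)$ is a graph over the horizontal with vanishing first derivative at $p$, and write $p(t)=(t,\Phi(t))+O(t^2)$ in this frame. Second, I would use the injectivity condition to select, among the points of the periodic orbit, enough points $s_{i}$ whose vertical strips miss the homoclinic orbit and miss the forward segment of $p$'s orbit that controls $W^s_{loc}$; this is what guarantees the perturbation does not destroy the periodic orbit, the homoclinic point $p$ itself, or $W^s_{loc}(O)$. Third, I would invoke Theorem~\ref{theorem_perturb_many_independently} (Perturbation~\ref{perturbation_which _lets_you_vary_all_derivatives_independently}) at those points to produce, for each target order $k$, a deformation of the curvature changing the jet of the appropriate iterate of $f$ restricted to $W^u_{loc}(O)$ so that the normal displacement of $f^m(W^u_{loc}(O))$ at $p_m$, read off in the $R$-frame, has Taylor expansion $\eps t^k + (\text{higher order in } t,\text{ vanishing at }\eps=0)$; smoothness of the billiard map in the curvature perturbation gives the $g_j(\eps)$ as $C^\infty$ functions with $g_j(0)=0$. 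Fourth, I would translate the geometric displacement into the change of $\Phi_\eps$: since $\Phi_\eps(t)$ is the distance from the moved point $p_\eps(t)$ to the (unchanged) $W^s_{loc}(O)$, and the tangential component of the move is $O(t^2)$ while $\Phi'(0)=0$, the distance changes by the normal component plus corrections that are again polynomials in $t$ of order $\geq k+1$ with coefficients vanishing at $\eps=0$, which can be folded into the $g_j$. This establishes both displayed equations simultaneously.

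**Main obstacle.** The hard part will be Step~3: making precise, via Theorem~\ref{theorem_perturb_many_independently}, that one can arrange the normal displacement of the unstable-manifold image to be a single monomial $\eps t^k$ at leading order while controlling — but not necessarily killing — all higher-order terms, and that the tangential (first-coordinate) displacement is genuinely $O(t^2)$ rather than $O(t)$, i.e. that the perturbation preserves the tangency direction at $p_m$. This requires carefully tracking how a perturbation of the $k$-jet of the curvature at the chosen points propagates through the composition $df^{m}\circ\cdots$ along $W^u_{loc}(O)$ to the jet of the displacement of $f^m(W^u_{loc}(O))$ at $p_m$, and checking the relevant derivative map is onto the space of admissible displacement-jets — this is exactly the content the appendix must supply, and it is where the billiard-specific (as opposed to general area-preserving) difficulties concentrate. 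A secondary, more bookkeeping-level obstacle is the reparameterization in Step~4: ensuring that absorbing the $O(t^2)$ tangential shift into the $g_j$ does not spoil the $g_j(0)=0$ property, which follows because at $\eps=0$ the perturbation is trivial and $p_0(t)=p(t)$.
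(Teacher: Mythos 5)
There is a genuine gap at $k=0$, and a second, structural one in how you invoke Theorem~\ref{theorem_perturb_many_independently}. The perturbations of Theorem~\ref{theorem_perturb_many_independently} (Perturbation~\ref{perturbation_which _lets_you_vary_all_derivatives_independently}) by construction \emph{leave the orbit through $(s_0,\phi_0)$ fixed}, and the tangency point $p$ is a point of that orbit; hence no such perturbation can move $p(0)$ at all, and the constant term $\eps$ in $\Phi_\eps(t)=\Phi(t)+\eps+\dots$ is unreachable by your Step~3. This is why the paper treats $k=0$ by a completely different mechanism: it perturbs near three points of the \emph{periodic} orbit (Perturbation~\ref{perturbation_which_lets_you_move_unstable_manifold}), tilting $W^u_{loc}(O)$ and using the lambda lemma to translate the image curve near $p_m$ by $\eps+O(\eps^2)$ in the direction $\vec n$. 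Your plan, which applies the jet-control theorem uniformly "for each target order $k$", fails for $k=0$ without this extra ingredient.

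For $k\geq 1$ your plan is in the right spirit but defers the essential content to a theorem that does not directly supply it. Perturbation~\ref{perturbation_which _lets_you_vary_all_derivatives_independently} controls the partial derivatives of $f^{n+3}$ \emph{at a point of a fixed orbit}, and even there only with $n+2$ degrees of freedom at top order (the rest being constrained by area preservation); the quantity you must control is the $t$-jet of the normal displacement of the curve $f^m(W^u_{loc}(O))$ at $p_m$, and the surjectivity of the map from curvature perturbations to such displacement jets is exactly what the sublemma has to prove, not something the appendix already provides. The paper does this by hand: it perturbs only $\Delta\kappa^{(k-1)}$ at the two preimages $s_1=f^{-2}(p)$, $s_2=f^{-1}(p)$ (lower curvature derivatives set to zero, which is what keeps the coefficients of $t^j$, $j<k$, unchanged), Taylor-expands $f^3_\eps(s_0(t),\phi_0(t))$, and shows the resulting $2\times 2$ coefficient matrix in $(\eps_1,\eps_2)$ at order $t^k$ is nondegenerate — the determinant reduces to $\frac{\del s_1}{\del s_0}\frac{\del s_2}{\del \phi_0}\frac{\del s_2}{\del \phi_1}\det\big(df(x_2)\big)\neq 0$, together with the separate check $s_1'(0)\neq 0$, $s_2'(0)\neq 0$ (Lemma~\ref{sublemma_unfolding_derivatives_nonzero}), which itself relies on the standing assumption that the invariant manifolds are not vertical. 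Your proposal names this onto-ness as the "main obstacle" but offers no argument for it, and without the two-point computation (or an equivalent nondegeneracy statement) the proof is not complete.
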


\begin{figure}[h]
\centering
\includegraphics[width=12.5cm, height=6.25cm]{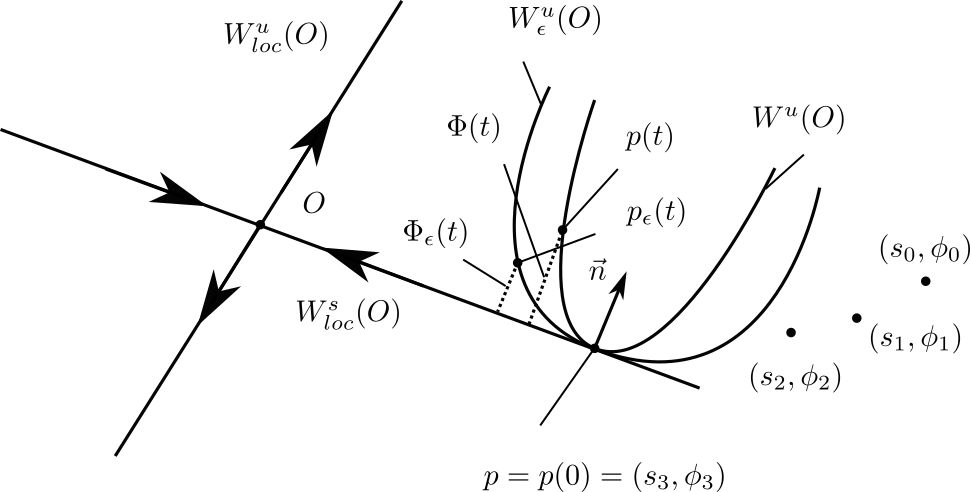}
\caption{Figure showing all the objects described.  We vary in small strips around the points $(s_1,\phi_1),(s_2,\phi_2)$ (corresponding to small neighborhoods of their position on the boundary of the domain) in order to change $\Phi(t)$ so that the derivatives of $\Phi$ at $t=0$ vary independently.}
\label{fig_unfold_generically1}
\end{figure}

We label the perturbations described as $pert_k(\eps)$.  These perturbations can then be used to obtain the main lemma of this section (Lemma \eqref{lemma_unfold_generically}) by the following inductive process:

\begin{sublemma}\label{sublemma_unfolding_final_pert}
    If we have a set of perturbations $pert_k(\eps)$ as described above, then there exist perturbations for each $k$ which have the effect on $\Phi$ as
    \begin{align}\begin{split}\nonumber
        \Phi_\eps(t) = \Phi(t) + \eps t^k + O(t^{n+1}).
    \end{split}\end{align}
\end{sublemma}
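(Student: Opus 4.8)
The plan is to use the perturbations $pert_k(\eps)$ from Sublemma \ref{sublemma_unfolding_first_pert} and eliminate the unwanted higher-order terms $\sum_{j=k+1}^{n+1} g_j(\eps) t^j$ by an inductive correction scheme, working from the top degree $n+1$ downward (or equivalently from low $k$ to high $k$). The key observation is that $pert_k(\eps)$ adds $\eps t^k$ as its leading new contribution, plus an error supported in degrees $k+1$ through $n+1$ with coefficients vanishing at $\eps = 0$. So if I already possess "clean" perturbations that produce $\Phi(t) + \eps t^j + O(t^{n+1})$ for every $j > k$, I can use them to cancel, one coefficient at a time, the parasitic terms $g_j(\eps)t^j$ generated by $pert_k(\eps)$.

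First I would set up the induction with the base case $k = n+1$ (or $k=n$, depending on how the indexing in Sublemma \ref{sublemma_unfolding_first_pert} bottoms out): here the sum $\sum_{j=k+1}^{n+1}$ is empty, so $pert_{n+1}(\eps)$ already has the desired form $\Phi_\eps(t) = \Phi(t) + \eps t^{n+1} + O(t^{n+2})$, and $O(t^{n+2}) = O(t^{n+1})$ in the sense we need. For the inductive step, suppose clean perturbations $\widetilde{pert}_j$ have been constructed for all $j$ with $k < j \leq n+1$. Apply $pert_k(\eps)$; this gives $\Phi(t) + \eps t^k + \sum_{j=k+1}^{n+1} g_j(\eps) t^j + O(t^{n+1})$. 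Now compose with $\widetilde{pert}_{k+1}(-g_{k+1}(\eps)) \circ \widetilde{pert}_{k+2}(-g_{k+2}(\eps)) \circ \cdots \circ \widetilde{pert}_{n+1}(-g_{n+1}(\eps))$, i.e. apply the already-constructed clean perturbations with parameters chosen to cancel each $g_j(\eps)$. Since each $g_j(0) = 0$ and each $g_j$ is $C^\infty$, the resulting composite family is a $C^\infty$ family of perturbations in a parameter $\eps$ with the value $f$ at $\eps = 0$, and its effect on $\Phi$ is exactly $\Phi(t) + \eps t^k + O(t^{n+1})$. This defines $\widetilde{pert}_k$ and closes the induction.

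There is a bookkeeping point worth spelling out: composing perturbations of the boundary curvature is legitimate because each $pert_k$ is realized (by Theorem \ref{theorem_perturb_many_independently} and the construction in Sublemma \ref{sublemma_unfolding_first_pert}) as a modification of the curvature at finitely many orbit points that leaves the periodic orbit and the homoclinic orbit fixed; composing finitely many such modifications is again such a modification, still leaving those orbits fixed and still inside $\mathcal D^\infty$, and the induced change on $W^u$, on $W^s$, and hence on $\Phi_\eps$ adds to first order in the small parameters (higher-order cross terms in the several small parameters only contribute to the $O(t^{n+1})$ remainder and to higher-order-in-$\eps$ corrections, which are harmless since we only need a $C^\infty$ family vanishing appropriately at $\eps=0$). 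I would also remark that the injectivity hypothesis, inherited from the statement of Lemma \ref{lemma_unfold_generically}, is what guarantees all these curvature modifications can be performed at disjoint strips without disturbing the orbits in play.

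The main obstacle I anticipate is not the induction itself — that is a standard triangular elimination — but verifying that the compositions remain within the class of admissible billiard-map deformations and that the "adds to first order" claim is quantitatively correct, i.e. that the interaction terms among the several small parameters $\eps$, $g_{k+1}(\eps), \dots, g_{n+1}(\eps)$ genuinely land in the $O(t^{n+1})$ error and do not contaminate the coefficient of $t^k$. This requires tracking how a curvature perturbation propagates through several iterates of the billiard map to the local unstable manifold and then to the distance function $\Phi_\eps$, which is exactly the technical content deferred to Appendix \ref{appendix_all_perturbations}; here I would invoke those estimates rather than reprove them.
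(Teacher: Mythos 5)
Your proposal is correct and is essentially the paper's own argument: both cancel the parasitic terms $g_j(\eps)t^j$ by re-applying the higher-degree perturbations with parameters chosen as smooth functions of $\eps$ vanishing at $\eps=0$, i.e.\ a triangular elimination, relying on the fact that each $pert_j$ only produces terms of degree $\geq j$. The only difference is bookkeeping: the paper eliminates the lowest remaining parasitic degree first using the raw $pert_j$'s (absorbing the newly created higher-degree errors into updated $g$'s and killing them at later steps), whereas you run the induction from the top degree down, constructing ``clean'' perturbations first and then cleaning each lower degree in one pass — an equivalent reorganization.
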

\begin{proof}[Proof of lemma 1.2.2]
Perform $pert_0(\eps_0)$ which yields the following $\Phi_{\eps_0}(t)$:

\begin{align}\begin{split}\nonumber
    \Phi_{\eps_0}(t) = \Phi(t) + \eps_0 + \sum_{k=1}^{n+1} g_{0,k}(\eps_0)t^k
\end{split}\end{align}
for some $C^\infty$ functions $g_{0,k},$ which have $g_{0,k}(0)=0$ for each $k$.  Then we perform $pert_1(\eps_1)$ to obtain
\begin{align}\begin{split}\nonumber
    \Phi_{(\eps_0,\eps_1)}(t) = \Phi(t) + \eps_0 + \sum_{k=1}^{n+1} g_{0,k}(\eps_0)t^k + \eps_1t + \sum_{k=1}^{n+1} \tilde g_{1,k}(\eps_1)t^k,
\end{split}\end{align}
again for some $C^\infty$ functions $\tilde g_{1,k},$ which have $\tilde g_{1,k}(0)=0$ for each $k$.  We thus set $\eps_1 = -g_{0,1}(\eps_0)$ and label each $g_{0,k}(\eps_0) + \tilde g_{1,k}(-g_{0,1}(\eps_0)) = g_{1,k}(\eps_0)$ to obtain
\begin{align}\begin{split}\nonumber
    \Phi_{\eps_0}(t) = \Phi(t) + \eps_0 + \sum_{k=2}^{n+1}g_{1,k}(\eps_0)t^k.
\end{split}\end{align}
We then repeat this process $n-2$ more times, each time eliminating the lowest order term in $\Phi_{\eps_0}(t)$ to obtain a total perturbation by the parameter $\eps_0$ which yields
\begin{align}\begin{split}\nonumber
    \Phi_{\eps_0}(t) = \Phi(t) + \eps_0 + O(t^{n+1}).
\end{split}\end{align}
Then, we perturb by a new parameter $\eps_1$, repeating the same process but at one degree higher.  In the end, we obtain a perturbation by the parameter $\eps = (\eps_0,...,\eps_n)$ which gives us
\begin{align}\begin{split}\nonumber
    \Phi_{\eps}(t) = \Phi(t) + \eps_0 + \eps_1t + ... + \eps_nt^n + O(t^{n+1}).
\end{split}\end{align}

\end{proof}
This then allows us to prove Lemma \eqref{lemma_unfold_generically}:
\begin{proof}
Observe that Lemma \eqref{sublemma_unfolding_final_pert} allows us to vary each derivative of $\Phi$ independently up to order $n$.
\end{proof}

So, what remains is to prove Lemma \eqref{sublemma_unfolding_first_pert}.

\begin{proof}
First, we set 
\begin{align}\begin{split}\nonumber
    p&=(s_3,\phi_3)=f^3(s_0,\phi_0)=f^2(s_1,\phi_1)=f(s_2,\phi_2)\\
    p(t)&=(s_3(t),\phi_3(t))=f^3(s_0(t),\phi_0(t))=f^2(s_1(t),\phi_1(t))=f(s_2(t),\phi_2(t)),
\end{split}\end{align}
and we perturb $\kappa$ in a small neighborhood around $s_1$ and $s_2$.  To begin we will do a general perturbation of adding $\Delta\kappa_\eps$ to $\kappa$, and later specify exactly what $\Delta\kappa_\eps$ is.  We also note that we may use Perturbation \ref{lemma_we_can_perturb_around_three_points_instead} in Appendix \ref{appendix_all_perturbations} in order to, without loss of generality, assume the injectivity condition is satisfied at $s_1$ and $s_2$.  

Then, sending $\kappa\rightarrow \kappa_\eps = \kappa+ \Delta\kappa_\eps,$ and defining our perturbation to send $\beta_1 \rightarrow \beta_1 + \Delta\beta_1(\eps,t), l_1 \rightarrow l_1 + \Delta l_1(\eps,t), \beta_2 \rightarrow \beta_2 + \Delta\beta_2(\eps,t), l_2 \rightarrow l_2 + \Delta l_2(\eps,t)$, we have
\begin{align}\begin{split}\nonumber
    df_\eps^3&((s_0(t),\phi_0(t))) = df^3((s_0(t),\phi_0(t))) + 
    \Delta df_\eps^3((s_0(t),\phi_0(t))) \\
    &+ O(\Delta l_1(\eps,t) + \Delta\beta_1(\eps,t) + \Delta l_2(\eps,t) + \Delta\beta_2(\eps,t)) + O(\Delta\kappa_\eps^2(t))
\end{split}\end{align}
where, following the same calculations done in Appendix \ref{appendix_all_perturbations} for Perturbation \ref{perturbation_which_varys_the_first_differential},
\begin{align}\begin{split}
    \Delta &df_\eps^3((s_0(t),\phi_0(t)))  = \\
    &\begin{bmatrix}
    a_1(t)\Delta\kappa_{\eps}(s_1(t))+a_2(t)\Delta\kappa_{\eps}(s_2(t)) & b_1(t)\Delta\kappa_{\eps}(s_1(t))+b_2(t)\Delta\kappa_{\eps}(s_2(t))\\
    c_1(t)\Delta\kappa_{\eps}(s_1(t))+c_2(t)\Delta\kappa_{\eps}(s_2(t)) &
    d_1(t)\Delta\kappa_{\eps}(s_1(t))+d_2(t)\Delta\kappa_{\eps}(s_2(t))
    \end{bmatrix}, \label{expansion_of_HT_perturbation}
\end{split}\end{align}
with
\begin{align}\begin{split}\nonumber
    \begin{bmatrix}
    a_1(t) & b_1(t) \\ c_1(t) & d_1(t)
    \end{bmatrix}
   & =
    df^2((s_1(t),\phi_1(t)))\begin{bmatrix}
    0&0\\2&0
    \end{bmatrix}
    df((s_0(t),\phi_0(t)))\\ \label{abcd_matricies2}
    \begin{bmatrix}
    a_2(t) & b_2(t) \\ c_2(t) & d_2(t)
    \end{bmatrix}
   & =
    df((s_2(t),\phi_2(t)))\begin{bmatrix}
    0&0\\2&0
    \end{bmatrix}
    df^2((s_0(t),\phi_0(t)))
\end{split}\end{align}
First off, we consider the case of $k \geq 1$ (we treat the case $k=0$ later).  For this case, we consider perturbations with the following properties:
\begin{equation}\label{equation_degree_of_perturbation}
\begin{split}
    &\Delta\kappa_\eps(s_i)=...=\Delta\kappa_\eps^{(k-2)}(s_i)=0\\ &\Delta\kappa_\eps^{(k-1)}(s_1)=\eps_1
    \\
    &\Delta\kappa_\eps^{(k-1)}(s_2)=\eps_2
\end{split}
\end{equation}
for $i=1,2$.  Then it is easy to show using $\del_1l(s,s')=-\cos(\phi),\del_2l(s,s')=\cos(\phi')$ that this implies for $i=1,2$ and $j+l=k-1$
\begin{equation}\begin{split}\label{equation_partials_of_l_and_beta_are_0}
    \frac{\del ^{k-1}\Delta l_i(\eps,t)}{\del^ls_0\del^j\phi_0}\bigg|_{t=0}=
    \frac{\del ^{k-1}\Delta \beta_i(\eps,t)}{\del^ls_0\del^j\phi_0}\bigg|_{t=0}=0.
\end{split}\end{equation}
From here we have all we need to expand $f^3_{\eps}((s_0(t),\phi_0(t)))$ in t, up to order $k$.  We do this just for $s_{3,\eps}(s_0(t),\phi_0(t))$, as the other component is similar.  We obtain through Taylor expansion
\begin{align}\begin{split}\nonumber
    s_{3,\eps}(t) = s_{3,\eps}(0) + \sum_{l=1}^k \frac{t^l}{l!}\sum_{j=0}^l\Bigg\{\frac{\del^l s_{3,\eps}(t)}{\del s_0^j\del\phi_0^{l-j}}\bigg|_{t=0}(s_0'(0))^j(\phi_0'(0))^{l-j}{l \choose j}\\
    +O(t^{k+1}) + (\text{derivatives of }s_{3,\eps}\text{ of order less than }l)\Bigg\}
\end{split}\end{align}
and from Equation \eqref{expansion_of_HT_perturbation}, we have (including the $j=0$ case, which one can check using Relations \eqref{equation_reltion_between_partials})
\begin{equation}\begin{split}\label{definition_of_a_b_c_d}
    \Delta\frac{\del^l s_{3,\eps}(t)}{\del s_0^j\del\phi_0^{l-j}}\bigg|_{t=0} & = a_1(0)\Delta\kappa^{(l-1)}_{\eps}(s_1)\bigg(\frac{\del s_1}{\del s_0}\bigg)^{j-1}\bigg(\frac{\del s_1}{\del \phi_0}\bigg)^{l-j}\\
    & +a_2(0)\Delta\kappa^{(l-1)}_{\eps}(s_2)\bigg(\frac{\del s_2}{\del s_0}\bigg)^{j-1}\bigg(\frac{\del s_2}{\del \phi_0}\bigg)^{l-j}\\
    & + (\text{lower derivatives of }\Delta\kappa_\eps)\\
    & + (\text{derivatives up to order }l-1\text{ of }\Delta l_1)\\
    & + (\text{derivatives up to order }l-1\text{ of }\Delta l_2)\\
    & + (\text{derivatives up to order }l-1\text{ of }\Delta \beta_1)\\
    & + (\text{derivatives up to order }l-1\text{ of }\Delta \beta_2),
\end{split}\end{equation}
So we get that with Equations \eqref{equation_degree_of_perturbation} and \eqref{equation_partials_of_l_and_beta_are_0} most of these terms are 0, and in fact we are left with only the final term:
\begin{align}\begin{split}\nonumber
    s_{3,\eps}(t) = s_3(t) 
    & + \frac{t^k}{k!}\bigg\{\sum_{j=0}^k\bigg(a_1(0)\eps_1\bigg(\frac{\del s_1}{\del s_0}\bigg)^{j-1}\bigg(\frac{\del s_1}{\del \phi_0}\bigg)^{k-j}\\
    + & a_2(0)\eps_2\bigg(\frac{\del s_2}{\del s_0}\bigg)^{j-1}\bigg(\frac{\del s_2}{\del \phi_0}\bigg)^{k-j}\bigg)
    (s_0'(0))^j(\phi_0'(0))^{k-j}{k \choose j}\bigg\}.
\end{split}\end{align}
Simplifying (one may check the equivalences), we get
\begin{align}\begin{split}\nonumber
    \Delta s_{3,\eps}(t) = \frac{t^k}{k!}\bigg[\eps_1a_1(0)\bigg(\frac{\del s_1}{\del s_0}\bigg)^{-1}(s_1'(0))^k + \eps_2a_2(0)\bigg(\frac{\del s_2}{\del s_0}\bigg)^{-1}(s_2'(0))^k\bigg].
\end{split}\end{align}
Similarly we get
\begin{align}\begin{split}\nonumber
    \Delta \phi_{3,\eps}(t) = \frac{t^k}{k!}\bigg[\eps_1c_1(0)\bigg(\frac{\del s_1}{\del s_0}\bigg)^{-1}(s_1'(0))^k + \eps_2c_2(0)\bigg(\frac{\del s_2}{\del s_0}\bigg)^{-1}(s_2'(0))^k\bigg].
\end{split}\end{align}
Note that we chose to use $a_1,a_2,c_1,c_2$.  However, we could use the other variables $b_1,b_2,d_1,d_2$ and perform the same analysis to get equivalent expressions.  These can all be related by the fact that for $i=1,2$
\begin{align}\begin{split}\label{equation_reltion_between_partials}
    a_i(0) &= b_i(0)\bigg(\frac{\del s_i}{\del \phi_0}\bigg)^{-1}\bigg(\frac{\del s_i}{\del s_0}\bigg)\\
    c_i(0) &= d_i(0)\bigg(\frac{\del s_i}{\del \phi_0}\bigg)^{-1}\bigg(\frac{\del s_i}{\del s_0}\bigg)
\end{split}\end{align}
Thus, we can choose $\eps_1,\eps_2$ such that we perturb $(s_3(t),\phi_3(t))$ in the direction of $\vec n$ if we have the following:
\begin{align}\begin{split}\label{equation_unfolding_determinant}
    \det\begin{bmatrix}
    a_1(0)\bigg(\frac{\del s_1}{\del s_0}\bigg)^{-1}(s_1'(0))^k&
    a_2(0)\bigg(\frac{\del s_2}{\del s_0}\bigg)^{-1}(s_2'(0))^k\\
    c_1(0)\bigg(\frac{\del s_1}{\del s_0}\bigg)^{-1}(s_1'(0))^k&
    c_2(0)\bigg(\frac{\del s_2}{\del s_0}\bigg)^{-1}(s_2'(0))^k\\
    \end{bmatrix} \neq 0.
\end{split}\end{align}
From here, we suppose that $s_1'(0)\neq0$ and $s_2'(0)\neq 0$.  We prove that that is indeed the case in Lemma \eqref{sublemma_unfolding_derivatives_nonzero}.  Now, given these inequalities, Equation \eqref{equation_unfolding_determinant} is equivalent to 
\begin{align}\begin{split}\nonumber
    \det \begin{bmatrix}
    a_1(0) & a_2(0) \\ c_1(0) & c_2(0)
    \end{bmatrix} \neq 0,
\end{split}\end{align}
or from our relations, equivalently
\begin{align}\begin{split}\nonumber
    \det \begin{bmatrix}
    a_1(0) & b_2(0) \\ c_1(0) & d_2(0)
    \end{bmatrix} \neq 0.
\end{split}\end{align}
Now, from Equation \eqref{definition_of_a_b_c_d} we have
\begin{align}\begin{split}\nonumber
    \begin{bmatrix}
    a_1(0) & b_2(0) \\ c_1(0) & d_2(0)
    \end{bmatrix}
    =
    \begin{bmatrix}
    \frac{\del s_1}{\del s_0}\frac{\del s_3}{\del \phi_1} &&
    \frac{\del s_2}{\del \phi_0}\frac{\del s_3}{\del \phi_2} \\ 
    \frac{\del s_1}{\del s_0}\frac{\del \phi_3}{\del \phi_1} &&
    \frac{\del s_2}{\del \phi_0}\frac{\del \phi_3}{\del \phi_2}
    \end{bmatrix},
\end{split}\end{align}
so taking the determinant we obtain
\begin{align}\begin{split}\nonumber
    &\det{\bigg(
    \begin{bmatrix}
    a_1(0) & b_2(0) \\ c_1(0) & d_2(0)
    \end{bmatrix}
    \bigg)}\\
    &=
    \frac{\del s_1}{\del s_0}\frac{\del s_3}{\del \phi_1}\frac{\del s_2}{\del \phi_0}\frac{\del \phi_3}{\del \phi_2} - \frac{\del s_2}{\del \phi_0}\frac{\del s_3}{\del \phi_2}\frac{\del s_1}{\del s_0}\frac{\del \phi_3}{\del \phi_1}\\
    &= \frac{\del s_1}{\del s_0}\frac{\del s_2}{\del \phi_0}\bigg( \frac{\del s_3}{\del \phi_1}\frac{\del \phi_3}{\del \phi_2} - \frac{\del s_3}{\del \phi_2}\frac{\del \phi_3}{\del \phi_1}\bigg)\\
    &= \frac{\del s_1}{\del s_0}\frac{\del s_2}{\del \phi_0}\bigg(
    \bigg(\frac{\del s_3}{\del s_2}\frac{\del s_2}{\del \phi_1} + \frac{\del s_3}{\del \phi_2}\frac{\del \phi_2}{\del \phi_1}\bigg) \frac{\del \phi_3}{\del \phi_2} - 
    \frac{\del s_3}{\del \phi_2}\bigg(
    \frac{\del \phi_3}{\del s_2}\frac{\del s_2}{\del \phi_1} + \frac{\del \phi_3}{\del \phi_2}\frac{\del \phi_2}{\del \phi_1}
    \bigg)
    \bigg)\\
    &=\frac{\del s_1}{\del s_0}\frac{\del s_2}{\del \phi_0}\frac{\del s_2}{\del \phi_1}\det{\bigg(df(x_2)\bigg)}\\
    &\neq 0.
\end{split}\end{align}
Thus we may find $\eps_1,\eps_2$ so that our perturbation moves $W^u(O_1)$ in a small neighborhood of the point of tangency with $W^s(O_2)$ by $\eps t^k+O(t^{k+1})$ in the direction $\vec n$.  In order to obtain that the changes to order $k+1$ and above are $C^\infty$ smooth in $\eps$, we simply choose perturbations which are $C^\infty$ and which satisfy Equations \eqref{equation_degree_of_perturbation}, and observe that at $\eps=0$ there is no change $\Phi$.  This finishes the proof of the lemma for cases $k \geq 1$.

Now we deal with the case of $k=0$.  For this case, we are able to achieve the desired result by perturbing around three points of the periodic orbit $\mathcal O = (O,f(O),...,f^{q-1}(O))$ which satisfy the injectivity condition with respect to $\mathcal O$ and the homoclinic orbit containing $p$.  The perturbation we perform is from Appendix \ref{appendix_all_perturbations}, Perturbation \ref{perturbation_which_lets_you_move_unstable_manifold}, which allows us to change the angles of the invariant manifolds of $O$.  This allows us to in a sense "raise" or "lower" the height of the moment of tangency (which in turn of course destroys it and either yields no point of tangency, or two points of transverse homoclinic intersection) (see Figure \ref{fig_unfold_generically2}).  

\begin{figure}[h]
\centering
\includegraphics[width=12.5cm, height=6.25cm]{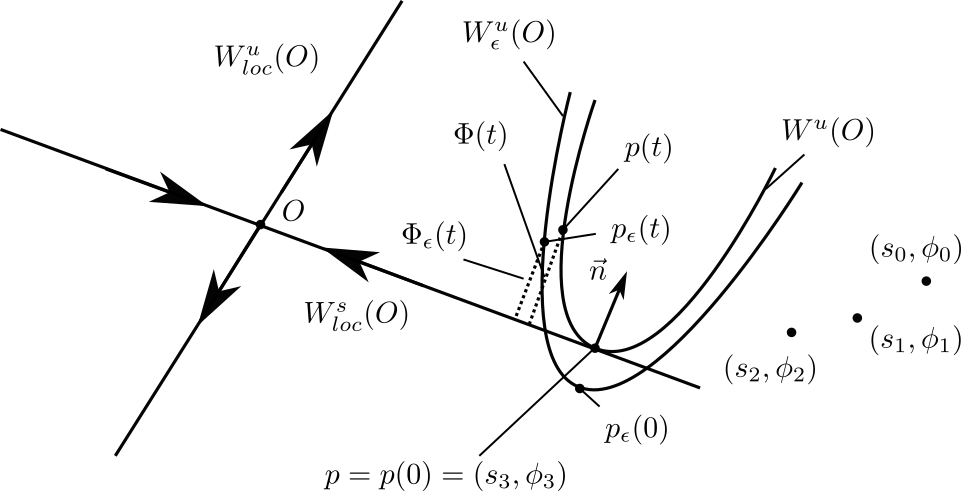}
\caption{We vary in small strips around points in the orbit of $O$ in order to change $\Phi(t)$ so that the position of $\Phi(0)$ changes in the direction of $\vec n$.}
\label{fig_unfold_generically2}
\end{figure}

The exact details are worked out in Appendix \ref{appendix_all_perturbations}, Perturbation \ref{perturbation_which_lets_you_move_unstable_manifold}, which allows us to obtain exactly
\begin{align}\begin{split}\nonumber
    \Phi_\eps(t) = \Phi(t) + \eps + O(\eps^2).
\end{split}\end{align}
\\
\end{proof}

Now, we deal with the following technical issue that came up during the proof:
\begin{sublemma}\label{sublemma_unfolding_derivatives_nonzero}
    With the given definitions, we have
    \begin{align}\begin{split}\nonumber
        s_1'(0) \neq 0, s_2'(0) \neq 0.
    \end{split}\end{align}
\end{sublemma}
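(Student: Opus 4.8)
The plan is to recast the statement geometrically and then reduce it to the non‑verticality already built into Lemma \ref{lemma_creating_quadratic_ht}. First observe that $p(t)$ is unit speed, hence a regular curve inside $W^u(O)$, and that $(s_i(t),\phi_i(t))=f^{-(3-i)}(p(t))$; since $f^{-(3-i)}$ is a diffeomorphism, $t\mapsto(s_i(t),\phi_i(t))$ is a regular curve lying in the unstable manifold of the appropriate periodic point and passing through $(s_i,\phi_i)$. Thus $(s_i'(0),\phi_i'(0))$ is a nonzero vector spanning the tangent line to that invariant manifold at $(s_i,\phi_i)$, so $s_i'(0)\neq 0$ is \emph{equivalent} to the assertion that this manifold makes a non‑zero angle with the vertical axis at $(s_i,\phi_i)$. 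It therefore suffices to show that the unstable manifold through each of $(s_1,\phi_1)$ and $(s_2,\phi_2)$ is non‑vertical there.

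The key point is that a homoclinic tangency is a property of an entire homoclinic orbit, not of one of its points: since the invariant manifolds are permuted by $f$, tangency of $W^u(O)$ and $W^s(O)$ at $p$ forces tangency of the unstable and stable manifolds of $f^j(O)$ at $f^j(p)$ for every $j$. Hence we are free to choose which point of the homoclinic orbit plays the role of $(s_3,\phi_3)$ in the setup of Lemma \ref{sublemma_unfolding_first_pert}, and I would take it to be $f^{-K}(p)$ for $K$ large. Backward iterates of a homoclinic point converge to the hyperbolic periodic orbit along the local unstable manifolds, so for $K$ large enough the four consecutive points $(s_0,\phi_0),\dots,(s_3,\phi_3)$ all lie in small neighborhoods of vertices of the inscribed $q$-gon, inside the corresponding local unstable manifolds. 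Lemma \ref{lemma_creating_quadratic_ht} guarantees those local unstable manifolds make non‑zero angles with the vertical axis; since the unstable eigendirection at each periodic point is non‑vertical, continuity of the tangent field gives the same on a small enough sub‑arc, so in particular the unstable manifold is non‑vertical at $(s_1,\phi_1)$ and $(s_2,\phi_2)$, i.e. $s_1'(0)\neq 0$ and $s_2'(0)\neq 0$.

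The hard part will be checking that replacing $p$ by $f^{-K}(p)$ does no harm to the surrounding argument, and this is really the only obstacle. On one side, because $f$ is a diffeomorphism it conjugates the splitting data $(\Gamma_0(\eps),\dots,\Gamma_n(\eps))$ at one point of the homoclinic orbit to that at any other, up to an invertible linear change depending smoothly on $\eps$; hence the generic‑unfolding determinant condition of Lemma \ref{lemma_unfold_generically} holds at $p$ if and only if it holds at $f^{-K}(p)$, so it is enough to carry out the construction of Lemma \ref{sublemma_unfolding_first_pert} at the shifted point. On the other side, with this choice the perturbation points $\gamma(s_1),\gamma(s_2)$ sit near vertices of the $q$-gon, so one arranges the injectivity condition there exactly as in the given proof of Lemma \ref{sublemma_unfolding_first_pert}, via Perturbation \ref{lemma_we_can_perturb_around_three_points_instead}; this needs only $s_1\neq s_2$ and that $s_1,s_2$ avoid the $s$-coordinates of the finitely many orbit points involved, both of which hold because the homoclinic orbit is disjoint from the periodic orbit. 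An alternative that sidesteps the shift altogether is to note that the periodic orbit, and hence the homoclinic orbit attached to it, lies in a thin near‑boundary region where, by Lazutkin's near‑integrable normal form and the confining invariant curves just inside $\del\Omega$, all invariant manifolds are $C^1$-close to horizontal, whence $s_1'(0),s_2'(0)\neq 0$ at once.
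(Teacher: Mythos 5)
Your argument is essentially the paper's own proof: the tangency propagates backwards along the orbit, so $(s_i'(0),\phi_i'(0))$ is tangent to both the unstable and the stable manifold at $(s_i,\phi_i)$, and verticality of that tangent vector would contradict the non-zero-angle-with-the-vertical condition supplied by Lemma \ref{lemma_creating_quadratic_ht}. The only difference is that the paper invokes that condition directly at $(s_1,\phi_1),(s_2,\phi_2)$, implicitly taking the tangency point close enough to the periodic orbit, whereas you make this explicit by shifting the base point to $f^{-K}(p)$ and verifying the shift is harmless --- a more careful rendering of the same idea rather than a different route.
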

\begin{proof}
    This follows from our manifolds near $O$ having a non-zero angle with the vertical line.  Recall that the point $(s_3(0),\phi_3(0))$ is a point of homoclinic tangency between $W^u(O)$ and $W^s(O)$.  This implies that $(s_2(0),\phi_2(0))$ is a point of homoclinic tangency between  $W^u(f^{-1}(O))$ and $W^s(f^{-1}(O))$, and $(s_1(0),\phi_1(0))$ is a point of homoclinic tangency between  $W^u(f^{-2}(O))$ and $W^s(f^{-2}(O))$.
    
    Thus, if $(s_1'(0),\phi_1'(0))$ is a tangent vector of $W^u(f^{-2}(O))$ at the point of intersection with $W^s(f^{-2}(O))$ (and similarly $(s_2'(0),\phi_2'(0))$ for $W^u(f^{-1}(O))$), which means it is also a tangent vector of $W^s(f^{-2}(O))$.  Thus, if $s_1'(0) = 0$, this tangent vector would be vertical, which would contradict our assumption that the manifolds (in particular the stable manifold) have non-zero angles with the vertical line near the periodic point.
\end{proof}

\section{Satisfying the Injectivity Condition for a  HT of order $n$}\label{sec_verify_injectivity_condition}

In this section, we prove Lemma \eqref{lemma_on_injectivity}.  
\begin{proof}
We begin by considering a periodic orbit containing the point $O=(s_0,\phi_0)$, with an $n^{th}$ order homoclinic tangency at $p$.  Using Perturbation \ref{perturbation_which_lets_you_move_unstable_manifold} from Appendix \ref{appendix_all_perturbations}, we perturb around a small strip $U$ of width $\sigma$ around $O$ in order to embed our map $f$ into a smooth deformation $(f_\tau)_{|\tau| \ll 1}$ which leaves the periodic orbit fixed and satisfies $f_0 = f$ as well as that the difference in the direction normal to $W_{loc}^s(O)$ at $p$ between $W_{loc,\tau}^u(O)$ (the local unstable manifold corresponding to $f_\tau$) and $W_{loc}^u(O)$ in a small neighborhood of $p$ is $\tau + O(\tau^2)$ (See Figure 7).

We now fix some $\tau$.  Then, we use the same type of perturbation, but while considering the inverse of the billiard map $f_\tau$ in order to move the stable manifold.  This lets us embed $f_\tau$ into a smooth deformation $(f_{\tau,\eps})_{|\eps| \ll 1}$, with $f_{\tau,0}=f_{\tau}$, with the local stable manifold $W_{loc,(\tau,\eps)}^s$ corresponding to $f_{\tau,\eps}$ such that the difference from $W_{loc,\tau}^s(O)$ in the direction of the normal to $W_{loc}^s(O)$ at $p$ is $\eps + O(\eps^2)$.  Thus, choosing $\eps$ as a function of $\tau$, we may find a specific map $\tilde f_\tau$ which still has a moment of tangency, but now in a different location than the tangency for $f$.  We may arrange for the position to be a distance $\tau + O(\tau^2)$ away from $p$ in the direction perpendicular to $W_{loc}^s(O)$ at $p$.  

We label this new moment of tangency $p_\tau$, and the manifolds of $\tilde f$ by $W_\tau^u(O)$ and $W_\tau^s(O)$, and locally by $W_{loc,\tau}^u(O)$ and $W_{loc,\tau}^s(O)$ (See Figure \ref{fig_moving_moment})

\begin{figure}[h]
\centering
\includegraphics[width=10.5cm, height=6.25cm]{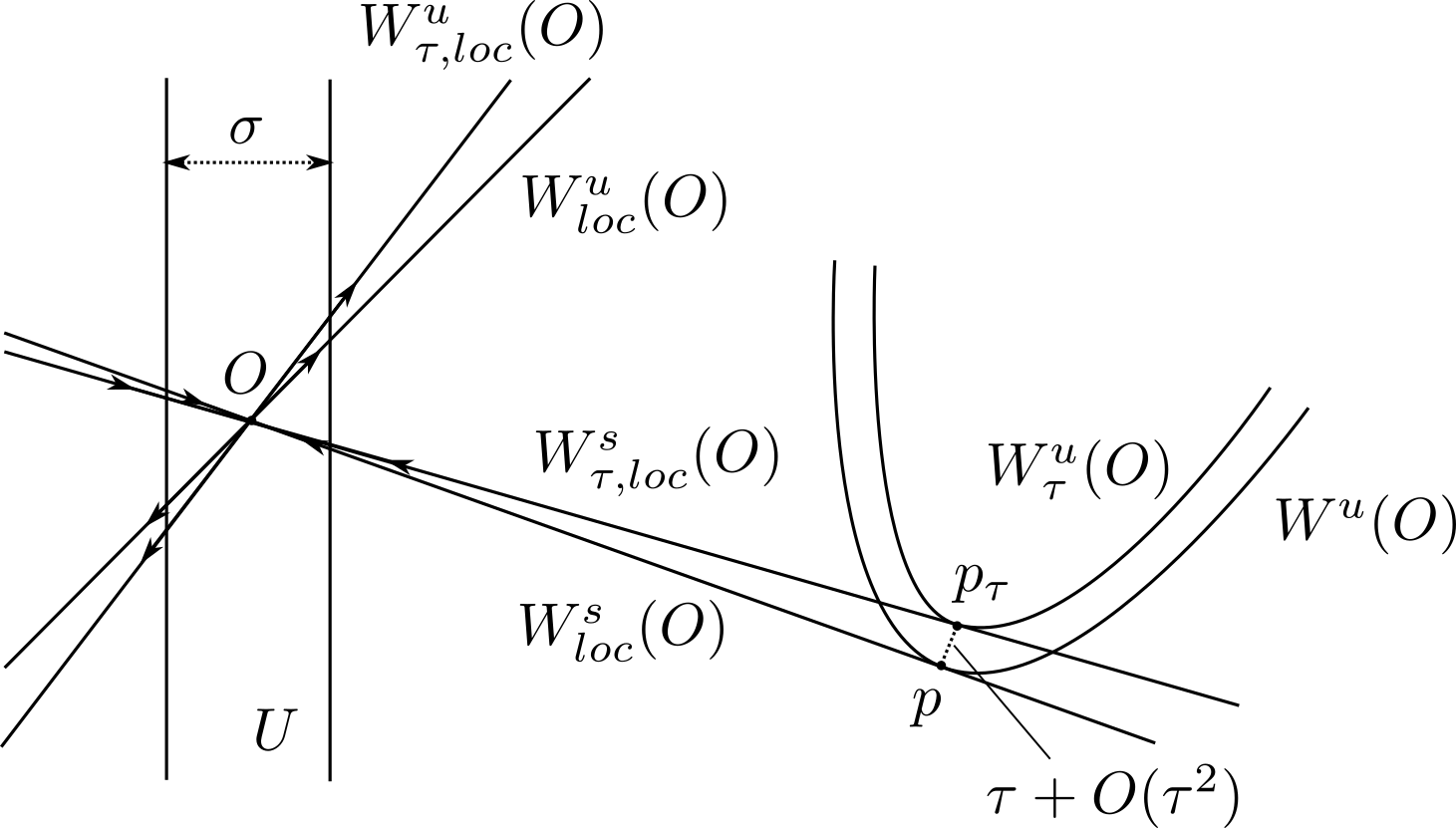}
\caption{Moving the moment of tangency.}
\label{fig_moving_moment}
\end{figure}

Note that the HT at $p_\tau$ is of the same order as the HT at $p$ under the original map $f$, up to small error.  These errors however can be eliminated exactly by performing the types of perturbations done in the previous section.  

We now show precisely how the homoclinic orbit under the map $\tilde f_\tau$ varies from the homoclinic orbit under $f$ as we vary $\tau$.  To begin, given any $\sigma$, there is a number $m \in \Z$ so that $f^{-qm}(p) \notin U$ and $f^{-q(m+1)}(p) \in U$.  Similarly, there exists an $M \in \Z$ so that $f^{qM}(p) \notin U$ and $f^{q(M+1)}(p) \in U$.  Note that as $\sigma \rightarrow 0$, then $m,M\rightarrow \infty$.  

We examine how the new homoclinic points $f^{qk}(p_\tau)$ change from $f^{qk}(p)$, for each $-m \leq k \leq M$.

\setcounter{lemma}{3}
\setcounter{sublemma}{0}

\begin{sublemma}\label{sublemma_rate_at_which_points_of_ht_move}
For each $-m \leq k \leq M$, we have
\begin{align}\begin{split}\label{equation_non_degeneracy_in_injectivity_condition}
    \frac{\del}{\del \tau}\bigg(\tilde f_\tau^{qk}(p)\cdot \vec n(f^{qk}(p)) \bigg)\bigg|_{\tau=0} &= \lambda^{-k}
    \\
    \frac{\del}{\del \tau}\bigg(\tilde f_\tau^{qk}(p)\cdot \vec \nu(f^{qk}(p)) \bigg)\bigg|_{\tau=0} &= 0,
\end{split}\end{align}
where $\lambda$ is the eigenvalue of $df^q(O)$ greater than 1, $\vec n(f^{qk}(p))$ is the unit normal vector of $W_{loc}^s(O)$ at $f^{qk}(p)$, and $\vec \nu(f^{qk}(p))$ is the unit tangent vector of $W_{loc}^s(O)$ at $f^{qk}(p)$.
\end{sublemma}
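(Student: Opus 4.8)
The plan is to reduce the statement to a chain-rule identity along the orbit and then evaluate it from the hyperbolic structure at the saddle $O$.

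\textbf{Reduction to a chain rule.} For $-m\le k\le M$ the finite orbit arc $\{f^{j}(p):-qm\le j\le qM\}$ avoids the region on which $\tilde f_\tau$ differs from $f$; this is precisely what the choice of $m$ and $M$ (together with the smallness of the strip $U$) guarantees. Since $(f_\tau)_{|\tau|\ll1}$ is a smooth family with $f_0=f$ and $p_\tau\to p$ as $\tau\to0$, for $|\tau|$ small the perturbed arc $\{\tilde f_\tau^{j}(p_\tau):-qm\le j\le qM\}$ still avoids that region, so along it $\tilde f_\tau=f$ and therefore $\tilde f_\tau^{qk}(p_\tau)=f^{qk}(p_\tau)$ for each $k$ in the range. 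Differentiating at $\tau=0$, and using that by construction the moment of tangency moves a signed distance $\tau+O(\tau^2)$ along the normal to $W^s_{loc}(O)$ at $p$, so that $\frac{\del}{\del\tau}p_\tau\big|_{\tau=0}=\vec n(p)$, the chain rule gives
\[
\frac{\del}{\del\tau}\big(\tilde f_\tau^{qk}(p_\tau)\big)\Big|_{\tau=0}=df^{qk}(p)\,\vec n(p).
\]
Thus the two claimed identities are the two components of the single vector identity $df^{qk}(p)\,\vec n(p)=\lambda^{-k}\,\vec n(f^{qk}(p))$ in the orthonormal frame $\big(\vec \nu(f^{qk}(p)),\vec n(f^{qk}(p))\big)$ at $f^{qk}(p)$, and it is this identity I would establish.

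\textbf{Evaluating the transported normal.} I would work in the moving orthonormal frame $\big(\vec \nu(f^{qj}(p)),\vec n(f^{qj}(p))\big)$ along the homoclinic orbit. Because $W^s(O)$ is $f$-invariant, $df^q$ carries $T_xW^s$ to $T_{f^q(x)}W^s$, so every $df^q(f^{qj}(p))$ and hence $df^{qk}(p)$ is upper triangular in these frames; write its diagonal entries as $c_k$, the scaling factor of unit tangents to $W^s$ along the orbit, and $d_k$. Area preservation for $\sin\phi\,d\phi\wedge ds$ gives $\det df^{qk}(p)=\sin\phi_0/\sin\phi(f^{qk}(p))$, so $c_kd_k$ is that ratio of sines; and since $f^{qk}(p)\to O$ as $k\to\pm\infty$ with $W^s(O)$ tangent at $O$ to the $\lambda^{-1}$-eigendirection of $df^q(O)$, the per-step factors along $W^s$ converge to $\lambda^{\mp1}$. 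Tracking these factors through the product identifies the normal component of $df^{qk}(p)\vec n(p)$ with $\lambda^{-k}$. For the tangential component I would use that each $f^{qj}(p)$ is itself a quadratic homoclinic tangency, so $W^u(O)$ is invariant and tangent to $W^s(O)$ all along the orbit and the normal to $W^s$ there is intrinsically the normal to $W^u$; feeding this, together with the first-order properties with which $\tilde f_\tau$ was constructed (so that $W^s_{\tilde f_\tau}(O)$ and $W^u_{\tilde f_\tau}(O)$ have the same first-order motion near the tangency), into the triangular picture makes the $\vec \nu$-component drop out.

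\textbf{Where the difficulty lies.} I expect the genuinely delicate part to be this last point: showing that the first-order motion of the homoclinic orbit point is \emph{exactly} normal and that the normal rate is \emph{exactly} $\lambda^{-k}$, uniformly for all $-m\le k\le M$ --- this uniformity matters, since $m,M\to\infty$ as the strip shrinks. A transverse homoclinic point would leave an $O(1)$ tangential term in the analogous computation, so the argument must use both the quadratic tangency and the matching normalizations of $\vec n$, of the parametrizations of $W^{s,u}(O)$, and of the deformation $\tilde f_\tau$; by contrast the reduction to the chain rule and the hyperbolic estimates near $O$ are routine. The geometric spreading of the rates $\lambda^{-k}$ with $k$ is exactly the non-degeneracy that will be exploited afterwards to deform the homoclinic orbit into one meeting the injectivity condition.
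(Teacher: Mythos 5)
Your first step is sound and is essentially a made-precise version of what the paper does: the paper's own proof disposes of $k=0$ "by construction" and then simply invokes the lambda lemma (or Birkhoff normal form) to assert how the remaining moments of tangency move, whereas you justify the same reduction by noting that the finite arc $\{\tilde f_\tau^{j}(p_\tau)\}_{0\le j\le qM}$ (and its backward counterpart) avoids the strip $U$, so that $\tilde f_\tau^{qk}(p_\tau)=f^{qk}(p_\tau)$ and the derivative at $\tau=0$ is $df^{qk}(p)\,\vec n(p)$. Up to the same glossed smallness/genericity of $\sigma$ that the paper relies on, this part is fine.

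The gap is in the second half, and it is not merely "delicate" as you flag it --- the route as stated cannot close. First, the exact identity $df^{qk}(p)\vec n(p)=\lambda^{-k}\vec n(f^{qk}(p))$ is false in general: triangularity of $df^{q}$ in the frames $(\vec\nu,\vec n)$ along $W^s$ kills the normal component of the image of the \emph{tangent} vector, not the tangential component of the image of the \emph{normal}; the shear entries do not vanish just because every $f^{qj}(p)$ is a tangency point (that only says the $W^u$- and $W^s$-adapted frames coincide there), the per-step tangential factors are only asymptotically $\lambda^{\mp 1}$, and the Jacobian in these frames is $\sin\phi_j/\sin\phi_{j+1}$, so at best you can hope for rates up to uniformly bounded constants plus a controlled tangential part. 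Second, and more seriously, carrying out exactly the bookkeeping you propose gives the \emph{opposite} exponent for forward iterates: as $j\to+\infty$ the tangential factor $a_j\to\lambda^{-1}$ and the Jacobian tends to $1$, so the normal factor is $\det/a_j\to\lambda>1$, i.e. a vector normal to $W^s$ at a point of $W^s_{loc}$ has a definite unstable component and its normal component is \emph{expanded} like $C\lambda^{+k}$ under forward iteration; only for $k<0$ does your product reproduce $\lambda^{-k}=\lambda^{|k|}$. So "tracking these factors" cannot identify the normal component with $\lambda^{-k}$ for $k>0$. The paper never attempts this frame-by-frame computation --- it transports the displacement of the perturbed invariant manifolds via the lambda lemma/normal form and absorbs all constants into the construction at $k=0$ --- so before investing further in the linear-algebra route you must either reconcile your formulation with the paper's conventions (what exactly is displaced, in which direction it is measured, and how $k$ is indexed), or aim at the weaker statement your computation actually supports, namely normal rates comparable to $c_\pm\lambda^{|k|}$ with bounded constants, and then check that this weaker form still feeds the contradiction argument in Section 6.
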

\begin{proof}
    We observe that this is true for $k=0$ by our construction of the map $\tilde f_\tau$.  Then, by the lambda lemma (or by considering the map in Birkhoff normal form), we have that the other moments of tangency move the following distances
    \begin{align}\begin{split}\nonumber
        ||f^{qk}(p_\tau) - f^{qk}(p)|| = \frac{\tau}{\lambda^k} + O(\tau^2)
    \end{split}\end{align}
    in the direction of $\vec n(f^{qk}(p))$.
\end{proof}

Now, for each $\sigma > 0$, label the embedding which satisfies Equation \eqref{equation_non_degeneracy_in_injectivity_condition} for $-m \leq k \leq M$ by $(f_{\sigma,\tau})_{\tau \ll 1}$.  Then, suppose that for each such embedding, the injectivity condition fails.  We examine these embeddings.  For the injectivity condition to fail, this means that for each $-m \leq k \leq M$ (except for at most two numbers), either there is an additional point $q_{\sigma,\tau}^k$ which is on the same vertical line going through $f_{\sigma,\tau}^{qk}(p)$ (case 1), or there is an infinite sequence of points $(q_{\sigma,\tau}^{k,j})_{j \in \N}$ which accumulates onto the vertical line going through $f_{\sigma,\tau}^{qk}(p)$ (case 2), where these points $q_{\sigma,\tau}^k$ and $q_{\sigma,\tau}^{k,j}$ belong to either the hyperbolic periodic orbit or to the homoclinic orbit.

Now since our hyperbolic periodic orbit is near the boundary, and the points $f_{\sigma,\tau}^{qk}(p)$ are all in the invariant manifolds of $O$, the points $q_{\sigma,\tau}^k$ and $q_{\sigma,\tau}^{k,j}$ cannot be a part of the periodic orbit.  Hence they must be a part of the homoclinic orbit, and since they also lie outside of the region $U$, there must be some $-m \leq n_k \leq M$ such that $q_{\sigma,\tau}^k = f_{\sigma,\tau}^{qn_k}(p)$, and $q_{M,\tau}^{k,j} = f_{\sigma,\tau}^{qn_{k,j}}(p)$  Observe this immediately implies the second case is impossibility, since there are only a finite number of points of the homoclinic orbit which are not in the region $U$.

\begin{figure}[h]
\centering
\includegraphics[width=11cm, height=8cm]{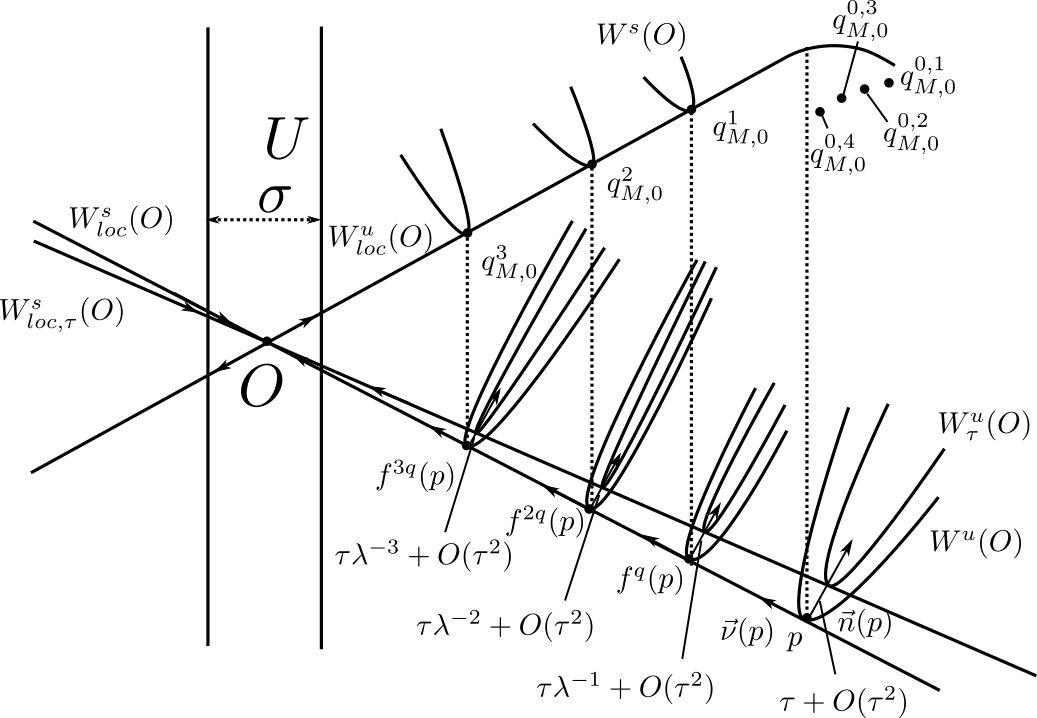}
\caption{Dynamics near $O$, assuming the injectivity condition fails.  In the figure, the injectivity condition fails in the way we labeled the second case around the point $p$, and in the first case around the points $f^q(p),f^{2q}(p),$ and $f^{3q}(p)$.}
\label{fig_injectivity_contradiction}
\end{figure}

Thus, we proceed to analyze the first case. We observe 
\begin{sublemma}\label{sublemma_they_move_at_same_horizontal_rate}
    If the injectivity condition fails for all $\tau$, the
    rate at which $f^{qk}(p_\tau)$ moves in the horizontal direction must match the rate at which $f^{qn_k}(p)$ moves in the horizontal direction.
\end{sublemma}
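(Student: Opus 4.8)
The plan is to turn the standing hypothesis --- that every member of the family $(f_{\sigma,\tau})$ violates the injectivity condition --- into a \emph{single} pair of indices $(k,n_k)$ whose associated homoclinic points share a vertical line for a large set of parameters, and then differentiate that coincidence. First I would fix $\sigma$ small enough that $m,M$ are large (so that there are at least three homoclinic points $f_{\sigma,\tau}^{qj}(p)$, $-m\le j\le M$, outside the neighborhood $U$), and unwind what failure means: for every $\tau$, at most two of the indices $k\in\{-m,\dots,M\}$ are ``good'', so for each of the at least $m+M-1$ ``bad'' indices $k$ the analysis preceding the lemma (with the second case already excluded) supplies an index $n_k=n_k(\tau)\in\{-m,\dots,M\}\setminus\{k\}$ with $f_{\sigma,\tau}^{qk}(p)$ and $f_{\sigma,\tau}^{qn_k}(p)$ on a common vertical line of the cylinder $\A$, i.e. with equal $s$-coordinates.

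Next I would pin the indices down by a double pigeonhole. Each $\tau$ contributes at least $m+M-1$ bad indices out of $m+M+1$, so some fixed $k$ is bad on a parameter set $E$ of positive measure; partitioning $E$ by the finitely many possible values of $n_k(\tau)$ produces a single index $n:=n_k$ with $s(f_{\sigma,\tau}^{qk}(p))=s(f_{\sigma,\tau}^{qn}(p))$ for an uncountable (indeed positive-measure) set of $\tau$. To promote this to an identity on an interval I would use either of two routes. (i) Arrange the curvature deformation defining $(f_{\sigma,\tau})$ to depend polynomially, hence real-analytically, on the scalar parameter $\tau$; this is compatible with all earlier constructions, since only the $s$-dependence of the curvature was ever constrained to be $C^\infty$. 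Then $\tau\mapsto s(f_{\sigma,\tau}^{qk}(p))-s(f_{\sigma,\tau}^{qn}(p))$ is analytic and vanishes on a set with an accumulation point, so it vanishes identically. (ii) Alternatively, observe that for each fixed $j$ the set $\{\tau: s(f_{\sigma,\tau}^{qk}(p))=s(f_{\sigma,\tau}^{qj}(p))\}$ is closed (these maps are $C^\infty$, in particular continuous, in $\tau$) and that the finite union of these sets over the bad indices contains a parameter subinterval; by the Baire category theorem one summand then has nonempty interior.

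Once the identity $s(f_{\sigma,\tau}^{qk}(p))\equiv s(f_{\sigma,\tau}^{qn}(p))$ holds on a genuine interval, differentiating in $\tau$ gives precisely the asserted equality of horizontal rates, $\del_\tau s(f_{\sigma,\tau}^{qk}(p))=\del_\tau s(f_{\sigma,\tau}^{qn}(p))$. In route (i) this holds at $\tau=0$; in route (ii) one re-centers the family at an interior point $\tau^\ast$ of the subinterval --- the construction of Perturbation \ref{perturbation_which_lets_you_move_unstable_manifold}, and hence of Lemma \ref{sublemma_rate_at_which_points_of_ht_move}, applies verbatim around $f_{\sigma,\tau^\ast}$, which still carries a homoclinic tangency of the same order --- so the statement again holds at the base parameter. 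This is the assertion of the lemma.

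The main obstacle is this combinatorial bookkeeping rather than any analytic subtlety: a priori both the set of bad indices and the matching partner $n_k$ depend on $\tau$, so the real content is the double pigeonhole that freezes the pair $(k,n)$ together with the rigidity input --- analyticity in $\tau$, or the Baire step --- needed to upgrade a pointwise coincidence into one that may be differentiated. The differentiation itself, and the check that re-centering does not disturb the rates $\lambda^{-k}$ from Lemma \ref{sublemma_rate_at_which_points_of_ht_move}, are routine.
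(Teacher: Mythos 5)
Your core step coincides with the paper's: the lemma is, at bottom, the observation that if two homoclinic points share a vertical line as $\tau$ varies, then differentiating the coincidence of their $s$-coordinates gives equal horizontal rates (the paper disposes of this in two sentences -- ``otherwise they would split''). What you add, and it is a real issue the paper glosses over, is that the hypothesis only says injectivity fails for each $\tau$ separately, so the witnessing pair may a priori change with $\tau$; your pigeonhole over the finitely many possible partners, followed by a rigidity step (analyticity in $\tau$ or Baire category) to upgrade a pointwise coincidence to one valid on a set where it can be differentiated, is a legitimate way to handle this. Of your two routes, the Baire one is the safer: in the construction of $\tilde f_\tau$ the second parameter $\eps$ is chosen implicitly as a function of $\tau$ so as to restore a degenerate (order-$n$) tangency, so analytic dependence of the family on $\tau$ is not automatic even if the curvature deformation is polynomial in $\tau$.

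There is, however, a quantifier gap between what you prove and what the statement (and its use two paragraphs later) requires. The lemma asserts the matching of rates for \emph{every} bad index $k$ together with its partner $n_k$ (all but at most two of the indices $-m\le k\le M$), and the contradiction concluding the proof of Lemma \ref{lemma_on_injectivity} needs this for pairs with $k$ arbitrarily large (as $\sigma\to0$), since it plays $\lambda^{-k}\to0$ against $\lambda^{-n_k}\to\infty$ via Lemmas \ref{sublemma_rate_at_which_points_of_ht_move} and \ref{sublemma_they_approach_the_same_movement_direction}. A single pigeonholed pair $(k,n)$, with no control over which indices occur, cannot produce that contradiction: for one fixed pair the equality $\lambda^{-k}\cos\theta_s=\lambda^{-n}\cos\theta_u$ up to $O(\delta)$ may simply happen to hold. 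The repair stays entirely inside your framework: pigeonhole over the whole coincidence \emph{pattern} rather than over one index. For each $\tau$ the set of coincident pairs has cardinality at least about $(m+M-1)/2$, and there are only finitely many possible such sets, so some fixed family $P$ of pairs of that size occurs for a non-meager parameter set $E_P$; each pair in $P$ has a closed coincidence set containing $E_P$, hence containing $\overline{E_P}$, which is closed and non-meager and therefore has nonempty interior. Thus all pairs in $P$ coincide on a common open interval, and differentiating there gives the rate-matching simultaneously for all but at most two indices at a common (possibly re-centered) parameter $\tau^\ast$, where, as you note, the rates of Lemma \ref{sublemma_rate_at_which_points_of_ht_move} still apply up to $1+O(\tau^\ast)$ factors. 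With that modification your argument delivers exactly what the paper's two-line proof tacitly assumes.
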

\begin{proof}
If the injectivity condition fails for all $\tau$, then as we vary $\tau$ each pair of points $(f_{\sigma,\tau}^{qk}(p),f_{\sigma,\tau}^{qn_k}(p))$ remain on the same vertical line.  This implies exactly the statement of the lemma, since otherwise they would split.
\end{proof}

Now, $f^{qk}(p)$ accumulates to $O$ as $k\rightarrow\infty$, which implies $f^{qn_k}(p)$ accumulates to $O$ as well.  Further, since $n_k < k$, we must have $n_k \rightarrow -\infty$ and $f^{qn_k}(p)$ approaches $O$ along $W_{loc}^u(O)$.  Then, we observe 
\begin{sublemma}\label{sublemma_they_approach_the_same_movement_direction}
    As $n_k \rightarrow -\infty$, the points $f^{n_k}(p)$ approaches $O$ along $W_{loc}^u(O)$.  Additionally,
    the normal vector of $W_{loc}^s(O)$ at $f^{n_k}(p)$ approaches the vector perpendicular to the the eigenvector of $df^q(O)$ corresponding to the eigenvalue greater than 1.  Similarly the normal vector of $W_{loc}^s(O)$ at $f^{qk}(p)$ approaches the vector perpendicular to the eigenvector of $df^q(O)$ corresponding to the eigenvalue less than 1.

    More precisely, if we examine the dynamics in a ball $B_\delta$ of radius $\delta$ centered at $O$, then for $\sigma \ll \delta$ there exist $k$ so that $f^{qk}(p)$ and $f^{n_k}(p)$ are within the $B_\delta$ and outside of $U$, and, within $B_\delta$, the normal vector of $W_{loc}^s(O)$ at $f^{n_k}(p)$ is the vector perpendicular to the the eigenvector of $df^q(O)$ corresponding to the eigenvalue greater than 1 with error of size $O(\delta)$ (and similarly the normal vector of $W_{loc}^s(O)$ at $f^{qk}(p)$ is the vector perpendicular to the eigenvector of $df^q(O)$ corresponding to the eigenvalue less than 1 with error of size $O(\delta)$).
\end{sublemma}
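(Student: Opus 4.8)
The plan is to exploit the fact that each point $f^{qn_k}(p)$ is itself a point of homoclinic tangency, so that the branch of $W^s(O)$ through it is pinned to the direction of the branch of $W^u(O)$ through it; the statement for $f^{qk}(p)$ is softer and reduces to $C^1$-regularity of $W^s_{loc}(O)$ alone. First I would record the relevant convergences. Since $p$ lies on the homoclinic orbit of $O$ under $f^q$, the forward images $f^{qk}(p)$ tend to $O$ along $W^s_{loc}(O)$ as $k\to+\infty$, and along the subsequence produced just before the sublemma the partner indices satisfy $n_k\to-\infty$, so the backward images $f^{qn_k}(p)=(f^{-q})^{|n_k|}(p)$ tend to $O$ along $W^u_{loc}(O)$. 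In particular $f^{qn_k}(p)\in W^u_{loc}(O)$ once $|n_k|$ is large, and $f^{qk}(p)\in W^s_{loc}(O)$ once $k$ is large.

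Next comes the main point. Because $f^q$ fixes $O$ and preserves both $W^u(O)$ and $W^s(O)$, and $p$ is a point at which these two curves are tangent, every $f^{qn_k}(p)$ is again a point of tangency of $W^u(O)$ with $W^s(O)$. Once $|n_k|$ is large the branch of $W^u(O)$ through $f^{qn_k}(p)$ is $W^u_{loc}(O)$ itself, so the tangent line to $W^s(O)$ at $f^{qn_k}(p)$ equals that of $W^u_{loc}(O)$ there. As $W^u_{loc}(O)$ is a $C^\infty$ curve through $O$ whose tangent at $O$ is the eigenvector of $df^q(O)$ for the eigenvalue $\lambda>1$, its unit tangent is Lipschitz along the curve, so at a point of $W^u_{loc}(O)$ inside a ball $B_\delta$ of radius $\delta$ about $O$ this tangent, and hence the tangent of $W^s(O)$, differs from that eigendirection by $O(\delta)$. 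Therefore $\vec n$ at $f^{qn_k}(p)$ (the unit normal to $W^s(O)$ there) is within $O(\delta)$ of the vector perpendicular to the eigenvector for $\lambda>1$, and letting $\delta\to0$ gives the first limit. The statement for $f^{qk}(p)$ needs no tangency: $f^{qk}(p)\in W^s_{loc}(O)$ for $k$ large, $W^s_{loc}(O)$ is a $C^\infty$ curve through $O$ tangent there to the eigenvector for the eigenvalue $<1$, so $\vec n$ at $f^{qk}(p)$, the normal of $W^s_{loc}(O)$, is within $O(\delta)$ of the vector perpendicular to that eigenvector whenever $f^{qk}(p)\in B_\delta$.

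For the quantitative clause I would fix $\delta$ and then choose $\sigma\ll\delta$ so that the strip $U$ lies inside $B_\delta$; as $\sigma\to0$ one has $m,M\to\infty$, so both the forward and the backward orbit of $p$ have arbitrarily many consecutive iterates in the annulus $B_\delta\setminus U$. Taking $k$ large among the indices at which injectivity is assumed to fail puts $f^{qk}(p)$ in $B_\delta\setminus U$, and since the partner index obeys $n_k\to-\infty$ one can, after possibly shrinking $\sigma$ and enlarging $k$, also place $f^{qn_k}(p)$ in $B_\delta\setminus U$; both points then carry the $O(\delta)$ estimates above. I expect this last step, namely pinning down a single $k$ for which both $f^{qk}(p)$ and $f^{qn_k}(p)$ sit in the same annulus, to be the only slightly delicate point, and it is routine bookkeeping once $m,M\to\infty$ as $\sigma\to0$; everything else is immediate from invariance of tangency under $f^q$ together with the $C^1$-closeness of the local invariant manifolds to their tangent lines at $O$ inside $B_\delta$.
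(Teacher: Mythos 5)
Your proposal is correct, and it fleshes out what the paper compresses into a single sentence: the paper's own proof just says the statements "follow from considering the problem in Birkhoff normal form and Equation \eqref{equation_non_degeneracy_in_injectivity_condition}," i.e. it straightens $W^u_{loc}(O)$ and $W^s_{loc}(O)$ into the coordinate axes so that the $O(\delta)$-closeness of their tangents to the eigendirections inside $B_\delta$ is immediate up to the smooth change of coordinates, whereas you obtain the same estimate directly from Lipschitz continuity of the unit tangent along the $C^\infty$ local manifolds. The genuinely valuable addition in your argument is the point the paper leaves implicit: the sublemma speaks of the normal to the stable manifold at the backward iterates $f^{qn_k}(p)$, which lie on $W^u_{loc}(O)$ rather than on $W^s_{loc}(O)$, and you justify why that normal is governed by the unstable direction — since $f^q$ preserves both invariant manifolds, every iterate of the tangency point $p$ is again a point of tangency, so the tangent of $W^s(O)$ there coincides with that of the local unstable branch; this is exactly the right reading of the loosely worded statement, and it is also consistent with how the direction is used together with the displacement formula afterwards. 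Your handling of the quantitative clause matches the paper's bookkeeping (as $\sigma\to 0$ one has $m,M\to\infty$, and the partner points approach $O$ along $W^u_{loc}(O)$, which is established in the text immediately preceding the sublemma); the only slip is cosmetic: the strip $U$ is a full vertical strip of the cylinder and so cannot "lie inside $B_\delta$" — what your argument actually needs, and uses, is only that its width satisfies $\sigma\ll\delta$ so that both $f^{qk}(p)$ and $f^{qn_k}(p)$ eventually sit in $B_\delta\setminus U$.
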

\begin{proof}
These statements follow from considering the problem in Birkhoff normal form and Equation \eqref{equation_non_degeneracy_in_injectivity_condition}.
\end{proof}

Now, we may arrive at a contradiction.  Lemma \eqref{sublemma_they_approach_the_same_movement_direction} implies that as $k \rightarrow \infty$, the direction each $f^{qn_k}(p_\tau)$ moves as $\tau$ varies approach the same direction, with $O(\delta)$ error.  Similarly for the points $f^{qk}(p)$.  This combined with Lemma \eqref{sublemma_they_move_at_same_horizontal_rate} implies the rate at which $f^{qk}(p_\tau)$ and $f^{qn_k}(p_\tau)$ move as $\tau$ varies must be the same up to error of size $O(\delta)$.  However, Lemma \eqref{sublemma_rate_at_which_points_of_ht_move} gives that future iterations move at different rates than past iterations based on $\lambda$.  So, we examine two pairs for some $k$.  We have, combining Lemma \eqref{sublemma_rate_at_which_points_of_ht_move} and Lemma \eqref{sublemma_they_move_at_same_horizontal_rate}
\begin{align}\begin{split}\nonumber
    &\frac{\del}{\del \tau}\bigg(\tilde f_\tau^{qk}(p)\cdot \hat n(f^{qk}(p)) \bigg)\bigg|_{\tau=0} = \lambda^{-k}\cos(\theta_s) +O(\delta)
    \\
    =&\frac{\del}{\del \tau}\bigg(\tilde f_\tau^{qn_k}(p)\cdot \hat n(f^{qk}(p)) \bigg)\bigg|_{\tau=0} = \lambda^{-n_k}\cos(\theta_u) + O(\delta),
\end{split}\end{align}
where $\hat n(f^{qk}(p))$ is horizontal component of $\vec n(f^{qk}(p))$ and $\hat n(f^{qk}(p))$ is horizontal component of $\vec n(f^{qk}(p))$, and $\theta_s$ is the angle the vector perpendicular to the eigenvector corresponding to the eigenvalue less than 1 makes with the horizontal axis, and $\theta_u$ is the angle the vector perpendicular to the eigenvector corresponding to the eigenvalue greater than 1 makes with the horizontal axis.

However, this cannot be true for all $k$, since as $k\rightarrow \infty$, we have $n_k \rightarrow -\infty$, and we may choose $\delta$ small for large $k$. Thus, we have a contradiction, and therefore must have there exist arbitrarily small $\tau$ so that the periodic orbit and the homoclinic orbit under the map $\tilde f_\tau$ satisfy the injectivity condition with respect to each other.

\end{proof}

\section{Construction of the tower}\label{sec_constructing_the_tower}
The main result of this section is to prove Lemma \eqref{lemma_finding_elliptic_rotation_of_order_n}.  To prove this lemma, we follow arguments from \cite{GST}, adapted to our setting of billiard orbits.  By Lemma \eqref{lemma_unfold_generically}, we may first off embed our map $f$ into a one-dimensional family of billiard maps $f_\tau$ parameterized by $\tau$ and with $f_0=f$, such that the quadratic HT of $f$ unfolds generically as $\tau$ varies.  we also label the periodic point associated with the quadratic HT by $O$. From here, the main ideas in proving the lemma are as follows:
\begin{enumerate}
    \item Find arbitrarily close to 0 values of $\tau$ so that $f_\tau$ has for periodic point $O$ an associated quadratic HT which unfolds generically as $\tau$ varies, as well as a homoclinic orbit corresponding to a transverse intersection of the invariant manifolds of $O$.
    \item For any small $\tau_0$ as above, find another map $\tilde f$ which is $C^\infty$ close to $f_{\tau_0}$ such that $\tilde f$ has an additional (distinct) quadratic HT tangency, also associated to the periodic point $O$.
    \item Inductively repeat a process which takes a map $\tilde f_k$ which has a $k$-order HT and a quadratic HT (both associated to the same periodic point $O$) and produces a map $f_{k+1}$ which has a HT of order $k+1$ and is arbitrarily $C^\infty$ close to $\tilde f_k$, then find another map $\tilde f_{k+1}$ which is arbitrarily $C^\infty$ close to $f_{k+1}$ and which has both a HT of order $k+1$ and a quadratic HT, both associated to $O$.
    \item After using the previous item $r$ times to obtain a map $f_{r}$ which has an $r^{th}$ order HT where $r=2n+5$, we find arbitrarily close to $f_r$ in the $C^n$ topology a map $\bar f$ in $C^\infty$ which has an elliptic periodic orbit of order $n$.
\end{enumerate}
We note two things.  The first is that throughout these steps, we occasionally arrive at maps which have HT of order some high order, say $n$, which we then want to embed into families of maps for which the $n$-order HT unfolds generically.  This is able to be done by Lemma \eqref{lemma_unfold_generically}.

The second thing we note is that we need at each step the injectivity condition to be satisfied.  This is able to be done by using the perturbation described in Lemma \eqref{lemma_on_injectivity}.

We now prove each point in our outline, which proves the theorem.
\begin{proof}
The first item in our outline follows from Lemma 2 of \cite{GST}, which we state here, applied now to our case of billiard maps:

\setcounter{lemma}{4}
\setcounter{sublemma}{0}

\begin{sublemma}
(Lemma 2 from \cite{GST}) If $f_\tau$ is a one-parameter family of $C^\infty$ billiard maps and $f_0$ has a saddle point $O$ with an orbit of quadratic homoclinic tangency which unfolds generically as $\tau$ varies, then arbitrarily close to $\tau = 0$ there exists a value of $\tau$ for which the billiard map $f_\tau$ has an orbit of quadratic homoclinic tangency (which unfolds generically as $\tau$ varies) and a secondary homoclinic orbit corresponding to a transverse intersection of the invariant manifolds of $O$.
\end{sublemma}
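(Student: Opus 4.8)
The plan is to reduce to the area-preserving map setting of \cite{GST}: this statement is their Lemma 2, and its hypotheses hold in our situation because $f_\tau$ is a $C^\infty$ family of area-preserving maps (each $f_\tau$ preserves $\sin\phi\,d\phi\wedge ds$) and, by Lemma \eqref{lemma_unfold_generically}, $f_0$ has at the saddle $O$ a quadratic homoclinic tangency that unfolds generically as $\tau$ varies. For completeness I outline the mechanism, which is a renormalization near the tangency point.

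Fix the tangency point $p$ of $f_0$ and points $p^-\in W^s_{loc}(O)$, $p^+\in W^u_{loc}(O)$ of its orbit. After replacing $\tau$ by $-\tau$ if needed, generic unfolding gives that for every $\tau$ in some interval $(0,\tau_0)$ the local branches of $W^u(O)$ and $W^s(O)$ near $p$ meet at two nearby transverse points, hence $f_\tau$ has a transverse homoclinic orbit of $O$ for all $\tau\in(0,\tau_0)$. It therefore suffices to produce $\tau_k\to 0$ with $\tau_k\in(0,\tau_0)$ such that $f_{\tau_k}$ in addition has a quadratic homoclinic tangency of $O$ that unfolds generically.

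For large $k$ consider the first-return map $T_k(\tau)$ to a small box near $p^-$, namely the global transition map near $p$ composed with the $k$-th iterate of the local (near-$O$) map. In the affine rescaling of \cite{GST} (depending on $k$ and on the multiplier $\lambda>1$ of $O$), $T_k(\tau)$ converges, in the $C^r$ topology on compact sets for every finite $r$, to an area-preserving H\'enon-like map $(x,y)\mapsto(y,\,M-x-y^2)$ up to a fixed affine conjugacy, with effective parameter $M=M_k(\tau)$ obeying $\partial_\tau M_k\neq 0$; moreover the interval of $\tau$ on which $M_k(\tau)$ stays bounded shrinks to a point as $k\to\infty$, and this interval lies in $(0,\tau_0)$ once $k$ is large, because the saddle fixed point of the limit map corresponds to a periodic orbit of $f_\tau$ born near the homoclinic loop on the side where the manifolds cross. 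As in \cite{GST}, the area-preserving H\'enon-like family has a parameter $M^*$ at which its saddle fixed point has a non-degenerate quadratic homoclinic tangency that unfolds generically in $M$.

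Choosing, for each large $k$, the value $\tau_k\in(0,\tau_0)$ with $M_k(\tau_k)=M^*$, we get $\tau_k\to 0$ and $T_k(\tau_k)$ has a quadratic homoclinic tangency of its saddle fixed point $P$. Since $P$ is homoclinically related to $O$, the relevant branches of $W^u(P)$ and $W^s(P)$ are, in the unrescaled coordinates, pieces of $W^u(O)$ and $W^s(O)$, so this is a secondary quadratic homoclinic tangency of $O$; its splitting function in $\tau$ equals, up to a nonzero factor, the H\'enon splitting function at $M_k(\tau)$, so by the chain rule and $\partial_\tau M_k(\tau_k)\neq 0$ it unfolds generically as $\tau$ varies. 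Combined with the primary transverse homoclinic orbit present for all $\tau\in(0,\tau_0)$, this proves the claim at $\tau=\tau_k$, which is arbitrarily close to $0$. The main obstacle is the renormalization step — getting $C^r$ convergence of $T_k(\tau)$ to the H\'enon-like family sharply enough to locate $M^*$ and read off the unfolding — but this is carried out for area-preserving maps in \cite{GST} and transfers verbatim, since $f_\tau$ is a smooth area-preserving family; the billiard-specific difficulty of perturbing only through the curvature has already been handled in producing the generic unfolding (Lemma \eqref{lemma_unfold_generically}).
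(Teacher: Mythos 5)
Your opening reduction is exactly the paper's proof: the paper establishes this sublemma simply by observing that Lemma 2 of \cite{GST} is a statement about an arbitrary one-parameter family of conservative $C^\infty$ two-dimensional diffeomorphisms with a generically unfolding quadratic homoclinic tangency, and that the billiard family $f_\tau$ (area-preserving, with the generic unfolding already supplied by Lemma \eqref{lemma_unfold_generically}) satisfies these hypotheses verbatim, no billiard-specific perturbation being needed inside this step. Had you stopped after the first two sentences, your proposal and the paper's argument would coincide.

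The ``for completeness'' sketch, however, contains a step that would fail. A homoclinic tangency of the saddle fixed point $P$ of the rescaled return map is a tangency between $W^u(P)$ and $W^s(P)$; since $P$ corresponds to a single-round periodic orbit of $f_\tau$ distinct from $O$, these manifolds are disjoint from $W^u(O)$ and $W^s(O)$ (backward orbits of points of $W^u(P)$ converge to the orbit of $P$, not to $O$), so they are emphatically not ``pieces of $W^u(O)$ and $W^s(O)$'' in the unrescaled coordinates. Homoclinic relation only gives, via the inclination lemma, $C^1$-accumulation of $W^u(P)$ on $W^u(O)$; turning a tangency of $P$ into a tangency of $O$ requires an additional perturbation or parameter selection, which is not automatically available inside the fixed one-parameter family $f_\tau$ --- and both the statement and its later use in the tower construction need the tangency to be between the invariant manifolds of $O$ itself. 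The standard mechanism (as in \cite{GST}, going back to Gavrilov--Shilnikov and Palis--Takens) instead tracks a piece of $W^u(O)$ itself through the return map: in the renormalization coordinates $W^s_{loc}(O)$ and the image of the parabolic piece of $W^u(O)$ have explicit asymptotic forms, and one finds $\tau$-values accumulating at $0$ at which this image is quadratically tangent to $W^s_{loc}(O)$ (a secondary, double-round homoclinic tangency of $O$), unfolding generically by the chain rule, while the split primary tangency supplies the transverse homoclinic orbit. This route also avoids the nontrivial fact your sketch takes for granted, namely that the conservative H\'enon limit family has a generically unfolding homoclinic tangency of its own fixed point at some $M^*$, and it replaces your unargued assertion that the renormalization windows in $\tau$ lie on the side where the primary manifolds cross with the explicit sign bookkeeping carried out in \cite{GST}.
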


\begin{figure}[h]
\centering
\includegraphics[width=9cm, height=6.25cm]{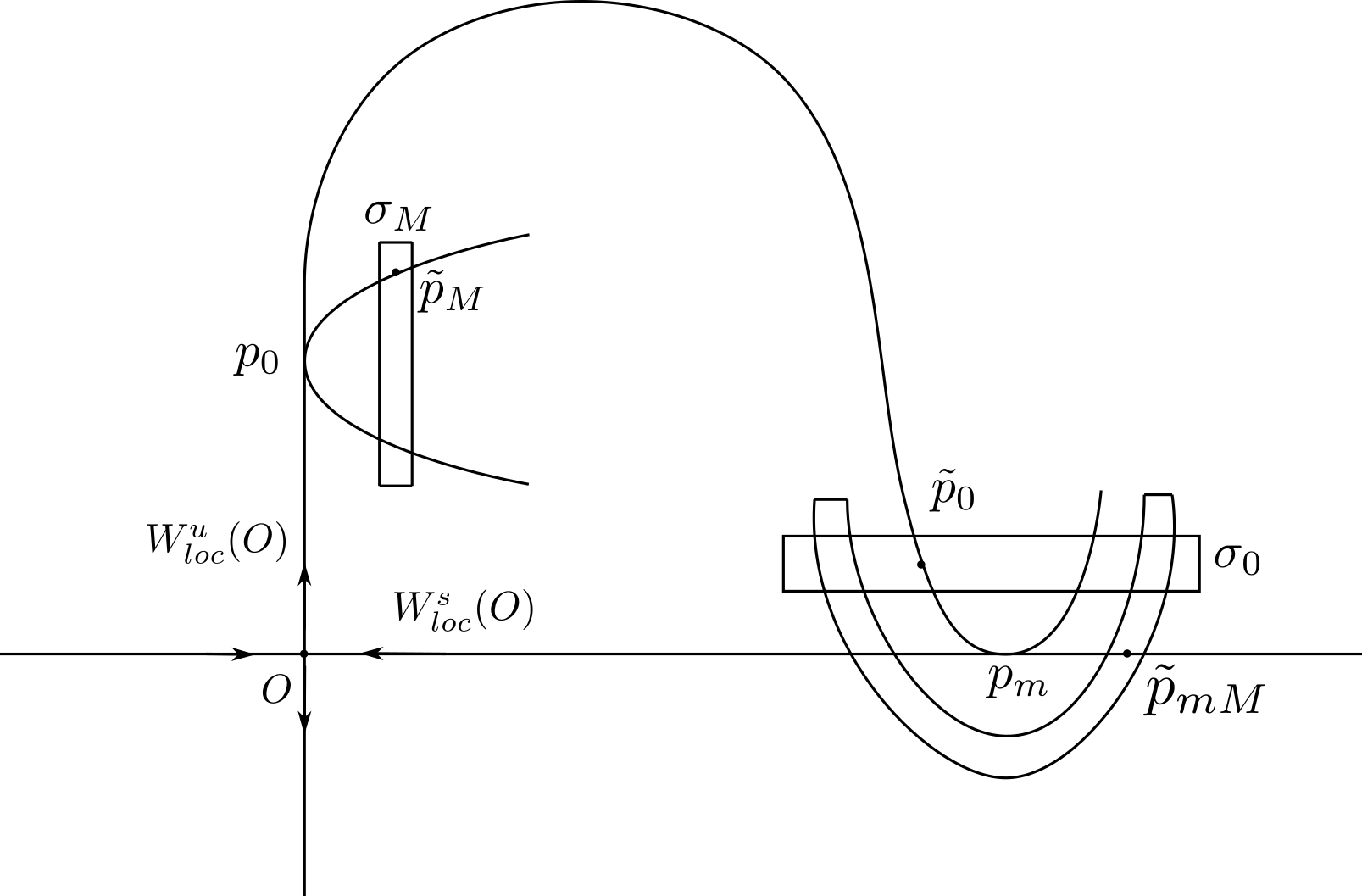}
\caption{An example case of how to obtain a secondary transverse intersection.  In this case, it already exists for $\tau=0$.  The box $\sigma_0$ is mapped after $M$ iterations of the map $f^q$ to $\sigma_M$.  This crosses the stable manifold near $p_0$, so there is some secondary $\tilde p_0$ in the unstable manifold which after $M$ iterations of $f^q$ is in the stable manifold at $\tilde p_M$.  Examining then $m$ more iterations, the point $\tilde p_{mM}$ corresponds to a secondary point of transverse intersection.  In other cases, we perturb to find such a situation.}
\end{figure}

The proof of this lemma in \cite{GST} only requires that we first embed our map into a one-parameter family of two-dimensional $C^\infty$ differomorphisms such that as $\tau$ varies the quadratic homoclinic tangency unfolds generically.  It was known before that analogs of this lemma were true in various settings (\cite{palis_takens},\cite{Newhouse4},\cite{GST2}), but what \cite{GST} contributed was the fact that even if the family of maps is conservative, there are still values of $\tau$ arbitrarily close to $0$ such that a secondary homoclinic orbit is produced.  Thus, since our billiard maps are conservative, we may apply this lemma.

From the existence of a transverse homoclinic orbit, we obtain a horseshoe $\Lambda$ containing $O$ and many points close to $W^s(O)$.  Each of the points in this horseshoe are saddle periodic points (with different periods).

For completeness we detail the construction of this horseshoe (see Figure \ref{figure_horseshoe}). Suppose we have the points $q_0,q_1,...q_M$ which are points of transverse intersection of $W^u(O)$ and $W^s(O)$ with $q_0=f^N(p), q_1=f^{N+1}(p),...,q_M=f^{N+M}(p)$.  We consider a rectangle $R_n$, with height $h_n$ and width $w_n$ centered at $O$, where $h_n \ll 1$ and $w_n$ is large enough so that $R_n$ contains $q_0$ in its interior.  Then $f^n(R_n)$ is a rectangle with height $h_n\lambda^n$ and width $w_n\lambda^{-n}$ centered at 0.  We choose our parameters so that $p=(0,1)$ is near the top of $f^n(R_n)$, i.e. we want $h_n\lambda^n/2 > 1$.  Then we note that $f^{N+M+n}(R_n)$ contains $q_0,...,q_M$.  Our horseshoe is then produced by $R_n$ with the map $f^{N+M+n}$, and contains points $O_i$ arbitrarily close to each $q_i$, as well as $O$.

\begin{figure}[h]
\centering
\includegraphics[width=9cm, height=6.25cm]{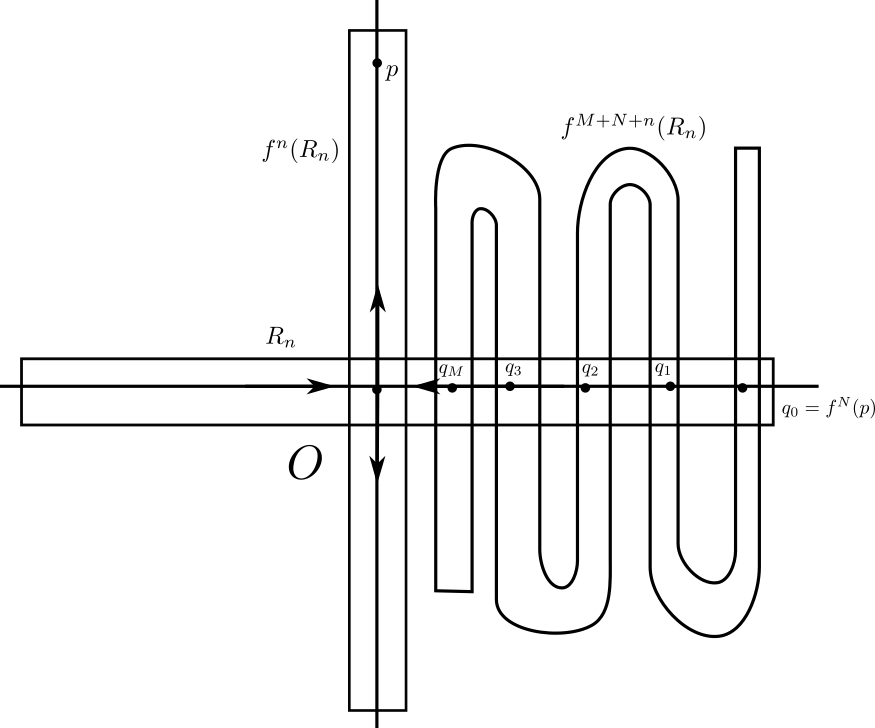}
\caption{The construction of a horseshoe.}
\label{figure_horseshoe}
\end{figure}

We now use two more lemmas in order to accomplish the second item in our outline.

From the map which has a transverse homoclinic orbit containing points $q_0$ and $q_1$, we may find close to $q_0$ and $q_1$ saddle points $O_0$ and $O_1$ respectively so that there is a quadratic heteroclinic tangency between $W^u(O_1)$ and $W^s(O_0)$, and a transverse heteroclinic intersection between $W^u(O_0)$ and $W^s(O_1)$ such that the corresponding heteroclinic cycle are what \cite{GST}, \cite{gavrilov_shilnikov} define as "of the third class" (which we also define momentarily).  This follows from Lemma 3 of \cite{GST}, which we mention here, again applied to the case of billiard maps.

\begin{sublemma}
(Lemma 3 from \cite{GST}) Under the conditions of lemma 2, arbitrarily close to $\tau = 0$, there exist values of $\tau$ for which the billiard map $f_\tau$ has a non-trivial transitive hyperbolic set $\Lambda$ which includes the point $O$ and two saddle periodic points $O_1$ and $O_2$ such that $W^u(O_2)$ and $W^s(O_1)$ have a quadratic heteroclinic tangency which unfolds generically as $\tau$ varies. In $\Lambda$ there exists also an orbit of transverse intersection of $W^u(O_1)$ and $W^s(O_2)$ such that the corresponding heteroclinic cycle belongs to the third class.
\end{sublemma}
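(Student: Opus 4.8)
The plan is to follow the proof of Lemma~3 in \cite{GST}; the only new content is checking that its hypotheses hold for billiard families and carrying the billiard-specific injectivity bookkeeping. First I would use the previous sublemma to fix a value $\tau_0$, arbitrarily close to $\tau=0$, at which $f_{\tau_0}$ has for the saddle $O$ a quadratic homoclinic tangency at a point $p$ that unfolds generically as $\tau$ varies, together with a transverse homoclinic orbit $\Gamma=\{q_i\}$ of $O$. From $\Gamma$ I would build, exactly as in the horseshoe construction described above (rectangle $R_n$ with the iterate $f^{N+M+n}$, chosen so the dynamics sees $p$), a non-trivial transitive hyperbolic set $\Lambda\ni O$ containing saddle periodic points $O_i$ of arbitrarily long period, each accumulating on $W^s(O)\cup W^u(O)$. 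As in Lemma~\eqref{lemma_creating_quadratic_ht}, all of $\Lambda$ lies in a thin strip near the boundary of $\Omega$.

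Next I would select two distinct saddles $O_1,O_2$ in $\Lambda$ of sufficiently long period. By the inclination ($\lambda$-)lemma, using the uniform hyperbolicity of $\Lambda$, suitable forward iterates of a local arc of $W^s(O_1)$ are $C^r$-close to an arc of $W^s_{loc}(O)$ through $p$, and backward iterates of a local arc of $W^u(O_2)$ are $C^r$-close to an arc of $W^u_{loc}(O)$ through $p$, with the closeness uniform in $\tau$ near $\tau_0$. Consequently the heteroclinic splitting function $\mu_{12}(\tau)$, the signed distance between $W^u(O_2)$ and $W^s(O_1)$ along $\vec n$ near $p$, differs from the homoclinic splitting function $\Phi_\tau(0)$ of $O$ by a quantity that is $C^2$-small in $\tau$ once the periods of $O_1,O_2$ are large; since $\Phi_\tau(0)$ has a simple zero at $\tau_0$ (generic unfolding), so does $\mu_{12}$, at some $\tau_1$ arbitrarily close to $\tau_0$, hence to $0$, and at $\tau_1$ the curves $W^u(O_2)$ and $W^s(O_1)$ meet in a quadratic tangency that unfolds generically in $\tau$. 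The transverse intersection of $W^u(O_1)$ and $W^s(O_2)$ inside $\Lambda$ is automatic, since in a transitive hyperbolic set the unstable manifold of one saddle meets the stable manifold of any other transversally.

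Then I would arrange that the resulting heteroclinic cycle belongs to the third class in the sense of \cite{gavrilov_shilnikov},\cite{GST}: this is an open condition — an inequality on the multipliers $\lambda_1,\lambda_2$ of $O_1,O_2$ and on the coefficients of the transverse and tangent heteroclinic orbits — and, exactly as in the conservative case of \cite{GST}, it can be met by choosing the periods of $O_1,O_2$ appropriately among the saddles of $\Lambda$; the area-preserving property of billiard maps (Jacobian $\equiv 1$) enters only through the same normalization used in \cite{GST} and creates no new obstruction. Finally, since $O_1$, $O_2$ and the two heteroclinic orbits are new orbits lying near the boundary, I would apply Lemma~\eqref{lemma_on_injectivity} to perform an arbitrarily small curvature perturbation after which all of these orbits, together with the periodic orbit of $O$, satisfy the injectivity condition with respect to one another, while the quadratic heteroclinic tangency and its generic unfolding are preserved up to small errors that can be removed by the perturbations of Section~\ref{sec_perturb_nth_order_ht_generically}; this is what lets the subsequent tower steps be carried out within the class of billiard maps. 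The hard part will be the uniform-in-$\tau$ $C^2$-closeness of $\mu_{12}$ to $\Phi_\tau(0)$, which is exactly what transfers the generic unfolding of the $O$-homoclinic tangency to the $O_1$–$O_2$ heteroclinic tangency; once that estimate is in place the remaining ingredients are either purely topological (and hence insensitive to staying in the billiard class) or are supplied by Lemma~\eqref{lemma_on_injectivity}.
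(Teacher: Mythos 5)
Your proposal is correct and follows essentially the same route as the paper, which likewise treats this statement as Lemma~3 of \cite{GST} applied to the billiard class: the horseshoe built from the transverse homoclinic orbit of the previous sublemma supplies $\Lambda$, $O_1$, $O_2$, the GST argument (which only needs a conservative $C^\infty$ family in which the quadratic tangency unfolds generically, available here by Lemma \eqref{lemma_unfold_generically}) transfers the tangency and the third-class cycle, and the billiard-specific injectivity bookkeeping is handled afterwards via Lemma \eqref{lemma_on_injectivity} and the fact that $W^s_{loc}(O)$ is not vertical. The extra detail you sketch (splitting-function closeness, openness of the third-class condition) is simply the content of the cited GST proof, which the paper does not reproduce.
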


The meaning of "of the third class" is a geometric one, relating to whether or not the quadratic tangency hits the stable manifold from above or below.  More concretely, when we define our map from $W^u(O_2)$ to $W^s(O_1)$ to send $(x_2,y_2)$ near $(0,y_2^-)$ to $(x_1,y_1)$ near $(x_1^+,0)$ such that

\begin{align}\begin{split}\nonumber
    x_1 - x_1^+ = ax_2 + b(y_2 - y_2^- ) + ..., \qquad y_1 = cx_2 + d(y_2 - y_2^- )^2 + ...
\end{split}\end{align}

while our map from $W^u(O_1)$ to $W^s(O_2)$ maps $(0,y_1^-)$ to $(x_2^+,0)$, then being of the third class means 
\begin{align}\begin{split}\label{of_the_third_class}
cy_1^-x_2^+ > 0.    
\end{split}\end{align}

Since the points $O_1$ and $O_2$ are close to points in $W^s(O)$ (namely the points $q_1$ and $q_2$), and can be arranged to be arbitrarily close by choosing large enough $n$ in the construction of our horseshoe, we note that we may arrange for the corresponding position on the boundary of the cylinder for these two points to be different - that is, these two points are not on the same vertical line.  We also can arrange for these to not be on the same vertical line as $O$.  This is because we set our $W^s_{loc}(O)$ to have a non-zero angle $\omega$, so that, close to $O$, each point in $W^s_{loc}(O)$ lies in a different vertical line.

Then, to complete the second item in our outline, we use Perturbation \ref{perturbation_which_varys_the_first_differential} in Appendix \ref{appendix_all_perturbations} to vary the eigenvalues of the differential at $O$.  Doing this, we may make a new quadratic heteroclinic tangency between $W^u(O_1)$ and $W^s(O_2)$ while retaining all the other homoclinic and heteroclinic tangencies.  This follows from Lemma 4 of \cite{GST}.  Before citing this we must define a moduli of local $\Omega-$conjugacy that is particular to cycles of the third class:
\begin{align}\begin{split}\nonumber
    \alpha = \frac{\ln(\lambda_1)}{\ln(\lambda_2)},
\end{split}\end{align}
where $\lambda_i$ is the multiplier greater than 1 for the manifolds at $O_i$.  Then, Lemma 4 of \cite{GST} states:

\begin{sublemma}
(Lemma 4 of \cite{GST}) Let $f_\eps$ be any smooth family of $C^\infty$ billiard maps such that all maps in the family have a heteroclinic cycle of the third class, i.e. there are two periodic points $O_1$ and $O_2$, an orbit of transverse intersection of $W^u(O_1)$ and $W^s(O_2)$, and an orbit of quadratic heteroclinic tangency between $W^u(O_2)$ and $W^s(O_1)$ which does not unfold as $\eps$ varies, plus Condition \eqref{of_the_third_class} holds.  If $\alpha$ changes monotonically with $\eps$, i.e. 
\begin{align}\begin{split}\nonumber
    \frac{\del \alpha (f_\eps)}{\del \eps} \neq 0,
\end{split}\end{align}
then there is a dense set of values of $\eps$ for which the map $f_\eps$ has a quadratic heteroclinic tangency (which unfolds generically as $\eps$ varies) between $W^u(O_1)$ and $W^s(O_2)$.
\end{sublemma}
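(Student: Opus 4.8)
The plan is to reduce the statement to Lemma 4 of \cite{GST} (itself built on the modulus analysis of Gavrilov--Shilnikov \cite{gavrilov_shilnikov}), observing that the only structure that argument uses is that $f_\eps$ is a smooth family of $C^\infty$ area-preserving surface diffeomorphisms together with the listed configuration of saddles and heteroclinic orbits. The fact that each $f_\eps$ is a billiard map plays no further role here: the billiard-specific constraints were already dealt with when the family $f_\eps$ with all its required properties was produced during the tower construction. So the content is an analysis of the first-return dynamics near the heteroclinic cycle $O_1 \to O_2 \to O_1$ and how it depends on the modulus $\alpha(\eps) = \ln\lambda_1(\eps)/\ln\lambda_2(\eps)$.

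First I would fix finitely-smooth linearizing (or Shilnikov cross-form) coordinates in small neighborhoods $U_1,U_2$ of $O_1,O_2$, so that the local maps $T_i$ near $O_i$ act as $(x,y)\mapsto(\lambda_i x+\ldots,\lambda_i^{-1}y+\ldots)$, and write the two global transition maps: the transverse one $T_{12}$ carrying a neighborhood of $W^u(O_1)\cap W^s(O_2)$ into $U_2$, and the tangent one $T_{21}$ carrying a neighborhood of $W^u(O_2)\cap W^s(O_1)$ into $U_1$ in the normal form $x_1-x_1^+ = ax_2 + b(y_2-y_2^-)+\ldots$, $y_1 = cx_2 + d(y_2-y_2^-)^2+\ldots$ with $d\neq 0$. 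Composing, a short arc $\ell\subset W^u(O_1)$ near the transverse heteroclinic point is carried, after $m$ passes in $U_2$ and $k$ passes in $U_1$, back into a neighborhood of $W^s(O_2)$ by a map $F_{m,k}^\eps = T_{12}\circ T_1^{k}\circ T_{21}\circ T_2^{m}\circ T_{12}$. By the lambda-lemma the piece $T_2^m T_{12}(\ell)$ is $C^1$-close to $W^u(O_2)$, so after $T_{21}$ it is $C^1$-close to the quadratic fold inherited from the tangency of $W^u(O_2)$ with $W^s(O_1)$; after $T_1^k$ this fold is stretched along $W^u(O_1)$ and survives as a genuine nondegenerate fold (because $d\neq 0$). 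Condition \eqref{of_the_third_class}, $cy_1^-x_2^+>0$, is precisely what guarantees the fold points toward $W^s(O_2)$ rather than away, so that the relevant extremum sits on the correct side of $W^s(O_2)$.

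Next I would track the signed distance $\Phi_{m,k}(\eps)$ from the tip of the fold of $\gamma_{m,k}^\eps := F_{m,k}^\eps(\ell)$ to $W^s(O_2)$. The Gavrilov--Shilnikov estimate gives $\Phi_{m,k}(\eps) = A\,\lambda_1(\eps)^{-k}\lambda_2(\eps)^{-2m} + (\text{the fixed contribution of the unfolded-away tangency}) + o(\cdot)$, the key feature being that, through the exponents $k,m$ and the multipliers, the leading dependence on $\eps$ is (up to bounded Shilnikov-type corrections from the global maps and the linearizations) a dependence through $\alpha(\eps)$ alone. Restricting to pairs $(m,k)$ with $k\approx\alpha(\eps)\,m$ so that $\lambda_1^{-k}\lambda_2^{-2m}$ stays in a fixed compact range, and using that $\alpha$ varies monotonically, $\partial_\eps\alpha\neq 0$, one gets that for every sufficiently large such pair $\Phi_{m,k}$ changes sign on the parameter interval with nonvanishing derivative; hence it has a simple zero $\eps_{m,k}$, at which $W^u(O_1)$ and $W^s(O_2)$ have a quadratic tangency that unfolds generically (since $\Phi_{m,k}'(\eps_{m,k})\neq 0$). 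As $m\to\infty$ consecutive zeros $\eps_{m,k}$ are $O(1/m)$-apart inside the interval of monotonicity of $\alpha$, giving density, exactly as in \cite{GST}. One also records that each such tangency lives along new heteroclinic points outside $U_1\cup U_2$, so it is genuinely distinct from the input data, and that shrinking $U_1,U_2$ leaves all previously arranged tangencies and transverse intersections untouched.

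The main obstacle is the distance estimate for $\Phi_{m,k}$ together with its dependence on the modulus: one must show that after the compositions the $\eps$-dependence of the height of the fold really does factor, to leading order, through $\alpha(\eps)$, so that monotonicity of $\alpha$ forces a transverse zero of $\Phi_{m,k}$ in $\eps$. This is the technical heart of the Gavrilov--Shilnikov / \cite{GST} modulus argument and requires keeping careful track of the bounded corrections coming from the global maps and from the finitely-smooth linearizations (so that the ratio of successive $\Phi_{m,k}$ is controlled and the sign change is genuine). A secondary point to watch is that, since we are in the area-preserving (billiard) category, the linearizations and normal forms must be chosen compatibly with $C^\infty$-closeness of the family; but this only limits the size of the admissible neighborhoods and does not affect the structure of the argument.
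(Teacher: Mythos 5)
Your proposal is correct and matches the paper's approach: the paper gives no internal proof of this sublemma at all, simply citing Lemma 4 of \cite{GST} on the grounds that its proof only uses that $f_\eps$ is a smooth family of two-dimensional (conservative) diffeomorphisms with the stated heteroclinic cycle of the third class and monotonically varying modulus $\alpha$, the billiard-specific work (producing such a family via the curvature perturbations and the injectivity condition) having been done elsewhere. Your additional sketch of the Gavrilov--Shilnikov/GST modulus argument goes beyond what the paper records but is consistent with the cited source, so no discrepancy arises.
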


Since this orbit must contain points close to $W^s_{loc}(O_1)$, we may arrange for this to also satisfy the injectivity condition with respect to all the other orbits, since $W^s_{loc}(O_1)$ must be parallel to $W^s_{loc}(O)$.

Now, since our points $O_1$ and $O_2$ are close to $W^s(O)$, we may by a small perturbation arrange for the newly constructed quadratic heteroclinic tangency between $W^u(O_1)$ and $W^s(O_2)$ to go between $W^u(O)$ and $W^s(O)$, so that we have a new quadratic homoclinic tangency.  We can accomplish this using an argument similar to that done in the previous section, by varying the angles of the manifolds of $O$.

This completes the second item in our outline.  Before going to the third item, we repeat this process $2n+4$ times in order to obtain a map with $2n+4$ coexisting, distinct points of quadratic HT belonging to different homoclinic orbits (the reason for doing it this number of times will be made clear in a moment).  For each time, we ensure that the injectivity condition continues to hold for each homoclinic orbit, which we can do since each newly constructed quadratic homoclinic orbit contains points which are in $W^s_{loc}(O_1)$, and since this is not a vertical line, we may take the new points of homoclinic tangency to all not lie on the same vertical line.  

Now, to prove the third item, we seek to use Lemma 5 of \cite{GST}.  We state this lemma here (again slightly reworded for our case).

\begin{sublemma}\label{lemma_making_n+1_tangency}
(Lemma 5 of \cite{GST}) Let $f_\eps$ (where $\eps=(\tau_0,...,\tau_{k-1},\nu)$) be a smooth ($k+1$)-parameter family of $C^\infty$ billiard maps which have a saddle periodic point $O$ such that at $\tau=0$ the manifolds $W^u(O)$ and $W^s(O)$ have a tangency of order $k$, and at $\nu=0$ these manifolds have a quadratic tangency.  Suppose that the tangency of order $k$ unfolds generically as $\tau$ varies, and the quadratic tangency unfolds generically as $\nu$ varies.  Then there exists a sequence $\eps_j \rightarrow 0$ such that the map $f_\eps$ has an orbit of tangency of order $(k+1)$ between $W^u(O)$ and $W^s(O)$ at $\eps=\eps_j$, while keeping the saddle point $O$ fixed.
\end{sublemma}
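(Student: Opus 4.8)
The statement to prove is Lemma 5 of \cite{GST} adapted to billiard maps: given a family $f_\eps$ with $\eps=(\tau_0,\dots,\tau_{k-1},\nu)$ having a saddle $O$ whose invariant manifolds carry a tangency of order $k$ (unfolding generically in $\tau$) and a quadratic tangency (unfolding generically in $\nu$), one finds $\eps_j\to 0$ where $f_{\eps_j}$ has a tangency of order $k+1$. The approach is to reduce everything to local normal coordinates near $O$, track how the two tangency orbits move under the return map, and solve the resulting system of equations by a Bolzano/implicit-function argument. Since all the hypotheses needed — generic unfolding of $k$-order and quadratic HT within the billiard class — are exactly the content of Lemma \eqref{lemma_unfold_generically}, the role of ``billiard map'' versus ``general area-preserving $C^\infty$ diffeomorphism'' in the proof is only that our perturbations are realized by Lemma \eqref{lemma_unfold_generically} and Perturbation \ref{perturbation_which_lets_you_move_unstable_manifold}; otherwise the proof is that of \cite{GST}.

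\textbf{Key steps.} First I would introduce Birkhoff/linearizing coordinates $(x,y)$ near $O$ in which $f^q$ is (up to flat or polynomial corrections) the saddle map $x\mapsto\lambda x$, $y\mapsto\lambda^{-1}y$, with $W^u_{loc}=\{y=0\}$ and $W^s_{loc}=\{x=0\}$; write the global ``return'' maps along the two homoclinic orbits as $C^\infty$ maps $\Psi_k$ and $\Psi_\nu$ from a neighborhood of a point of $W^u_{loc}$ to a neighborhood of a point of $W^s_{loc}$. The $k$-order tangency at $\tau=0$ means the graph $\Psi_k(W^u_{loc})$ meets $\{x=0\}$ so that, writing the $x$-component as $\Phi_k(t)$ along a unit-speed parameter $t$ on $W^u_{loc}$, one has $\Phi_k(0)=\Phi_k'(0)=\dots=\Phi_k^{(k)}(0)=0$, $\Phi_k^{(k+1)}(0)\neq 0$; generic unfolding in $\tau$ means the map $\tau\mapsto(\Phi_k^{(0)}(0),\dots,\Phi_k^{(k-1)}(0))$ is a submersion. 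Second, I would combine the two homoclinic orbits into a single long orbit: starting near the quadratic-tangency point on $W^u_{loc}$, apply the quadratic-return $\Psi_\nu$, then flow for $m$ iterates of $f^q$ to come back near $W^u_{loc}$ (this contracts in the $y$-direction and expands in $x$ by $\lambda^{\pm m}$), then apply $\Psi_k$. The composed first-return map $T_m=\Psi_k\circ f^{qm}\circ\Psi_\nu$ has, for large $m$, a controlled ``rescaled'' form: after the standard affine rescaling of coordinates (Gonchenko--Shilnikov--Turaev rescaling lemma), $T_m$ converges as $m\to\infty$ to a limiting model map that is essentially the $(k+1)$-order analogue of the Hénon-like map, whose graph $y=$ (polynomial of degree $k+1$ in the rescaled $x$) $+$ small. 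Third, I would read off from this limiting normal form the equations that express ``the image of $W^u_{loc}$ under $T_m$ has a tangency of order $k+1$ with $W^s_{loc}$'' as $k+1$ scalar equations in the $k+1$ parameters $(\tau_0,\dots,\tau_{k-1},\nu)$ — roughly $\Phi^{(j)}=0$ for $0\le j\le k$ — together with $\Phi^{(k+1)}\neq 0$; the generic-unfolding hypotheses make the linearization of this system (as $m\to\infty$, after rescaling) nondegenerate, so by the implicit function theorem applied uniformly in $m$ there is a solution $\eps=\eps_{(m)}$, and $\eps_{(m)}\to 0$ because the required corrections scale like negative powers of $\lambda^m$. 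Throughout, I would invoke Lemma \eqref{lemma_on_injectivity} to keep the injectivity condition satisfied so that each of the two unfoldings can actually be realized by curvature perturbations, and I would note that the saddle $O$ (and its continuation) stays fixed under the perturbations of Perturbation \ref{perturbation_which_lets_you_move_unstable_manifold}.

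\textbf{Main obstacle.} The genuinely delicate point is the uniform-in-$m$ solvability: one needs the rescaled first-return maps $T_m$ to converge in $C^{k+1}$ to the limiting polynomial model with error controlled by $\lambda^{-m}$, and one needs the Jacobian of the ``create a $(k+1)$-tangency'' system to be bounded away from zero \emph{uniformly in $m$} after the rescaling — only then does the implicit function theorem produce $\eps_{(m)}\to 0$ rather than a sequence that might fail to shrink. In \cite{GST} this is handled by their rescaling lemmas for conservative maps; the work here is to check that these rescaling estimates survive when the ambient family is constrained to be billiard maps, i.e. that the $C^{k+1}$-smallness of all the auxiliary terms (the flat corrections in Birkhoff normal form, the $\eps$-dependence of $\Psi_k,\Psi_\nu$) is not destroyed by the fact that our admissible perturbations come from curvature perturbations via Theorem \ref{theorem_perturb_many_independently}. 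A secondary but real bookkeeping issue is that applying Lemma \eqref{lemma_unfold_generically} to set up the generic unfoldings may itself slightly move the other tangency, so the parametrization $\eps=(\tau_0,\dots,\tau_{k-1},\nu)$ must be chosen after all the earlier perturbations, with the injectivity condition re-verified via Lemma \eqref{lemma_on_injectivity} at that final stage.
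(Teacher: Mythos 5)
Your proposal matches the paper's treatment: the paper does not reprove this statement but cites Lemma 5 of \cite{GST} directly, on the grounds that its proof only requires a smooth family of conservative $C^\infty$ two-dimensional diffeomorphisms in which the order-$k$ and quadratic tangencies unfold generically, and such families are realized within the billiard class via Lemma \eqref{lemma_unfold_generically} (with the injectivity condition maintained by Lemma \eqref{lemma_on_injectivity}), exactly the reduction you make. Your additional sketch of the internal GST mechanism (return maps, rescaling, uniform implicit-function argument) supplies more detail than the paper records, but it is consistent with the paper's approach.
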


\begin{figure}[h]
\centering
\includegraphics[width=11cm, height=6.25cm]{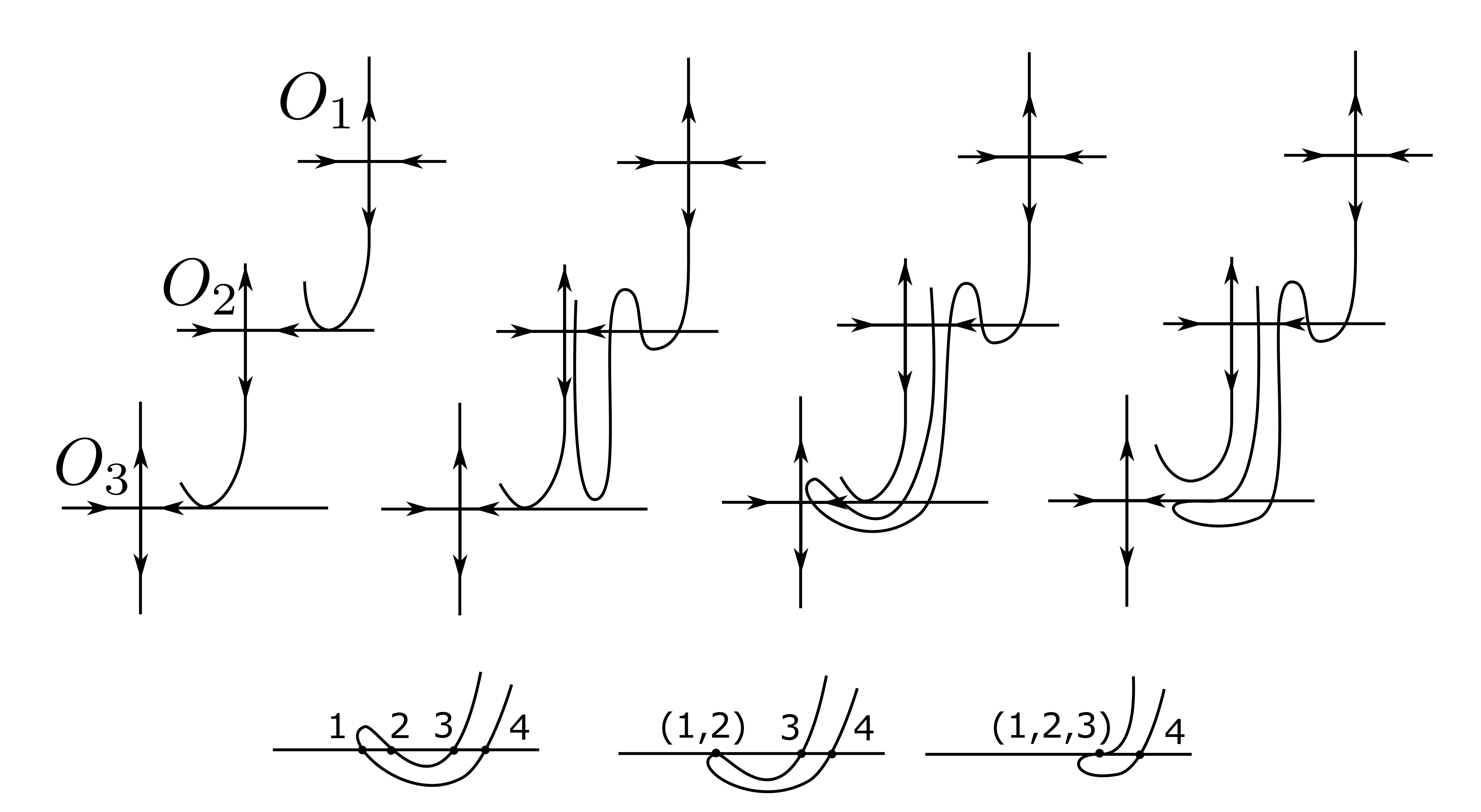}
\caption{An example of the process of creating a cubic HT out of a tower of two quadratic HT.  Essentially, we have a situation where there is an intersection between $W^u(O_1)$ and $W^s(O_3)$, which intersects at four points as in the diagram on the bottom left.  Then, we unfold generically to pull this back so that points $1$ and $2$ hit at the same point, creating a quadratic HT between $W^u(O_1)$ and $W^s(O_3)$.  From here, we unfold the two original quadratic HT again to merge points $1$ and $2$ into point $3$, thus creating a cubic HT. }
\end{figure}

So by Lemma \eqref{lemma_making_n+1_tangency} we take two such quadratic HT and make a cubic HT by using our perturbations from Lemma \eqref{lemma_unfold_generically}, applying Lemma \eqref{lemma_on_injectivity} on our newly created cubic to ensure the injectivity condition between this orbit and the remaining quadratic HT.  We then take this map, which contains now a cubic HT and one of our previously constructed quadratic HT, and apply Lemma \eqref{lemma_making_n+1_tangency} to create a quartic HT - and so on so that we obtain a map containing a HT of arbitrarily high order.

This complete our third item.

Now, since at each step we may arrange for the perturbations to be arbitrarily small in the $C^\infty$ topology, for any $\delta > 0$ we may arrange for our map final map $\bar f$ which contains  HT of order $r$ to be within $\delta$ of our original map $f$ in the $C^\infty$ topology.

We now do item 4 in our outline for this section to obtain an elliptic orbit which is a rotation up to order $n$.  From \cite{GST} we have

\begin{sublemma}\label{lemma_creating_an_ellpitic_periodic_point}
    (Lemma 8 of \cite{GST}). Let $f_\tau$ be a family of $C^\infty$ billiard maps for which a homoclinic tangency
    of order $2n+4$ unfolds generically. Then arbitrarily close to the moment of tangency there
    are values of parameters that correspond to the existence of an elliptic periodic orbit of the
    degeneracy order $n$.
\end{sublemma}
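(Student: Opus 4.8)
The plan is to run the proof of Lemma 8 of \cite{GST} essentially verbatim, the only genuinely new point being to check that it never leaves the class of billiard maps. Its engine is the renormalization (rescaling) of the first return map near the point of homoclinic tangency. Let $q$ be the period of the saddle $O$, let $p$ be the point of tangency of order $2n+4$, and let $\eps=(\eps_0,\dots,\eps_{2n+4})$ be the $2n+5$ parameters of the given family, which by hypothesis unfolds the tangency generically in the sense of the determinant condition defined earlier. For each large $k$ I would form the first return map $T_k$: follow the near-saddle dynamics $f_\eps^{qk}$ in a fixed small neighborhood of $O$, then follow the fixed ``global'' map along the homoclinic orbit back to a neighborhood of $p$. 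The standard rescaling lemmas then say that after an affine change of coordinates in the phase cylinder, with coefficients explicit in $k$ and in $\lambda=\lambda(O)$, together with a reparametrization $\mu=\mu(\eps,k)$ of the $\eps$'s, the maps $T_k$ converge in $C^r$ on compacta, for every $r$ and with rate controlled uniformly in $\mu$, to the area preserving ``generalized H\'enon'' limit
\begin{align}\begin{split}\nonumber
(x,y)\ \longmapsto\ \Big(\,y,\ -\,x + M_0 + M_1 y + \dots + M_{2n+4}\,y^{\,2n+4}\,\Big),
\end{split}\end{align}
and, because the order $2n+4$ tangency unfolds generically, the map $\mu\mapsto(M_0,\dots,M_{2n+4})$ is for large $k$ a submersion onto a prescribable open set in $\R^{2n+5}$. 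These statements are about smooth area preserving planar maps near a generically unfolding homoclinic tangency and are proved in \cite{GST} (cf. \cite{gavrilov_shilnikov}); every $f_\eps$ here is such a map, so they apply unchanged.

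Granting this, it suffices to pick one value of $(M_0,\dots,M_{2n+4})$ for which the limit map has an elliptic fixed point of degeneracy order $n$, and then transport the conclusion back to $T_k$. A fixed point $(y_0,y_0)$ of the limit map is a root of $P(y)=2y$ with $P(y)=M_0+\dots+M_{2n+4}y^{2n+4}$; its linearization has determinant $1$ and trace $P'(y_0)$, hence is elliptic with rotation number $\theta$, $2\cos\theta=P'(y_0)$, as soon as $P'(y_0)\in(-2,2)$. I would spend two of the free coefficients placing $y_0$ and fixing $\theta$ at a Diophantine, high-order non-resonant value so the Birkhoff normal form exists to the needed order, and then use further coefficients to kill the first $n$ Birkhoff coefficients $\nu_1,\dots,\nu_n$ at $y_0$: since $\nu_j$ depends on finitely many jets of $P$ at $y_0$ with nonzero leading dependence on the highest coefficient it involves, the system $\nu_1=\dots=\nu_n=0$ is triangular and is solved by induction, leaving an open set of admissible $(M_j)$; the precise requirement that $P$ have degree exactly $2n+4$ rather than something smaller is just the bookkeeping of \cite{GST}. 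Here ``degeneracy order $n$'' is the notion of \cite{GST} that, as used in Lemma \eqref{lemma_finding_elliptic_rotation_of_order_n}, says the differential of the return map at the orbit agrees with a constant rotation up to order $n$. Because $\mu\mapsto(M_j)$ is a submersion and $T_k$ converges to the limit map in $C^r$, for all large $k$ the map $f_\eps$ at suitable parameters $\eps=\eps_k$ has an elliptic periodic point of degeneracy order $n$; and since the rescaling forces $\eps_k\to0$, these parameter values sit arbitrarily close to the moment of tangency, which is the claim.

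The billiard-specific checks are all immediate. By hypothesis every $f_\eps$ in the family is a billiard map (the $\eps$'s are curvature perturbations keeping us inside $\mathcal D^\infty$), so the elliptic periodic orbit produced is automatically a genuine billiard orbit; the orbit lives in the renormalization neighborhood of $p$, hence in the near-boundary, near-integrable regime arranged throughout the construction of the tower in this section; and no local phase space perturbation is needed at this step at all — the only ``perturbation'' is selecting a parameter $\eps_k$ within the already given, already generically unfolding family — so the non-locality obstruction flagged in the introduction never enters, and the injectivity condition need not be revisited here since we introduce no new orbit, only extract one.

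The step I expect to be the real work is the renormalization itself: proving $C^r$ convergence of the rescaled return maps for arbitrary $r$, uniformly over the rescaled parameter range, and proving that the induced parameter map is a submersion onto the coefficient space — this is exactly where the generic unfolding of the order $2n+4$ tangency is consumed, and where one must be sure the billiard setting introduces no hidden constraint (it does not: the rescaling is a coordinate change in the cylinder together with a reparametrization of $\eps$, both legitimate operations on a family of billiard maps). Everything downstream — solving the triangular Birkhoff system, and persistence of the sufficiently nondegenerate elliptic point under the $C^r$-small passage from the limit map to $T_k$ — is routine.
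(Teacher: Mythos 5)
Your proposal is correct and matches the paper's treatment: the paper simply invokes Lemma 8 of \cite{GST} as a statement about smooth conservative two-dimensional families and applies it directly, exactly on the grounds you give — the family of billiard maps unfolding the order $2n+4$ tangency generically is already in hand, so producing the degenerate elliptic orbit is pure parameter selection and no new (billiard-constrained) perturbation or injectivity check is needed. Your additional sketch of the internals (rescaling of the first-return map to the conservative generalized H\'enon limit and solving for the Birkhoff coefficients) is just the content of the cited lemma, which the paper does not reprove.
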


Thus, unfolding the tangency of order $2n+4$ generically, we obtain an elliptic periodic orbit of order $n$.

Now, when one considers the first return map in a neighborhood of this periodic point one obtains the following Birkhoff normal form:

\begin{align}\begin{split}\nonumber
    \overline{z} = e^{i\phi}z + o(|z|^n).
\end{split}\end{align}

So, our map in the $C^n$ topology is close to a rotation by $\phi$.

\end{proof}
Now observe that if this is a rational rotation, then we are in fact done with the proof of Theorem 1.  However, this may be an irrational number.  Thus, in the next section, we describe a perturbation of the boundary so that this becomes a rational number, thus completing the proof when we consider the billiard map composed with itself a number of times equal to the denominator of this rational number.

\section{Rotating the differential to order $n$}\label{sec_rotate_the_differential}

Here we prove that one may rotate the differential to the $n^{th}$ order by perturbing the curvature up to order $n$ at $n+3$ points.  The size of these perturbations are of order $\delta$ in all partial derivatives from degree 0 to $n$, where $\delta$ is the angle by which we rotate the differential.  Our main result in this section is Lemma \eqref{lemma_rotating_the_differential_to_order_n}.
\begin{proof}
We begin as in the statement of the lemma.  Then we expand our map in the form
\begin{align}\begin{split}\nonumber
    f^q(s_0+u,\phi_0+v) =  df^q(s_0,\phi_0)\begin{pmatrix}
    u \\ v
    \end{pmatrix} +
    \sum_{k=2}^{n+1}
    \Delta^{k}_{0}(u,v),
\end{split}\end{align}
where 
\begin{align}\begin{split}\nonumber
    \Delta_0^k(u,v) = \sum_{j=0}^k \delta_{0,j}^k u^{k-j} v^j,\text{ for }2 \leq k \leq n
\end{split}\end{align}
where $\delta_{0,j}^k$ are constants, and we also have that $\Delta^{n+1}_{0}(u,v)$ satisfies
\begin{align}\begin{split}\nonumber
    \frac{\del^j}{\del u^{j-i} \del v^i}\bigg(\Delta^{n+1}_{0}(u,v)\bigg)\Bigg|_{(u,v)=(0,0)} = 0\text{ for }0 \leq i \leq j \leq n.
\end{split}\end{align}  
Now, by Perturbation \ref{perturbation_which _lets_you_vary_all_derivatives_independently} for $n=0$, we may perturb to first order by size $O(\delta)$ so that, to first order, the leading term is rotated by a matrix $R_{\delta}$, which is a rotation matrix\footnote{At this step, one might wonder how we know we can necessarily achieve this specific differential while ensuring we do not leave the class of billiard maps.  This will be answered directly after this proof.} by the angle $\delta$.  In other words, we may find a map $f_1$ which is close (dependent on $\delta$) to $f$ so that  
\begin{align}\begin{split}\nonumber
    f^q_1(s_0+u,\phi_0+v) =  R_\delta df^q(s_0,\phi_0)\begin{pmatrix}
    u \\ v
    \end{pmatrix} +
    \sum_{k=2}^{n+1}
    \Delta^{k}_{1}(u,v),
\end{split}\end{align}
where 
\begin{align}\begin{split}\nonumber
    \Delta_1^k(u,v) = \sum_{j=0}^k \delta_{1,j}^k u^{k-j} v^j,
\end{split}\end{align}
with
\begin{align}\begin{split}\nonumber
    \frac{\del^{j}}{\del u^{j-i} \del v^i}\bigg( \Delta_1^{n} (u,v)\bigg)\Bigg|_{(u,v)=(0,0)}=0\text{ for }0 \leq i \leq j \leq n,
\end{split}\end{align}
and for $j \leq k$ and $0 \leq j \leq k$,
\begin{align}\begin{split}\nonumber
    |\delta_{1,j}^k - \delta_{0,j}^k| < C_{j,k,n}\delta
\end{split}\end{align}
for some constants $C_{j,k,n}$.  From here we again use Perturbation \ref{perturbation_which _lets_you_vary_all_derivatives_independently}, now with $n=1$ in order to eliminate the change that was introduced in the second order partial derivatives after the previous perturbation. Specifically, what we mean is we perturb so that we obtain a map $f_2$ such that
\begin{align}\begin{split}\nonumber
    f^q_2(s_0+u,\phi_0+v) &=  R_{\delta}df^q(s_0,\phi_0)\begin{pmatrix}
    u \\ v
    \end{pmatrix} + \Delta_0^2(u,v) + \Delta^{3}_{2}(u,v) + ... 
    \\
    &+ \Delta^{n+1}_{2}(u,v),
\end{split}\end{align}    
with
\begin{align}\begin{split}\nonumber
    \Delta_2^k(u,v) &= \sum_{j=0}^k \delta_{2,j}^k u^{k-j} v^j,
    \\
    |\delta_{2,j}^k-\delta_{0,j}^k| &\leq C_{j,k,n}\delta,
\end{split}\end{align}
for $k \geq 3$, and 
\begin{align}\begin{split}\nonumber
    \frac{\del^{j}}{\del u^{j-i} \del v^i}\bigg( \Delta_2^{n+1} (u,v)\bigg)\Bigg|_{(u,v)=(0,0)}=0\text{ for }0\leq i \leq j \leq n.
\end{split}\end{align}
We are able to do this because $|\delta_{1,j}^2-\delta_{0,j}^2|\leq C_{j,2,n}\delta$ for $0\leq j \leq 2$, which means the size of our perturbation can be of order $\delta$.  We then do the same process inductively. At each step we have a map $f_k$ of the form
\begin{align}\begin{split}\nonumber
    f_k^q(s_0+u,\phi_0+v) = R_{\delta}df^q(s_0,\phi_0)\begin{pmatrix}
    u \\ v
    \end{pmatrix} + \sum_{j=0}^{k} \Delta_0^j(u,v) + \sum_{j=k+1}^{n} \Delta^{j}_{k}(u,v),
\end{split}\end{align}
where
\begin{align}\begin{split}\nonumber
    \Delta_k^j(u,v) &= \sum_{i=0}^j \delta_{k,i}^j u^{j-i}v^i,
    \\
    |\delta_{k,i}^j-\delta_{0,i}^j| &\leq C_{i,j,n} \delta
\end{split}\end{align}
for $k+1 \leq j \leq n $ and $0 \leq i \leq j$, and with
\begin{align}\begin{split}\nonumber
    \frac{\del^{j}}{\del u^{j-i} \del v^i}\bigg( \Delta_k^{n} (u,v)\bigg)\Bigg|_{(u,v)=(0,0)}=0\text{ for }0\leq i \leq j \leq n,
\end{split}\end{align}  
and we perturb using Perturbation \ref{perturbation_which _lets_you_vary_all_derivatives_independently} with $n=k-1$ in order to obtain a map with the same properties, but with $k$ increased by 1.  In the end we arrive at a map $f_{n+1}$ which is close to $f$ in all derivatives up to degree $n$, dependent on $\delta$, so that
\begin{align}\begin{split}\nonumber
    f^q_{n+1}(s_0 + u,\phi_0 + v) =  R_{\delta}df^q(s_0,\phi_0)\begin{pmatrix}
    u \\ v
    \end{pmatrix} + \sum_{k=2}^{n}\Delta_0^k(u,v) + \Delta_{n+1}^{n+1}(u,v),
\end{split}\end{align}
with
\begin{align}\begin{split}\nonumber
    \frac{\del^{j}}{\del u^{j-i} \del v^i}\bigg( \Delta_{n+1}^{n+1} (u,v)\bigg)\Bigg|_{(u,v)=(0,0)}=0\text{ for }0\leq i \leq j \leq n.
\end{split}\end{align}
This completes the proof.
\end{proof}
Now, we answer an objection that one might have had in the previous proof.  It might not be clear at first glance how we know we can achieve the specific form of the differential we desire (i.e. the previous matrix but rotated by some angle).  More precisely, consider the set of 2 by 2 matrices:
\begin{align}\begin{split}\nonumber
    \{A \in \R ^{2 \times 2}:\text{ there exist a billiard map }f\text{ such that }df(x_0) = A\},
\end{split}\end{align}
and for higher orders
\begin{align}\begin{split}\nonumber
    \{A\in \R^{(n+1) \times 2}&:\text{ there exists a billiard map }f\text{ such that }
    \\
    &A_{i,1} = \frac{\del^n Proj_1(f)}{\del_s^{n-i}\del\phi^i}(s_0,\phi_0), A_{i,2} = \frac{\del^n Proj_2(f)}{\del_s^{n-i}\del\phi^i}(s_0,\phi_0),
    \\
    &\text{for }0 \leq i \leq n
    \},
\end{split}\end{align}
where $Proj_i(f)$ is the projection of $f$ onto its $i^{th}$ coordinate.  Then the question is "how do we know our target values are in these sets?"  For instance, how do we know that given a billiard map $f$ with differential $df(x_0)$ we may find another billiard map $f_1$ so that $df_1(x_0)=R_\delta df(x_0)$?  

The answer is that for the first order terms, we can change three terms to perfectly match what one would get from a rotation, and the fourth term necessarily is fixed due to the area preservation condition.

For the higher order terms, we know that all the target values we perturb to are a priori belonging to those of a billiard map, and are thus obtainable through our perturbations. 

\appendix 

\section{Perturbations and their Effects on the Jet of the Billiard Map}\label{appendix_all_perturbations}
Throughout this section, we let $\mathcal O = ((s_0,\phi_0),...,(s_{q-1},\phi_{q-1}))$ be a $q$-periodic obit.  We also let $x_0=(s_0,\phi_0)$ and $W_{loc}^u(x_0)$ be its local invariant unstable manifold, and similarly $W_{loc}^s(x_0)$ be its lcoal invariant stable manifold (all under the mapping $f$).  We use the following notation:
\begin{align}\begin{split}\nonumber
    x_k &= (s_k,\phi_k),
    \\
    l_k &= l(s_k,s_{k+1}) = ||\gamma(s_{k+1})-\gamma(s_k)||,
    \\
    \beta_k &= \sin(\phi_k),
    \\
    \kappa_k &= \kappa(s_k).
\end{split}\end{align}

The format of this section is to provide a description of a perturbation, along with the result it has on the partial derivatives of $f^q(x_0)$, followed by a proof which shows how the perturbation leads to that change in the derivatives.  
The first type of perturbation we do is the following:
\begin{perturbation}
    We change $\kappa$ in a small neighborhood around $s_k$ for some fixed $k$, and we choose this perturbation in such a way as to leave the tangent vector at $\gamma(s_k)$ unchanged.  We change $\kappa$ in this neighborhood so that in particular at the point $s_k$, the curvature $\kappa$ changes as:
    \begin{align}\begin{split}\nonumber
        \kappa_k \rightarrow \kappa_k + \eps.
    \end{split}\end{align}
    Given this perturbation, the change in the differential $df^q(x_0)$ is given by
    \begin{align}\begin{split}\nonumber
        df^q(x_0) \rightarrow  &df_\eps^q(x_0) =
        \\
       &df^q(x_0) \ + \eps df^{q-k}(x_k)\,B\,df^k(x_0),
    \end{split}\end{align}
    where
    \begin{align}\begin{split}\nonumber
        B &= \begin{bmatrix}
        0 && 0
        \\
        2 && 0
        \end{bmatrix}.
    \end{split}\end{align}
\end{perturbation}

\begin{figure}[h]
\centering
\includegraphics[width=8.5cm, height=6.25cm]{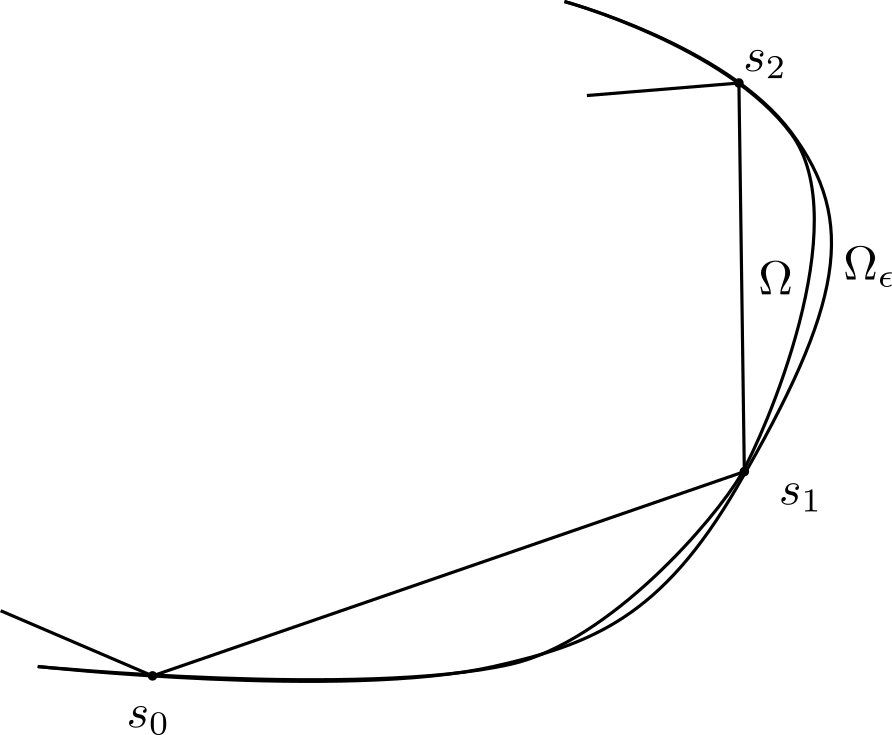}
\caption{The types of perturbations we consider. We change the curvature at $s_1$, while leaving the angle tangent of the boundary of the domain at $s_1$ fixed, thus leaving the orbit fixed while changing the differential of the billiard map.}
\end{figure}

\begin{proof}
\text{For positive integers $i,j$ we} first note that
\begin{align}\begin{split}\nonumber
    df^i(x_j)=\begin{bmatrix}
    \frac{\del s_{i+j}}{\partial s_j} & \frac{\del s_{i+j}}{\partial \phi_j} \\ \frac{\del \phi_{i+j}}{\partial s_j} & \frac{\del \phi_{i+j}}{\partial \phi_j}
    \end{bmatrix},
\end{split}\end{align}
and for $i>k>j$ that we also have the relation $df^i(x_j)=df^{i-k}(x_k)df^{k-j}(x_j)$.  Then, it follows that we have
\begin{align}\begin{split}\nonumber
    df^q(x_0)=df^{q-(k+1)}(x_{k+1})df(x_k)df(x_{k-1})df^{k-1}(x_0).
\end{split}\end{align}

Note that $df^{q-(k+2)}(x_{k+2})$ and $df^{k-1}(x_0)$ do not depend on $\eps$, while the others do:
\begin{align}\begin{split}\nonumber
    df(x_{k})&=df_\eps(x_{k})\\
    df(x_{k-1})&=df_\eps(x_{k-1}).
\end{split}\end{align}

We now recall the following equations (\cite{Katok} Theorem 4.2 in Part V):
\begin{align}\begin{split} \nonumber
    & \frac{\partial s_{k+1}}{\partial s_k} = \frac{\kappa_kl_k-\beta_k}{\beta_{k+1}}\\ \label{derivatives_2}
    & \frac{\partial s_{k+1}}{\partial \phi_k} = \frac{l_k}{\beta_{k+1}}\\ 
    & \frac{\partial \phi_{k+1}}{\partial s_k} = \frac{\kappa_k\kappa_{k+1}l_k-\kappa_k\beta_{k+1}-\kappa_{k+1}\beta_k}{\beta_{k+1}}\\ 
    & \frac{\partial \phi_{k+1}}{\partial \phi_k} = \frac{\kappa_{k+1}l_k-\beta_{k+1}}{\beta_{k+1}}.
\end{split}\end{align}
Using this, it is straightforward to show how the perturbations act on each of our differentials to first order. We have:
\begin{align}\begin{split}\nonumber
    df(x_k)\rightarrow &\ df_\eps(x_k)=df(x_k) + \eps\begin{bmatrix}
    \frac{l_k}{\beta_{k+1}} & 0\\
    \frac{\kappa_{k+1}l_k-\beta_{k+1}}{\beta_{k+1}}&0
    \end{bmatrix}\\
    = &\ df(x_k) + 
    \eps\begin{bmatrix}
    \frac{\del s^{k+1}}{\del\phi_k} & 0\\
    \frac{\del \phi^{k+1}}{\del\phi_k}&0
    \end{bmatrix}\\
    = &\ df(x_k) + df(x_k)
    \eps\begin{bmatrix}
    0 & 0\\
    1 & 0
    \end{bmatrix},
\end{split}\end{align}
and 
\begin{align}\begin{split}\nonumber
    df(x_{k-1})\rightarrow &\ df_\eps(x_{k-1})=df(x_{k-1}) + \eps\begin{bmatrix}
    0 & 0\\
    \frac{\kappa_{k-1}l_{k-1}-\beta_{k-1}}{\beta_{k}}&\frac{l_{k-1}}{\beta_{k}}
    \end{bmatrix}\\
    = &\ df(x_{k-1}) + \eps
    \begin{bmatrix}
    0 & 0\\
    \frac{\del s^k}{\del s_{k-1}}&\frac{\del s^k}{\del\phi_{k-1}}
    \end{bmatrix}\\
    = &\ df(x_{k-1}) + \eps
    \begin{bmatrix}
    0 & 0\\
    1 & 0
    \end{bmatrix}df(x_{k-1}).
\end{split}\end{align}
So, in total, we have that $\eps$ affects our differential by
\begin{align}\begin{split} \nonumber
    df(x_k)df(x_{k-1})\rightarrow df(x_k)df(x_{k-1}) + \eps \, \bigg(df(x_k)
    \begin{bmatrix}
    0 & 0\\
    2 & 0
    \end{bmatrix} df(x_{k-1})\bigg).
\end{split}\end{align}

\end{proof}

The next perturbation is used in the proofs of Lemmas \eqref{lemma_unfold_generically} and \eqref{lemma_on_injectivity}, and it enables us to smoothly lift and lower a point of homoclinic tangency.

\begin{perturbation}\label{perturbation_which_lets_you_move_unstable_manifold}
    Given a periodic point $O$ with a point of homoclinic tangency at $p$ and unit speed parameterization $p(t)$ (for $|t|$ small) of a section of $W^u(O)$ with $p(0)=p$, there is a smooth deformation obtained by using Perturbation  \ref{perturbation_which_varys_the_first_differential} which allows us to change the angle of the local unstable manifold $W_{loc}^u(O)$ by $C\eps + O(\eps^2)$ for some constant $C$, which deforms $p(t)$ to $p_\eps(t)$ such that for all $|t|$ sufficiently small the distance $\Phi_\eps(t)$ between the point $p_\eps(t)$ and the local stable manifold $W_{loc}^s(O)$ is given by
    \begin{align}\begin{split}\nonumber
        \Phi_\eps(t) = \Phi(t) + \eps + O(\eps^2),
    \end{split}\end{align}
    where $\Phi(t)$ is the unperturbed distance of $p(t)$ to the local stable manifold $W_{loc}^s(O)$.
\end{perturbation}

\begin{figure}[h]
\centering
\includegraphics[width=8.5cm, height=6cm]{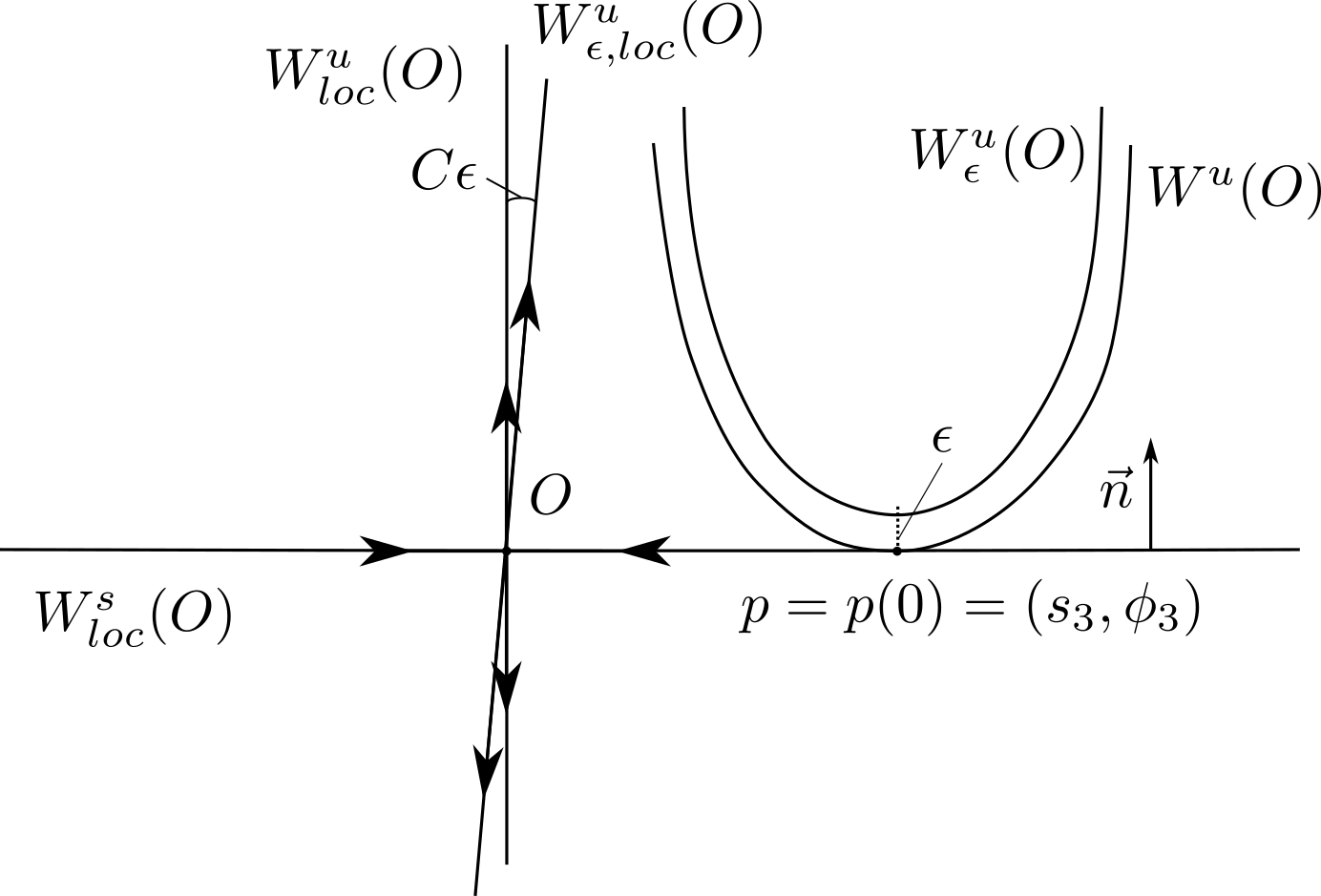}
\caption{By changing the angle of the unstable manifold, we may move the unstable manifold around a point of HT in the direction of the normal of the stable manifold at the moment of tangency.}
\end{figure}

\begin{proof}
    We let $U$ be a small strip of width $\sigma$ around the point $O$, and we perturb in $U$.  This corresponds to perturbing $\kappa$ on the region $(s_0-\sigma/2,s_0+\sigma/2)$.  We perturb by Perturbation  \ref{perturbation_which_varys_the_first_differential} which allows us to change the angle of the unstable manifold $W^u_{loc}$ by angle $\omega$.  We call the resulting billiard map $f_\omega$ and the resulting unstable manifold $W^u_{loc,\omega}(O)$.   We then define the points $a_\omega^-,a_\omega^+$ as the points within $W_{loc,\omega}^u(O) \cap \del U = \{a_\omega^-,a_\omega^+\}$, with $a_\omega^- < a_\omega^+$. Then we have
    
    \begin{align}\begin{split}\nonumber
        a_\omega^\pm - a_0^\pm &= \pm\frac{\sigma}{2}\bigg(\tan(\omega_0 + \omega)-\tan(\omega_0)\bigg) + O(\omega^2)
        \\
        &=\pm\frac{\sigma}{2}\omega + O(\omega^2).
    \end{split}\end{align}
    Consider the point $f^{-m}(p)$ where $m$ is such that $f^{-m}(p) \notin U$ and $f^{-(m+1)}(p) \in U$.  Consider a small ball $V$ around $f^{-m}(p)$, chosen small enough that $V\cap U = \emptyset$. 
    
    Now $W_{loc}^u(O)$ intersects $V$, which implies that for $\omega$ small enough, $W_{loc,\omega}^u(O)$ intersects $V$ as well.  We label this as $l$, and parameterize it by $l(t)$ for $t$ small such that $l(t)$ is a unit speed parameterization. We observe that for each point in this small section we have
    \begin{align}\begin{split}\nonumber
        d(l(t),W^u(O)) = \frac{\sigma \omega}{2} + O(\omega^2),
    \end{split}\end{align}
    where $d(l(t),W_{loc}^u(O))$ is the shortest distance of $l(t)$ to $W_{loc}^u(O)$.  Now under the dynamics of $f^q$ (i.e. the unperturbed map), by the lambda lemma (see \cite{siburg}) we have that $f^{nq}(l)\rightarrow W_{loc}^u(O)$ as $n \rightarrow \infty$.  In particular we have
    \begin{align}\begin{split}\nonumber
        d(f^{mq}(l(t)),W_{loc}^u(O)) = \frac{\sigma \omega}{2 \lambda^{mq}} + O(\omega^2\lambda^{-mq}).
    \end{split}\end{align}
    Now the dynamics outside of the region $U$ where we perturb remain unchanged as we vary $\omega$, and so since the dynamics which send $V$ to $T^m(V)$ are outside of the region $U$, we have
    \begin{align}\begin{split}\nonumber
        d(W_{loc,\eps}^u(O),W_{loc}^u(O)) = \frac{\sigma \omega}{2 \lambda^{mq}} + O(\omega^2\lambda^{-mq}).
    \end{split}\end{align}
    Thus if we let
    \begin{align}\begin{split}\nonumber
        \omega = \frac{2 \eps \lambda^{mq}}{\sigma}
    \end{split}\end{align}
    we have
    \begin{align}\begin{split}\nonumber
        d(W_{loc,\eps}^u(O),W_{loc}^u(O)) = \eps + O(\eps^2),
    \end{split}\end{align}
    which is the desired result.
\end{proof}

For our next perturbation, we label the $k^{th}$ derivative of the curvature at each point $s_i$ by $\kappa_i^{(k)}$.
\begin{perturbation}[Theorem \ref{theorem_perturb_many_independently}]\label{perturbation_which _lets_you_vary_all_derivatives_independently}
    Consider a perturbation which satisfies
    \begin{align}\begin{split}\nonumber
        \Delta\kappa_i^{(j)} &= 0 \text{ for }0 \leq j \leq n-1,
        \\
        \Delta\kappa_i^{(n)} &= \eps_i
    \end{split}\end{align}
    for $1 \leq i \leq n+2$ and that the orbit $\mathcal O$ remains fixed.  Then one may vary each element of 
\begin{align}\begin{split}\nonumber
\left\{\frac{\del^n s_{n+3}}{\del s_0^{n-k}\del \phi_0^k}:0 \leq k \leq n\right\}
\large\cup
\left\{\frac{\del^n \phi_{n+3}}{\del s_0^n}\right\}
\end{split}\end{align}
independently, provided each $(s_i,\phi_i)$ is sufficiently close to the boundary for $0 \leq i \leq n+3$.  More precisely, letting $\eps = (\eps_1,\cdots,\eps_{n+2})$,
    \begin{align}\begin{split}\nonumber
        \frac{\del\bigg(
        \frac{\del^n s_{n+3}}{\del s_0^{n}}, \frac{\del^n s_{n+3}}{\del s_0^{n-1}\del \phi_0},...,\frac{\del^n s_{n+3}}{\del \phi_0^n},\frac{\del^n \phi_{n+3}}{\del s_0^n}
        \bigg)(\eps)}{\del \eps} \neq 0,
    \end{split}\end{align}
    provided
    \begin{align}\begin{split}
        \frac{\del s_j}{\del \phi_i} \neq 0
    \end{split}\end{align}
    for $0 \leq i < j \leq n+3$.
\end{perturbation}
\begin{proof}
As the proof is quite lengthy and technical, we relegate it to Appendix \ref{appendix_perturb_nth_derivative_independently}.
\end{proof}

We also note the special case of this perturbation for $n=0$:
\begin{perturbation}\label{perturbation_which_varys_the_first_differential}
    Given a perturbation which changes the curvature in small neighborhoods around 3 points $(s_1,s_2,s_3)$ by $\Delta \kappa_i = \eps_i,$ and which leaves the orbit $\mathcal O$ fixed, we have
    \begin{align}\begin{split}\nonumber
        \frac{\del\bigg(
        \frac{\del s_{4}}{\del s_0}, \frac{\del s_{4}}{\del \phi_0},\frac{\del \phi_{4}}{\del s_0}
        \bigg)(\eps)}{\del \eps} \neq 0,
    \end{split}\end{align}
    provided
    \begin{align}\begin{split}\nonumber
        \frac{\del s_j}{\del \phi_i} \neq 0
    \end{split}\end{align}
    for $0 \leq i < j \leq 4$.  Thus, with this perturbation one may vary the angles and eigenvalues of the differential $df^4((s_0,\phi_0))$ independently to first order.
\end{perturbation}
\begin{proof}
    As the first part of the statement follows from the work in Appendix \ref{appendix_perturb_nth_derivative_independently}, we prove the second part of the statement.  It is a simple exercise in linear algebra, but we detail it here for completeness.
    
We consider a matrix $A=\begin{bmatrix}
a&b\\c&d
\end{bmatrix}$ with product of eigenvalues $1$ and eigenvalues and eigenvectors given by:
\begin{align}\begin{split}\nonumber
    \lambda & = \frac{a+d}{2}+\frac{1}{2}\sqrt{(a+d)^2-4\det(A)},\\
    \lambda^{-1} & = \frac{a+d}{2}-\frac{1}{2}\sqrt{(a+d)^2-4\det(A)},\\
    \vec{V}_\lambda & = \begin{bmatrix} b\\\lambda-a \end{bmatrix}\frac{1}{\sqrt{b^2+(\lambda-a)^2}}=\begin{bmatrix} \cos(\omega_1)\\\sin(\omega_1)
    \end{bmatrix}\\
    \vec{V}_{\lambda^{-1}} & = \begin{bmatrix} b\\\lambda^{-1}-a \end{bmatrix}\frac{1}{\sqrt{b^2+(\lambda^{-1}-a)^2}}=\begin{bmatrix} \cos(\omega_2)\\\sin(\omega_2)
    \end{bmatrix}.
\end{split}\end{align}
Thus if we perturb our matrix $A$ by
\begin{align}\begin{split}\nonumber
\begin{bmatrix}
a & b \\ c & d
\end{bmatrix} \rightarrow
\begin{bmatrix}
a+\eps_1 & b+\eps_2 \\ c+* & d+\eps_3
\end{bmatrix}
\end{split}\end{align}
where $*$ is chosen so that the overall perturbation maintains that the product of the eigenvalues is still 1, we can see how this effects $\lambda, \omega_1,$ and $\omega_2$, to first order.  Doing this yields the following:
\begin{align}\begin{split}\nonumber
    \Delta\omega_1 = & \eps_1\frac{b}{l_+^2(\lambda^2-1)} - \eps_2\frac{(\lambda-a)}{l_+^2} + \eps_3\frac{b\lambda^2}{l_+^2(\lambda^2-1)} + O(\eps^2)\\
    \Delta\omega_2 = & -\eps_1\frac{(2\lambda^2+1)b}{l_-^2(\lambda^2-1)} - \eps_2\frac{\lambda^{-1}-a}{(\lambda^{-1})^2} - \eps_3\frac{b\lambda^2}{l_-^2(\lambda^2-1)} + O(\eps^2)\\
    \Delta\lambda = & \eps_1\frac{\lambda^2}{\lambda^2-1} + \eps_3\frac{\lambda^2}{\lambda^2-1} + O(\eps^2),
\end{split}\end{align}
where $l_+^2 = b^2 + (a-\lambda)^2$, and $l_-^2 = b^2 + (a - \lambda^{-1})^2$.  From here then it is easy to show that we may vary $\omega_1, \omega_2, \lambda$ each independently for appropriately chosen $\eps$.  Applying this result to our differential, we are done.
    
\end{proof}

We now note the next type of perturbation we use.  Essentially, this type of perturbation states that if we have a perturbation which varies the curvature at $k$ points, we may instead consider a different perturbation at any 3 previous points that achieves the same change of the differential.

\begin{perturbation}\label{lemma_we_can_perturb_around_three_points_instead}
Given a perturbation of the curvature at points $s_{N-k},...,s_{N-1}$ which varies the differential of $f$ by
\begin{align}\begin{split}\nonumber
    df^N(x_1) \rightarrow df^N(x_1) + \Delta_1df^N(x_1)
\end{split}\end{align}
while keeping the orbit $\mathcal O$ fixed, there exists another perturbation of the curvature at any increasing subsequence of length $3$, $s_{n_1},s_{n_2},s_{n_3}$ such that this perturbation also keeps $\mathcal O$ fixed and changes the differential of $f$ by
\begin{align}\begin{split}\nonumber
    df^N(x_1) \rightarrow df^N(x_1) + \Delta_2df^N(x_1),
\end{split}\end{align}
so that
\begin{align}\begin{split}\nonumber
    \Delta_1df^N(x_1) = \Delta_2df^N(x_1),
\end{split}\end{align}
given the following holds:
\begin{align}\begin{split}\nonumber
    \frac{\del s_{j}}{\del \phi_i}\neq 0
\end{split}\end{align}
for $1 \leq i < j \leq N+1$.
\end{perturbation}
\begin{proof}
This follows from showing that one can vary the curvature at any $4$ points, $s_{k_1}<s_{k_2}<s_{k_3}<s_{k_4}$ so that the the resulting $\Delta df^N(x_1)=0$.  This is because of the following argument.  Supposing that we can, then we have

\begin{align}\begin{split}\nonumber
    \Delta \, df^N(x_1) =& \sum_{i=0}^{k-1} df^{k-i}(x_{N-k+i}) \, B \, df^{N-k+i-1}(x_1)\Delta\kappa_{N-k+i}
\end{split}\end{align}

And then if we consider each term with the three points $s_{n_1},s_{n_2},s_{n_3}$, we can find a perturbation $\Phi_i$ which changes the curvature around these points by $\Delta\kappa_{s_1},\Delta\kappa_{s_2},\Delta\kappa_{s_3}$ so that
\begin{align}\begin{split}\nonumber
    & df^{k-i}(x_{N-k+i})\,B\,df^{N-k+i-1}(x_1)\Delta_i\kappa_{N-k+i}\\
    +&df^{N-s_1}(x_{s_1})\,B\,df^{s_1-1}(x_1)\Delta_i\kappa_{s_1}\\
    +&df^{N-s_2}(x_{s_2})\,B\,df^{s_2-1}(x_1)\Delta_i\kappa_{s_2}\\
    +&df^{N-s_3}(x_{s_3})\,B\,df^{s_3-1}(x_1)\Delta_i\kappa_{s_3}.\\
    =&0.
\end{split}\end{align}

Then, plugging this into the previous equation, we have by Perturbation \eqref{perturbation_which_varys_the_first_differential}

\begin{align}\begin{split}\nonumber
    \Delta df^N(x_1) = -\sum_{i=0}^{k-1} 
    \bigg \{ &df^{N-s_1}(x_{s_1})\,B\,df^{s_1-1}(x_1)\Delta_i\kappa_{s_1}\\
    +&df^{N-s_2}(x_{s_2})\,B\,df^{s_2-1}(x_1)\Delta_i\kappa_{s_2}\\
    +&df^{N-s_3}(x_{s_3})\,B\,df^{s_3-1}(x_1)\Delta_i\kappa_{s_3} \bigg\}
\end{split}\end{align}

So if we define a perturbation by changing the curvature at points $s_j$ by
$-\sum_{i=0}^{i=k-1}\Delta_i\kappa_{s_j}$ for $j=1,2,3$, then we this perturbation 
gives the same $\Delta df^N(x_1)$ as our original perturbation. 

So, to prove the claim that we may vary at $4$ points without changing 
$df^N(x_1)$, we note that if we vary the curvature at 
$s_{k_1},s_{k_2},s_{k_3},s_{k_4}$ by $\Delta\kappa_{k_1},\Delta\kappa_{k_2},
\Delta\kappa_{k_3},\Delta\kappa_{k_4}$, we have
\begin{align}\begin{split}\nonumber
    \Delta df^N(x_1) =& \, df^{N-k_1}(x_{k_1})\,B\,df^{k_1-1}(x_1)\Delta\kappa_{k_1}\\
    &\, df^{N-k_2}(x_{k_2})\,B\,df^{k_2-1}(x_1)\Delta\kappa_{k_2}\\
    &\, df^{N-k_3}(x_{k_3})\,B\,df^{k_3-1}(x_1)\Delta\kappa_{k_3}\\
    &\, df^{N-k_4}(x_{k_4})\,B\,df^{k_4-1}(x_1)\Delta\kappa_{k_4}.
\end{split}\end{align}

Then, this is $0$ if and only if
\begin{align}\begin{split}\nonumber
    &df^{k_4-k_1}(x_{k_1}) \,B\,\Delta\kappa_{k_1}\\
    +&df^{k_4-k_2}(x_{k_2})\,B\,df^{k_2-k_1}(x_{k_1})\Delta\kappa_{k_2}\\
    +&df^{k_4-k_3}(x_{k_3})\,B\,df^{k_3-k_1}(x_{k_1})\Delta\kappa_{k_3}\\
    +&B\,df^{k_4-k_1}(x_{k_1})\Delta\kappa_{k_4}\\
    =&0.
\end{split}\end{align}

Expanding this out, we get this is the same as

\begin{align}\begin{split}\nonumber
    &
    \begin{bmatrix}
    \frac{\del s_{k_4}}{\del \phi_{k_1}} && 0 \\
    \frac{\del \phi_{k_4}}{\del \phi_{k_1}} && 0
    \end{bmatrix} \Delta \kappa_{k_1}\\
    +&
    \begin{bmatrix}
    \frac{\del s_{k_2}}{\del s_{k_1}}\frac{s_{k_4}}{\del \phi_{k_2}} && 
    \frac{\del s_{k_2}}{\del \phi_{k_1}}\frac{s_{k_4}}{\del \phi_{k_2}} \\
    \frac{\del s_{k_2}}{\del s_{k_1}}\frac{\phi_{k_4}}{\del \phi_{k_2}} && 
    \frac{\del s_{k_2}}{\del \phi_{k_1}}\frac{\phi_{k_4}}{\del \phi_{k_2}} \\
    \end{bmatrix} \Delta \kappa_{k_2}\\
    +&
    \begin{bmatrix}
    \frac{\del s_{k_3}}{\del s_{k_1}}\frac{s_{k_4}}{\del \phi_{k_3}} && 
    \frac{\del s_{k_3}}{\del \phi_{k_1}}\frac{s_{k_4}}{\del \phi_{k_3}} \\
    \frac{\del s_{k_3}}{\del s_{k_1}}\frac{\phi_{k_4}}{\del \phi_{k_3}} && 
    \frac{\del s_{k_3}}{\del \phi_{k_1}}\frac{\phi_{k_4}}{\del \phi_{k_3}} \\
    \end{bmatrix} \Delta \kappa_{k_3}\\
    +&
    \begin{bmatrix}
    0 && 0 \\
    \frac{\del s_{k_4}}{\del s_{k_1}} && \frac{\del s_{k_4}}{\del \phi_{k_1}}
    \end{bmatrix} \Delta \kappa_{k_4}\\
    &=0.
\end{split}\end{align}

Then, considering each term independently, we get the following four relations must hold:

\begin{align}\begin{split}\nonumber
    &\frac{\del s_{k_4}}{\del \phi_{k_1}}\Delta\kappa_{k_4} = \frac{\del s_{k_2}}{\del \phi_{k_1}}\frac{\del \phi_{k_4}}{\del \phi_{k_2}}\Delta \kappa_{k_2} + \frac{\del s_{k_3}}{\del \phi_{k_1}}\frac{\del \phi_{k_4}}{\del \phi_{k_3}}\Delta \kappa_{k_3},\\
    &\frac{\del s_{k_2}}{\del \phi_{k_1}}\frac{\del s_{k_4}}{\del \phi_{k_2}}\Delta\kappa_{k_2} +
    \frac{\del s_{k_3}}{\del \phi_{k_1}}\frac{\del s_{k_4}}{\del \phi_{k_3}}\Delta\kappa_{k_3} = 0,\\
    &\frac{\del s_{k_4}}{\del \phi_{k_1}}\Delta\kappa_{k_1} + \frac{\del s_{k_2}}{\del s_{k_1}}\frac{\del s_{k_4}}{\del \phi_{k_2}}\Delta\kappa_{k_2} + \frac{\del s_{k_3}}{\del s_{k_1}}\frac{\del s_{k_4}}{\del \phi_{k_3}}\Delta\kappa_{k_3} = 0,\\
    &\frac{\del s_{k_4}}{\del s_{k_1}}\Delta\kappa_{k_4} = \frac{\del \phi_{k_4}}{\del \phi_{k_1}}\Delta \kappa_{k_1}+
    \frac{\del s_{k_2}}{\del s_{k_1}}\frac{\del \phi_{k_4}}{\del \phi_{k_2}}\Delta \kappa_{k_2} + \frac{\del s_{k_3}}{\del s_{k_1}}\frac{\del \phi_{k_4}}{\del \phi_{k_3}}\Delta \kappa_{k_3}.
\end{split}\end{align}

Without loss of generality, we can set $\Delta \kappa_{k_4}=1$ since we may normalize.  Then, the first three relations determine $\Delta \kappa_{k_2}$, $\Delta \kappa_{k_3}$, and $\Delta \kappa_{k_1}$.  To see this, we note the second and third equation determine $\Delta \kappa_{k_2}$ and $\Delta \kappa_{k_3}$ if
\begin{align}\begin{split}\nonumber
    \det \begin{bmatrix}
    \frac{\del s_{k_2}}{\del \phi_{k_1}}\frac{\del s_{k_4}}{\del \phi_{k_2}} && \frac{\del s_{k_3}}{\del \phi_{k_1}}\frac{\del s_{k_4}}{\del \phi_{k_3}}
    \\
    \frac{\del s_{k_2}}{\del s_{k_1}}\frac{\del s_{k_4}}{\del \phi_{k_2}} && \frac{\del s_{k_3}}{\del s_{k_1}}\frac{\del s_{k_4}}{\del \phi_{k_3}}
    \end{bmatrix}\neq 0.
\end{split}\end{align}
This is true since
\begin{align}\begin{split}\nonumber
    &\frac{\del s_{k_4}}{\del \phi_{k_2}},\frac{\del s_{k_4}}{\del \phi_{k_3}} \neq 0,
    \\
    & \det \begin{bmatrix}
    \frac{\del s_{k_2}}{\del \phi_{k_1}} && \frac{\del s_{k_3}}{\del \phi_{k_1}}
    \\
    \frac{\del s_{k_2}}{\del s_{k_1}} && \frac{\del s_{k_3}}{\del s_{k_1}}
    \end{bmatrix} = -\frac{\del s_{k_3}}{\del \phi_{k_2}}\det(df(x_1))) \neq 0.
\end{split}\end{align}
Thus, we have
\begin{align}\begin{split}\nonumber
    \begin{bmatrix}
    \Delta \kappa_{k_2} \\ \Delta \kappa_{k_3}
    \end{bmatrix}
    =
    \begin{bmatrix}
    \frac{\del s_{k_2}}{\del \phi_{k_1}}\frac{\del s_{k_4}}{\del \phi_{k_2}} && \frac{\del s_{k_3}}{\del \phi_{k_1}}\frac{\del s_{k_4}}{\del \phi_{k_3}}
    \\
    \frac{\del s_{k_2}}{\del s_{k_1}}\frac{\del s_{k_4}}{\del \phi_{k_2}} && \frac{\del s_{k_3}}{\del s_{k_1}}\frac{\del s_{k_4}}{\del \phi_{k_3}}
    \end{bmatrix}^{-1}\begin{bmatrix}
    0 \\ -\frac{\del s_{k_4}}{\del \phi_{k_1}}
    \end{bmatrix}.
\end{split}\end{align}
From here, one uses the first and second equations to fix $\Delta \kappa_{k_4}$.  These equations yield
\begin{align}\begin{split}\nonumber
    \begin{bmatrix}
    \frac{\del s_{k_4}}{\del \phi_{k_1}}\Delta \kappa_{k_4} \\ 0
    \end{bmatrix} &= \begin{bmatrix}
    \frac{\del s_{k_2}}{\del \phi_{k_1}}\frac{\del \phi_{k_4}}{\del \phi_{k_2}} && \frac{\del s_{k_3}}{\del \phi_{k_1}}\frac{\del \phi_{k_4}}{\del \phi_{k_3}}
    \\
    \frac{\del s_{k_2}}{\del \phi_{k_1}}\frac{\del s_{k_4}}{\del \phi_{k_2}} && \frac{\del s_{k_3}}{\del \phi_{k_1}}\frac{\del s_{k_4}}{\del \phi_{k_3}}
    \end{bmatrix}\begin{bmatrix}
    \Delta \kappa_{k_2} \\ \Delta \kappa_{k_3}
    \end{bmatrix}
    \\
    &= \begin{bmatrix}
    \frac{\del s_{k_2}}{\del \phi_{k_1}}\frac{\del \phi_{k_4}}{\del \phi_{k_2}} && \frac{\del s_{k_3}}{\del \phi_{k_1}}\frac{\del \phi_{k_4}}{\del \phi_{k_3}}
    \\
    \frac{\del s_{k_2}}{\del \phi_{k_1}}\frac{\del s_{k_4}}{\del \phi_{k_2}} && \frac{\del s_{k_3}}{\del \phi_{k_1}}\frac{\del s_{k_4}}{\del \phi_{k_3}}
    \end{bmatrix}
    \begin{bmatrix}
    \frac{\del s_{k_2}}{\del \phi_{k_1}}\frac{\del s_{k_4}}{\del \phi_{k_2}} && \frac{\del s_{k_3}}{\del \phi_{k_1}}\frac{\del s_{k_4}}{\del \phi_{k_3}}
    \\
    \frac{\del s_{k_2}}{\del s_{k_1}}\frac{\del s_{k_4}}{\del \phi_{k_2}} && \frac{\del s_{k_3}}{\del s_{k_1}}\frac{\del s_{k_4}}{\del \phi_{k_3}}
    \end{bmatrix}^{-1}\begin{bmatrix}
    0 \\ -\frac{\del s_{k_4}}{\del \phi_{k_1}}
    \end{bmatrix}.
\end{split}\end{align}
Simplifying we obtain
\begin{align}\begin{split}\nonumber
    \Delta \kappa_{k_4}\bigg(\frac{\del s_{k_3}}{\del \phi_{k_2}}\frac{\del s_{k_4}}{\del \phi_{k_2}}\frac{\del s_{k_4}}{\del \phi_{k_3}}\bigg)\det(df(x_1)) &= \frac{\del s_{k_2}}{\del \phi_{k_1}}\frac{\del s_{k_3}}{\phi_{k_1}}\bigg(
    \frac{\del s_{k_4}}{\del \phi_{k_2}}\frac{\del \phi_{k_4}}{\del \phi_{k_3}} - \frac{\del s_{k_4}}{\del \phi_{k_3}}\frac{\del \phi_{k_4}}{\del \phi_{k_2}}
    \bigg)
    \\
    & = \frac{\del s_{k_2}}{\del \phi_{k_1}}\frac{\del s_{k_3}}{\phi_{k_1}}\frac{\del s_{k_3}}{\del \phi_{k_2}}\det(df(x_3))
\end{split}\end{align}

All that remains after this then is to verify that the fourth equation holds true with these values.  One may verify that it does.

\end{proof}

\section{Perturbing the $n$-th partial derivatives of the differential independently}\label{appendix_perturb_nth_derivative_independently}

The main outcome of this section is to prove the statement for Perturbation \ref{perturbation_which _lets_you_vary_all_derivatives_independently} (which is a more detailed description of Theorem \eqref{theorem_perturb_many_independently}).  Recall for this perturbation we consider a changes in the curvature such that
\begin{align}\begin{split}\nonumber
    &\Delta \kappa_i^{(j)}=0\text{ for }0\leq j \leq n-1\\
    &\Delta \kappa_i^{(n)}=\eps_i.
\end{split}\end{align}
for $1\leq i\leq n+2$.  To begin, we show how this effects the $n^{th}$ differentials of $s_{n+3},\phi_{n+3}$.  We have

\setcounter{maintheorem}{2}
\setcounter{lemma}{0}

\begin{lemma}
With the perturbation given as above, the changes to the $n^{th}$ differentials of $s_{n+3},\phi_{n+3}$ are

\begin{align}\begin{split}\nonumber
    &\Delta\frac{\del^n s_{n+3}}{\del s_0^{n-k} \del \phi_0^k} = 2\sum_{i=1}^{n+2}\frac{\del s_{n+3}}{\del \phi_i}\bigg(\frac{\del s_i}{\del s_0}\bigg)^{n-k}\bigg(\frac{\del s_i}{\del \phi_0}\bigg)^k\eps_i,\\
    &\Delta\frac{\del^n \phi_{n+3}}{\del s_0^{n-k} \del \phi_0^k} = 2\sum_{i=1}^{n+2}\frac{\del \phi_{n+3}}{\del \phi_i}\bigg(\frac{\del s_i}{\del s_0}\bigg)^{n-k}\bigg(\frac{\del s_i}{\del \phi_0}\bigg)^k\eps_i.
\end{split}\end{align}
\end{lemma}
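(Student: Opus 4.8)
The plan is to track, by a direct computation, how the prescribed deformation of the curvature propagates through the $n$-jet of the $(n{+}3)$-fold composition $f^{n+3}$, using as the only nontrivial input the first-order formula recorded for the first perturbation of Appendix~\ref{appendix_all_perturbations}: a localized change of the curvature at the $k$-th bounce inserts the shear $B$ into the derivative cocycle there.

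First I would reduce to a single point. The deformations near the $n+2$ points $s_1,\dots,s_{n+2}$ are supported in disjoint strips, so to first order in $\eps=(\eps_1,\dots,\eps_{n+2})$ their contributions to any fixed jet of $f^{n+3}$ add; it is therefore enough to prove the formula when only the curvature near one $s_i$ is moved, with parameter $\eps_i$. Since the deformation leaves $\kappa$ and all of its derivatives of order strictly below the top one unchanged at $s_i$, on a small neighbourhood of $s_i$ it has the form $\Delta\kappa(s)=c\,\eps_i\,(s-s_i)^{\,n-1}+O(|s-s_i|^{\,n})$ for a fixed constant $c$; one further arranges — as in that first perturbation — that $\gamma(s_i)$ and the tangent direction at $\gamma(s_i)$ stay fixed and that $\gamma$ is unchanged outside an arbitrarily small neighbourhood of $s_i$, restoring the finitely many closure conditions \eqref{Dr_curvatures} by a correction supported away from every point of $\mathcal O$ (using the injectivity available here, cf.\ Perturbation~\ref{lemma_we_can_perturb_around_three_points_instead}), so that the orbit $\mathcal O$ is preserved.

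The core step is to run the computation of the first perturbation along a \emph{nearby} orbit. Fix $x_0+\xi$ with $\xi=(u,v)$ small, put $x_j(\xi)=f^j(x_0+\xi)$, and let $\tilde s_i(\xi)$ be the position of its $i$-th bounce; this is smooth in $\xi$ with $\tilde s_i(0)=s_i$ and
\begin{align}\begin{split}\nonumber
\tilde s_i(\xi)=s_i+\tfrac{\del s_i}{\del s_0}\,u+\tfrac{\del s_i}{\del\phi_0}\,v+O(|\xi|^2).
\end{split}\end{align}
Along this orbit the deformation is felt, to first order, only through the curvature value at its single bounce point $\tilde s_i(\xi)$: higher curvature derivatives affect $df$ only at second order, and by the vanishing arranged above one has $\Delta\theta(\tilde s_i(\xi))=O(|\xi|^{\,n})$ and $\Delta\gamma(\tilde s_i(\xi))=O(|\xi|^{\,n+1})$. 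Hence the derivation behind the first perturbation of Appendix~\ref{appendix_all_perturbations} applies verbatim and yields, to first order in $\eps_i$,
\begin{align}\begin{split}\nonumber
\Delta df^{n+3}(x_0+\xi)=\Delta\kappa(\tilde s_i(\xi))\;df^{\,n+3-i}(x_i)\,B\,df^{\,i}(x_0)+O(|\xi|^{\,n}).
\end{split}\end{align}
Now $\Delta\kappa(\tilde s_i(\xi))=c\,\eps_i\big(\tfrac{\del s_i}{\del s_0}u+\tfrac{\del s_i}{\del\phi_0}v\big)^{n-1}+O(|\xi|^{\,n})$, and a direct multiplication identifies the first row of $df^{\,n+3-i}(x_i)\,B\,df^{\,i}(x_0)$ as $2\,\tfrac{\del s_{n+3}}{\del\phi_i}\big(\tfrac{\del s_i}{\del s_0},\,\tfrac{\del s_i}{\del\phi_0}\big)$ and the second row as $2\,\tfrac{\del\phi_{n+3}}{\del\phi_i}\big(\tfrac{\del s_i}{\del s_0},\,\tfrac{\del s_i}{\del\phi_0}\big)$. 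Since the $(1,1)$ and $(1,2)$ entries of $df^{n+3}(x_0+\xi)$ are $\del_{s_0}s_{n+3}$ and $\del_{\phi_0}s_{n+3}$ evaluated at $x_0+\xi$, applying $\del_u^{\,n-1-l}\del_v^{\,l}$ at $\xi=0$ and expanding the monomial by the binomial theorem isolates exactly
\begin{align}\begin{split}\nonumber
\Delta\frac{\del^n s_{n+3}}{\del s_0^{\,n-l}\del\phi_0^{\,l}}=2\,\frac{\del s_{n+3}}{\del\phi_i}\Big(\frac{\del s_i}{\del s_0}\Big)^{n-l}\Big(\frac{\del s_i}{\del\phi_0}\Big)^{l}\eps_i\qquad(0\le l\le n-1),
\end{split}\end{align}
together with the $l=n$ case from the $(1,2)$ entry; the $O(|\xi|^{\,n})$ remainder and all the higher-order corrections collected above contribute only to derivatives of order $\ge n+1$ and are annihilated by these order $(n-1)$ derivatives in $\xi$. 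The identity for $\phi_{n+3}$ comes out of the second row in the same way, and summing over $i$ gives the Lemma.

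I expect the main obstacle to be precisely this core step: one must verify that, among all the terms produced by differentiating a long composition every inner factor of which moves under the deformation, only the single shear term survives into the $n$-jet, and that the errors coming from the position dependence of $\Delta\kappa$, from the motion of the intermediate bounce positions $\tilde s_j(\xi)$, and from the (strictly higher-order) changes of $\gamma$ and of the tangent directions are genuinely $O(|\xi|^{\,n})$. This is what forces the precise choice of deformation in the second paragraph — in particular the cancellation of the tangent-direction change at $\gamma(s_i)$, without which the induced change of the reflection angle would be a nonzero constant rather than $O(|\xi|^{\,n})$ and would corrupt every jet.
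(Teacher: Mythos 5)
Your argument is, in substance, the paper's own: both rest on the Perturbation~\ref{perturbation_which_varys_the_first_differential}--type identity that the curvature value at the $i$-th bounce enters $df^{n+3}$ only through $df^{\,n+3-i}(x_i)\,B\,df^{\,i}(x_0)$, whose rows produce the factors $2\frac{\del s_{n+3}}{\del \phi_i}$ and $2\frac{\del \phi_{n+3}}{\del \phi_i}$, together with the observation that the top derivative of $\kappa$ at $s_i$ can reach the top-order jet only through the pure chain-rule term carrying $\big(\frac{\del s_i}{\del s_0}\big)^{n-k}\big(\frac{\del s_i}{\del \phi_0}\big)^{k}$, every other contribution involving only lower derivatives of $\kappa$, which the deformation leaves untouched. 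Your explicit error bookkeeping ($\Delta\theta(\tilde s_i(\xi))=O(|\xi|^{n})$, $\Delta\gamma(\tilde s_i(\xi))=O(|\xi|^{n+1})$, hence harmless changes of $l_j,\beta_j$ and of the intermediate bounce points) spells out what the paper compresses into ``terms not involving $\kappa_j$'' and the remainder $R$; that is a useful elaboration, not a different route.

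One point needs repair. With the perturbation exactly as stated ($\Delta\kappa_i^{(j)}=0$ for $0\le j\le n-1$, $\Delta\kappa_i^{(n)}=\eps_i$) the local form is $\Delta\kappa(s)=\frac{\eps_i}{n!}(s-s_i)^{n}+O(|s-s_i|^{n+1})$, not $c\,\eps_i(s-s_i)^{n-1}+O(|s-s_i|^{n})$ as you wrote; under the literal hypothesis your own computation shows the $n$-jet of $(s_{n+3},\phi_{n+3})$ does not move at first order, and the displayed coefficients appear one order up, i.e.\ in the $(n+1)$-st partials, equivalently in the $n$-th derivatives of the entries of $df^{n+3}$. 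This is precisely the off-by-one the paper itself makes when it identifies $n$-th derivatives of $df^{n+3}$ with $n$-th derivatives of $s_{n+3}$; your normalization (in effect $\Delta\kappa_i^{(n-1)}=\eps_i$, i.e.\ $c=1/(n-1)!$, a constant you should track so that the binomial computation yields exactly $\eps_i$ and not $c\,(n-1)!\,\eps_i$) is the one for which the lemma reads as stated and as it is later used to build the matrix $M$. So the mechanism and the route are correct and match the paper; just say explicitly which derivative of $\kappa$ you move, and carry the factorial through the final differentiation.
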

\begin{proof}
    To see this we prove the statement for the derivatives of $s_{n+3}$ as the proof for the derivatives of $\phi_{n+3}$ is similar. We prove the result first for degree $1$ partial derivatives.  Recall that $df^{n+3}(x_0)$ depends on $\kappa_j$ by
    \begin{align}\begin{split}\nonumber
        df^{n+3}(x_0)
        =
        \kappa_jdf^{n+3-j}(x_{j})Bdf^j(x_0) + \text{ terms not involving }\kappa_j.
    \end{split}\end{align}
    Expanding the term multiplying $\kappa_j$, we have
    \begin{align}\begin{split}
        df^{n+3-j}(x_j)Bdf^j(x_0) = 2 \begin{bmatrix}
            \frac{\del s_{n+3}}{\del \phi_{j}}\frac{\del s_{j}}{\del s_0} && \frac{\del s_{n+3}}{\del \phi_{j}}\frac{\del s_{j}}{\del \phi_0} \\ \frac{\del \phi_{n+3}}{\del \phi_{j}}\frac{\del s_{j}}{\del s_0} && \frac{\del \phi_{n+3}}{\del \phi_{j}}\frac{\del s_{j}}{\del \phi_0}
        \end{bmatrix}.
    \end{split}\end{align}
    Thus, adding to each $\kappa_j$ some change $\eps_j$, we have to first order 
    \begin{align}\begin{split}\nonumber
        \Delta\frac{\del s_{n+3}}{\del s_0} &= 2\sum_{j=1}^{n+2}\frac{\del s_{n+3}}{\del \phi_{j+1}}\frac{\del s_{j+1}}{\del s_0}\eps_j,
        \\
        \Delta\frac{\del s_{n+3}}{\del \phi_0} &= 2\sum_{j=1}^{n+2}\frac{\del s_{n+3}}{\del \phi_{j+1}}\frac{\del s_{j+1}}{\del \phi_0}\eps_j.
    \end{split}\end{align}
    The higher order partials are similar: we observe 
    \begin{align}\begin{split}\nonumber
        \frac{\del^k}{\del s_0^{k-i}\del \phi_0^i}\bigg(df^{n+3}(x_0)\bigg) &= 2\kappa_j^{(k)}\bigg(\frac{\del s_j}{\del s_0}\bigg)^{k-i}\bigg(\frac{\del s_j}{\del \phi_0}\bigg)^{i}df^{n+3-j}(x_{j})Bdf^j(x_0)
        \\
        &+ R(s_0,\phi_0)
    \end{split}\end{align}
    where the remainder term $R$ only contains terms potentially containing $\kappa_j^{(i)}$ where $i$ is at most $k-1$.  Thus, if we perturb $\kappa_j$      by adding to its $n^{th}$ derivative $\eps_j$, we obtain to first order
    \begin{align}\begin{split}\nonumber
        \Delta\frac{\del^k}{\del s_0^{k-i}\del \phi_0^i}\bigg(df^{n+3}(x_0)\bigg) &= 2\sum_{j=1}^{n+2}\frac{\del s_{n+3}}{\del \phi_j}\bigg(\frac{\del s_j}{\del s_0}\bigg)^{k-i}\bigg(\frac{\del s_j}{\del \phi_0}\bigg)^{i}\eps_j,
    \end{split}\end{align}
    which is precisely the statement of the lemma.
\end{proof}

So, we have proven the $n^{th}$ partial derivatives depend linear on $\eps=(\eps_1,...,\eps_{n+2})$.  

observe that there are exactly $2(n+1)$ such partial derivatives.  As it turns out, however, there are really only $n+2$ degrees of freedom.  What we mean by this is that given $n+2$ of these terms, one may determine the other $n$. This is ultimately due to the area preservation relation
\begin{align}\begin{split}\nonumber
    \det\bigg(df^q(x_0)\bigg)=\frac{\sin(\phi_0)}{\sin(\phi_q)}.
\end{split}\end{align}
More precisely, we prove
\begin{lemma}
    For all billiard maps, the partial derivatives
    \begin{align}\begin{split}\nonumber
        \bigg\{\frac{\del^n \phi_{n+3}}{\del s_0^n},\frac{\del^n \phi_{n+3}}{\del s_0^{n-1}\del \phi_0^1},...,\frac{\del^n \phi_{n+3}}{\del s_0^{1}\del \phi_0^{n-1}}\bigg\}
    \end{split}\end{align}
    can be determined from
    \begin{align}\begin{split}\nonumber
        &\bigg\{\frac{\del^n s_{n+3}}{\del s_0^n},\frac{\del^n s_{n+3}}{\del s_0^{n-1}\del \phi_0^1},...,\frac{\del^n s_{n+3}}{\del \phi_0^{n-1}},\frac{\del^n \phi_{n+3}}{\del \phi_0^n}\bigg\}
        \\
        &\cup\bigg\{\text{all derivatives of $s_{n+3}$ and $\phi_{n+3}$ of order less than $n$}\bigg\}.
    \end{split}\end{align}
\end{lemma}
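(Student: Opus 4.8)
The plan is to differentiate the area‑preservation relation. Writing $x_0=(s_0,\phi_0)$, one has the Jacobian identity
\begin{align}\begin{split}\nonumber
D(s_0,\phi_0) &:= \frac{\del s_{n+3}}{\del s_0}\frac{\del \phi_{n+3}}{\del\phi_0} - \frac{\del s_{n+3}}{\del\phi_0}\frac{\del\phi_{n+3}}{\del s_0} \\
&= \det\bigl(df^{n+3}(x_0)\bigr) = \frac{\sin\phi_0}{\sin\phi_{n+3}}.
\end{split}\end{align}
First I would apply the operator $\del_{s_0}^{\,n-1-j}\del_{\phi_0}^{\,j}$ to both sides of this identity for each $j=0,1,\dots,n-1$, producing $n$ scalar equations. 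On the right‑hand side the total order of differentiation is only $n-1$, so by the chain rule only partial derivatives of $\phi_{n+3}$ of order at most $n-1$ appear, multiplied by explicit functions of $\phi_0$; all of these are among the data we are permitted to use.

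Next I would extract from each equation its order‑$n$ part. Since $D$ is a bilinear expression in the first partials of $s_{n+3}$ and $\phi_{n+3}$ with constant coefficients, the Leibniz rule shows that an $(n-1)$‑fold derivative of $D$ is a sum of products of a derivative of one first partial with a derivative of the other, and an order‑$n$ quantity can occur only when one factor absorbs all $n-1$ extra derivatives while the other stays a first partial. Consequently the only order‑$n$ terms are: order‑$n$ derivatives of $s_{n+3}$, which are given by hypothesis; and order‑$n$ derivatives of $\phi_{n+3}$, each appearing with coefficient exactly $\frac{\del s_{n+3}}{\del s_0}$ or $-\frac{\del s_{n+3}}{\del\phi_0}$. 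A short bookkeeping then shows that, writing $y_i := \frac{\del^n \phi_{n+3}}{\del s_0^{\,n-i}\del\phi_0^{\,i}}$ (so that $y_n=\frac{\del^n\phi_{n+3}}{\del\phi_0^{\,n}}$ belongs to the data while $y_0,\dots,y_{n-1}$ are to be determined), the $j$‑th equation takes the form
\begin{align}\begin{split}\nonumber
-\frac{\del s_{n+3}}{\del\phi_0}\,y_j + \frac{\del s_{n+3}}{\del s_0}\,y_{j+1} = (\text{an expression in the permitted data}),
\end{split}\end{align}
for $j=0,\dots,n-1$, with the term $\frac{\del s_{n+3}}{\del s_0}y_n$ transposed to the right when $j=n-1$.

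Finally I would observe that this is a bidiagonal linear system for $(y_0,\dots,y_{n-1})$ whose coefficient matrix is upper triangular with every diagonal entry equal to $-\frac{\del s_{n+3}}{\del\phi_0}$. Because the billiard map of a strictly convex domain is a twist map and this property persists under iteration, $\frac{\del s_{n+3}}{\del\phi_0}\neq 0$, so the matrix is invertible; solving by back‑substitution, starting from the $j=n-1$ equation (which determines $y_{n-1}$ from $y_n$ and known data) and descending through $j=n-2,\dots,0$, expresses each $y_i$ in terms of the order‑$n$ derivatives of $s_{n+3}$, the derivative $\frac{\del^n\phi_{n+3}}{\del\phi_0^n}$, and derivatives of $s_{n+3},\phi_{n+3}$ of order $<n$, which is exactly the claim. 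The main obstacle is the second step: one must verify carefully, term by term in the Leibniz expansion of $\del_{s_0}^{\,n-1-j}\del_{\phi_0}^{\,j}D$, that no order‑$n$ derivative of $\phi_{n+3}$ escapes with a coefficient other than $\pm\frac{\del s_{n+3}}{\del s_0}$ or $\pm\frac{\del s_{n+3}}{\del\phi_0}$, and that every remaining term involves only the permitted lower‑order data (or order‑$n$ derivatives of $s_{n+3}$); this follows from the special quadratic, coefficient‑free shape of $D$, but it is the part requiring genuine care.
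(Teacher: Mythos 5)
Your proposal is essentially the paper's own argument: differentiate the area-preservation identity $\frac{\del s_{n+3}}{\del s_0}\frac{\del \phi_{n+3}}{\del \phi_0}-\frac{\del s_{n+3}}{\del \phi_0}\frac{\del \phi_{n+3}}{\del s_0}=\frac{\sin\phi_0}{\sin\phi_{n+3}}$ with $\del_{s_0}^{\,n-1-j}\del_{\phi_0}^{\,j}$, isolate the order-$n$ terms to get an upper-triangular bidiagonal system in the unknown derivatives of $\phi_{n+3}$ with diagonal entries $-\frac{\del s_{n+3}}{\del \phi_0}$, and invert. The one caveat is your justification of $\frac{\del s_{n+3}}{\del \phi_0}\neq 0$: the twist condition is \emph{not} in general preserved under composition, so this nonvanishing is really part of the standing non-degeneracy hypotheses $\frac{\del s_j}{\del \phi_i}\neq 0$ that the paper assumes throughout (e.g. in Perturbation 3), not a consequence of convexity alone.
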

\begin{proof}
    We first observe the statement follows for $n=0$ from the area preservation, which explicitly gives us:
    \begin{align}\begin{split}\nonumber
        \frac{\del s_{n+3}}{\del s_0}\frac{\del \phi_{n+3}}{\del \phi_0} - \frac{\del s_{n+3}}{\del \phi_0}\frac{\del \phi_{n+3}}{\del s_0} = \frac{\sin(\phi_0)}{\sin(\phi_{n+3})}.
    \end{split}\end{align}
    We now show the statement of the lemma for $n=1$. To do this we take partial derivatives of this equation to obtain
    \begin{align}\begin{split}\nonumber
        &\frac{\del^2 s_{n+3}}{\del s_0^2}\frac{\del \phi_{n+3}}{\del \phi_0} + \frac{\del s_{n+3}}{\del s_0}\frac{\del^2 \phi_{n+3}}{\del s_0\del\phi_0} 
        \\
        &- \frac{\del^2 s_{n+3}}{\del s_0\del\phi_0}\frac{\del \phi_{n+3}}{\del s_0} - \frac{\del s_{n+3}}{\del \phi_0}\frac{\del^2 \phi_{n+3}}{\del s_0^2} = \frac{\del}{\del s_0}\bigg(\frac{\sin(\phi_0)}{\sin(\phi_{n+3})}\bigg),
        \\
        &\frac{\del^2 s_{n+3}}{\del s_0 \del\phi_0}\frac{\del \phi_{n+3}}{\del \phi_0} + \frac{\del s_{n+3}}{\del s_0}\frac{\del^2 \phi_{n+3}}{\del\phi_0^2} 
        \\
        &- \frac{\del^2 s_{n+3}}{\phi_0^2}\frac{\del \phi_{n+3}}{\del s_0} - \frac{\del s_{n+3}}{\del \phi_0}\frac{\del^2 \phi_{n+3}}{\del s_0\del \phi_0} = \frac{\del}{\del \phi_0}\bigg(\frac{\sin(\phi_0)}{\sin(\phi_{n+3})}\bigg).
    \end{split}\end{align}
    Thus, if we define vectors 
    \begin{align}\begin{split}\nonumber
        \vec x &= \bigg(
        \frac{\del^2 s_{n+3}}{\del s_0^2},
        \frac{\del^2 s_{n+3}}{\del s_0\del \phi_0},
        \frac{\del^2 s_{n+3}}{\del \phi_0^2},
        \frac{\del^2 \phi_{n+3}}{\del s_0^2},
        \frac{\del^2 \phi_{n+3}}{\del s_0\del \phi_0},
        \frac{\del^2 \phi_{n+3}}{\del \phi_0^2}
        \bigg)
        \\
        \vec y &= \bigg(
        \frac{\del}{\del s_0}\bigg(\frac{\sin(\phi_0)}{\sin(\phi_{n+3})}\bigg),
        \frac{\del}{\del \phi_0}\bigg(\frac{\sin(\phi_0)}{\sin(\phi_{n+3})}\bigg)
        \bigg),
    \end{split}\end{align}
    we have
    \begin{align}\begin{split}\nonumber
        A_2\vec x = \vec y,
    \end{split}\end{align}
    where $A_2$ is given by
    \setcounter{MaxMatrixCols}{20}
    \begin{align}\begin{split}\nonumber
        A_2 = \begin{bmatrix}
        \frac{\del \phi_{n+3}}{\del \phi_0} &&
        -\frac{\del \phi_{n+3}}{\del s_0} &&
        0 &&
        -\frac{\del s_{n+3}}{\del \phi_0} &&
        \frac{\del s_{n+3}}{\del s_0} &&
        0 \\
        0 &&
        \frac{\del \phi_{n+3}}{\del \phi_0} &&
        -\frac{\del \phi_{n+3}}{\del s_0} &&
        0 &&
        -\frac{\del s_{n+3}}{\del \phi_0} &&
        \frac{\del s_{n+3}}{\del s_0} 
        \end{bmatrix}.
    \end{split}\end{align}
    We thus note that this implies we may write $\bigg\{\frac{\del^2 \phi_{n+3}}{\del s_0^2},
    \frac{\del^2 \phi_{n+3}}{\del s_0\del \phi_0}\bigg\}$ in terms of the other components of $\vec x$ and the terms in $\vec y$ since the submatrix
    \begin{align}\begin{split}\nonumber
        \begin{bmatrix}
        -\frac{\del s_{n+3}}{\del \phi_0} &&
        \frac{\del s_{n+3}}{\del s_0} \\
        0 &&
        -\frac{\del s_{n+3}}{\del \phi_0}
        \end{bmatrix}
    \end{split}\end{align}
    has determinant $\bigg(\frac{\del s_{n+3}}{\del \phi_0}\bigg)^2 \neq 0$.
    Similar results hold for higher order partials.  To see this, we continue to take partial derivatives with respect to $s_0$ and $\phi_0$ of the previous two equations.  If we examine only the terms where $n^{th}$ order partial derivatives of $\phi_{n+3}$ appear, we obtain
    \begin{align}\nonumber
        A_n \vec v = \vec b_n
    \end{align}
    where
    \begin{align}\begin{split}\nonumber
        \vec v &= \bigg(
        \frac{\del^n \phi_{n+3}}{\del s_0^n},...,
        \frac{\del^n \phi_{n+3}}{\del s_0\del\phi^{n-1}}
        \bigg),
        \\
        A_n &= \begin{bmatrix}
        -\frac{\del s_{n+3}}{\del \phi_0} && \frac{\del s_{n+3}}{\del s_0} && 0 && \cdots && 0
        \\
        0 && -\frac{\del s_{n+3}}{\del \phi_0} && \frac{\del s_{n+3}}{\del s_0} && \cdots && 0
        \\
        0 && 0 && -\frac{\del s_{n+3}}{\del \phi_0} && \cdots && 0
        \\
        \vdots && \vdots && \vdots && \ddots && \vdots
        \\
        0 && 0 && 0 && \cdots && -\frac{\del s_{n+3}}{\del \phi_0}
        \end{bmatrix},
    \end{split}\end{align}
    and $\vec b_n$ contains only terms in
    \begin{align}\begin{split}\nonumber
        \bigg\{\frac{\del^n s_{n+3}}{\del s_0^n},\frac{\del^n s_{n+3}}{\del s_0^{n-1}\del \phi_0^1},...,\frac{\del^n s_{n+3}}{\del \phi_0^{n-1}},\frac{\del^n \phi_{n+3}}{\del \phi_0^n}\bigg\}
    \end{split}\end{align}
    and derivatives of $s_{n+3}$ and $\phi_{n+3}$ of order less than $n$.  Thus, since $\det A_n = \bigg(-\frac{\del s_{n+3}}{\del \phi_0}\bigg)^n\neq 0$, the proof is complete.
\end{proof}
Thus, we seek to vary only all the partial derivatives order $n$ of $s_{n+3}$, and the term $\frac{\del^n \phi_{n+3}}{\del \phi_0^n}$, since the rest are determined by these.  We thus examine the following $n+2$ by $n+2$ matrix:

\begin{align}\begin{split}\nonumber
    &M_{k,l}=\frac{\del s_{n+3}}{\del \phi_l}\bigg(\frac{\del s_l}{\del s_0}\bigg)^{n-(k-1)}\bigg(\frac{\del s_l}{\del \phi_0}\bigg)^{k-1}\text{ for }1\leq k \leq n+1, 1\leq l \leq n+2,\\
    &M_{n+2,l}=\frac{\del \phi_{n+3}}{\del \phi_l}\bigg(\frac{\del s_l}{\del \phi_0}\bigg)^{n}\text{ for }1\leq l \leq n+2
\end{split}\end{align}
and show it has non-zero determinant.  This implies we may vary each of the differentials of order $n$ of $s_{n+3}$ and $\frac{\del^n \phi_{n+3}}{\del \phi_0^n}$ independently.  

To obtain this result, we perform row and column operations on the matrix $(M_{i,j})_{1\leq i,j\leq n+2}$ to be lower triangular, and we find the diagonal entries to be non-zero.  These statements are summarized in the following lemma:
\begin{lemma}\label{lemma_perturb_nth_differential_independently_technical}
    Given the matrix $M$ as defined above, we may perform a series of row and column operations to reduce the matrix into a lower triangular matrix with diagonal entries given by
    \begin{align}\begin{split}\nonumber
    M_{k,k}&=\frac{\del s_{n+3}}{\del \phi_k}
        \bigg\{\bigg[\frac{\del s_k}{\del \phi_1}\det(df(x_0))\bigg]
        ...\\
        &\qquad
        \bigg[\frac{\del s_1}{\del s_0}...\frac{\del s_{k-2}}{\del s_0}\frac{\del s_k}{\del \phi_{k-1}}\det(df^{k-1}(x_0))\bigg]\bigg\}\times\\
        &\qquad \bigg\{\bigg[\frac{\del s_1}{\del s_0}...\frac{\del s_{k-1}}{\del s_0}\frac{\del s_{k+1}}{\del \phi_k}\det(df^k(x_0))\bigg]...\\
        &\qquad \bigg[\frac{\del s_1}{\del s_0}...\frac{\del s_{k-1}}{\del s_0}\frac{\del s_{n+1}}{\del \phi_k}\det(df^k(x_0))\bigg]\bigg\}\times\\
        &\qquad \bigg[
        \bigg(\frac{\del s_k}{\del s_0}\bigg)^{n-(k-1)}
        \bigg(\frac{\del s_{k+1}}{\del s_0}\bigg)^{n-k}\bigg(\frac{\del s_{k+2}}{\del s_0}\bigg)^{n-(k+1)}...\bigg(\frac{\del s_n}{\del s_0}\bigg)\bigg]
    \end{split}\end{align}
    for $1\leq k \leq n+1$, and
    \begin{align}\begin{split}\nonumber
        M_{n+2,n+2} &= \det(df(x_{n+2}))\bigg(\frac{\del s_{n+3}}{\del \phi_0}\bigg)^n\bigg[\frac{\del s_{n+2}}{\del \phi_1}...\frac{\del s_{n+2}}{\del \phi_{n+1}}\bigg]\times\\
    &\qquad\bigg[\bigg(\frac{\del s_{n+1}}{\del \phi_1}...\frac{\del s_{n+1}}{\del \phi_n}\bigg)^2\bigg(\frac{\del s_{n}}{\del \phi_1}...\frac{\del s_{n}}{\del \phi_{n-1}}\bigg)^2...\bigg(\frac{\del s_2}{\del \phi_1}\bigg)^2\bigg].
    \end{split}\end{align}
    These are all non-zero provided
    \begin{align}\begin{split}\nonumber
        \frac{\del s_{j}}{\del \phi_{i}} \neq 0
    \end{split}\end{align}
    for $0\leq i < j \leq n+3$.  We note that when these entries along the diagonal are non-zero, then we have that the determinant of $M$ is non-zero.
\end{lemma}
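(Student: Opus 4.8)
The plan is to prove the equivalent assertion that $\det M\neq 0$; the precise lower-triangular form and the stated pivots $M_{k,k}$ then come out of the reduction, and every factor occurring in them will be manifestly a product of quantities nonzero under the standing hypothesis $\del s_j/\del\phi_i\neq 0$ for $0\le i<j\le n+3$. Write $df^j(x_0)=\begin{bmatrix}a_j & b_j\\ \hat a_j & \hat b_j\end{bmatrix}$ and $df^{n+3-j}(x_j)=\begin{bmatrix}e_j & c_j\\ g_j & d_j\end{bmatrix}$, so that the $l$-th column of $M$ is $\bigl(c_l a_l^{n},\,c_l a_l^{n-1}b_l,\,\dots,\,c_l b_l^{n},\,d_l b_l^{n}\bigr)^{\!\top}$. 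First I would record two preliminary facts. Applying $df^l(x_0)=df^{l-l'}(x_{l'})\,df^{l'}(x_0)$ for $l'<l$ shows that the first row of $df^l(x_0)$ equals $\tfrac{\del s_l}{\del s_{l'}}$ times the first row of $df^{l'}(x_0)$ plus $\tfrac{\del s_l}{\del\phi_{l'}}$ times its second row; since $df^{l'}(x_0)$ is invertible these two rows are independent, so (a) the ``slope vectors'' $(a_l,b_l)$, $1\le l\le n+2$, are pairwise non-parallel, and each is nonzero because $b_l=\del s_l/\del\phi_0\neq 0$. By the same computation applied to $df^{n+3-l}(x_l)=df^{n+3-l'}(x_{l'})\,df^{l'-l}(x_l)$ for $l<l'$, one gets (b) the ``future vectors'' $(c_l,d_l)$, $1\le l\le n+2$, are also pairwise non-parallel, with $c_l=\del s_{n+3}/\del\phi_l\neq 0$.

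Next I would triangularize the top $(n+1)\times(n+2)$ block $M_{k,l}=\tfrac{\del s_{n+3}}{\del\phi_l}\,a_l^{n+1-k}b_l^{k-1}$, which is a homogeneous Vandermonde block in the slope vectors: after factoring $\tfrac{\del s_{n+3}}{\del\phi_l}a_l^{n}$ out of column $l$, row $k$ is the value at $(a_l,b_l)$ of the monomial $X^{n+1-k}Y^{k-1}$. Using column $1$ to clear the factor $(b_1X-a_1Y)$ out of columns $2,\dots,n+2$, then column $2$ to clear $(b_2X-a_2Y)$, and so on, brings this block to lower-triangular form; because $df^l(x_0)=df(x_{l-1})\cdots df(x_0)$, each clearing step re-expresses $(a_l,b_l)$ in terms of the earlier columns and contributes precisely a factor of the form $\tfrac{\del s_l}{\del\phi_{l'}}\det df^{l'}(x_0)$, which is exactly how the products in the stated $M_{k,k}$ (for $1\le k\le n+1$) are assembled; the trailing products of powers of $\del s_k/\del s_0$ are simply what the bookkeeping carries along and need not be nonzero. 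Each genuine factor is nonzero: $\del s_l/\del\phi_{l'}\neq 0$ by hypothesis, and $\det df^{l'}(x_0)=\sin\phi_0/\sin\phi_{l'}\neq 0$ by area preservation.

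The crux — which I expect to be the main obstacle — is the bottom row $M_{n+2,l}=\tfrac{\del\phi_{n+3}}{\del\phi_l}\,b_l^{n}$: it is attached to $Y^{n}$ but weighted by $\tfrac{\del\phi_{n+3}}{\del\phi_l}$ rather than $\tfrac{\del s_{n+3}}{\del\phi_l}$, so it is not carried along by the column operations above. Since the top block has rank $n+1$ (its maximal minors are $\pm\prod(a_lb_m-a_mb_l)\neq 0$ by (a)), there is a relation among the columns of $M$ that is unique up to scale; restricted to the top block it forces coefficients $t_l$ with $t_l c_l$ proportional to the residue weight $\bigl(\prod_{m\neq l}(a_lb_m-a_mb_l)\bigr)^{-1}$, and by the classical divided-difference (Lagrange) identity $(t_l c_l)$ annihilates the value at $(a_l,b_l)$ of every degree-$n$ homogeneous polynomial. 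Hence $\det M\neq 0$ iff $\sum_l t_l\,d_l b_l^{n}\neq 0$. Now comparing the $(1,2)$ and $(2,2)$ entries of $df^{n+3}(x_0)=df^{n+3-l}(x_l)\,df^l(x_0)$ and eliminating $\hat b_l=\tfrac{\del\phi_l}{\del\phi_0}$ gives the identity $S\,d_l-P\,c_l=\tfrac{\sin\phi_l}{\sin\phi_{n+3}}\,b_l$ for every $l$, with $S:=\del s_{n+3}/\del\phi_0\neq 0$ and $P:=\del\phi_{n+3}/\del\phi_0$. Substituting $d_l=\bigl(Pc_l+\tfrac{\sin\phi_l}{\sin\phi_{n+3}}b_l\bigr)/S$ kills the term carrying $\sum_l t_l c_l b_l^{n}$ (as $Y^{n}$ has degree $n$), leaving $\sum_l t_l\,d_l b_l^{n}=\tfrac{1}{S\sin\phi_{n+3}}\sum_l t_l\sin\phi_l\,b_l^{n+1}$; the monomial $Y^{n+1}$ has degree $n+1$ and is no longer annihilated, so the remaining task is to see this last sum is nonzero. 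I would do this by invoking the billiard identities $\del s_{k+1}/\del\phi_k=l_k/\sin\phi_{k+1}$ and their iterates to rewrite $\sin\phi_l/c_l$, reducing to a divided-difference evaluation that comes out nonzero thanks to the non-parallelism (b); tracking that computation through the final clearing step $df^{n+3-l}(x_l)=df^{n+3-l-1}(x_{l+1})\,df(x_l)$ produces both the factor $\det df(x_{n+2})=\sin\phi_{n+2}/\sin\phi_{n+3}\neq 0$ and the remaining factors $\del s_{n+2}/\del\phi_l$, $\del s_j/\del\phi_1$ recorded in the stated $M_{n+2,n+2}$. Since all operations used are elementary (additions of scalar multiples, with at most a reordering of rows and columns), $\det M$ equals $M_{1,1}\cdots M_{n+2,n+2}$ up to a nonzero constant, and is therefore nonzero.
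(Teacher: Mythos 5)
Your structural reductions are fine as far as they go, and they do take a genuinely different route from the paper (which triangularizes $M$ by explicit row operations, tracking the sums $\Sigma_j(k,l)$ inductively, and then evaluates the last pivot by a separate induction). In particular: the pairwise non-parallelism of the vectors $(a_l,b_l)$ and of $(c_l,d_l)$ does follow from $\del s_l/\del\phi_{l'}\neq 0$ exactly as you argue; the identity $S\,d_l-P\,c_l=\det\big(df^{\,n+3-l}(x_l)\big)\,b_l=\frac{\sin\phi_l}{\sin\phi_{n+3}}\,b_l$ is correct; and since $t_lc_l$ annihilates every homogeneous degree-$n$ polynomial evaluated at $(a_l,b_l)$, your reduction of the lemma to the single inequality $\sum_l t_l\,\sin\phi_l\,b_l^{\,n+1}\neq 0$ is valid (given $S\neq 0$, which is part of the hypothesis). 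A minor caveat: factoring $a_l^{\,n}=\big(\tfrac{\del s_l}{\del s_0}\big)^n$ out of the columns presumes $\del s_l/\del s_0\neq 0$, which is not among the stated hypotheses (the paper's own row multipliers have the same defect), so this should be handled or avoided.

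The genuine gap is the last step. Writing $t_lc_l=u_l$ with $u_l^{-1}=\prod_{m\neq l}(a_lb_m-a_mb_l)$, the quantity you must show nonzero is $\sum_l u_l\,\frac{\sin\phi_l}{c_l}\,b_l^{\,n+1}$, and the weights $\sin\phi_l/c_l$ are \emph{not} values of a homogeneous polynomial in $(a_l,b_l)$; a divided-difference functional applied to arbitrary weights has no a priori nonvanishing, so ``non-parallelism (b)'' plus unspecified billiard identities is not an argument — this inequality \emph{is} the lemma. Already for $n=0$ it amounts to the identity $c_1\sin\phi_2\,b_2-c_2\sin\phi_1\,b_1=S\,\frac{\del s_2}{\del\phi_1}\,\sin\phi_2$, equivalently $\det\begin{bmatrix}c_1&c_2\\ d_1&d_2\end{bmatrix}=\frac{\del s_2}{\del\phi_1}\det df(x_2)$, which one gets from the chain-rule decomposition $(c_1,d_1)=\frac{\del s_2}{\del\phi_1}\big(\tfrac{\del s_3}{\del s_2},\tfrac{\del\phi_3}{\del s_2}\big)+\frac{\del\phi_2}{\del\phi_1}(c_2,d_2)$, not from any non-degeneracy statement. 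For general $n$ this pairing of the bottom row with the kernel vector is exactly what the paper spends its effort on: after the column operations it must evaluate $\frac{\del\phi_{n+3}}{\del\phi_{n+2}}b_{n+2}^{\,n}-\sum_i\frac{\del\phi_{n+3}}{\del\phi_i}b_i^{\,n}\,Y_i/X_i$ in closed form, which it does by an induction on $n$ that repeatedly splits partials through $x_1$ and exploits the cancellations coming from $\frac{\del s_{n+3}}{\del\phi_0}=\frac{\del s_{n+3}}{\del s_1}\frac{\del s_1}{\del\phi_0}+\frac{\del s_{n+3}}{\del\phi_1}\frac{\del\phi_1}{\del\phi_0}$. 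Until you supply an analogous computation (e.g.\ an induction showing your pairing equals $\det(df(x_{n+2}))\,S^{\,n}$ times the explicit product of the $\del s_j/\del\phi_i$), the proof is incomplete at precisely its decisive point, and the claimed pivot value $M_{n+2,n+2}$ is not obtained.
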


\begin{proof}
We proceed to show this using row reduction to get the submatrix of the first $n+1$ rows and columns to row reduced form.  We perform the following row operations for $2\leq k \leq n+1$:

\begin{align}\begin{split}\nonumber
    &R_k \rightarrow R_k\bigg(\frac{\del s_1}{\del s_0}\bigg)^{k-1},\\
    &R_k \rightarrow R_k - R_1\bigg(\frac{\del s_1}{\del \phi_0}\bigg)^{k-1}.
\end{split}\end{align}

This then gives us for $2\leq k \leq n+1$, $2\leq l \leq n+2$
\begin{align}\begin{split}\nonumber
    M_{k,l}&=\frac{\del s_{n+3}}{\del \phi_l}\bigg(\frac{\del s_l}{\del s_0}\bigg)^{n-(k-1)}\bigg(\frac{\del s_l}{\del \phi_0}\bigg)^{k-1}\bigg(\frac{\del s_1}{\del s_0}\bigg)^{k-1} \\
    &\qquad- \frac{\del s_{n+3}}{\del \phi_l}\bigg(\frac{\del s_l}{\del s_0}\bigg)^n\bigg(\frac{\del s_1}{\del \phi_0}\bigg)^{k-1}\\
    &= \frac{\del s_{n+3}}{\del \phi_l}\bigg(\frac{\del s_l}{\del s_0}\bigg)^{n-(k-1)}\bigg[\bigg(\frac{\del s_l}{\del \phi_0}\bigg)^{k-1}\bigg(\frac{\del s_1}{\del s_0}\bigg)^{k-1}\\
    &\qquad - \bigg(\frac{\del s_l}{\del s_0}\bigg)^{k-1}\bigg(\frac{\del s_1}{\del \phi_0}\bigg)^{k-1}\bigg]\\
    &= \frac{\del s_{n+3}}{\del \phi_l}\bigg(\frac{\del s_l}{\del s_0}\bigg)^{n-(k-1)}\bigg[\frac{\del s_l}{\del \phi_0}\frac{\del s_1}{\del s_0}-\frac{\del s_l}{\del s_0}\frac{\del s_1}{\del \phi_0}\bigg]\Sigma_2(k,l)\\
    &=\frac{\del s_{n+3}}{\del \phi_l}\bigg(\frac{\del s_l}{\del s_0}\bigg)^{n-(k-1)}\bigg[\frac{\del s_l}{\del \phi_1}\det(df(x_0))\bigg]\Sigma_2(k,l).
\end{split}\end{align}

where we define
\begin{align}\begin{split}\nonumber
    \Sigma_2(k,l)=\sum_{i=0}^{k-2}\bigg(\frac{\del s_l}{\del \phi_0}\bigg)^{k-2-i}\bigg(\frac{\del s_1}{\del s_0}\bigg)^{k-2-i}\bigg(\frac{\del s_l}{\del s_0}\bigg)^i\bigg(\frac{\del s_1}{\del \phi_0}\bigg)^i.
\end{split}\end{align}

Now we use the second row to eliminate the entries in the second column below the second row.  So we make the following operations for $3\leq k \leq n+1$:

\begin{align}\begin{split}\nonumber
    &R_k\rightarrow R_k\bigg(\frac{\del s_2}{\del s_0}\bigg)^{k-2},\\
    &R_k \rightarrow R_k - R_2\Sigma_2(k,2).
\end{split}\end{align}

We then obtain for $3\leq k\leq n+1$, $3,\leq l,\leq n_2$

\begin{align}\begin{split}\nonumber
    M_{k,l}&=\frac{\del s_{n+3}}{\del \phi_l}\bigg(\frac{\del s_l}{\del s_0}\bigg)^{n-(k-1)}\bigg[\frac{\del s_l}{
    \del \phi_1}\det(df(x_0))\bigg]\Sigma_2(k,l)\bigg(\frac{\del s_2}{\del s_0}\bigg)^{k-2}\\
    &\qquad- \frac{\del s_{n+3}}{\del \phi_l}\bigg(\frac{\del s_l}{\del s_0}\bigg)^{n-1}\bigg[\frac{\del s_l}{\del \phi_1}\det(df(x_0))\bigg]\Sigma_2(k,2)\\
    &=\frac{\del s_{n+3}}{\del \phi_l}\bigg(\frac{\del s_l}{\del s_0}\bigg)^{n-(k-1)}\bigg[\frac{\del s_l}{\del \phi_1}\det(df(x_0))\bigg]\times\\
    &\qquad\bigg[\Sigma_2(k,l)\bigg(\frac{\del s_2}{\del s_0}\bigg)^{k-2}-\bigg(\frac{\del s_l}{\del s_0}\bigg)^{k-2}\Sigma_2(k,2)\bigg]
\end{split}\end{align}

Here we examine this last term more carefully.  we have
\begin{align}\begin{split}\nonumber
    &\bigg[\Sigma_2(k,l)\bigg(\frac{\del s_2}{\del s_0}\bigg)^{k-2}-\bigg(\frac{\del s_l}{\del s_0}\bigg)^{k-2}\Sigma_2(k,2)\bigg]\\
    &=\sum_{i=0}^{k-2}\bigg[\bigg(\frac{\del s_l}{\del \phi_0}\bigg)^{k-2-i}\bigg(\frac{\del s_1}{\del s_0}\bigg)^{k-2-i}\bigg(\frac{\del s_l}{\del s_0}\bigg)^i\bigg(\frac{\del s_1}{\del \phi_0}\bigg)^i\bigg(\frac{\del s_2}{\del s_0}\bigg)^{k-2}\\
    &\qquad-\bigg(\frac{\del s_2}{\del \phi_0}\bigg)^{k-2-i}\bigg(\frac{\del s_1}{\del s_0}\bigg)^{k-2-i}\bigg(\frac{\del s_2}{\del s_0}\bigg)^{i}\bigg(\frac{\del s_1}{\del \phi_0}\bigg)^{i}\bigg(\frac{\del s_l}{\del s_0}\bigg)^{k-2}\bigg]\\
    &=\sum_{i=0}^{k-3}\bigg(\frac{\del s_1}{\del s_0}\bigg)^{k-2-i}\bigg(\frac{\del s_1}{\del \phi_0}\bigg)^{i}\bigg(\frac{\del s_l}{\del s_0}\bigg)^i\bigg(\frac{\del s_2}{\del s_0}\bigg)^i\times\\
    &\qquad\bigg[\bigg(\frac{\del s_l}{\del \phi_0}\bigg)^{k-2-i}\bigg(\frac{\del s_2}{\del s_0}\bigg)^{k-2-i}-\bigg(\frac{\del s_2}{\del \phi_0}\bigg)^{k-2-i}\bigg(\frac{\del s_l}{\del s_0}\bigg)^{k-2-i}\bigg]\\
    &=\bigg(\frac{\del s_1}{\del s_0}\bigg)\sum_{i_1=0}^{k-3}\bigg(\frac{\del s_1}{\del s_0}\bigg)^{k-3-i_1}\bigg(\frac{\del s_1}{\del \phi_0}\bigg)^{i_1}\bigg(\frac{\del s_l}{\del s_0}\bigg)^{i_1}\bigg(\frac{\del s_2}{\del s_0}\bigg)^{i_1}\times\\
    &\qquad\bigg[\bigg(\frac{\del s_l}{\del \phi_0}\bigg)\bigg(\frac{\del s_2}{\del s_0}\bigg)-\bigg(\frac{\del s_2}{\del \phi_0}\bigg)\bigg(\frac{\del s_l}{\del s_0}\bigg)\bigg]\times\\
    &\qquad\sum_{i_2=0}^{k-3-i_1}\bigg(\frac{\del s_2}{\del s_0}\bigg)^{k-3-i_1-i_2}\bigg(\frac{\del s_l}{\del \phi_0}\bigg)^{k-3-i_1-i_2}\bigg(\frac{\del s_l}{\del s_0}\bigg)^{i_2}\bigg(\frac{\del s_2}{\del \phi_0}\bigg)^{i_2}\\
    &= \bigg(\frac{\del s_1}{\del s_0}\bigg)\bigg(\frac{\del s_l}{\del \phi_2}\bigg)\det(df^2(x_0))\Sigma_3(k,l),
\end{split}\end{align}
where we define
\begin{align}\begin{split}\nonumber
    \Sigma_3(k,l)&=\sum_{i_1=0}^{k-3}\sum_{i_2=0}^{k-3-i_1}\bigg(\frac{\del s_l}{\del \phi_0}\bigg)^{k-3-i_1-i_2}\times\\
    &\qquad\bigg(\frac{\del s_1}{\del s_0}\bigg)^{k-3-i_1}\bigg(\frac{\del s_2}{\del s_0}\bigg)^{k-3-i_2}\times\\
    &\qquad\bigg(\frac{\del s_1}{\del \phi_0}\bigg)^{i_1}\bigg(\frac{\del s_2}{\del \phi_0}\bigg)^{i_2}\bigg(\frac{\del s_l}{\del s_0}\bigg)^{i_1+i_2}
\end{split}\end{align}
Thus we obtain for $3\leq k\leq n+1$, $3,\leq l,\leq n+2$

\begin{align}\begin{split}\nonumber
    M_{k,l}=\frac{\del s_{n+3}}{\del \phi_l}\bigg(\frac{\del s_l}{\del s_0}\bigg)^{n-(k-1)}\bigg[\frac{\del s_l}{\del \phi_1}\det(df(x_0))\bigg]\bigg[\frac{\del s_1}{\del s_0}\frac{\del s_l}{\del \phi_2}\det(df^2(x_0))\bigg]\Sigma_3(k,l).
\end{split}\end{align}
We will do one more reduction explicitly before describing the inductive process.  So for $4\leq k \leq n+1$ we perform the following row operations:
\begin{align*}\begin{split}\nonumber
    &R_k \rightarrow R_k\bigg(\frac{\del s_3}{\del s_0}\bigg)^{k-3},\\
    &R_k \rightarrow R_k-R_3\Sigma_3(k,3).
\end{split}\end{align*}
We obtain then for $4\leq k \leq n+1$, $4 \leq l \leq n+2$
\begin{align}\begin{split}\nonumber
    &M_{kl}=\bigg\{\frac{\del s_{n+3}}{\del \phi_l}\bigg(\frac{\del s_l}{\del s_0}\bigg)^{n-(k-1)}\bigg[\frac{\del s_l}{\del \phi_1}\det(df(x_0))\bigg]\times\\
    &\qquad\bigg[\frac{\del s_1}{\del s_0}\frac{\del s_l}{\del \phi_2}\det(df^2(x_0))\bigg]\Sigma_3(k,l)\bigg(\frac{\del s_3}{\del s_0}\bigg)^{k-3}\bigg\}\\
    &-\bigg\{\frac{\del s_{n+3}}{\del \phi_l}\bigg(\frac{\del s_l}{\del s_0}\bigg)^{n-2}\bigg[\frac{\del s_l}{\del \phi_1}\det(df(x_0))\bigg]\times\\
    &\qquad\bigg[\frac{\del s_1}{\del s_0}\frac{\del s_l}{\del \phi_2}\det(df^2(x_0))\bigg]\Sigma_3(k,3)\bigg\}\\
    &=\frac{\del s_{n+3}}{\del \phi_l}\bigg(\frac{\del s_l}{\del s_0}\bigg)^{n-(k-1)}\bigg[\frac{\del s_l}{\del \phi_1}\det(df(x_0))\bigg]\bigg[\frac{\del s_1}{\del s_0}\frac{\del s_l}{\del \phi_2}\det(df^2(x_0))\bigg]\times\\
    &\qquad\bigg[\bigg(\frac{\del s_3}{\del s_0}\bigg)^{k-3}\Sigma_3(k,l)-\bigg(\frac{\del s_l}{\del s_0}\bigg)^{k-3}\Sigma_3(k,3)\bigg].
\end{split}\end{align}
And here we examine the last term again.  We obtain
\begin{align}\begin{split}\nonumber
    &\bigg[\bigg(\frac{\del s_3}{\del s_0}\bigg)^{k-3}\Sigma_3(k,l)-\bigg(\frac{\del s_l}{\del s_0}\bigg)^{k-3}\Sigma_3(k,2)\bigg]\\
    &=\sum_{i_1=0}^{k-3}\sum_{i_2=0}^{k-3-i_1}\bigg\{\bigg[\bigg(\frac{\del s_l}{\del \phi_0}\bigg)^{k-3-i_1-i_2}\bigg(\frac{\del s_1}{\del s_0}\bigg)^{k-3-i_1}\times\\
    &\qquad\bigg(\frac{\del s_2}{\del s_0}\bigg)^{k-3-i_2}\bigg(\frac{\del s_1}{\del \phi_0}\bigg)^{i_1}\bigg(\frac{\del s_2}{\del \phi_0}\bigg)^{i_2}\bigg(\frac{\del s_l}{\del s_0}\bigg)^{i_1+i_2}\bigg(\frac{\del s_3}{\del s_0}\bigg)^{k-3}\bigg]\\
    &\qquad - \bigg[\bigg(\frac{\del s_3}{\del \phi_0}\bigg)^{k-3-i_1-i_2}\bigg(\frac{\del s_1}{\del s_0}\bigg)^{k-3-i_1}\times\\
    &\qquad\bigg(\frac{\del s_2}{\del s_0}\bigg)^{k-3-i_2}\bigg(\frac{\del s_1}{\del \phi_0}\bigg)^{i_1}\bigg(\frac{\del s_2}{\del \phi_0}\bigg)^{i_2}\bigg(\frac{\del s_3}{\del s_0}\bigg)^{i_1+i_2}\bigg(\frac{\del s_l}{\del s_0}\bigg)^{k-3}\bigg]\bigg\}\\
    &=\frac{\del s_1}{\del s_0}\frac{\del s_2}{\del s_0}\sum_{i_1=0}^{k-4}\sum_{i_2=0}^{k-4-i_1}\bigg\{\bigg(\frac{\del s_1}{\del s_0}\bigg)^{k-4-i_1} 
    \bigg(\frac{\del s_2}{\del s_0}\bigg)^{k-4-i_2}
    \times
    \\
    &\qquad \bigg(\frac{\del s_1}{\del \phi_0}\bigg)^{i_1}
    \bigg(\frac{\del s_2}{\del \phi_0}\bigg)^{i_2}\bigg(\frac{\del s_l}{\del s_0}\bigg)^{i_1+i_2}\bigg(\frac{\del s_2}{\del s_0}\bigg)^{i_1+i_2} \times\\
    &\bigg[\bigg(\frac{\del s_l}{\del \phi_0}\bigg)^{k-3-i_1-i_2}\bigg(\frac{\del s_3}{\del s_0}\bigg)^{k-3-i_1-i_2}-\bigg(\frac{\del s_3}{\del \phi_0}\bigg)^{k-3-i_1-i_2}\bigg(\frac{\del s_l}{\del s_0}\bigg)^{k-3-i_1-i_2}\bigg]\bigg\}\\
    &= \frac{\del s_1}{\del s_0}\frac{\del s_2}{\del s_0}\frac{\del s_l}{\del \phi_3}\det(df^3(x_0))\Sigma_4(k,l)
\end{split}\end{align}
where here we define $\Sigma_4(k,l)$ by
\begin{align}\begin{split}\nonumber
    &\Sigma_4(k,l) = \sum_{i_1=0}^{k-4}\sum_{i_2=0}^{k-4-i_1}\sum_{i_3=0}^{k-4-i_1-i_2}\bigg\{\bigg(\frac{\del s_l}{\del \phi_0}\bigg)^{k-4-i_1-i_2-i_3}\times\\
    &\qquad \bigg(\frac{\del s_1}{\del s_0}\bigg)^{k-4-i_1}
    \bigg(\frac{\del s_2}{\del s_0}\bigg)^{k-4-i_2}
    \bigg(\frac{\del s_3}{\del s_0}\bigg)^{k-4-i_3}\times\\
    &\qquad \bigg(\frac{\del s_1}{\del \phi_0}\bigg)^{i_1}
    \bigg(\frac{\del s_2}{\del \phi_0}\bigg)^{i_2}
    \bigg(\frac{\del s_3}{\del \phi_0}\bigg)^{i_3}\times\\
    &\qquad \bigg(\frac{\del s_l}{\del s_0}\bigg)^{i_1+i_2+i_3}.
\end{split}\end{align}
So we get for $4\leq k \leq n+1$, $4 \leq l \leq n+2$
\begin{align}\begin{split}
    &M_{k,l}=\frac{\del s_{n+3}}{\del \phi_l}\bigg(\frac{\del s_l}{\del s_0}\bigg)^{n-(k-1)}\bigg[\frac{\del s_l}{\del \phi_1}\det(df(x_0))\bigg]\bigg[\frac{\del s_1}{\del s_0}\frac{\del s_l}{\del \phi_2}\det(df^2(x_0))\bigg]\times\\
    &\qquad \bigg[\frac{\del s_1}{\del s_0}\frac{\del s_2}{\del s_0}\frac{\del s_l}{\del \phi_3}\det(df^3(x_0))\bigg]\Sigma_4(k,l).
\end{split}\end{align}
We proceed inductively.  In the end we obtain for $1\leq k \leq n+1$, $k \leq l \leq n+2$
\begin{align}\begin{split}\nonumber
    M_{k,l} &= \frac{\del s_{n+3}}{\del \phi_l}\bigg(\frac{\del s_l}{\del s_0}\bigg)^{n-(k-1)}\bigg[\frac{\del s_l}{\del \phi_1}\det(df(x_0))\bigg]...\\
    &\qquad\bigg[\frac{\del s_1}{\del s_0}...\frac{\del s_{k-2}}{\del s_0}\frac{\del s_l}{\del \phi_{k-1}}\det(df^{k-1}(x_0))\bigg].
\end{split}\end{align}

So now we eliminate upwards for each row to obtain the top left $n+1$ by $n+1$ matrix to have zeroes off of the diagonal.  After this we will do column operations to eliminate the last column, and then we will show that all the values along the diagonal are non-zero, which implies the determinant is not zero.

We do this for the first few terms and then describe the inductive proof.  We do the following row operations:
\begin{align}\begin{split}\nonumber
    &R_1 \rightarrow R_1\bigg[\frac{\del s_2}{\del \phi_1}\det(df(x_0))\bigg]\\
    &R_1 \rightarrow R_1-R_2\frac{\del s_2}{\del s_0}.
\end{split}\end{align}

Then we get 
\begin{align}\begin{split}\nonumber
    M_{1,1}=\frac{\del s_{n+3}}{\del \phi_1}\bigg(\frac{\del s_1}{\del s_0}\bigg)^n\bigg[\frac{\del s_2}{\del \phi_1}\det(df(x_0))\bigg],
\end{split}\end{align}

And we get for $3\leq l \leq n+2$
\begin{align}\begin{split}\nonumber
    &M_{1,l}=\frac{\del s_{n+3}}{\del \phi_l}\bigg(\frac{\del s_l}{\del s_0}\bigg)^n\bigg[\frac{\del s_2}{\del \phi_1}\det(df(x_0))\bigg]\\
    &\qquad - \frac{\del s_{n+3}}{\del \phi_l}\bigg(\frac{\del s_l}{\del s_0}\bigg)^{n-1}\bigg[\frac{\del s_l}{\del \phi_1}\det(df(x_0))\bigg]\frac{\del s_2}{\del s_0}\\
    &= \frac{\del s_{n+3}}{\del \phi_l}\bigg(\frac{\del s_l}{\del s_0}\bigg)^{n-1}\det(df(x_0))\bigg[\frac{\del s_l}{\del s_0}\frac{\del s_2}{\del \phi_1}-\frac{\del s_l}{\del \phi_l}\frac{\del s_2}{\del s_0}\bigg]\\
    &=-\frac{\del s_{n+3}}{\del \phi_l}\bigg(\frac{\del s_l}{\del s_0}\bigg)^{n-1}\bigg[\det(df^2(x_0))\frac{\del s_l}{\del \phi_2}\frac{\del s_1}{\del s_0}\bigg].
\end{split}\end{align}

Now we perform the following row operations:
\begin{align}\begin{split}\nonumber
    &R_1 \rightarrow R_1\bigg[\frac{\del s_3}{\del \phi_1}\det(df(x_0))\bigg]\bigg[\frac{\del s_2}{\del s_0}\bigg],\\
    &R_1 \rightarrow R_1 + R_3\frac{\del s_3}{\del s_0}\bigg[\frac{\del s_2}{\del s_0}\bigg],\\
    &R_2 \rightarrow R_2\bigg[\frac{\del s_3}{\del \phi_2}\frac{\del s_1}{\del s_0}\det(df^2(x_0))\bigg],\\
    &R_2 \rightarrow R_2 - R_3\frac{\del s_3}{\del s_0}.
\end{split}\end{align}

So we obtain
\begin{align}\begin{split}\nonumber
    &M_{1,1} = \frac{\del s_{n+3}}{\del \phi_1}\bigg(\frac{\del s_1}{\del s_0}\bigg)^n\bigg[\frac{\del s_2}{\del \phi_1}\det(df(x_0))\bigg]
    \bigg[
    \frac{\del s_3}{\del \phi_1}\det(df(x_0))
    \bigg]\bigg[\frac{\del s_2}{\del s_0}\bigg],\\
    &M_{2,2} = \frac{\del s_{n+3}}{\del \phi_2}\bigg(\frac{\del s_2}{\del s_0}\bigg)^{n-1}\bigg[\frac{\del s_2}{\del \phi_1}\det(df(x_0))\bigg]
    \bigg[
    \frac{\del s_3}{\del \phi_2}\frac{\del s_1}{\del s_0}\det(df^2(x_0))
    \bigg],
\end{split}\end{align}

And we get for $4 \leq l \leq n+2$
\begin{align}\begin{split}\nonumber
    M_{1,l}=\frac{\del s_{n+3}}{\del \phi_l}\bigg(\frac{\del s_l}{\del s_0}\bigg)^{n-2}\bigg[\frac{\del s_1}{\del s_0}\frac{\del s_l}{\del \phi_2}\det(df^2(x_0))\bigg]
    \bigg[
    \frac{\del s_1}{\del s_0}\frac{\del s_2}{\del s_0}\frac{\del s_l}{\del \phi_3}\det(df^3(x_0))
    \bigg],
    \\
    M_{2,l}=-\frac{\del s_{n+3}}{\del \phi_l}\bigg(\frac{\del s_l}{\del s_0}\bigg)^{n-2}\bigg[\frac{\del s_l}{\del \phi_1}\det(df(x_0))\bigg]
    \bigg[
    \frac{\del s_1}{\del s_0}\frac{\del s_2}{\del s_0}\frac{\del s_l}{\del \phi_3}\det(df^3(x_0))
    \bigg].
\end{split}\end{align}
Now we eliminate one more.  We do the following row operations:
\begin{align}\begin{split}\nonumber
     &R_1 \rightarrow R_1 \bigg[\frac{\del s_4}{\del \phi_1}\det(df(x_0))\bigg]\bigg[\frac{\del s_2}{\del s_0}\frac{\del s_3}{\del s_0}\bigg],\\
     &R_1\rightarrow R_1 - R_4\frac{\del s_4}{\del s_0}\bigg[\frac{\del s_2}{\del s_0}\frac{\del s_3}{\del s_0}\bigg],\\
     &R_2 \rightarrow R_2\bigg[\frac{\del s_1}{\del s_0}\frac{\del s_4}{\del \phi_2}\det(df^2(x_0))\bigg]\bigg[\frac{\del s_3}{\del s_0}\bigg],\\
     &R_2 \rightarrow R_2 + R_4\frac{\del s_4}{\del s_0}\bigg[\frac{\del s_3}{\del s_0}\bigg],\\
     &R_3 \rightarrow R_3\bigg[\frac{\del s_1}{\del s_0}\frac{\del s_2}{\del s_0}\frac{\del s_4}{\del \phi_3}\det(df^3(x_0))\bigg],\\
     &R_3 \rightarrow R_3 - R_4\frac{\del s_4}{\del s_0}.
\end{split}\end{align}

This yields
\begin{align}\begin{split}\nonumber
    &M_{1,1} = \frac{\del s_{n+3}}{\del \phi_1}\bigg(\frac{\del s_1}{\del s_0}\bigg)^n\bigg[\frac{\del s_2}{\del \phi_1}\det(df(x_0))\bigg]
    \bigg[
    \frac{\del s_3}{\del \phi_1}\det(df(x_0))
    \bigg]\times\\
    &\qquad 
    \bigg[
    \frac{\del s_4}{\del \phi_1}\det(df(x_0))
    \bigg]
    \bigg[
    \bigg(\frac{\del s_2}{\del s_0}\bigg)^2
    \frac{\del s_3}{\del s_0}
    \bigg],
    \\
    &M_{2,2}=\frac{\del s_{n+3}}{\del \phi_2}\bigg(\frac{\del s_2}{\del s_0}\bigg)^{n-1}\bigg[\frac{\del s_2}{\del \phi_1}\det(df(x_0))\bigg]
    \bigg[
    \frac{\del s_3}{\del \phi_2}\frac{\del s_1}{\del s_0}\det(df^2(x_0))
    \bigg]\times\\
    &\qquad \bigg[\frac{\del s_1}{\del s_0}\frac{\del s_4}{\del \phi_2}\det(df^2(x_0))\bigg]
    \bigg[
    \frac{\del s_3}{\del s_0}
    \bigg],
    \\
    &M_{3,3} = \frac{\del s_{n+3}}{\del \phi_3}\bigg(\frac{\del s_3}{\del s_0}\bigg)^{n-2}\bigg[\frac{\del s_3}{\del \phi_1}\det(df(x_0))\bigg]\bigg[\frac{\del s_1}{\del s_0}\frac{\del s_3}{\del \phi_2}\det(df^2(x_0))\bigg]\times\\
    &\qquad \bigg[\frac{\del s_1}{\del s_0}\frac{\del s_2}{\del s_0}\frac{\del s_4}{\del \phi_3}\det(df^3(x_0))\bigg],
\end{split}\end{align}
And for $5 \leq l \leq n+2$ we have
\begin{align}\begin{split}\nonumber
    &M_{1,l} = -\frac{\del s_{n+3}}{\del \phi_l}\bigg(\frac{\del s_l}{\del s_0}\bigg)^{n-3}\bigg[\frac{\del s_1}{\del s_0}\frac{\del s_l}{\del \phi_2}\det(df^2(x_0))\bigg]\times\\
    &\qquad \bigg[\frac{\del s_1}{\del s_0}\frac{\del s_2}{\del s_0}\frac{\del s_l}{\del \phi_3}\det(df^3(x_0))\bigg]\bigg[\frac{\del s_1}{\del s_0}\frac{\del s_2}{\del s_0}\frac{\del s_3}{\del s_0}\frac{\del s_l}{\del \phi_4}\det(df^4(x_0))\bigg],
    \\
    &M_{2,l} = \frac{\del s_{n+3}}{\del \phi_l}\bigg(\frac{\del s_l}{\del s_0}\bigg)^{n-3}\bigg[\frac{\del s_l}{\del \phi_1}\det(df(x_0))\bigg]\times\\
    &\qquad \bigg[\frac{\del s_1}{\del s_0}\frac{\del s_2}{\del s_0}\frac{\del s_l}{\del \phi_3}\det(df^3(x_0))\bigg]\bigg[\frac{\del s_1}{\del s_0}\frac{\del s_2}{\del s_0}\frac{\del s_3}{\del s_0}\frac{\del s_l}{\del \phi_4}\det(df^4(x_0))\bigg],
    \\
    &M_{3,l} = -\frac{\del s_{n+3}}{\del \phi_l}\bigg(\frac{\del s_l}{\del s_0}\bigg)^{n-3}\bigg[\frac{\del s_l}{\del \phi_1}\det(df(x_0))\bigg]\times\\
    &\qquad \bigg[\frac{\del s_1}{\del s_0}\frac{\del s_l}{\del \phi_2}\det(df^2(x_0))\bigg]\bigg[\frac{\del s_1}{\del s_0}\frac{\del s_2}{\del s_0}\frac{\del s_3}{\del s_0}\frac{\del s_l}{\del \phi_4}\det(df^4(x_0))\bigg].
\end{split}\end{align}
Proceeding inductively can get the final result.  We obtain for $1 \leq k \leq n+1$
\begin{align}\begin{split}\nonumber
    &M_{k,k}=\frac{\del s_{n+3}}{\del \phi_k}
    \bigg\{\bigg[\frac{\del s_k}{\del \phi_1}\det(df(x_0))\bigg]
    ...\\
    &\qquad
    \bigg[\frac{\del s_1}{\del s_0}...\frac{\del s_{k-2}}{\del s_0}\frac{\del s_k}{\del \phi_{k-1}}\det(df^{k-1}(x_0))\bigg]\bigg\}\times\\
    &\qquad \bigg\{\bigg[\frac{\del s_1}{\del s_0}...\frac{\del s_{k-1}}{\del s_0}\frac{\del s_{k+1}}{\del \phi_k}\det(df^k(x_0))\bigg]...\\
    &\qquad \bigg[\frac{\del s_1}{\del s_0}...\frac{\del s_{k-1}}{\del s_0}\frac{\del s_{n+1}}{\del \phi_k}\det(df^k(x_0))\bigg]\bigg\}\times\\
    &\qquad \bigg[
    \bigg(\frac{\del s_k}{\del s_0}\bigg)^{n-(k-1)}
    \bigg(\frac{\del s_{k+1}}{\del s_0}\bigg)^{n-k}\bigg(\frac{\del s_{k+2}}{\del s_0}\bigg)^{n-(k+1)}...\bigg(\frac{\del s_n}{\del s_0}\bigg)\bigg],
    \\
    &M_{k,n+2}=(-1)^{n+k-1}\frac{\del s_{n+3}}{\del \phi_{n+2}}\bigg[\frac{\del s_{n+2}}{\del \phi_1}\det(df(x_0))\bigg]...\\
    &\qquad ...\bigg[\frac{\del s_1}{\del s_0}...\frac{\del s_{k-2}}{\del s_0}\frac{\del s_{n+2}}{\del \phi_{k-1}}\det(df^{k-1}(x_0))\bigg]\times\\
    &\qquad \bigg[\frac{\del s_1}{\del s_0}...\frac{\del s_k}{\del s_0}\frac{\del s_{n+2}}{\del \phi_{k+1}}\det(df^{k+1}(x_0))\bigg]...\\
    &\qquad ...\bigg[\frac{\del s_1}{\del s_0}...\frac{\del s_n}{\del s_0}\frac{\del s_{n+2}}{\del \phi_{n+1}}\det(df^{n+1}(x_0))\bigg].
\end{split}\end{align}
The second equation here will be used in the next result.
\end{proof}

Now we proceed to perform column operations to eliminate column $n+2$, and then we see that this gives in index $M_{n+2,n+2}$ the desired result claimed in the previous lemma.

First we note that if we define $X_k$ and $Y_k$ by
\begin{align}\begin{split}\nonumber
    &M_{k,k}=\text{LCM}(M_{kk},M_{k,n+2})X_k,\\
    &M_{k,n+2}=\text{LCM}(M_{kk},M_{k,n+2})Y_k,
\end{split}\end{align}
where $LCM$ is the least common multiple when we consider the terms symbolically (not numerically). Then we have
\begin{align}\begin{split}\nonumber
    &X_k = \frac{\del s_{n+3}}{\del \phi_k}\bigg[\frac{\del s_k}{\del \phi_1}...\frac{\del s_k}{\del \phi_{k-1}}\bigg]\bigg[\frac{\del s_{k+1}}{\del \phi_k}...\frac{\del s_{n+1}}{\del \phi_k}\bigg],\\
    &Y_k = (-1)^{n-k+1}\frac{\del s_{n+3}}{\del \phi_{n+2}}\bigg[\frac{\del s_{n+2}}{\del \phi_1}...\frac{\del s_{n+2}}{\del \phi_{k-1}}\bigg]\bigg[\frac{\del s_{n+2}}{\del \phi_{k+1}}...\frac{\del s_{n+2}}{\del \phi_{n+1}}\bigg]\times\\
    &\qquad \det(df(x_k))...\det(df^{n-k+1}(x_k))
\end{split}\end{align}

We perform the following row operations:
\begin{align}\begin{split}\nonumber
    &C_{n+2} \rightarrow C_{n+2}X_1\\
    &C_{n+2} \rightarrow C_{n+2}-C_1Y_1\\
    &C_{n+2} \rightarrow C_{n+2}X_2\\
    &C_{n+2} \rightarrow C_{n+2}-C_2Y_2X_1\\
    &...\\
    &C_{n+2} \rightarrow C_{n+2}X_{n+1}\\
    &C_{n+2} \rightarrow C_{n+2}-C_{n+1}Y_{n+1}X_n...X_1.
\end{split}\end{align}

This yields in $M_{n+2,n+2}$ the following:

\begin{align}\begin{split}\nonumber
    &M_{n+2,n+2}=X_1...X_{n+1}\bigg\{\frac{\del \phi_{n+3}}{\del \phi_{n+2}}\bigg(\frac{\del s_{n+2}}{\del \phi_0}\bigg)^n -\sum_{i=1}^{n+1}\frac{\del \phi_{n+3}}{\del \phi_i}\bigg(\frac{\del s_i}{\del \phi_0}\bigg)^n\frac{Y_i}{X_i}\bigg\}.
\end{split}\end{align}
We now claim the following.
\begin{lemma}
The above equation is equal to
\begin{align}\begin{split}\nonumber
    &\det(df(x_{n+2}))\bigg(\frac{\del s_{n+3}}{\del \phi_0}\bigg)^n\bigg[\frac{\del s_{n+2}}{\del \phi_1}...\frac{\del s_{n+2}}{\del \phi_{n+1}}\bigg]\times\\
    &\qquad\bigg[\bigg(\frac{\del s_{n+1}}{\del \phi_1}...\frac{\del s_{n+1}}{\del \phi_n}\bigg)^2\bigg(\frac{\del s_{n}}{\del \phi_1}...\frac{\del s_{n}}{\del \phi_{n-1}}\bigg)^2...\bigg(\frac{\del s_2}{\del \phi_1}\bigg)^2\bigg],
\end{split}\end{align}
which is non-zero.
\end{lemma}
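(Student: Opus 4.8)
The plan is to evaluate the displayed combination explicitly and then read off that it is nonzero. Substituting the explicit forms of $X_k$ and $Y_k$ recorded just above (these are the factorizations of $M_{k,k}$ and $M_{k,n+2}$ obtained in the proof of Lemma \ref{lemma_perturb_nth_differential_independently_technical}) into $X_1\cdots X_{n+1}\{\cdots\}$, the cleanest way to organize the outcome is to recognize it as the Laplace expansion along the last row of the matrix obtained from the original $M$ by pulling, out of each column $l$, the scalar $(\del s_{n+3}/\del\phi_l)(\del s_l/\del s_0)^n$: the $l$-th column then becomes $(1,\,r_l,\,r_l^2,\ldots,r_l^{n},\,w_l r_l^{n})^{T}$, where $r_l:=(\del s_l/\del\phi_0)/(\del s_l/\del s_0)$ and $w_l:=(\del\phi_{n+3}/\del\phi_l)/(\del s_{n+3}/\del\phi_l)$. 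Thus the claim reduces to evaluating this near-Vandermonde determinant $D$ together with the accumulated column scalars.

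To compute $D$, first observe that the row operation $R_{n+2}\mapsto R_{n+2}-w_1R_{n+1}$ leaves $D$ unchanged and replaces its last row by $((w_l-w_1)r_l^{n})_l$, so only the spread of the $w_l$ contributes (when the last row is a constant multiple of $R_{n+1}$ the matrix has two proportional rows). Expanding along this row produces a sum of genuine Vandermonde minors $\prod_{i<j,\ i,j\neq l}(r_j-r_i)$, and each factor satisfies the chain-rule identity $r_j-r_i=(\del s_j/\del\phi_i)\det df^i(x_0)/[(\del s_j/\del s_0)(\del s_i/\del s_0)]$ --- exactly the $2\times 2$ cancellation used to collapse the sums $\Sigma_r$ in the proof of Lemma \ref{lemma_perturb_nth_differential_independently_technical} and in Perturbation \ref{lemma_we_can_perturb_around_three_points_instead}. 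Feeding this back in, combining with the column scalars $(\del s_{n+3}/\del\phi_l)(\del s_l/\del s_0)^n$, and using area preservation $\det df^m(x_i)=\sin\phi_i/\sin\phi_{i+m}$ to telescope the products of determinants, every determinant factor and every power of $\del s_\cdot/\del s_0$ cancels, and the surviving monomials reorganize into $\det(df(x_{n+2}))(\del s_{n+3}/\del\phi_0)^n[\tfrac{\del s_{n+2}}{\del\phi_1}\cdots\tfrac{\del s_{n+2}}{\del\phi_{n+1}}][(\tfrac{\del s_{n+1}}{\del\phi_1}\cdots\tfrac{\del s_{n+1}}{\del\phi_n})^2\cdots(\tfrac{\del s_2}{\del\phi_1})^2]$, which is the asserted value. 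Nonvanishing is then immediate, since $\det df(x_{n+2})=\sin\phi_{n+2}/\sin\phi_{n+3}\neq 0$ and every factor $\del s_j/\del\phi_i$ occurring has indices $0\le i<j\le n+3$ and hence is nonzero by hypothesis.

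The genuine obstacle is purely combinatorial bookkeeping: matching the signs $(-1)^{n-k+1}$ and the nested products $\tfrac{\del s_k}{\del\phi_1}\cdots\tfrac{\del s_k}{\del\phi_{k-1}}$, $\tfrac{\del s_{k+1}}{\del\phi_k}\cdots\tfrac{\del s_{n+1}}{\del\phi_k}$ against the Vandermonde cofactors without error. I expect the safest organization is induction on $n$: passing from the orbit $x_0,x_1,\ldots$ to the shifted orbit $x_1,x_2,\ldots$ and splitting off the first row and first column expresses the $n$-version of the braced sum in terms of the $(n-1)$-version plus one application of the cross identity and of area preservation, while the base case $n=0$ is the single relation
\[
\frac{\del s_3}{\del\phi_1}\frac{\del\phi_3}{\del\phi_2}\frac{\del s_2}{\del\phi_0}-\frac{\del s_3}{\del\phi_2}\frac{\del\phi_3}{\del\phi_1}\frac{\del s_1}{\del\phi_0}=\det(df(x_2))\,\frac{\del s_2}{\del\phi_1},
\]
which follows by expanding $df^2(x_1)=df(x_2)\,df(x_1)$, rewriting $\del s_2/\del\phi_0$ and $\del s_1/\del\phi_0$ via the chain rule at the base point $x_1$, and simplifying with $\det df(x_1)$ --- the same kind of $2\times 2$ computation already carried out for the determinant identity inside the proof of Lemma \ref{sublemma_unfolding_first_pert} and for Perturbation \ref{perturbation_which_varys_the_first_differential}.
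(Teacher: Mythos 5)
Your structural idea---pull the factor $\frac{\del s_{n+3}}{\del \phi_l}\big(\frac{\del s_l}{\del s_0}\big)^n$ out of each column, view what remains as a bordered Vandermonde in $r_l=\frac{\del s_l/\del\phi_0}{\del s_l/\del s_0}$, and convert differences by the cross identity $r_j-r_i=\frac{\del s_j}{\del\phi_i}\det(df^i(x_0))/\big[\frac{\del s_j}{\del s_0}\frac{\del s_i}{\del s_0}\big]$---is sound and is genuinely different in organization from the paper's argument (which never evaluates a determinant, but tracks the $(n+2,n+2)$ entry through the explicit operations and verifies the resulting scalar identity by a double induction with chain-rule splittings). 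However, your identification of the target quantity is wrong. Since $Y_i/X_i=M_{i,n+2}/M_{i,i}$ for the reduced entries, the braced expression is the Schur complement of the top-left $(n+1)\times(n+1)$ block $A$ of $M$, so what must be evaluated is $X_1\cdots X_{n+1}\det M/\det A$, \emph{not} $\det M$ divided by your pulled-out column scalars. The factor $\det A=\prod_{l=1}^{n+1}\big[\frac{\del s_{n+3}}{\del\phi_l}\big(\frac{\del s_l}{\del s_0}\big)^n\big]\prod_{1\le i<j\le n+1}(r_j-r_i)$ is a full Vandermonde that must itself be computed and divided out. Already for $n=1$ one finds $\det M=\det(df(x_0))\det(df(x_3))\frac{\del s_4}{\del\phi_0}\frac{\del s_3}{\del\phi_1}\frac{\del s_3}{\del\phi_2}\frac{\del s_2}{\del\phi_1}$, whereas the lemma's value is $\det(df(x_3))\frac{\del s_4}{\del\phi_0}\frac{\del s_3}{\del\phi_1}\frac{\del s_3}{\del\phi_2}\big(\frac{\del s_2}{\del\phi_1}\big)^2$; the mismatch $\det(df(x_0))/\frac{\del s_2}{\del\phi_1}=\det A/(X_1X_2)$ is exactly what your ``accumulated column scalars'' never produce. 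Relatedly, ``every determinant factor cancels'' cannot be right, since $\det(df(x_{n+2}))$ survives in the answer; and area preservation is not needed at all---multiplicativity of the Jacobians suffices, which is why the paper can remark afterwards that the identity holds for arbitrary compositions, not just billiard maps.

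The more serious gap is that the step where ``the surviving monomials reorganize into'' the stated product is precisely the content of the lemma, and you do not carry it out: after the last-row expansion you have an alternating sum of $n+1$ products of cross-identity factors, and proving that this sum collapses to a single monomial times one Jacobian is where all the work lies (it is what the paper's lengthy double induction does). Moreover your proposed induction cannot start as written, because the displayed base case is false: its right-hand side $\det(df(x_2))\frac{\del s_2}{\del\phi_1}$ does not depend on $df(x_0)$, while the left-hand side does---writing $\frac{\del s_2}{\del\phi_0}=\frac{\del s_2}{\del s_1}\frac{\del s_1}{\del\phi_0}+\frac{\del s_2}{\del\phi_1}\frac{\del\phi_1}{\del\phi_0}$, the coefficient of $\frac{\del\phi_1}{\del\phi_0}$ is $\frac{\del s_3}{\del\phi_1}\frac{\del\phi_3}{\del\phi_2}\frac{\del s_2}{\del\phi_1}\neq 0$, and a test with generic $2\times2$ Jacobians (all relevant partials nonzero) confirms the two sides differ. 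The correct $n=0$ statement is $\frac{\del s_3}{\del\phi_1}\frac{\del\phi_3}{\del\phi_2}-\frac{\del s_3}{\del\phi_2}\frac{\del\phi_3}{\del\phi_1}=\det(df(x_2))\frac{\del s_2}{\del\phi_1}$, without the extra factors $\frac{\del s_2}{\del\phi_0}$ and $\frac{\del s_1}{\del\phi_0}$. The determinantal route could likely be salvaged---evaluate both $\det M$ and $\det A$ by your Vandermonde scheme, using in addition the analogous cross identity for the quantities $w_l$ coming from the last row, and then compare with $X_1\cdots X_{n+1}$---but as written the proposal neither identifies the correct quantity nor proves the collapse, and its induction base is incorrect.
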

\begin{proof}
Now
\begin{align}\begin{split}\nonumber
    &X_1...X_{n+1} = \bigg[\frac{\del s_{n+3}}{\del \phi_1}...\frac{\del s_{n+3}}{\del \phi_{n+1}}\bigg]\times\\
    &\qquad\bigg[\bigg(\frac{\del s_{n+1}}{\del \phi_1}...\frac{\del s_{n+1}}{\del \phi_n}\bigg)^2\bigg(\frac{\del s_{n}}{\del \phi_1}...\frac{\del s_{n}}{\del \phi_{n-1}}\bigg)^2...\bigg(\frac{\del s_2}{\del \phi_1}\bigg)^2\bigg].
\end{split}\end{align}

We begin by dividing both sides of the supposed equality by $X_1...X_{n+1}$.  We obain on one side
\begin{align}\begin{split}\nonumber
    \det(df(x_{n+2}))\bigg(\frac{\del s_{n+3}}{\del \phi_0}\bigg)^n\frac{\bigg[\frac{\del s_{n+2}}{\del \phi_1}...\frac{\del s_{n+2}}{\del \phi_{n+1}}\bigg]}{\bigg[\frac{\del s_{n+3}}{\del \phi_1}...\frac{\del s_{n+3}}{\del \phi_{n+1}}\bigg]}.
\end{split}\end{align}

Dividing by $\bigg[\frac{\del s_{n+2}}{\del \phi_1}...\frac{\del s_{n+2}}{\del \phi_{n+1}}\bigg]$ and multiplying by $\bigg[\frac{\del s_{n+3}}{\del \phi_1}...\frac{\del s_{n+3}}{\del \phi_{n+1}}\bigg]$, we obtain on one side
\begin{align}\begin{split}\nonumber
    \det(df(x_{n+2}))\bigg(\frac{\del s_{n+3}}{\del \phi_0}\bigg)^n,
\end{split}\end{align}
and on the other
\begin{align}\begin{split}\label{eqn_rhs_of_n_th_inductive_step}
    &\frac{\del \phi_{n+3}}{\del \phi_{n+2}}\bigg(\frac{\del s_{n+2}}{\del \phi_0}\bigg)^n
    \,
    \frac{\bigg[\frac{\del s_{n+3}}{\del \phi_1}...\frac{\del s_{n+3}}{\del \phi_{n+1}}\bigg]}{\bigg[\frac{\del s_{n+2}}{\del \phi_1}...\frac{\del s_{n+2}}{\del \phi_{n+1}}\bigg]}
    \,
    \\
    &-\sum_{i=1}^{n+1}\frac{\del \phi_{n+3}}{\del \phi_i}\bigg(\frac{\del s_i}{\del \phi_0}\bigg)^n\frac{Y_i}{X_i}
    \,
    \frac{\bigg[\frac{\del s_{n+3}}{\del \phi_1}...\frac{\del s_{n+3}}{\del \phi_{n+1}}\bigg]}{\bigg[\frac{\del s_{n+2}}{\del \phi_1}...\frac{\del s_{n+2}}{\del \phi_{n+1}}\bigg]}
    \,.
\end{split}\end{align}
Now each term in this sum gives
\begin{align}\begin{split}\nonumber
    &(-1)^{n-1+1}\frac{\del \phi_{n+3}}{\del \phi_i}\bigg(\frac{\del s_i}{\del \phi_0}\bigg)^n\times
    \\
    &\frac{\frac{\del s_{n+3}}{\del \phi_{n+2}}\bigg[\frac{\del s_{n+2}}{\del \phi_1}...\frac{\del s_{n+2}}{\del \phi_{i-1}}\bigg]\bigg[\frac{\del s_{n+2}}{\del \phi_{i+1}}...\frac{\del s_{n+2}}{\del \phi_{n+1}}\bigg]}{\frac{\del s_{n+3}}{\del \phi_i}\bigg[\frac{\del s_{i}}{\del \phi_1}...\frac{\del s_i}{\del \phi_{i-1}}\bigg]\bigg[\frac{\del s_{i+1}}{\del \phi_i}...\frac{\del s_{n+1}}{\del \phi_i}\bigg]}\times
    \\
    &\det(df(x_i))...\det(df^{n-i+1}(x_i))\times
    \\
    &\frac{\bigg[\frac{\del s_{n+3}}{\del \phi_1}...\frac{\del s_{n+3}}{\del \phi_{n+1}}\bigg]}{\bigg[\frac{\del s_{n+2}}{\del \phi_1}...\frac{\del s_{n+2}}{\del \phi_{n+1}}\bigg]},
\end{split}\end{align}
which simplifies to
\begin{align}\begin{split}\nonumber
    &(-1)^{n-i+1}\frac{\del \phi_{n+3}}{\del \phi_i}\bigg(\frac{\del s_i}{\del \phi_0}\bigg)^n\times
    \\
    &\frac{\frac{\del s_{n+3}}{\del \phi_{n+2}}
    \,
    \bigg[\frac{\del s_{n+3}}{\del \phi_1}...\frac{\del s_{n+3}}{\del \phi_{n+1}}\bigg]\det(df(x_i))...\det(df^{n-i+1}(x_i))}{\frac{\del s_{n+3}}{\del \phi_i}\frac{\del s_{n+2}}{\del \phi_i}\bigg[\frac{\del s_{i}}{\del \phi_1}...\frac{\del s_i}{\del \phi_{i-1}}\bigg]\bigg[\frac{\del s_{i+1}}{\del \phi_i}...\frac{\del s_{n+1}}{\del \phi_i}\bigg]}.
\end{split}\end{align}
Now, we prove the equality by induction.  Suppose it is true for $n-1$.  Then, by relabelling points $x_1,...,x_{n+2}$ to $x_2,...,x_{n+3}$ and multiplying both sides by $\frac{\del s_{n+3}}{\del \phi_0}$, we obtain the equality
\begin{align}\begin{split}\label{eqn_rhs_of_n_minus_1_th_unductive_step}
    &\det(df(x_{n+2}))\bigg(\frac{\del s_{n+3}}{\del \phi_0}\bigg)^n\\
    &=\frac{\del \phi_{n+3}}{\del \phi_{n+2}}\bigg(\frac{\del s_{n+2}}{\del \phi_0}\bigg)^{n-1}\frac{\del s_{n+3}}{\del \phi_0}\frac{\del s_{n+2}}{\del \phi_1}\frac{\bigg[\frac{\del s_{n+3}}{\del \phi_2}...\frac{\del s_{n+3}}{\del \phi_{n+1}}\bigg]}{\bigg[\frac{\del s_{n+2}}{\del \phi_1}....\frac{\del s_{n+2}}{\del \phi_{n+1}}\bigg]}
    \\
    &-\sum_{i=1}^{n}(-1)^{n-i}\frac{\del \phi_{n+3}}{\del \phi_{i+1}}\bigg(\frac{\del s_{i+1}}{\del \phi_0}\bigg)^{n-1}\frac{\del s_{i+1}}{\del \phi_1}\frac{\del s_{n+3}}{\del \phi_0}\times
    \\
    &\frac{\frac{\del s_{n+3}}{\del \phi_{n+2}}
    \,
    \bigg[\frac{\del s_{n+3}}{\del \phi_2}...\frac{\del s_{n+3}}{\del \phi_{n+1}}\bigg]\det(df(x_{i+1}))...\det(df^{n-i}(x_{i+1}))
    }{\frac{\del s_{n+3}}{\del \phi_{i+1}}\frac{\del s_{n+2}}{\del \phi_{i+1}}\bigg[\frac{\del s_{i+1}}{\del \phi_1}...\frac{\del s_{i+1}}{\del \phi_i}\bigg]\bigg[\frac{\del s_{i+2}}{\del \phi_{i+1}}...\frac{\del s_{n+1}}{\del \phi_{i+1}}\bigg]}.
\end{split}\end{align}
We now show that the right hand side of this equation is equal to \eqref{eqn_rhs_of_n_th_inductive_step}.  To do this we take each $\frac{\del s_{n+3}}{\del \phi_0}$ in every term and split it up as
\begin{align*}\begin{split}\nonumber
    \frac{\del s_{n+3}}{\del \phi_0} = \frac{\del s_{n+3}}{\del s_1}\frac{\del s_1}{\del \phi_0}+\frac{\del s_{n+3}}{\del \phi_1}\frac{\del \phi_1}{\del \phi_0}.
\end{split}\end{align*}
Now we take a single $\frac{\del s_i}{\del \phi_0}$ in each term of \eqref{eqn_rhs_of_n_th_inductive_step} for $i > 1$ and split it up as
\begin{align}\begin{split}\nonumber
    \frac{\del s_i}{\del \phi_0} = \frac{\del s_i}{\del s_1}\frac{\del s_1}{\del \phi_0} + \frac{\del s_i}{\del \phi_1}\frac{\del \phi_1}{\del \phi_0}.
\end{split}\end{align}
Now we subtract \eqref{eqn_rhs_of_n_th_inductive_step} from \eqref{eqn_rhs_of_n_minus_1_th_unductive_step}.  Notice that the terms with $\frac{\del s_{n+3}}{\del \phi_1}\frac{\del \phi_1}{\del \phi_0}$ and the terms with $\frac{\del s_i}{\del \phi_1}\frac{\del \phi_1}{\del \phi_0}$ cancel out, and we are left with the difference being
\begin{align*}\begin{split}\nonumber
    &\frac{\del \phi_{n+3}}{\del \phi_{n+2}}\bigg(\frac{\del s_{n+2}}{\del \phi_0}\bigg)^{n-1}\frac{\bigg[\frac{\del s_{n+3}}{\del \phi_2}...\frac{\del s_{n+3}}{\del \phi_{n+1}}\bigg]}{\bigg[\frac{\del s_{n+2}}{\del \phi_1}...\frac{\del s_{n+2}}{\del \phi_{n+1}}\bigg]}\bigg[\frac{\del s_{n+2}}{\del \phi_1}\frac{\del s_{n+3}}{\del s_1}-\frac{\del s_{n+3}}{\del \phi_1}\frac{\del s_{n+2}}{\del s_1}\bigg]\frac{\del s_1}{\del \phi_0}
    \\
    &-\sum_{i=1}^n\bigg\{(-1)^{n-i}\frac{\del \phi_{n+3}}{\del \phi_{i+1}}\bigg(\frac{\del s_{i+1}}{\del \phi_0}\bigg)^{n-1}\times
    \\
    &\qquad\frac{
    \,
    \frac{\del s_{n+3}}{\del \phi_{n+2}}
    \bigg[
    \frac{\del s_{n+3}}{\del \phi_2}...\frac{\del s_{n+3}}{\del \phi_{n+1}}
    \bigg]
    \det(df(x_{i+1}))...\det(df^{n-i}(x_{i+1}))
    }{
    \,
    \frac{\del s_{n+3}}{\del \phi_{i+1}}\frac{\del s_{n+2}}{\del \phi_{i+1}}
    \bigg[\frac{\del s_{i+1}}{\del \phi_1}...\frac{\del s_{i+1}}{\del \phi_i}\bigg]\bigg[\frac{\del s_{i+2}}{\del \phi_{i+1}}...\frac{\del s_{n+1}}{\del \phi_{i+1}}\bigg]
    }\times
    \\
    &\qquad\bigg[\frac{\del s_{i+1}}{\del \phi_1}\frac{\del s_{n+3}}{\del s_1} - \frac{\del s_{i+1}}{\del s_1}\frac{\del s_{n+3}}{\del \phi_1}\bigg]\frac{\del s_1}{\del \phi_0}\bigg\}
    \\
    &\qquad - \bigg\{(-1)^n\frac{\del \phi_{n+3}}{\del \phi_1}\bigg(\frac{\del s_1}{\del \phi_0}\bigg)^n\times
    \\
    &\qquad\qquad \frac{\frac{\del s_{n+3}}{\del \phi_{n+2}}
    \,
    \bigg[\frac{\del s_{n+3}}{\del \phi_2}...\frac{\del s_{n+3}}{\del \phi_{n+1}}\bigg]\det(df(x_1))...\det(df^n(x_1))}{\frac{\del s_{n+2}}{\del \phi_1}
    \bigg[
    \frac{\del s_2}{\del \phi_1}...\frac{\del s_{n+1}}{\del \phi_1}
    \bigg]}\bigg\}.
\end{split}\end{align*}
We now show that this difference is equal to $0$.  First, we divide by $\frac{\del s_1}{\del \phi_0}$, and by $\frac{\del s_{n+3}}{\del \phi_2}...\frac{\del s_{n+3}}{\del \phi_{n+1}}$.  We also note that 
\begin{align}\begin{split}\nonumber
    \bigg[\frac{\del s_{i+1}}{\del \phi_1}\frac{\del s_{n+3}}{\del s_1}-\frac{\del s_{i+1}}{\del s_1}\frac{\del s_{n+3}}{\del \phi_1}\bigg] = -\frac{\del s_{n+3}}{\del \phi_{i+1}}\det(df^i(x_1)).
\end{split}\end{align}
So we have
\begin{align}\begin{split}\nonumber
    &-\frac{\del \phi_{n+3}}{\del \phi_{n+2}}\bigg(\frac{\del s_{n+2}}{\del \phi_0}\bigg)^{n-1}
    \frac{
    \,
    \frac{\del s_{n+3}}{\del \phi_{n+2}}\det(df^{n+1}(x_1))
    }{
    \,
    \bigg[\frac{\del s_{n+2}}{\del \phi_1}...\frac{\del s_{n+2}}{\del \phi_{n+1}}\bigg]
    }\\
    &+\sum_{i=1}^n\bigg\{(-1)^{n-i}\frac{\del \phi_{n+3}}{\del \phi_{i+1}}\bigg(\frac{\del s_{i+1}}{\del \phi_0}\bigg)^{n-1}\times\\
    &\qquad \frac{
    \,
    \frac{\del s_{n+3}}{\del \phi_{n+2}}\det(df(x_{i+1}))...\det(df^{n-i}(x_{i+1}))\det(df^i(x_1))
    }{
    \,
    \frac{\del s_{n+2}}{\del \phi_{i+1}}
    \bigg[\frac{\del s_{i+1}}{\del \phi_1}...\frac{\del s_{i+1}}{\del \phi_i}\bigg]\bigg[\frac{\del s_{i+2}}{\del \phi_{i+1}}...\frac{\del s_{n+1}}{\del \phi_{i+1}}\bigg]
    }\bigg\}
    \\
    &\qquad-\bigg\{(-1)^n\frac{\del \phi_{n+3}}{\del \phi_1}\bigg(\frac{\del s_1}{\del \phi_0}\bigg)^{n-1}\times
    \\
    &\qquad\qquad
    \frac{
    \,
    \frac{\del s_{n+3}}{\del \phi_{n+2}}\det(df(x_1))...\det(df^n(x_1))
    }{
    \,
    \frac{\del s_{n+2}}{\del \phi_1}\bigg[\frac{\del s_2}{\del \phi_1}...\frac{\del s_{n+1}}{\del \phi_1}\bigg]
    }\bigg\}.
\end{split}\end{align}
Now we can divide each term by $\frac{\del s_{n+3}}{\del \phi_{n+2}}$.  If one examines the determinants, they will see that we can also divide by $\det(df^n(x_1))$.  Doing so yields
\begin{align}\begin{split}\nonumber
    &-\frac{\del \phi_{n+3}}{\del \phi_{n+2}}\bigg(\frac{\del s_{n+2}}{\del \phi_0}\bigg)^{n-1}
    \frac{
    \,
    \det(df(x_{n+1}))
    }{
    \,
    \bigg[\frac{\del s_{n+2}}{\del \phi_1}...\frac{\del s_{n+2}}{\del \phi_{n+1}}\bigg]
    }\\
    &+\sum_{i=0}^{n-1}\bigg\{(-1)^{n-i}\frac{\del \phi_{n+3}}{\del \phi_{i+1}}\bigg(\frac{\del s_{i+1}}{\del \phi_0}\bigg)^{n-1}\times\\
    &\qquad \frac{
    \,
    \det(df(x_{i+1}))...\det(df^{n-i-1}(x_{i+1}))
    }{
    \,
    \frac{\del s_{n+2}}{\del \phi_{i+1}}
    \bigg[\frac{\del s_{i+1}}{\del \phi_1}...\frac{\del s_{i+1}}{\del \phi_i}\bigg]\bigg[\frac{\del s_{i+2}}{\del \phi_{i+1}}...\frac{\del s_{n+1}}{\del \phi_{i+1}}\bigg]
    }\bigg\}
    \\
    &+\frac{\del \phi_{n+3}}{\del \phi_{n+1}}\bigg(\frac{\del s_{n+1}}{\del \phi_0}\bigg)^{n-1}
    \frac{1}
    {
    \,
    \frac{\del s_{n+2}}{\del \phi_{n+1}}\bigg[\frac{\del s_{n+1}}{\del \phi_1}...\frac{\del s_{n+1}}{\del \phi_n}\bigg]
    }
\end{split}\end{align}
Now, since we are assuming the equality holds for $n-1$, we note this says
\begingroup
\begin{align}\begin{split}\nonumber
    &\det(df(x_{n+1}))\bigg(\frac{\del s_{n+2}}{\del \phi_0}\bigg)^{n-1}
    \\
    &=\frac{\del \phi_{n+2}}{\del \phi_{n+1}}\bigg(\frac{\del s_{n+1}}{\del \phi_0}\bigg)^{n-1}
    \,
    \frac{\bigg[\frac{\del s_{n+2}}{\del \phi_1}...\frac{\del s_{n+2}}{\del \phi_n}\bigg]}{\bigg[\frac{\del s_{n+1}}{\del \phi_1}...\frac{\del s_{n+1}}{\del \phi_n}\bigg]}
    \,
    \\
    &-\sum_{i=1}^{n}(-1)^{n-i}\frac{\del \phi_{n+2}}{\del \phi_i}\bigg(\frac{\del s_i}{\del \phi_0}\bigg)^{n-1}\times
    \\
    &\frac{\frac{\del s_{n+2}}{\del \phi_{n+1}}
    \,
    \bigg[\frac{\del s_{n+2}}{\del \phi_1}...\frac{\del s_{n+2}}{\del \phi_{n}}\bigg]\det(df(x_i))...\det(df^{n-i}(x_i))}{\frac{\del s_{n+2}}{\del \phi_i}\frac{\del s_{n+1}}{\del \phi_i}\bigg[\frac{\del s_{i}}{\del \phi_1}...\frac{\del s_i}{\del \phi_{i-1}}\bigg]\bigg[\frac{\del s_{i+1}}{\del \phi_i}...\frac{\del s_{n}}{\del \phi_i}\bigg]}
    \\
    \,
    &=\frac{\del \phi_{n+2}}{\del \phi_{n+1}}\bigg(\frac{\del s_{n+1}}{\del \phi_0}\bigg)^{n-1}
    \,
    \frac{\bigg[\frac{\del s_{n+2}}{\del \phi_1}...\frac{\del s_{n+2}}{\del \phi_n}\bigg]}{\bigg[\frac{\del s_{n+1}}{\del \phi_1}...\frac{\del s_{n+1}}{\del \phi_n}\bigg]}
    \,
    \\
    &+\sum_{i=0}^{n-1}(-1)^{n-i}\frac{\del \phi_{n+2}}{\del \phi_{i+1}}\bigg(\frac{\del s_{i+1}}{\del \phi_0}\bigg)^{n-1}\times
    \\
    &\frac{\frac{\del s_{n+2}}{\del \phi_{n+1}}
    \,
    \bigg[\frac{\del s_{n+2}}{\del \phi_1}...\frac{\del s_{n+2}}{\del \phi_{n}}\bigg]\det(df(x_{i+1}))...\det(df^{n-i-1}(x_{i+1}))}{\frac{\del s_{n+2}}{\del \phi_{i+1}}\frac{\del s_{n+1}}{\del \phi_{i+1}}\bigg[\frac{\del s_{i+1}}{\del \phi_1}...\frac{\del s_{i+1}}{\del \phi_i}\bigg]\bigg[\frac{\del s_{i+2}}{\del \phi_{i+1}}...\frac{\del s_{n}}{\del \phi_{i+1}}\bigg]}.
\end{split}\end{align}
\endgroup
Substituting this into the previous equation yields
\begin{align*}\nonumber
    &-\frac{\del \phi_{n+3}}{\del \phi_{n+2}}
    \,
    \frac{
    \,
    \frac{\del \phi_{n+2}}{\del \phi_{n+1}}\bigg(\frac{\del s_{n+1}}{\del \phi_0}\bigg)^{n-1}
    }{
    \,
    \frac{\del s_{n+2}}{\del \phi_{n+1}}\bigg[\frac{\del s_{n+1}}{\del \phi_1}...\frac{\del s_{n+1}}{\del \phi_n}\bigg]}\\
    &+\sum_{i=0}^{n-1}\Bigg\{-\frac{\del \phi_{n+3}}{\del \phi_{n+2}}\bigg\{(-1)^{n-i}\frac{\del \phi_{n+2}}{\del \phi_{i+1}}\bigg(\frac{\del s_{i+1}}{\del \phi_0}\bigg)^{n-1}\times
    \\
    &\frac{\det(df(x_{i+1}))...\det(df^{n-i-1}(x_{i+1}))}{\frac{\del s_{n+2}}{\del \phi_{i+1}}\frac{\del s_{n+1}}{\del \phi_{i+1}}\bigg[\frac{\del s_{i+1}}{\del \phi_1}...\frac{\del s_{i+1}}{\del \phi_i}\bigg]\bigg[\frac{\del s_{i+2}}{\del \phi_{i+1}}...\frac{\del s_{n}}{\del \phi_{i+1}}\bigg]}\bigg\}
    \\
    &+\bigg\{
    (-1)^{n-i}\frac{\del \phi_{n+3}}{\del \phi_{i+1}}\bigg(\frac{\del s_{i+1}}{\del \phi_0}\bigg)^{n-1}\times\\
    &\qquad \frac{
    \,
    \det(df(x_{i+1}))...\det(df^{n-i-1}(x_{i+1}))
    }{
    \,
    \frac{\del s_{n+2}}{\del \phi_{i+1}}
    \bigg[\frac{\del s_{i+1}}{\del \phi_1}...\frac{\del s_{i+1}}{\del \phi_i}\bigg]\bigg[\frac{\del s_{i+2}}{\del \phi_{i+1}}...\frac{\del s_{n+1}}{\del \phi_{i+1}}\bigg]
    }
    \bigg\}
    \Bigg\}
    \\
    &+\frac{\del \phi_{n+3}}{\del \phi_{n+1}}\bigg(\frac{\del s_{n+1}}{\del \phi_0}\bigg)^{n-1}
    \frac{1}
    {
    \,
    \frac{\del s_{n+2}}{\del \phi_{n+1}}\bigg[\frac{\del s_{n+1}}{\del \phi_1}...\frac{\del s_{n+1}}{\del \phi_n}\bigg]
    }.
\end{align*}
Then for each term, writing
\begin{align}\begin{split}\nonumber
    \frac{\del \phi_{n+3}}{\del \phi_{i+1}}=\frac{\del \phi_{n+3}}{\del s_{n+2}}\frac{\del s_{n+2}}{\del \phi_{i+1}}+\frac{\del \phi_{n+3}}{\del \phi_{n+2}}\frac{\del \phi_{n+2}}{\del \phi_{i+1}}
\end{split}\end{align}
we observe that all the terms with $\frac{\del \phi_{n+3}}{\del \phi_{n+2}}$ cancel out, and we are left with
\begin{align}\begin{split}\nonumber
    &-\frac{\del \phi_{n+3}}{\del s_{n+2}}\bigg(\frac{\del s_{n+1}}{\del \phi_0}\bigg)^{n-1}
    \frac{1}
    {
    \,
    \bigg[\frac{\del s_{n+1}}{\del \phi_1}...\frac{\del s_{n+1}}{\del \phi_n}\bigg]
    }
    \\
    &-
    \sum_{i=0}^{n-1}\Bigg\{(-1)^{n-i}\bigg(\frac{\del s_{i+1}}{\del \phi_0}\bigg)^{n-1}
    \times\\
    &\frac{
    \,
    \det(df(x_{i+1}))...\det(df^{n-i-1}(x_{i+1}))
    }{
    \,
    \frac{\del s_{n+1}}{\del \phi_{i+1}}\bigg[\frac{\del s_{i+1}}{\del \phi_1}...\frac{\del s_{i+1}}{\del \phi_i}\bigg]\bigg[\frac{\del s_{i+2}}{\del \phi_{i+1}}...\frac{\del s_n}{\del \phi_{i+1}}\bigg]
    }
    \frac{\del \phi_{n+3}}{\del s_{n+2}}\Bigg\}.
\end{split}\end{align}
Then, dividing by $\frac{\del \phi_{n+3}}{\del s_{n+2}}$ and multiplying by $\bigg[\frac{\del s_{n+1}}{\del \phi_1}...\frac{\del s_{n+1}}{\del \phi_n}\bigg]$, we have
\begin{align}\begin{split}\nonumber
    &-\bigg(\frac{\del s_{n+1}}{\del \phi_0}\bigg)^{n-1} 
    \\
    &- 
    \sum_{i=0}^{n-1}
    \Bigg\{
    (-1)^{n-i}
    \bigg(\frac{\del s_{i+1}}{\del \phi_0}\bigg)^{n-1}\times
    \\
    &\frac{
    \,
    \bigg[\frac{\del s_{n+1}}{\del \phi_1}...\frac{\del s_{n+1}}{\del \phi_i}\bigg]
    \bigg[
    \frac{\del s_{n+1}}{\del \phi_{i+2}}...\frac{\del s_{n+1}}{\del \phi_n}\bigg]\det(df(x_{i+1}))...\det(df^{n-i-1}(x_{i+1}))
    }
    {
    \,
    \bigg[\frac{\del s_{i+1}}{\del \phi_1}...\frac{\del s_{i+1}}{\del \phi_i}\bigg]
    \bigg[
    \frac{\del s_{i+2}}{\del \phi_{i+1}}...\frac{\del s_{n}}{\del \phi_{i+1}}\bigg]
    }
    \Bigg\}.
\end{split}\end{align}
We now prove that this is $0$ for all $n$ by induction.  Note that the base case of $n=2$ is easily verifiable.  For the rest, suppose it holds for $n-2$.  If one proceeds in the same fashion as before (breaking up the partials), they arrive at the conclusion that this is in fact equivalent to the step before being $0$.
\end{proof}
In fact, since all the calculations performed were simple algebra and did not use the explicit values of the partial derivatives in terms of the curvature of the boundary or anything to do with the fact that it is a billiard map, we can actually simply relabel the points $x_1,...,x_{n+2}$ to $x_{k_1},...,x_{k_{n+2}}$ where $k_1,...,k_{n+2}$ is any arbitrary increasing sequence of numbers, and obtain that by varying the curvature at any $n+2$ points, we may vary the $n^{th}$ differentials of $s_q$ and $\phi_q$ with $n+2$ degrees of freedom (which is the best one can do with billiard maps).

Thus, this combined with the fact that if you vary the curvature at points that are not consecutive points in the orbit then there are no $\eps^2$ terms, we have that we can vary the billiard map exactly up to the $n^{th}$ differential generically by varying in small neighborhoods of $n+2$ points on the boundary.

\section{Finding domains with a saddle point containing a homoclinic tangency}\label{section_Constructing_an_Orbit_with_Homoclinic_Tangency}
Our main result in this section is to prove Lemma \eqref{lemma_creating_quadratic_ht}.
\begin{proof}
We begin with a smooth $C^\infty$ domain $\Omega_0$.  By (\cite{PetkovStoyanov} Theorem 6.4.1) there exists arbitrarily close to $\Omega_0$ a domain $\Omega_1$ such that the eigenvalues of the differential of the billiard map composed with itself $q$ times at each $q$ periodic point are not any root of unity, for every $q$.  Then, due to a result of Aubry Mather theory which states that for a minimal orbit $O$ with rotation number $1/q$ there exists minimal orbits $O_\pm$ which accumulate to $O$ under forward and backward iterations (respectively), this implies that every minimal periodic orbit of $\Omega_1$ is hyperbolic.  

Next, we combine two results.  It is shown in \cite{Lazutkin} (Page 146 lemma 14.6) that for a sufficiently smooth boundary, one may find a coordinate system so that the billiard map takes the form
\begin{align}\begin{split}\nonumber
    f(\xi,\eta) = (\xi + \eta + A(\xi,\eta)\eta^N, \eta + B(\xi,\eta)\eta^{N+1}).
\end{split}\end{align}
This implies there exist rotational invariant KAM curves which accumulate onto the boundary.  For large $q$, then, we may find minimal orbits of rotation number $1/q$ trapped between these invariant curves. For instance, we may have the distance between these two KAM curves to be of order $1/q^{10}$. 

Now, by a result in \cite{Mather} we have that 
for these orbits, for two consecutive points $p_1=(x_1,y_1),p_2=(x_2,y_2) = f(p_1)$, there exist points $O_1=(\xi_1,\eta_1)$ and $O_2=(\xi_2,\eta_2)$ such that $x_1 < \xi_1,\xi_2 < x_2$ and the orbit of $O_1$ is a heteroclinic orbit from $p_1$ to $p_2$, and the orbit of $O_2$ is a heteroclinic orbit from $p_2$ to $p_1$. Thus, we have a heteroclinic cycle between the points $p_1$ and $p_2$ (see Figure \ref{fig_creating_ht})

\begin{figure}[h]
\centering
\includegraphics[width=12cm, height=4cm]{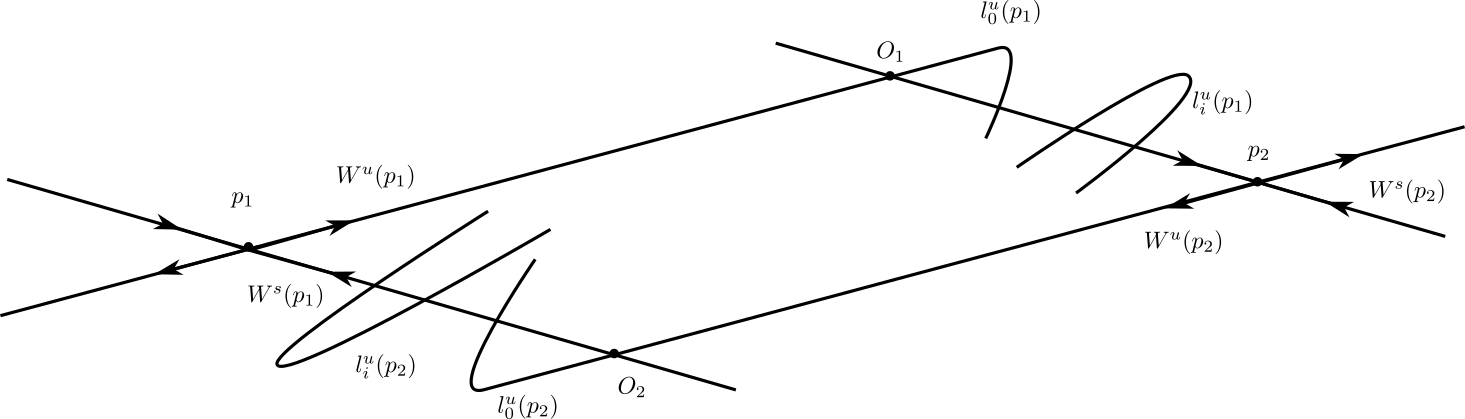}
\caption{The dynamics around $p_1$ and $p_2$, showing the heteroclinic points at $O_1$ and $O_2$.}
\label{fig_creating_ht}
\end{figure}

We now let $l^u_0(p_1)$ be a section of the unstable manifold of $p_1$ which contains in its interior $O_1$.  Then we define its iterations as $l^u_i(p_1) = T^i(l^u_0(p_1))$.  First we state the following lemma:

\setcounter{maintheorem}{1}
\setcounter{lemma}{1}

\begin{sublemma}
    We either have that $l^u_i(p_1)$ already crosses the stable manifold of $p_2$ transversally, or we may perturb the system so that this is achieved (i.e. so that $O_1$ is a point of transversal intersection).
\end{sublemma}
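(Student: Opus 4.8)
The plan is to argue by a dichotomy on the contact between $W^u(p_1)$ and $W^s(p_2)$ at the heteroclinic point $O_1$, with both invariant manifolds taken with respect to the return map $T=f^q$, for which $p_1$ and $p_2$ are hyperbolic fixed points joined by the heteroclinic orbit of $O_1$. The first observation is that transversality of a heteroclinic intersection is $T$-invariant: since $T(W^u(p_1))=W^u(p_1)$ and $T(W^s(p_2))=W^s(p_2)$, the curve $W^u(p_1)$ meets $W^s(p_2)$ transversally at $O_1$ if and only if $l^u_i(p_1)=T^i(l^u_0(p_1))$ meets $W^s(p_2)$ transversally at $T^i(O_1)$, for one, hence for every, $i$. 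Thus the first alternative of the lemma is precisely the statement that this contact is transversal, and it remains only to treat the tangential case by a perturbation.

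So suppose the contact of $W^u(p_1)$ with $W^s(p_2)$ at $O_1$ is tangential. I would first exclude the degenerate possibility that $W^u(p_1)$ contains an arc of $W^s(p_2)$: that forces the two manifolds to coincide along the whole orbit of $O_1$, which an arbitrarily small curvature perturbation keeping the periodic orbit fixed (for instance of the type in Perturbation \ref{perturbation_which_varys_the_first_differential}) destroys; hence I may assume the signed distance $\Phi$ from points of the unstable curve near $O_1$ to the local stable manifold $W^s_{loc}(p_2)$ is not identically zero, with $\Phi(0)=0$ at $O_1$. Now I would apply the mechanism of Perturbation \ref{perturbation_which_lets_you_move_unstable_manifold}, with $W^s_{loc}(p_2)$ playing the role of the local stable manifold there: a curvature perturbation supported in a thin strip around $p_1$ — which keeps the periodic orbit and the hyperbolicity of $p_1,p_2$ intact and stays within the class of billiard maps — tilts $W^u_{loc}(p_1)$ and, transported out to a neighborhood of $O_1$ by the lambda lemma, moves the unstable curve so that
\[
\Phi_\eps(t)=\Phi(t)+\eps+O(\eps^2).
\]
Since $\Phi$ is not identically zero near $t=0$, an elementary transversality argument (Sard's theorem applied to $\Phi$, the $O(\eps^2)$ term being harmless as $\Phi_\eps$ varies continuously in $\eps$ in the $C^1$ sense) shows that for arbitrarily small $\eps$ of a suitable sign the function $\Phi_\eps$ has a zero $t_*$ near $0$ with $\Phi_\eps'(t_*)\neq 0$; the point $p_\eps(t_*)$ is then a transversal intersection of the perturbed $W^u(p_1)$ with $W^s(p_2)$, which we take as the new $O_1$. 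Since $\eps$ may be chosen arbitrarily small, so may the perturbation.

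The step I expect to be the main obstacle is the quantitative claim underlying the displacement formula above: that tilting $W^u_{loc}(p_1)$ near $p_1$ produces, after the iterations carrying a neighborhood of $p_1$ to a neighborhood of $O_1$, a motion of $W^u(p_1)$ near $O_1$ that is to first order in $\eps$ exactly in the direction normal to $W^s_{loc}(p_2)$ and with nonzero leading coefficient. This is the content of Perturbation \ref{perturbation_which_lets_you_move_unstable_manifold} and its lambda-lemma estimate, so the remaining work is to verify that its hypotheses survive in the heteroclinic setting — principally an injectivity-type condition, namely that the thin strip about $p_1$ in which we perturb can be chosen to avoid the points of the orbit of $O_1$ lying near $O_1$, which holds because those points sit strictly between $x_1$ and $x_2$, away from $p_1$. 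Granting this, one of the two signs of $\eps$ opens the tangency into a transversal crossing, which is the second alternative of the lemma and completes the argument.
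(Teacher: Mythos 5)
Your proposal is correct and follows essentially the same route as the paper, whose entire proof is to invoke Perturbation \ref{perturbation_which_lets_you_move_unstable_manifold} to shift $W^u(p_1)$ in the direction normal to $W^s(p_2)$ and thereby split a tangential contact into transversal crossings. Your additional details (ruling out coincidence of the manifolds, the sign/Sard argument for a transversal zero of $\Phi_\eps$, and checking the perturbation strip avoids the relevant heteroclinic points) are just elaborations of that same one-line argument.
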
  
\begin{proof}
    Using Perturbation \ref{perturbation_which_lets_you_move_unstable_manifold}, we may perturb to obtain two transversal intersections.
\end{proof}
Similarly, we may take the intersection of $W^u(p_2)$ and $W^s(p_1)$ at $O_2$ to be transverse.  We also define $l^s_0(p_1)$ to be a small section of the stable manifold of $p_1$ which contains in its interior $O_2$, and its iterations as $T^i(l^s_0(p_1)) = l^s_i(p_1).$

We let $W^u_{loc}(p_2) (W^s_{loc}(p_2))$ be 
a section of the unstable (stable) manifold 
of $p_2$ containing in its interior $p_2$. 
Then by the lambda lemma (see \cite{siburg}), $\lim_{i \rightarrow \infty}l^u_i(p_1) = W^u_{loc}(p_2)$, and $\lim_{i \rightarrow \infty}l^s_{-i}(p_1) = W^s_{loc}(p_2)$.

Now, by \cite{Moser} there exist Birkhoff normal form coordinates around our fixed points.  In these coordinates our billiard map may be conjugated by a function $N$ so that $T = N \circ f^q \circ N^{-1}$ has the form
\begin{align}\begin{split}\nonumber
    T(\xi,\eta) = (\Delta(\xi\eta)\xi ,\Delta(\xi\eta)^{-1}\eta),
\end{split}\end{align}
where
\begin{align}\begin{split}\nonumber
    \Delta(\xi\eta) = \lambda + \sum_{k=1}^\infty a_k(\xi\eta)^k,
\end{split}\end{align}
and $\lambda$ is the eigenvalue of $df^q(p_1)$ with magnitude greater than 1.  This equation is valid when the product $\xi\eta$ is small. We also may normalize so that this is valid for $\xi,\eta \sim 1$.  

We now define a small neighborhood $U^s=U^s(\delta)$ for some $0<\delta \ll 1$ defined in these coordinates by $\{-\delta < \xi < 0, -1/2 < \eta < 1/2\}$.  Then by our considerations above we have for some $-M$ and some subset $\tilde l^s_{-M}(p_1)\subset l^s_{-M}(p_1)$, that $\tilde l^s_{-M}(p_1) \subset U^s,$ and 
$$
\tilde l^s_{-M}(p_1) \cap U^s \subset \bigg(\{-\delta < \xi < 0, \eta = -1/2\} \cup \{-\delta < \xi < 0, \eta = 1/2\}\bigg),
$$
and $\tilde l^s_{-M}(p_1)$ is as close as 
we would like in the $C^\infty$ topology to a vertical line in these coordinates.

Then, we also define $U^u=U^u(\delta)$ in these coordinates by $\{-1/2 < \xi < 1/2, 0 < \eta < \delta\}$.  We similarly have some large $N$ and some subset $\tilde l^u_N(p_1) \subset l^u_N(p_1)$ that $\tilde l^u_N(p_1) \subset U^u$, 
$$\tilde l^u_N(p_1) \cap U^u \subset \bigg(\{ \xi = -1/2, 0 \leq \eta \leq \delta\} \cup \{ \xi = 1/2, 0 \leq \eta \leq \delta\}\bigg),
$$
and  $\tilde l^u_N(p_1)$ is as close as we would like to a horizontal line in these coordinates.  

Thus, there must be a transverse intersection between $l^s_{-M}(p_1)$ and $l^u_N(p_1)$.  We now make a few more definitions:  We let $O_1^+$ be another point of transverse heteroclinic intersection of $W^u(p_1)$ and $W^s(p_2)$ which is the next point of intersection after $O_1$, and similarly for $O_2^+$.  We then define a section of $W^u(p_1)$ with $O_1^+$ in its interior, and label it as $l^{u+}_0(p_1)$.  We define its iterations as  $l^{u+}_i(p_1)=T^i(l^{u+}_0(p_1))$.  We do the same thing near $O_2$, and define these as $l^{s+}_i(p_1)=T^i(l^{s+}_0(p_1))$.  Observe that one may extend along $W^u(p_1)$ from $l^u_i(p_1)$ to $l^{u+}_i(p_1)$.  We label this as $L^u_i(p_1)$.  We now define the region between $L^u_i(p_1)$ and the strip of $W^s(p_2)$ from $O_1$ to $O_1^+$ as $\Lambda^u_i(p_1)$.   We do the same for the other heteroclinic point $O_2$ and define these regions as $\Lambda^s_i(p_1)$ (see Figure \ref{fig_creating_ht_birkhoff})

\begin{figure}[h]
\centering
\includegraphics[width=10cm, height=8cm]{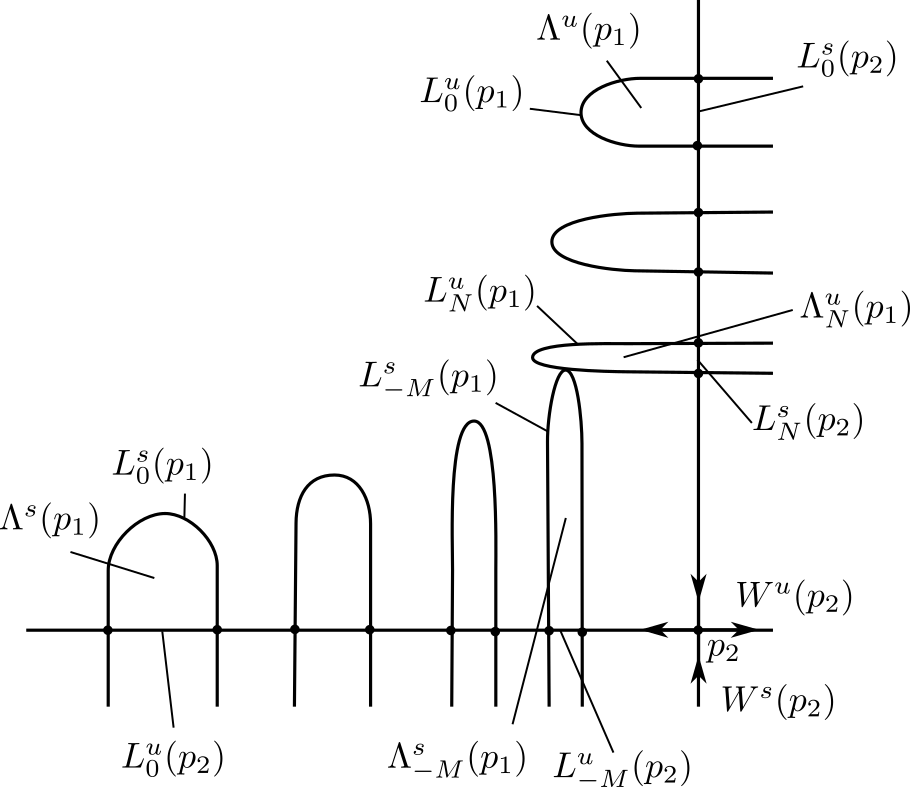}
\caption{The dynamics near $p_2$ in Birkhoff normal form.}
\label{fig_creating_ht_birkhoff}
\end{figure}

First observe that the boundaries of these regions are the union of two connected components which are sections of either the stable or unstable manifold of either $p_1$ or $p_2$.  More precisely we define $\del\Lambda^u_i(p_1) = L^u_i(p_1)\cup L^s_i(p_2)$, and $\del\Lambda^s_i(p_1) = L^s_i(p_1)\cup L^u_i(p_2)$. We also note that 
\begin{align}\begin{split}\label{boundary_length_goes_to_zero}
    \lim_{i \rightarrow \infty} \text{length}(L^s_i(p_1)) = 0
\end{split}\end{align}

Now the point of transverse intersection we found implies that for some numbers $M$ and $N$, we have $\Lambda^u_N(p_1)\cap \Lambda^s_{-M}(p_1)$ is not empty.

We mention one more fact: Since our map is $C^{N-2}$ close to integrable near the boundary (see Lemma \eqref{sublemma_close_to_integrable} in Appendix \ref{appendix_nearly_integrable_near_boundary}), we have by the Stable Manifold Theorem (see \cite{siburg}) that $W^u(p_1)$ and $W^s(p_2)$ are $C^{N-2}$ close to the manifolds $W^u(\hat p_1)=W^s(\hat p_2)$ under an integrable map, which is given explicitly by
\begin{align}\begin{split}\nonumber
    y = (C + q^{-N-1}U(x))^{1/2},
\end{split}\end{align}
for some smooth $U(x)$. Thus we have $L^u_i(p_1)$ is a graph over the strip of $W^s(p_2)$ from $T^i(O_1)$ to $T^i(O_1^+)$.

We now have the elements required to prove the following:
\begin{sublemma}
    Outside a neighborhood of $p_1$, there is a section $L^u(p_1)$ of $W^u(p_1)$ which intersections $W^s(p_2)$ at its endpoints and satisfies
    \begin{align}\begin{split}\nonumber
        dist(L^u(p_1),W^s(p_1)) = O(q^{(-K+1)/2})
    \end{split}\end{align}
    where $dist(L^u(p_1),W^s(p_1))$ is the shortest distance from a point in $L^u(p_1)$ to a point in $W^s(p_1)$, and $K$ is the degree of our Lazutkin coordinates.
\end{sublemma}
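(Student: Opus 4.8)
The plan is to read the bound off from the near--integrability of the billiard map near the boundary: the stable and unstable manifolds of the chain of saddles are pinned to the separatrix of a very thin resonant island, so that over any common base interval $W^u(p_1)$ and $W^s(p_1)$ are close, and all that remains is to place the section $L^u(p_1)$ so that its base interval meets $W^s(p_1)$ and lies outside a fixed neighborhood of $p_1$. The transverse intersection and the regions $\Lambda^u_i(p_1),\Lambda^s_i(p_1)$ built above are exactly what pick out such a section and let me estimate the distance.

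In the resonant normal form the integrable model has the invariant curves $\{y=(C+q^{-K-1}U(x))^{1/2}\}$, and the level $\{C=0\}$ is the separatrix through all the saddles, of height of order $q^{-(K+1)/2}$; the island of the $1/q$--resonance lying between $p_1$ and $p_2$ is bounded by the arc of $\{C=0\}$ running $\hat p_1\to\hat p_2$ (the integrable counterpart of $W^u(\hat p_1)=W^s(\hat p_2)$, shadowed by the orbit of $O_1$) and the arc running $\hat p_2\to\hat p_1$ (the counterpart of $W^u(\hat p_2)=W^s(\hat p_1)$, shadowed by the orbit of $O_2$). By the Stable Manifold Theorem each of $W^u(p_1),W^s(p_2),W^u(p_2),W^s(p_1)$ is a graph over the corresponding integrable arc; the quantitative claim I need in addition is that this graph lies within $O(q^{(-K+1)/2})$ of that arc. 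The heuristic is that the non--integrable part of the Lazutkin normal form at the resonance has size $O(q^{-(K+1)})$ per iterate, it is felt over the $O(q)$ iterates a heteroclinic orbit needs to cross one island, so the quantity $C$ (conserved by the integrable model) drifts by only $O(q^{-K})$ along these manifolds, and passing from an error in $C$ to a geometric displacement near $\{C=0\}$ costs the factor $\del y/\del C\sim q^{(K+1)/2}$, giving $O(q^{-K}\cdot q^{(K+1)/2})=O(q^{(-K+1)/2})$, which already dominates the $O(q^{-(K+1)/2})$ separation of the two integrable arcs.

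Next I would take $L^u(p_1)=L^u_N(p_1)$ for a pair $(N,M)$ furnished by the transverse intersection $\Lambda^u_N(p_1)\cap\Lambda^s_{-M}(p_1)\neq\emptyset$ found above; this section lies outside a fixed neighborhood of $p_1$ (it sits near $p_2$, where the normal--form box is, and $p_2\neq p_1$) and its endpoints $T^N(O_1),T^N(O_1^{+})$ lie on $W^s(p_2)$. Choosing $z\in\Lambda^u_N(p_1)\cap\Lambda^s_{-M}(p_1)$ and using $\del\Lambda^u_N(p_1)=L^u_N(p_1)\cup L^s_N(p_2)$, with $L^u_N(p_1)$ a graph over $L^s_N(p_2)$ and both $L^u_N(p_1)\subset W^u(p_1)$, $L^s_N(p_2)\subset W^s(p_2)$ lying $O(q^{(-K+1)/2})$--close to the same integrable arc by the previous step, the region $\Lambda^u_N(p_1)$ has thickness $O(q^{(-K+1)/2})$, so $z$ is within $O(q^{(-K+1)/2})$ of $L^u_N(p_1)$; likewise $\del\Lambda^s_{-M}(p_1)=L^s_{-M}(p_1)\cup L^u_{-M}(p_2)$ with $L^s_{-M}(p_1)\subset W^s(p_1)$ and $L^u_{-M}(p_2)\subset W^u(p_2)$ both $O(q^{(-K+1)/2})$--close to the arc $W^u(\hat p_2)=W^s(\hat p_1)$, so $z$ is within $O(q^{(-K+1)/2})$ of $L^s_{-M}(p_1)\subset W^s(p_1)$. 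The triangle inequality then gives
\[
dist\big(L^u(p_1),W^s(p_1)\big)\ \le\ dist\big(z,L^u_N(p_1)\big)+dist\big(z,W^s(p_1)\big)\ =\ O\big(q^{(-K+1)/2}\big).
\]

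The main obstacle is the quantitative shadowing estimate of the second paragraph: promoting the qualitative ``$C^{K-2}$--close to the integrable model'' into the sharp geometric bound $O(q^{(-K+1)/2})$. This needs a clean estimate of the non--integrable remainder of the Lazutkin/resonant normal form at the $1/q$--resonance and of how it accumulates along an orbit that lingers near the weakly hyperbolic saddles --- in particular one has to check that crossing one island costs only $O(q)$ iterates' worth of drift in $C$, not a drift magnified by the near--parabolicity of the $\hat p_i$ --- together with some care near the pinch points $\hat p_1,\hat p_2$, where $\del y/\del C$ blows up, so that the passage from an error in $C$ to a geometric error does not worsen the exponent. A secondary point, already essentially handled by the lambda--lemma bookkeeping set up above, is to confirm that $(N,M)$ can be chosen so that $L^u_N(p_1)$ escapes the fixed neighborhood of $p_1$ while $\Lambda^u_N(p_1)\cap\Lambda^s_{-M}(p_1)$ is still nonempty and the relevant thicknesses are still $O(q^{(-K+1)/2})$.
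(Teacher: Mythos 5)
Your argument, as you yourself flag in the final paragraph, rests entirely on a quantitative estimate that is never proved: that $W^u(p_1)$, $W^s(p_2)$, $W^u(p_2)$, $W^s(p_1)$ all lie within $O(q^{(-K+1)/2})$ of the corresponding separatrix arcs of the integrable model, so that the lobes $\Lambda^u_N(p_1)$ and $\Lambda^s_{-M}(p_1)$ have thickness $O(q^{(-K+1)/2})$. Everything after that (the choice of $z$ in the intersection and the triangle inequality) is routine, so the proposal is essentially circular: the content of the sublemma has been pushed into the unproven shadowing claim. The drift-in-$C$ heuristic does not close this gap. A heteroclinic orbit does not cross the island in $O(q)$ iterates uniformly --- it lingers for unboundedly many iterates near the weakly hyperbolic points $p_1,p_2$, whose multiplier is only $1+O(q^{(-K+1)/2})$, so the per-iterate remainder can accumulate over far more than $q$ steps --- and the conversion factor $\del y/\del C\sim q^{(K+1)/2}$ blows up exactly in the region where you need the estimate. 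The qualitative ``$C^{N-2}$ close to integrable'' statement the paper actually proves (Appendix on near-integrability, via the Stable Manifold Theorem) is used only to guarantee that $L^u_i(p_1)$ is a graph over the corresponding strip of $W^s(p_2)$; it does not yield, and the paper never claims, the sharp $O(q^{(-K+1)/2})$ closeness of the manifolds to the model separatrix.

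The paper's own proof avoids this entirely and is much more elementary: work in Birkhoff normal form at $p_2$ with the lobes already built. Either $\Lambda^u_N(p_1)\cap\Lambda^s_m(p_1)\neq\emptyset$ for all $m$, in which case $\text{length}(L^s_m(p_1))\to 0$ forces $L^s_m(p_1)$ arbitrarily close to $L^u_N(p_1)$; or there is a last $m$ with nonempty intersection, and then two consecutive iterates $q_N=(-\lambda^N a,\lambda^{-N}b)$ and $q_{N+1}=(-\lambda^{N+1}a,\lambda^{-N-1}b)$ of a marked point of $L^u_0(p_1)$ lie in regions separated by $L^s_{m-M}(p_1)$, so the distance from $q_N$ to that stable arc is at most $\lambda^N(\lambda-1)a=O(q^{(-K+1)/2})$, using only the multiplier estimate $\lambda=1+O(q^{(-K+1)/2})$ read off from the Lazutkin form of $f^q$. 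That single straddling argument delivers the exponent you were trying to extract from a global shadowing bound; if you want to salvage your route you would have to actually prove the $O(q^{(-K+1)/2})$ closeness of the invariant manifolds to the separatrix, which is a substantially harder (and here unnecessary) estimate.
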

\begin{proof}
We fix $N$.  Suppose that for all $m > -M$ we still have $\Lambda^u_N(p_1)\cap \Lambda^s_{m}(p_1)$ is not empty.  Then, we have that as $m \rightarrow \infty$, the section of $\del\Lambda^s_m(p_1)$ which intersects $\Lambda^u_N(p_1)$ becomes arbitrarily small due to \eqref{boundary_length_goes_to_zero}.  Hence, $L^s_i(p_1)$ approaches arbitrarily close to $L^u_i(p_1)$.  This would prove the lemma.

In the other case, there is a smallest $m$ so that $\Lambda^u_N(p_1)\cap \Lambda^s_{m-M}(p_1)$ is empty and $\Lambda^u_N(p_1)\cap \Lambda^s_{m-1-M}(p_1)$ is not empty, which implies $\Lambda^u_{N+1}(p_1)\cap \Lambda^s_{m-M}(p_1)$ is not empty.

If $\Lambda^u_N(p_1)\cap \Lambda^s_{m-M}(p_1)$ is a single point, then we are done.  If not, there are at least two points of intersection between $L^u_N(p_1)$ and $L^s_{m-M}(p_1)$.  We examine these in Birkhoff normal form coordinates near $p_2$. Let $q_0=(-a,b)$ be the point in $L^u_0(p_1)$ which minimizes the $\xi$ coordinate. Then we have $q_{N} = (-\lambda^{N}a,\lambda^{-N}b)$ and $q_{N+1} = (-\lambda^{N+1}a,\lambda^{-N-1}b)$, and these lie in two regions separated by $L^s_{m-M}(p_1)$.  The distance between the $\xi$ coordinate of these two points is then
\begin{align}\begin{split}\nonumber
    \text{Proj}_1(q_{N}-q_{N+1})=\lambda^{N}(\lambda-1)a    
\end{split}\end{align}
Now, we recall our billiard map is
\begin{align}\begin{split}\nonumber
    f^q(x,y) = (x + qy + O(q^2y^K), y + O(qy^{K+1}))
\end{split}\end{align}
so that we have  $\lambda = 1 + O(q^{(-K+1)/2})$.  Thus the distance between our two points is
\begin{align}\begin{split}\nonumber
    \text{Proj}_1(q_{N}-q_{N+1})=O(a\lambda^Nq^{(-K+1)/2})
\end{split}\end{align}
which implies the distance from $q_N$ to $L^s_{m-M}(p_1)$ is of the same order.  This implies that the distance from $q_0$ to $L^s_{m-N-M}(p_1)$ is $O(q^{(-K+1)/2})$
\end{proof}
Now, if we perturb by Perturbation \ref{perturbation_which_lets_you_move_unstable_manifold}, moving the unstable manifold of $p_1$ while keeping the stable manifold of $p_1$ fixed, with the order of the perturbation as $q^{(-N+1)/2}$, we may achieve that the homoclinic intersection is tangential instead of transverse.

\end{proof}

\section{Billiard maps close to integrable near the boundary}\label{appendix_nearly_integrable_near_boundary}

The main result of this appendix is to show

\begin{sublemma}\label{sublemma_close_to_integrable}
    For each $N$ there exists an integrable map $g$ such that close to the boundary the billiard map $f$ is $C^{N}$ close $g$. 
\end{sublemma}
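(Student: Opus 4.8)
The plan is to deduce the statement essentially for free from Lazutkin's normal form for the billiard map near the boundary, which is already recalled in Appendix \ref{section_Constructing_an_Orbit_with_Homoclinic_Tangency}. Fix $N\in\N$. Since $\Omega\in\mathcal D^\infty$, its boundary is $C^\infty$, so Lazutkin's theorem (\cite{Lazutkin}) supplies, for any prescribed order $M$, a $C^\infty$ change of coordinates $(s,\phi)\mapsto(\xi,\eta)$ on a fixed one-sided neighborhood $\{0\le\eta\le\eta_0\}$ of $\del\Omega$ (with $\xi\in\T$ and $\eta\ge 0$ a smooth function vanishing exactly on $\del\Omega$) in which
\begin{align}\begin{split}\nonumber
    f(\xi,\eta)=\big(\xi+\eta+\eta^{M}A(\xi,\eta),\,\eta+\eta^{M+1}B(\xi,\eta)\big),
\end{split}\end{align}
with $A,B$ smooth, hence bounded together with their partial derivatives up to order $N$ on $\{0\le\eta\le\eta_0\}$. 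I would simply take $M=N+2$.

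Next I would exhibit the integrable model: set $g(\xi,\eta)=(\xi+\eta,\eta)$ in these coordinates (equivalently, the conjugate of this map by the Lazutkin change of variables back in the original coordinates). This $g$ is integrable because $\eta$ is a continuous first integral --- every circle $\{\eta=\mathrm{const}\}$ is invariant and $g$ acts on it as a rigid rotation --- so a neighborhood of $\del\Omega$ is foliated by $g$-invariant curves, and $g$ is an integrable twist map defined there, as required.

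Finally I would estimate $f-g$. In the Lazutkin coordinates $f-g=(\eta^{M}A,\eta^{M+1}B)$, so by the Leibniz rule every partial derivative of order $\le N$ of this difference is a finite sum of terms $c\,\eta^{M-j}\,\del^{\beta}A$ (or the analogous terms with $B$) with $j\le N<M$; restricting to $\{0\le\eta\le\eps\}$ with $\eps\le\eta_0$, each such term is $O(\eps^{M-N})=O(\eps^{2})$. Hence $\|f-g\|_{C^{N}(\{0\le\eta\le\eps\})}\to 0$ as $\eps\to 0$. Because the Lazutkin change of variables is a fixed $C^\infty$ diffeomorphism on the larger set $\{0\le\eta\le\eta_0\}$, with itself and its inverse bounded in $C^{N}$ there, the same decay holds for $\|f-g\|_{C^N}$ measured in the original $(s,\phi)$ coordinates on the corresponding neighborhood of $\del\Omega$; choosing $\eps$ small enough completes the proof.

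The computations here are all routine, so there is no serious obstacle; the only points that require care are (i) verifying that the order $M$ in Lazutkin's normal form can indeed be taken as large as we please --- this is exactly where the $C^\infty$ hypothesis on $\del\Omega$ enters, and it costs a fixed (finite) number of derivatives relative to the smoothness of the boundary, which is the reason the applications in the main text only obtain $C^{N-2}$ closeness of the invariant manifolds --- and (ii) keeping track of the passage between the Lazutkin and the original coordinates, which is harmless precisely because the coordinate change is fixed on a neighborhood independent of $\eps$.
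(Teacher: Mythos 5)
As a proof of the sublemma read purely literally, your argument does produce an integrable map that is $C^N$-close to $f$ on a thin enough strip, but it is not the paper's proof and, more importantly, it trivializes the statement in a way that cannot support the use made of it, so I would count this as a gap rather than an alternative route. The paper does not take $g$ to be the shear $(\xi,\eta)\mapsto(\xi+\eta,\eta)$: it works at the resonant scale $y\sim 1/q$ where the hyperbolic $q$-periodic orbit of Appendix \ref{section_Constructing_an_Orbit_with_Homoclinic_Tangency} lives, and approximates the return map $f^q$ (written in Lazutkin coordinates as $f^q(x,y)=(x+qy+qA(x,y)y^N,\ y+qB(x,y)y^{N+1})$) by the time-one map of the pendulum-type Hamiltonian $H(x,y)=q y^2/2-q^{-N}U(x)$, with the potential $U$ matched to the normal form via $U(x+1)-U(x)=B(x,1/q)$; it then proves the quantitative bound $|\del_x^{N-k}\del_y^k(f^q-g)|\le Cq^{-N+k+1}$ on that strip (whence the $C^{N-2}$-closeness that is actually cited elsewhere). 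The $x$-dependent potential is the whole point of the construction: it gives $g$ hyperbolic fixed points and explicit separatrices, the level curves $y=(C+q^{-N-1}U(x))^{1/2}$, and it is precisely these curves that Appendix \ref{section_Constructing_an_Orbit_with_Homoclinic_Tangency} compares with $W^u(p_1)$ and $W^s(p_2)$, with the error beaten against the separatrix width $O(q^{(-K+1)/2})$.

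Your shear, by contrast, has no hyperbolic orbits and no separatrices, so closeness of $f$ to it says nothing about where the invariant manifolds of the billiard's periodic orbit lie; near a resonance the orbit structure of $f^q$ (saddles, islands, separatrix loops) differs qualitatively from that of a pure twist no matter how small the perturbation, which is exactly why the comparison model must itself carry the pendulum structure. A second, related problem is that your smallness is achieved only by shrinking the strip $\{0\le\eta\le\eps\}$ after the fact, whereas in the application the strip is dictated by the period $q$ of the orbit under consideration and the error must be quantified in terms of $q$ and compared to a specific $q$-dependent scale; an unquantified ``$C^N$-close on a sufficiently small neighborhood'' statement cannot be inserted into the argument of Appendix \ref{section_Constructing_an_Orbit_with_Homoclinic_Tangency}. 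The missing idea, in short, is the matched Hamiltonian (the choice of $U$ from the coefficient $B$ of the Lazutkin form) together with the resulting explicit description of the approximating invariant curves, which is the actual content of Appendix \ref{appendix_nearly_integrable_near_boundary}.
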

\begin{proof}
We begin with a smooth $C^\infty$ boundary $\Omega$ and associated billiard map $f$, written in Lazutkin coordinates as
\begin{align}\begin{split}\nonumber
    f^q
    \begin{pmatrix}
    x \\ y
    \end{pmatrix}
    =
    \begin{pmatrix}
    x + qy + qA(x,y)y^{N}
    \\
    y + qB(x,y)y^{N+1}.
    \end{pmatrix}
\end{split}\end{align}

We consider the Hamiltonian system with a Hamiltonian given by
\begin{align}\begin{split}\nonumber
    H(x,y) = q\frac{y^2}{2} - q^{-N}U(x),
\end{split}\end{align}
so that
\begin{align}\begin{split}\nonumber
    \dot x &= qy
    \\
    \dot y &= q^{-N}U'(x).
\end{split}\end{align}
Then we have the time-1 map which we label by $g(x,y)=(\hat x,\hat y)$ is given by
\begin{align}\begin{split}\nonumber
    \hat x &= x + qy + \frac{1}{2}q^{-N+1}U'(x) + \frac{1}{6}q^{-N+1}U''(x)(qy)+...
    \\
    &= x + qy + q^{-N+1}\sum_{k=2}^\infty \frac{(qy)^{k-2}}{k!}U^{(k-1)}(x) + q^{-2N}C(x,y)
    \\
    &= x + qy + q\hat A(x,y)y^N + q^{-2N}C(x,y)
    \\
    \hat y &= y + q^{-N}U'(x) + \frac{q^{-N+1}}{2}U''(x)y + ...
    \\
    &= y + q^{-N}\sum_{k=1}^\infty \frac{(qy)^{k-1}}{k!}U^{(k)}(x) + q^{-2N+1}D(x,y)
    \\
    &= y + q\hat B(x,y)y^{N+1} + q^{-2N+1}D(x,y)
\end{split}\end{align}
where $C(x,y)$ and $D(x,y)$ are smooth remainders, and we label
\begin{align}\begin{split}\nonumber
    \hat A(x,y) &= \frac{1}{(qy)^{N+2}}\sum_{k=2}^\infty \frac{(qy)^k}{k!}U^{(k-1)}(x) = \frac{\int_x^{x+qy} \bigg(U(s)-U(x)\bigg)ds}{(qy)^{N+2}}
    \\
    \hat B(x,y) &= \frac{1}{(qy)^{N+2}}\sum_{k=1}^\infty \frac{(qy)^k}{k!}U^{(k)}(x) = \frac{U(x+qy)-U(x)}{(qy)^{N+2}}
\end{split}\end{align}
and choose $U(x)$ so that
\begin{align}\begin{split}\nonumber
    \hat B(x,\frac{1}{q}) &= U(x+1)-U(x) = B(x,\frac{1}{q}).
\end{split}\end{align}
Now the difference between this map and the billiard map is
\begin{align}\begin{split}\nonumber
    x' - \hat x &= q(A(x,y)-\hat A(x,y))y^N - q^{-2N}C(x,y),
    \\
    y' - \hat y &= q(B(x,y)-\hat B(x,y))y^{N+1} - q^{-2N+1}D(x,y).
\end{split}\end{align}
This implies that if we take $y \sim \frac{1}{q}$, then we have for each $0 \leq k \leq N$
\begin{align}\begin{split}\nonumber
    \bigg|\del_x^{N-k}\del_y^k\bigg(f(x,y) - g(x,y)\bigg)\bigg| < C q^{-N+k+1}
\end{split}\end{align}
Hence the billiard map is $C^{N-2}$ close to the map $g$.
\end{proof}

\section{Absolutely T-Periodic in terms of the Billiard Map}\label{appendix_abs_t_periodic_implies_identity}
The main result of this section is to prove 
\begin{theorem}\label{theorem_abs_t_periodic_implies_identity}
    Given a $C^\infty$ billiard map $f$ with associated domain $\Omega$, its corresponding geodesic flow map $F^t:T'\Omega\rightarrow T'\Omega$, and a $q$-periodic point $x_0=(s_0,\phi_0)$, if the differential $df^q(x_0)$ is the identity up to order $n$, then the map $F$ is absolutely $T-$periodic up to order $n$ at $(\gamma(s_0),\frac{(\gamma(s_1)-\gamma(s_0))}{||(\gamma(s_1)-\gamma(s_0))||})$ with period $T=L_0$ where $L_0$ is the length of the orbit of $x_0$.
\end{theorem}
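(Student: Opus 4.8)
The plan is to realize the geodesic flow $F^t$ near the closed orbit through $p_0$ as a suspension of the billiard map over the cross‑section of inward‑pointing unit vectors based at $\del\Omega$, and then to show that the only obstruction to $F^{T}$ being the identity to order $n$ — beyond $df^q(x_0)=\Id$ to order $n$, which is assumed — is that the total chord‑length function be critical to order $n$ along the orbit, a fact forced by the exact‑symplectic structure of $f$. First I would fix notation. Let $\Sigma\subset T'\Omega$ be the hypersurface of pairs $(\gamma(s),v)$ with $v$ a unit vector pointing into $\Omega$; it is a global cross‑section for $F^t$, parameterized by $(s,\phi)\in\mathbb A$, and the first‑return map of $F^t$ to $\Sigma$ is exactly the billiard map $f$, with return time $\ell(s_0,s_1)=\|\gamma(s_1)-\gamma(s_0)\|$. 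Composing $q$ times, the $q$‑th return map is $f^q$ with return time $\tau(s_0,\phi_0)=\sum_{i=0}^{q-1}\ell(s_i,s_{i+1})$, $(s_i,\phi_i)=f^i(s_0,\phi_0)$, and the periodic orbit of $f^q$ through $x_0=(s_0,\phi_0)$ corresponds to the closed orbit of $F^t$ through $p_0=(\gamma(s_0),w_0)$, $w_0=(\gamma(s_1)-\gamma(s_0))/\|\gamma(s_1)-\gamma(s_0)\|$, of period $T=L_0=\tau(x_0)$. Since each reflection along the orbit occurs at an angle $\phi_i\in(0,\pi)$, the flow $F^t$ is a local $C^\infty$ diffeomorphism near every point of the orbit, so after conjugating by $F^{-\eps}$ for a small $\eps>0$ it suffices to verify the conclusion at the interior point $p_0'=F^\eps(p_0)$ of the first segment. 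Near $p_0'$ I would use flow‑box coordinates $(z,\theta)$, with $z=(s,\phi)\in\Sigma$ near $x_0$ and $\theta$ the elapsed flow time since the last passage through $\Sigma$; in these coordinates $F^{T}(z,\theta)=\big(f^q(z),\ \theta+T-\tau(z)\big)$, a genuine smooth map on a neighborhood of $(x_0,\eps)$ because $T-\tau(z)$ stays small. Its Jacobian at $p_0'$ is the block matrix with $df^q(x_0)$ in the upper left, the row $-d\tau(x_0)$ in the lower left, $0$ in the upper right, and $1$ in the lower‑right corner, and $F^T(z,\theta)-(z,\theta)=\big(f^q(z)-z,\ T-\tau(z)\big)$.

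The heart of the argument is then the claim that $df^q(x_0)=\Id$ to order $n$ forces $\tau$ to be constant to order $n+1$ at $x_0$. Here I would use the generating‑function structure: since $\del_1\ell(s,s')=-\cos\phi$ and $\del_2\ell(s,s')=\cos\phi'$, the differential of $\tau$ on $\Sigma$ telescopes,
\[
d\tau=\sum_{i=0}^{q-1}\big(-\cos\phi_i\,ds_i+\cos\phi_{i+1}\,ds_{i+1}\big)=-\cos\phi_0\,ds_0+\cos\phi_q\,ds_q,
\]
where $s_q,\phi_q$ denote the components of $f^q$ viewed as functions on $\Sigma$ (and $s_q$ is read in the covering, so $ds_q$ makes sense). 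Writing $s_q=s_0+a$, $\phi_q=\phi_0+b$ with $a,b$ vanishing to order $n+1$ at $x_0$ — which is exactly the hypothesis $f^q=\Id+O(n+1)$ — we get $\cos\phi_q=\cos\phi_0+\tilde b$ with $\tilde b$ vanishing to order $n+1$, hence $d\tau=\cos\phi_0\,da+\tilde b\,ds_0+\tilde b\,da$, and every term on the right vanishes to order $n$ at $x_0$. Therefore $\tau(z)-\tau(x_0)=O(|z-x_0|^{n+2})$, and in particular $T-\tau(z)=O(|z-x_0|^{n+1})$. Feeding this together with $f^q(z)-z=O(|z-x_0|^{n+1})$ into the flow‑box formula gives $F^T(y)-y=O(|y-p_0'|^{n+1})$ near $p_0'$; conjugating back by $F^{-\eps}$ yields the same statement at $p_0$, i.e. $dF^T(p_0)=\Id$ and all second‑ and higher‑order derivatives of $F^T$ up to order $n$ vanish there, which is precisely the assertion that $F$ is absolutely $T$‑periodic to order $n$ at $(\gamma(s_0),w_0)$ with period $T=L_0$.

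The main obstacle is not the computation — the generating‑function identity is a one‑line telescoping, and the rest is bookkeeping with Taylor orders — but the careful passage between the discrete model ($f$ on $\mathbb A$) and the flow ($F^t$ on the glued unit tangent bundle): one must check that $\Sigma$ is a smooth cross‑section near the orbit, that the flow‑box coordinates extend smoothly across the bounces (which uses that the reflection angles stay bounded away from $0$ and $\pi$, guaranteed by the orbit being close to but transverse to $\del\Omega$), and that the definition of "absolutely $T$‑periodic" of \cite{SafarovVisilliev}, stated for geodesic flows, coincides with "$F^T=\Id$ up to order $n$ at a point of the orbit" once the trivial flow direction is accounted for (note $dF^T$ automatically fixes the flow direction and shears the transverse space by $-d\tau$, so the two conditions match exactly via the block matrix above). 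I would devote most of the write‑up to pinning down these identifications; the dynamical content then reduces to the two order estimates of the previous paragraph.
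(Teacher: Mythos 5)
Your proposal is correct and takes essentially the same route as the paper: the core step in both is telescoping the generating-function identities $\del_1\ell=-\cos\phi$, $\del_2\ell=\cos\phi'$ to show the total orbit length (your return time $\tau$) is critical to order $n$ at $x_0$ — the paper's Lemma on the variation of $L$ does this by explicit Leibniz computations of the higher partials, while your $d\tau$ one-form argument packages the identical computation more compactly — and then matching the time-$L_0$ flow with the $q$-th return map, your flow-box conjugation by $F^{\eps}$ being just a cleaner handling of the boundary-reflection convention the paper adopts. The only nit is your claim $\tau(z)-\tau(x_0)=O(|z-x_0|^{n+2})$, which overshoots by one order relative to your stated hypothesis $f^q-\Id=O(|z-x_0|^{n+1})$; since you only use the $O(|z-x_0|^{n+1})$ bound in the final estimate, this does not affect the argument.
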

\begin{proof}
We consider 
\begin{align}\begin{split}\nonumber
    L(s_0,...,s_{q-1})=\sum_{i=0}^{q-1}l_i(s_i,s_{i+1})
\end{split}\end{align}
where $l_i(s_i,s_{i+1})$ is the distance between the points $\gamma(s_i)$ and $\gamma(s_{i+1})$.  Recall

\begin{align}\begin{split}\nonumber
    &\frac{\del l_i(s_i,s_{i+1})}{\del s_i} = -\cos(\phi_i^+),\\
    &\frac{\del l_i(s_i,s_{i+1})}{\del s_{i+1}} = \cos(\phi_{i+1}^-)
\end{split}\end{align}

where $\phi_i^+$ is the angle between $\gamma(s_{i+1})-\gamma(s_i)$ and $\gamma'(s_i)$, and $\phi_{i+1}^-$ is the angle between $\gamma(s_{i+1})-\gamma(s_i)$ and $\gamma'(s_{i+1})$.  In the case case where we have a billiard orbit (angle of incidence equal to angle of reflection) these are the same.

Here we consider every $s_i=(s_0,\phi_0)$ as a function of $s_0$ and $\phi_0$, where $(s_i,\phi_i)=f^i(s_0,\phi_0)$.  Then we prove the following lemma:

\setcounter{maintheorem}{3}
\setcounter{lemma}{0}

\begin{lemma}\label{lemma_L_variation}
    Given a $C^\infty$ billiard map,a $q$-periodic point $x_0=(s_0,\phi_0)$, and the length of this orbit \begin{align}\begin{split}\nonumber
        L(x_0)=l_0(s_0,s_1(s_0,\phi_0))+...+l_{q-1}(s_{q-1}(s_0,\phi_0),s_q(s_0,\phi_0)),
    \end{split}\end{align} if the differential is the identity up to order $n$, i.e.
    \begin{align}\begin{split}\nonumber
        df^q(x_0)=Id +F(x_0)
    \end{split}\end{align}
    such that 
    \begin{align}\begin{split}\nonumber
        \frac{\del^k F(s_0,\phi_0)}{\del^j s_0 \del ^{k-j} \phi_0} = 0\text{ for }0\leq j \leq k, 0\leq k \leq n,
    \end{split}\end{align}
    then
    \begin{align}\begin{split}\nonumber
        \frac{\del^kL(s_0,\phi_0)}{\del^j s_0 \del ^{k-j} \phi_0} = 0\text{ for }0\leq j \leq k, 0\leq k \leq n.
    \end{split}\end{align}
\end{lemma}

\begin{proof}
We first prove it for the first derivatives.  We obtain
\begin{align}\begin{split}\nonumber
    \frac{\del L}{\del s_0} &= \sum_{i=0}^{q-1}\frac{\del l_i(s_i(s_0,\phi_0),s_{i+1}(s_0,\phi_0))}{\del s_0}\\
    &= \sum_{i=0}^{q-1}-\cos(\phi_i)\frac{\del s_i}{\del s_0}+\cos(\phi_{i+1})\frac{\del s_{i+1}}{\del s_0}\\
    &= \cos(\phi_q)\frac{\del s_q}{\del s_0} - \cos(\phi_0),
\end{split}\end{align}
so when evaluated at $(s_0,\phi_0)$ this becomes
\begin{align}\begin{split}\nonumber
    \frac{\del L}{\del s_0}=\cos(\phi_0)\bigg(\frac{\del s_q}{\del s_0}-1\bigg).
\end{split}\end{align}
Similarly, we have
\begin{align}\begin{split}\nonumber
    \frac{\del L}{\del \phi_0} &= \sum_{i=0}^{q-1}\frac{\del l_i(s_i(s_0,\phi_0),s_{i+1}(s_0,\phi_0))}{\del \phi_0}\\
    &= \sum_{i=0}^{q-1}-\cos(\phi_i)\frac{\del s_i}{\del \phi_0}+\cos(\phi_{i+1})\frac{\del s_{i+1}}{\del \phi_0}\\
    &= \cos(\phi_q)\frac{\del s_q}{\del \phi_0},
\end{split}\end{align}
so when evaluated at $(s_0,\phi_0)$ this becomes
\begin{align}\begin{split}\nonumber
    \frac{\del L}{\del s_0}=\cos(\phi_0)\frac{\del s_q}{\del \phi_0}.
\end{split}\end{align}
Thus for both of these, with the condition that the differential is the identity up to order $n$, these are $0$.

For higher orders, we note that for $1\leq j\leq k$ and $1\leq k\leq n,$

\begin{align}\begin{split}\nonumber
    \frac{\del^kL}{\del^{k-j}s_0 \del^j \phi_0} &= \frac{\del^{k-1}}{\del^{k-j} s_0 \del \phi_0^{j-1}}\bigg(\frac{\del L}{\del \phi_0}\bigg)\\
    &= \sum_{i=0}^{j-1}\sum_{l=0}^{k-j}\frac{\del^{k-i-1-l}}{\del s^{k-j-l}\del \phi_0^{j-i-1}}\bigg(\cos(\phi_q)\bigg)\frac{\del^{i+l}}{\del^is_0\del^l\phi_0}\bigg(\frac{\del s_q}{\del \phi_0}\bigg).
\end{split}\end{align}
In this case,
\begin{align}\begin{split}\nonumber
    \frac{\del^{i+l}}{\del^is_0\del^l\phi_0}\bigg(\frac{\del s_q}{\del \phi_0}\bigg)=0
\end{split}\end{align}
for all these $i,l$ as a consequence of our differential being the identity up to order $n$, and so 
\begin{align}\begin{split}
    \frac{\del^kL}{\del^{k-j}s_0 \del^j \phi_0}=0
\end{split}\end{align}
for $1\leq j\leq k, 1\leq k\leq n$.

The other one to check is
\begin{align}\begin{split}\nonumber
    \frac{\del^kL}{\del s_0^k} &= \frac{\del^{k-1}}{\del s_0^{k-1}}\bigg(\frac{\del L}{\del s_0}\bigg)\\
    &= \sum_{i=0}^{k-1}\bigg[\frac{\del^{k-1-i}}{\del s_0^{k-1-i}}\bigg(\cos(\phi_q)\bigg)\frac{\del^i}{\del s_0^i}\bigg(\frac{\del s_q}{\del s_0}\bigg)\bigg] - \frac{\del^{k-1}}{\del s_0^{k-1}}\bigg(\cos(\phi_0)\bigg),
\end{split}\end{align}
and since 
\begin{align}\begin{split}\nonumber
    \frac{\del^i}{\del s_0^i}\bigg(\frac{\del s_q}{\del s_0}\bigg)=0
\end{split}\end{align}
for $i\geq 1$ and
\begin{align}\begin{split}
    \frac{\del^{k-1-i}}{\del s_0^{k-1-i}}\bigg(\cos(\phi_q)\bigg) = \frac{\del^{k-2-i}}{\del s_0^{k-2-i}}\bigg(-\sin(\phi_q)\frac{\del \phi_q}{\del s_0}\bigg) =0
\end{split}\end{align}
because of our differential being the identity up to order $n$, we have that 
\begin{align}\begin{split}\nonumber
    \frac{\del^kL}{\del s_0^k}=0
\end{split}\end{align}
for $1 \leq k \leq n$ as well.
\end{proof}
Now we claim that Lemma \eqref{lemma_L_variation} implies theorem \ref{theorem_abs_t_periodic_implies_identity}.  Recall that being \textbf{absolutely $T-$periodic up to order $n$ with periodic $T$ at $(x_0,y_0,\eta_0,\xi_0)$} means the map

\begin{align}\begin{split}\nonumber
    |F^T(x,y,\eta,\xi)-(x,y,\eta,\xi)|^2
\end{split}\end{align}
is $0$ up to order $n$ at $(x,y,\eta,\xi)=(x_0,y_0,\eta_0,\xi_0)$, where we interpret points and direction vectors to be equivalent to points and direction vectors obtained if you extend the geodesic into all of $\R^2$ by ignoring the boundary of $\Omega$ and then reflect across $\{t\gamma'(s):t\in \R \}$ where $\gamma(s)$ is a point in $\del \Omega$ that lies in our orbit. 

Consider $f^q(x_0+x)$, where $x=(s,\phi)$.  By assumption of our differential being the identity up to order $n$, expanding this we obtain
\begin{align}\begin{split}\nonumber
    f^q(x_0 + x) = x_0 + x + O(x^{n+1}).     
\end{split}\end{align}

From Lemma \eqref{lemma_L_variation}, we also have
\begin{align}\begin{split}
    L(x_0+x) = L_0 + O(x^{n+1}).
\end{split}\end{align}
Then since the distance in order to hit the boundary again after $q$ iterates for $x_0$ and $x_0+x$ is $O(x^{n+1})$, we have
\begin{align}\begin{split}\nonumber
    &F^{L(x_0+x)}\bigg(\gamma(s_0+s),\frac{\gamma(f(s_0+s,\phi_0+\phi)|_1)-\gamma(s_0+s)}{||\gamma(f(s_0+s,\phi_0+\phi)|_1)-\gamma(s_0+s)||}\bigg)\\
    \qquad &=F^{L_0}\bigg(\gamma(s_0+s),\frac{\gamma(f(s_0+s,\phi_0+\phi)|_1)-\gamma(s_0+s)}{||\gamma(f(s_0+s,\phi_0+\phi)|_1)-\gamma(s_0+s)||}\bigg)\\
    \qquad &+ O\bigg(\bigg|\bigg(\gamma(s_0+s),\frac{\gamma(f(s_0+s,\phi_0+\phi)|_1)-\gamma(s_0+s)}{||\gamma(f(s_0+s,\phi_0+\phi)|_1)-\gamma(s_0+s)||}\bigg)\bigg|^{n+1}\bigg),
\end{split}\end{align}
which implies
\begin{align}\begin{split}\nonumber
    &F^{L_0}\bigg(\gamma(s_0+s),\frac{\gamma(f(s_0+s,\phi_0+\phi)|_1)-\gamma(s_0+s)}{||\gamma(f(s_0+s,\phi_0+\phi)|_1)-\gamma(s_0+s)||}\bigg)\\
    &\qquad=\bigg(\gamma(s_0+s),\frac{\gamma(f(s_0+s,\phi_0+\phi)|_1)-\gamma(s_0+s)}{||\gamma(f(s_0+s,\phi_0+\phi)|_1)-\gamma(s_0+s)||}\bigg) \\
    &\qquad + O\bigg(\bigg|\bigg(\gamma(s_0+s),\frac{\gamma(f(s_0+s,\phi_0+\phi)|_1)-\gamma(s_0+s)}{||\gamma(f(s_0+s,\phi_0+\phi)|_1)-\gamma(s_0+s)||}\bigg)\bigg|^{n+1}\bigg),
\end{split}\end{align}
which proves the theorem.
\end{proof}

\section*{Potential Applications and Further areas of study}
As mentioned in the introduction, there is a relation between billiard orbits and the laplace spectrum, and hence Weyl's law.  As a result, there is the possibility of using the types of periodic orbits constructed in this paper to obtain some information on the second term in Weyl's law.  The method may involve analyzing the wave trace map expanded around the absolutely periodic orbits constructed here.

Another direction of further study would be to see how much more of the theory of Gonchenko Shilnikov and Turaev can be applied to billiard maps.  In their paper \cite{GST} there are a host of theorems which would be interesting to see hold in billiard maps, but which involve some additional perturbations aside from perturbations which unfold high order HT generically.  for instance, their Theorem 5 about the density of maps which have homoclinic bands would be of particular interest on potentially obtaining an open set of periodic orbits.  See also \cite{berger} for a general framework.

Finally, the last potential further direction would be to see if there are any other relations between the higher derivatives of the billiard map which may allow one to perturb on a finite number of points to eliminate higher derivatives.  What we mean is, in this paper we need $n$ points to eliminate $n+3$ derivatives of the billiard map (and obtain a map which is the identity up to order $n+3$).  If one could find hidden degeneracies in the higher derivatives however, then while keeping $n$ fixed one would be able to by the same techniques eliminate an arbitrarily high number of derivatives of the billiard map, and obtain a $C^{n+2}$ smooth convex domain which has an absolutely periodic orbit of infinite order.  

\section*{acknowledgements}
\markboth{Acknowledgements}{Acknowledgements}

The author would like to thank his advisor Vadim Kaloshin for recommending the problem and for his invaluable insight and guidance, as well as Corentin Fierobe, Alexei Glutsyuk, and Anton Gorodetski for their helpful comments while editing, as well as Pierre Berger for pointing out the potential of obtaining para-universality for a generic billiard.


\begin{thebibliography}{10}




\bibitem{Glutsyuk1}
A.A. Glutsyuk, Yu.G. Kudryashov, 
\newblock{ On quadrilateral orbits in planar billiards,}

\newblock{\em Doklady Mathematics} 83
(2011), No. 3, 371–373.

\bibitem{Glutsyuk2}
A.A. Glutsyuk, Yu. G. Kudryashov, 
\newblock No planar billiard possesses an open set of quadrilateral trajectories,
\newblock{\em J. Modern Dynamics} 6 (2012), No. 3, 287–326.




\bibitem{Katok}
Anatole Katok, Jean-Marie Strelcyn.
\newblock Homoclinic tangencies of
    Invariant manifolds, entropy and billiards: smooth maps with singularities.
\newblock {\em Lecture Notes in Mathematics} Vol.1222, viii+ 283 pp, SpringerVerlag, 1986. 



\bibitem{asaoka_irie}
Asaoka, Masayuki and Irie, Kei,
\newblock A $C^\infty$ closing lemma for Hamiltonian diffeomorphisms of closed surfaces
\newblock{\em arXiv} 2015 https://arxiv.org/abs/1512.06336

\bibitem{berger}
Berger, Pierre $\&$ Florio, Anna $\&$ Peralta-Salas, Daniel.
\newblock Steady Euler flows on $\mathbb{R}^3$ with wild and universal dynamics. (2022).
\newblock{https://arxiv.org/abs/2202.02848}

\bibitem{Corentin}
C. Fierobe,
\newblock On projective billiards with open subsets of triangular orbits
\newblock{https://arxiv.org/abs/2005.02012}

\bibitem{wolpert}
 C. Gordon, D. L. Webb, and S. Wolpert. 
 \newblock One cannot hear the shape of a drum.
\newblock{\em Bull. Amer. Math. Soc.} (N.S.), 27(1):134–138, 1992.


\bibitem{Contreras}
Contreras, Gonzalo. 
\newblock “Geodesic Flows with Positive Topological Entropy, Twist Maps and Hyperbolicity.” 
\newblock{\em Annals of Mathematics,} vol. 172, no. 2, 2010, pp. 761–808.


\bibitem{vasilliev_paper}
 D.G. Vasiliev, 
 \newblock Two-term asymptotics of the spectrum of a boundary value problem in interior reflection of
general form,
\newblock{\em Funct Anal Appl.} 18 (1984), 267–277.

\bibitem{vasilliev}
D.G. Vassiliev 
\newblock Two-term asymptotics of the spectrum of a boundary value problem in the case of a piecewise smooth boundary, 
\newblock{\em Dokl. AN SSSR 286} (1986), no. 5, 1043-1046; English transl., Soviet Math. Dokl. 33 (1986), no. 1, 227-230. 



\bibitem{dkw}
De Simoi, J.; Kaloshin, V.; Wei, Q. (with an appendix co-authored with H. Hezari);
\newblock
Dynamical spectral rigidity among Z2-symmetric strictly convex domains close to a circle,
\newblock{\em Annals of Mathematics} 186 (2017), pp. 277–314.


 

\bibitem{duarte}
Duarte, Pedro. 
\newblock Plenty of elliptic islands for the standard family of area preserving maps. 
\newblock{\em Annales de l'I.H.P. Analyse non linéaire, Tome 11} (1994) no. 4, pp. 359-409.


\bibitem{gavrilov_shilnikov}
Gavrilov N K and Shilnikov L P 1972 
\newblock On three-dimensional dynamical systems close to systems with a
structurally unstable homoclinic curve, Part I 
\newblock{\em Math. USSR} Sb. 17 467–85

\bibitem{GST2}
Gonchenko S V, Turaev D V and Shilnikov L P 1993 
\newblock On the existence of Newhouse domains in a neighbourhood
of systems with a structurally unstable Poincar´e homoclinic curve (the higher-dimensional case) 
\newblock{\em Russ. Acad.
Sci. Dokl. Math.} 47 268–73


\bibitem{GST}
Gonchenko S V, Turaev D V, Shilnikov L P
\newblock Homoclinic tangencies of
    arbitrarily high orders in conservative and dissipative
    two-dimensional maps.
\newblock {\em Nonlinearity.} 20. 241, 2007.

\bibitem{asgor}
Gorodetski, A.
\newblock On Stochastic Sea of the Standard Map. 
\newblock{\em Commun. Math. Phys.} 309, 155–192 (2012).

\bibitem{gorodetsky}
Gorodetski, A., Kaloshin, V. 
\newblock Conservative homoclinic bifurcations and some applications. 
\newblock{\em Proc. Steklov Inst. Math.} 267, 76–90 (2009)



\bibitem{hormander}
Hormander, L.
\newblock The analysis of linear partial differential operators. Ill, IV. Berlin, Heidelberg,
\newblock{\em New York: Springer} 1985


\bibitem{Ivrii}
Ivrii, V.Ya.,
\newblock The second term of the spectral asymptotics for a laplacebeltrami operator on manifolds with boundary,
\newblock {\em Func. Anal. Appl.} 14 (2) (1980), 98–106.

\bibitem{mm}
Shahla Marvizi, Richard Melrose 
\newblock Spectral invariants of convex planar regions,
\newblock{\em Journal of Differential Geometry, J. Differential Geom.} 17(3), 475-502, (1982)

\bibitem{Mather}
J.N. Mather,
\newblock Variational construction of orbits for twist diffeomorphisms 
\newblock{\em J. Amer. Math. Soc. 4} (1991), no. 2, 203-257


\bibitem{mather-forni}
J. N. Mather and G. Forni. 
\newblock Action minimizing orbits in Hamiltonian systems.
Transition to chaos in classical and quantum mechanics 
\newblock{\em (Montecatini Terme,
1991), Lecture Notes in Math.,} Vol. 1589 (1994), 92–186



\bibitem{siburg}
K. F. Siburg. 
\newblock The principle of least action in geometry and dynamics. 
\newblock{\em Lecture
Notes in Mathematics} Vol.1844, xiii+ 128 pp, Springer, (2004)


\bibitem{kac}
Kac, Mark. 
\newblock “Can One Hear the Shape of a Drum?” 
\newblock{\em The American Mathematical Monthly,} vol. 73, no. 4, 1966, pp. 1–23.

\bibitem{Lazutkin}
Lazutkin,
\newblock KAM Theory and Semiclassical Approximations to Eigenfunctions
\newblock{\em 
Springer Berlin Heidelberg, 
}1993


\bibitem{Moser}
 Moser, J.,
 \newblock The analytic invariants of an area-preserving mapping near a hyperbolic fixed
point 
\newblock{\em Communications on Pure and Applied Mathematics,} 9 (4) (1956), pp. 673–692

\bibitem{Newhouse1}
Newhouse S
\newblock Nondensity of axiom A(a) on S2 Global Analysis,
\newblock {\em Proc. Symp. Pure Math.} 1970.   14 191–202


\bibitem{Newhouse2}
Newhouse S
\newblock Diffeomorphisms with inﬁnitely many sinks
\newblock {\em Topology} 1974.   13 9–18 


\bibitem{Newhouse3}
Newhouse S
\newblock Quasi-elliptic periodic points in conservative dynamical systems
\newblock {\em Am. J. Math. } 1977.   99 1061–87


\bibitem{Newhouse4}
Newhouse S 1979 
\newblock The abundance of wild hyperbolic sets and non-smooth stable sets for diffeomorphisms 
\newblock{\em Publ.
Math. Inst. Hautes Etudes Sci.} 50 101–52


\bibitem{palis_takens}
Palis J and Takens F 1993 
\newblock Hyperbolicity and Sensitive Chaotic Dynamics at Homoclinic Bifurcations.
Fractal Dimensions and Infinitely Many Attractors
\newblock{\em 
(Cambridge Studies in Advanced Mathematics vol 35)
(Cambridge: Cambridge University Press)}



\bibitem{popov}
Popov, G.
\newblock Invariants of the Length Spectrum
and Spectral Invariants of Planar Convex Domains
\newblock{\em Commun. Math. Phys.} 161, 335-364 (1994)

\bibitem{pugh_robinson}
Pugh, C., $\&$ Robinson, C. (1983)
\newblock The C1 Closing Lemma, including Hamiltonians. 
\newblock{\em Ergodic Theory and Dynamical Systems,} 3(2), 261-313.


\bibitem{tabachnikov1}
 S. Tabachnikov, 
 \newblock Introducing projective billiards, 
 \newblock{\em Ergodic Theory and Dynamical Systems 17} (1997),
957–976.

\bibitem{tabachnikov2}
S. Tabachnikov, 
\newblock Ellipsoids, complete integrability and hyperbolic geometry, 
\newblock{\em Moscow Mathematical Journal 2} (2002), 185–198.


\bibitem{zelditch}
S. Zelditch. 
\newblock Inverse spectral problem for analytic domains. II. Z2-symmetric
domains. 
\newblock{Ann. of Math.} (2), 170(1):205–269, 2009.



\bibitem{PetkovStoyanov}
V. Petkov, L. Stoyanov,
\newblock Geometry of the Generalized Geodesic Flow and Inverse Spectral Problems
\newblock{\em John Wiley and Sons} 2017


\bibitem{local_birkhoff}
Vadim Kaloshin, Alfonso Sorrentino 
\newblock "On the local Birkhoff conjecture for convex billiards," 
\newblock{\em Annals of Mathematics, Ann. of Math.} (2) 188(1), 315-380, (July 2018)

\bibitem{weyl}
Weyl, Hermann (1911). 
\newblock "Über die asymptotische Verteilung der Eigenwerte". 
\newblock{\em Nachrichten der Königlichen Gesellschaft der Wissenschaften zu Göttingen:} 110–117.

\bibitem{SafarovVisilliev}
Yu.  Safarov,  D.  Vassiliev,
\newblock The  Asymptotic  Distribution  of  Eigenvalues  of  Partial Differential Operators,
\newblock {\em (American Mathematical Society)} 1996. 98–106.


\end{thebibliography}
\end{document}